\documentclass[11pt,leqno]{amsart}

\usepackage{amsmath,amssymb,amsfonts,amstext,amsthm}
\usepackage{mathtools,latexsym,amscd}
\usepackage{tikz}
\usetikzlibrary{arrows.meta}
\usepackage{graphicx}
\usepackage[T1]{fontenc}
\usepackage[utf8]{inputenc}
\usepackage{microtype}
\usepackage{color}
\usepackage{parskip}
\usepackage{enumitem}
\usepackage{comment}
\usepackage{booktabs,array,adjustbox}
\usepackage[pagebackref,colorlinks]{hyperref}

\allowdisplaybreaks[1]

\setlist[enumerate]{leftmargin=2em,itemsep=3pt,topsep=5pt}

\theoremstyle{plain}
\newtheorem{theorem}{Theorem}
\newtheorem{lemma}[theorem]{Lemma}

\newtheorem{proposition}[theorem]{Proposition}
\newtheorem{corollary}[theorem]{Corollary}
\newtheorem*{corollary*}{Corollary}

\theoremstyle{definition}

\newtheorem{remark}[theorem]{Remark}
\newtheorem*{remark*}{Remark}
\newtheorem*{question*}{Question}

\newcommand{\R}{\mathbb R}
\newcommand{\Z}{\mathbb Z}
\newcommand{\F}{F_2}
\newcommand{\Hb}{H_b}
\newcommand{\rHb}{\overline H_b}
\newcommand{\rEH}{\overline{EH}_b}
\newcommand{\Sym}{\operatorname{Sym}}
\newcommand{\Homeo}{\operatorname{Homeo}}
\newcommand{\Diff}{\operatorname{Diff}}
\newcommand{\Symp}{\operatorname{Symp}}
\newcommand{\Ham}{\operatorname{Ham}}
\newcommand{\id}{\operatorname{id}}
\newcommand{\im}{\operatorname{im}}

\newcommand{\Int}{\operatorname{int}}
\newcommand{\eps}{\varepsilon}
\newcommand{\cG}{\mathcal T_M}
\newcommand{\norm}[1]{\left\lVert#1\right\rVert}
\newcommand{\pair}[2]{\left\langle#1,#2\right\rangle}

\hypersetup{
  pdftitle={Bounded cohomology of transformation groups in all degrees},
  pdfsubject={Reduced exact bounded cohomology of transformation groups},
  pdfkeywords={bounded cohomology, transformation groups, surfaces, cup products}
}

\begin{document}
\raggedbottom
\title[Bounded cohomology in all degrees]{Bounded cohomology of transformation groups in all degrees}
\author{Michael Brandenbursky}\thanks{M.B. was partially supported by the Israel Science Foundation grant 823/23.}
\address{Department of Mathematics, Ben Gurion University, Israel}
\email{brandens@bgu.ac.il}
\date{}

\begin{abstract}
Let $M$ be a compact connected oriented smooth manifold, of dimension
$n\ge2$. Let $\cG$ be one of the following transformation groups:
$\Homeo_0(M,\mu)$, $\Diff_0(M,\mu)$, 
$\Symp_0(M, \omega)$ (in case $M$ is symplectic) and $\Ham(M, \omega )$ 
(in case $M$ is symplectic). Denote by $\rHb^d(\cG)$ reduced 
bounded cohomology of $\cG$ in degree $d$, and by $\rEH^d(\cG)$ reduced exact bounded cohomology of $\cG$ in degree $d$. 
In this paper we prove that if $n=2$ and $M$ is a closed surface $\Sigma_g$ or a disc $\mathbb D$, then for every $d\ge 2$
$$\dim(\rHb^d(\cG))=\infty.$$ 
Moreover, $\dim\rEH^d(\cG)=\infty$ 
if $\cG$ is either $\Diff_0(\Sigma_g,\mu)$ or $\Ham(\Sigma_g, \omega)$, or $\Ham(\mathbb{D}, \omega)$; or $g>1$.
In case $n>2$, under certain conditions on $\pi_1(M)$, we prove that for every degree $d\ge 2$
$$\dim\rEH^d(\cG)=\infty.$$ 
In particular, these results hold for $\cG$ when $M$ is 
a closed, orientable smooth manifold admitting a Riemannian metric 
of strictly negative sectional curvature of an arbitrary dimension.
\end{abstract}

\subjclass[2020]{20J06, 57S05, 20F65}
\maketitle
\markboth{Bounded cohomology in all degrees}{Bounded cohomology in all degrees}

\section{Introduction}
\label{sec:1}

Gromov's work \cite{Gromov} established bounded cohomology
as an important tool in geometry and topology.
Its applications include simplicial volume, group actions,
and symplectic geometry. Nevertheless, higher-degree
bounded cohomology remains difficult to compute, even for
free groups; see \cite{Frigerio}.

For transformation groups without a measure-preserving
condition, Monod--Nariman \cite{MonodNariman} obtained
complete computations and vanishing results.
In particular, their Theorem~1.10 gives
$$
H_b^q\bigl(\Homeo_0(S^2);\R\bigr)=H_b^q\bigl(\Diff_0(S^2);\R\bigr)=0\qquad(q=2,3).
$$
The area-preserving subgroups considered here exhibit
a different behaviour: their reduced bounded cohomology
is infinite-dimensional in every degree at least two.

In the measure-preserving setting, the constructions of
Gambaudo--Ghys \cite{GG} and Polterovich \cite{Polterovich}
provide quasimorphisms from braid groups and fundamental
groups. Such quasimorphisms have applications to
conjugation-invariant norms and dynamics; see, for example,
\cite{BMentropy,G-invariant}.
Brandenbursky--Marcinkowski \cite{BM} constructed transfer
maps in bounded cohomology and obtained infinite-dimensional
third reduced exact bounded cohomology for several
transformation groups, including those of hyperbolic
surfaces. Kimura \cite{Kimura} established analogous
configuration-space results for the disc and other
surfaces of non-negative Euler characteristic.
Nitsche \cite[Theorems~4.2--4.3]{Nitsche} obtained
infinite-dimensionality in every positive even degree
for volume-preserving homeomorphism groups of certain
manifolds of dimension at least three and for
area-preserving diffeomorphisms of the disc.

In this paper we obtain an all-degree conclusion for the
area-preserving transformation groups of every closed
oriented surface and of the disc. We also prove an
all-degree result for higher-dimensional manifolds under
the group-theoretic hypotheses stated below.

A principal difficulty in obtaining such results comes
from the very limited understanding of the higher-degree
bounded cohomology of free groups.
Under suitable hypotheses on the fundamental group,
Brandenbursky--Marcinkowski \cite[Theorems~A and~B]{BM}
proved that the reduced exact bounded cohomology of the
transformation groups considered there contains a linear
copy of $\rHb^d(F_2)$ for every $d\ge2$.
Thus, infinite-dimensionality of $\rHb^d(F_2)$ would
immediately give the corresponding conclusion for those
transformation groups in the same degree.
However, understanding the bounded cohomology of $F_2$
in degrees greater than three remains a fundamental open
problem; in particular, infinite-dimensionality of its
reduced bounded cohomology in these degrees is not known.
The degree-preserving transfer alone therefore does not
yield the desired all-degree conclusion.

Our construction avoids this difficulty by using symmetric
external products of degree-two and degree-three classes
on products of free groups, rather than requiring
higher-degree classes on $F_2$ itself.
We transfer these classes by integration over products
of manifolds and configuration spaces.
Suitable restriction maps recover the external products
and establish injectivity on the chosen spaces of classes.
Since every integer $d\ge2$ has the form $d=2m+q$,
with $m\ge0$ and $q\in\{2,3\}$, the known
infinite-dimensionality in degrees two and three
\cite{Brooks,Soma} suffices to obtain our all-degree results.

Let $M$ be a compact connected oriented smooth manifold of dimension
$n\ge2$ with possibly nonempty boundary. Let $\mu$ be a Borel probability
measure induced by a smooth positive volume form on $M$.
Let $\Homeo_0(M,\mu)$ denote the identity component of the group of measure-preserving
homeomorphisms of $M$ which are the identity on a neighborhood of
$\partial M$, and $\Diff_0(M,\mu)$ denote the identity component of the group of
volume-preserving diffeomorphisms $M$. If $(M,\omega)$ is symplectic, 
let $\Symp_0(M,\omega)$ denote the identity component of the group
of symplectomorphisms that are the identity near $\partial M$. 
Let $\Ham(M,\omega)$ denote the group of
Hamiltonian diffeomorphisms generated by Hamiltonian isotopies; when the
boundary is nonempty, the generating Hamiltonians have support in $\Int M$. 

Let $\Sigma_g$ denote the closed oriented surface of genus $g$, and fix
$x_0\in M$. We use the notation
$\pi=\pi_1(M,x_0),\quad G=\pi_M=\pi/Z(\pi),\quad \F=\langle a,b\rangle$,
where $Z(\pi)$ is the center of $\pi$ and $\F$ is the free group of rank
two. As in \cite{BM}, we use $\cG$ for any one of the transformation
groups under consideration. Theorem~\ref{thm:main} concerns $n\ge3$
under the hypotheses below. Theorem~\ref{thm:disc} concerns the disc
and the sphere, and Theorem~\ref{thm:surface} concerns all closed
oriented surfaces of positive genus.

Let $K$ be a discrete group. We denote its bounded cohomology by
$\Hb^d(K)$ and its reduced bounded cohomology by
$$
 \rHb^d(K)=\Hb^d(K)\big/\{u\in\Hb^d(K):\|u\|=0\},
$$
with the induced quotient norm. Let $c_K^d$ denote the comparison map
induced by inclusion of bounded cochains into ordinary cochains. Following
\cite{FPS}, let $EH_b^d(K)$ denote its kernel, called the
exact bounded cohomology of $K$. We write $\rEH^d(K)$ for the reduced
exact bounded cohomology:
$$
 \begin{aligned}
 &EH_b^d(K)=\ker\bigl(c_K^d:\Hb^d(K)\longrightarrow H^d(K;\R)\bigr),\\
 &\rEH^d(K)=\im\bigl(EH_b^d(K)\longrightarrow\rHb^d(K)\bigr).
 \end{aligned}
$$
Thus $\rEH^d(K)$ is $EH_b^d(K)$ modulo its zero-seminorm subspace,
with the norm induced from $\rHb^d(K)$. Consider the following conditions on $G=\pi/Z(\pi)$:

\begin{enumerate}[label=\textup{(\roman*)},leftmargin=2.4em]
\item There are a group $B$, an amenable group $A$, and injective homomorphisms
$A\to\F$ and $A\to B$ such that $G\cong\F*_A B$.
\item There is an epimorphism $r:G\twoheadrightarrow\F$.
\item The group $G$ contains a hyperbolically embedded subgroup isomorphic to $\F$.
\end{enumerate}

A surjection $\pi\twoheadrightarrow\F$ sends $Z(\pi)$ 
to the identity, so \textup{(ii)} is equivalent to Nitsche's epimorphism hypothesis on $\pi$ itself.

\begin{theorem}[Manifolds of dimension greater than two]\label{thm:main}
Let $M$ be a compact, connected, oriented smooth manifold of dimension
$n\ge3$, with normalized smooth volume measure $\mu$. Suppose that
$G=\pi_1(M)/Z(\pi_1(M))$ satisfies at least one of
\textup{(i)}, \textup{(ii)}, or \textup{(iii)}. Let $\cG$ denote either of
$$
 \Homeo_0(M,\mu),\qquad \Diff_0(M,\mu).
$$
If $M$ has a symplectic form $\omega$, one may instead take either of
$$
 \Symp_0(M,\omega),\qquad \Ham(M,\omega),
$$
with $\mu$ the normalized symplectic volume. 
\begin{enumerate}[label=\textup{(\alph*)},leftmargin=2.4em,itemsep=7pt]
\item \textbf{Infinite dimensionality.}
For every integer $d\ge2$,
\begin{equation}\label{eq:main}
 \dim_{\R}\rEH^d(\cG)=\infty,
 \qquad \dim_{\R}\rHb^d(\cG)=\infty.
\end{equation}

\item \textbf{Injective maps.}
There are a linear subspace $K_2\subset\rHb^2(\F)$ of codimension at most
one and injective linear maps
\begin{align}
 &\Phi_{0,2}:K_2
 \longrightarrow\rEH^2(\cG),\label{eq:baseinj}\\
 &\Phi_{0,3}:\rHb^3(\F)
 \longrightarrow\rEH^3(\cG),\notag\\
 &\Phi_{m,q}:\rHb^q(\F)
 \longrightarrow\rEH^{2m+q}(\cG)
 \qquad(m\ge1,\ q\in\{2,3\}).\label{eq:highinj}
 \end{align}
Under \textup{(ii)} or \textup{(iii)}, one may take $K_2=\rHb^2(\F)$.
\end{enumerate}
\end{theorem}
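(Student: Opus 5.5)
The argument reduces to two inputs: the degree-preserving transfer of Brandenbursky--Marcinkowski \cite[Theorems~A and~B]{BM}, and an external-product construction on products of free groups. Part (a) follows from (b). Every $d\ge2$ can be written as $2m+q$ with $m\ge0$ and $q\in\{2,3\}$, and $\rHb^2(\F)$ and $\rHb^3(\F)$ are infinite-dimensional by \cite{Brooks,Soma}. A subspace of codimension at most one in an infinite-dimensional space is still infinite-dimensional. Since $\rEH^d(\cG)\subset\rHb^d(\cG)$, both equalities in \eqref{eq:main} then hold. So the work is in constructing the maps $\Phi_{m,q}$.

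\textbf{Degrees two and three.} I would take $\Phi_{0,q}$ to be the transfer of \cite{BM}. Fix a basepoint $x_0$ and, for $x\in M$, choose paths from $x_0$ to $x$ of bounded length. For $f\in\cG$, the loop formed by the path, the isotopy trajectory $f_t(x)$, and the return path gives an element $\gamma(f,x)\in G$; passing to $G=\pi/Z(\pi)$ removes the dependence on the choice of isotopy. Pull a bounded cocycle back along a map $G\to\F$ (or extend it from $\F$ to $G$) and integrate over $M^{d+1}$ (or over $M$ with diagonal evaluation) against $\mu$. This gives a bounded cochain on $\cG$.
\begin{itemize}
\item Under (ii), compose with the epimorphism $r$; we then have $K_2=\rHb^2(\F)$.
\item Under (iii), use the extension theorem for hyperbolically embedded subgroups; again $K_2=\rHb^2(\F)$.
\item Under (i), use amalgamated products with an amenable edge group, following \cite{BM}. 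This may lose one dimension in degree two, namely the class not vanishing on $A$, which gives $K_2$ of codimension at most one.
\end{itemize}
Exactness holds because the cohomology of $\cG$ is detected through $M$: the pulled-back class is a coboundary of an unbounded cochain. This is the argument already given in \cite{BM}.

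\textbf{Higher degrees, $m\ge1$.} Work on $\F^{m+1}=\F\times\cdots\times\F$. Fix a nontrivial class $\beta\in\rHb^2(\F)$, for instance a Brooks quasimorphism coboundary with $\|\beta\|>0$. Send
$$
\alpha\longmapsto \alpha\times\beta\times\cdots\times\beta\in\rHb^{2m+q}(\F^{m+1}),
$$
symmetrized over the factors. Then transfer to $\cG$ by integrating over the configuration space $M^{m+1}$: use the cocycle $\gamma(f,x_1)\times\cdots\times\gamma(f,x_{m+1})$, which maps into $G^{m+1}\to\F^{m+1}$. Boundedness is preserved because the integrand is a bounded cocycle. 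Exactness follows because cup and external products of exact classes are exact.

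\textbf{Injectivity.} For this I would find a subgroup $\F^{m+1}\hookrightarrow\cG$ realizing it. Take $m+1$ disjoint balls in $M$, together with "finger-pushing" diffeomorphisms supported near embedded loops representing $a$ and $b$. In dimension $n\ge3$ the loops can be made disjoint, so the factors commute. Composing $\Phi_{m,q}$ with restriction to this subgroup should recover a nonzero multiple of $\alpha\times\beta^{\times m}$, up to error terms coming from points not in the supports. I would control those errors by taking the supports to have measure close to full, or by a scaling and limiting argument as in \cite{BM}.

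It then remains to show that $\alpha\mapsto\alpha\times\beta^{\times m}$ is injective on reduced bounded cohomology. For this I would use a slant product: evaluate against a bounded $\ell^1$-homology cycle on which $\beta$ pairs nonzero, and use the K\"unneth-type splitting for products with the norm-dual argument.

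\textbf{Main obstacle.} I expect the main difficulty to be this last injectivity of the external product in reduced bounded cohomology, together with keeping the cross terms of the integral controlled when restricting.
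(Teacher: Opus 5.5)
Your outline has the same architecture as the paper's proof: extend the degree-two and degree-three classes from $\F$ to $G$, form symmetrized external products on $G^{m+1}$, transfer over $M^{m+1}$, test on finger-pushing copies of $\F^{m+1}$, and detect the limit by a slant product. The restriction step, however, has a genuine gap. You claim that pulling $\Phi_{m,q}(\alpha)$ back to $\F^{m+1}$ recovers a multiple of the symmetrized product, up to errors from points outside the supports that can be made small by enlarging the supports. This is false.

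Set $k=m+1$ and let $U_{1,\eps},\ldots,U_{k,\eps}$ be the overlaps of the $k$ figure-eight tubes. The integral over $M^k$ contains a cell $U_{\tau(1),\eps}\times\cdots\times U_{\tau(k),\eps}$ of measure $\prod_l\lambda_{\tau(l),\eps}$ for \emph{every} map $\tau:\{1,\ldots,k\}\to\{1,\ldots,k\}$. When $\tau$ is not injective, two of the $k$ points lie in the same overlap. The cocycle then factors through a diagonal, and the cell contributes $D_\tau^*(\beta^{\times m}\times\alpha)$. This is an external product of internal cup products such as $p_j^*(\beta\smile\alpha)$ or $p_j^*(\beta\smile\beta)$.

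These cells have total measure $(\sum_j\lambda_{j,\eps})^k-k!\prod_j\lambda_{j,\eps}$. By AM--GM this is never smaller than the measure $k!\prod_j\lambda_{j,\eps}$ of the bijective cells, so no choice of supports makes them negligible. In the paper the only genuinely small error comes from the transition collars $B_\eps$. Points in single-tube regions or outside the tubes give amenable images and contribute exactly zero. Consequently, your injectivity argument for $\alpha\mapsto\alpha\times\beta^{\times m}$ is applied to a class that is not the limit of the restrictions. You do flag ``cross terms'' as an obstacle, but the only control you propose is by measure, and that cannot work here. The slant-product injectivity you call the main difficulty is in fact the easy step (Lemma~\ref{lem:crossdual}).

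What is missing is a mechanism that kills the diagonal cells. The paper takes $\beta=\theta$ to be a finite linear combination of Brooks classes with $\langle\theta,z\rangle=1$. It then uses Amontova--Bucher \cite[Theorem~A\textup{(b)}]{AB}: $\theta\smile u=0$ for every positive-degree class $u$ on $\F$. Since at most one factor differs from $\theta$, every repeated coordinate carries a $\theta$. All non-bijective cells therefore vanish, and $\rho_\eps^*\Phi_{m,q}(v)\to C_kT_m^q(v)$ (Proposition~\ref{prop:approx}). Your suggestion of a Brooks class is the right choice, but it is this vanishing that makes the choice work.

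Two other devices would rescue your argument even for an arbitrary $\beta$.
\begin{enumerate}
\item Apply the projection $\prod_{j=1}^k(\id-t_j^*)$ from the paper's surface proofs, where $t_j$ replaces the $j$th coordinate by $1$. It annihilates every term with an unused coordinate.
\item Apply $L_{Z_m}^*$, with $Z_m=z^{\times m}$, to the full limit. By Lemma~\ref{lem:crossdual}, each non-bijective term is killed, because some of the first $m$ coordinates then carries degree different from two.
\end{enumerate}

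Three smaller repairs are also needed.
\begin{enumerate}
\item Under (i) and (iii) there is no homomorphism $G^{m+1}\to\F^{m+1}$. You must extend each factor separately by $J_2,J_q$ and use $(i^k)^*\widetilde T_m^q(v)=T_m^q(v)$.
\item Under (i) you must take $\theta\in K_2$, so that $J_2(\theta)$ is exact. Otherwise ``products of exact classes are exact'' does not give exactness of $\Phi_{m,2}$ on all of $\rHb^2(\F)$.
\item The pushes must be supported in tubular neighborhoods of essential figure-eights whose two tubes overlap in positive measure, not in balls.
\end{enumerate}
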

Let $M$ be a closed, connected, orientable smooth manifold of dimension $n\geq 3$ 
admitting a Riemannian metric of strictly negative sectional curvature. 
Then $\pi_1(M)$ is center-free and non-elementary torsion-free word-hyperbolic group, and hence it
admits a hyperbolically embedded $F_2$ \cite[Theorem~6.14(c)]{DGO}. 
Now Theorem \ref{thm:main} implies the following

\begin{corollary*}
Let $M$ be a closed, connected, orientable smooth manifold of dimension $n\geq3$ 
admitting a Riemannian metric of strictly negative sectional curvature. 
Let $\mu$ be the normalized measure induced by any smooth positive volume form. Then, for every $d\ge2$,
$$
\dim\overline{EH}_b^d\bigl(\operatorname{Homeo}_0(M,\mu)\bigr)=
\infty\quad\text{and}\quad\dim\overline{EH}_b^d\bigl(\operatorname{Diff}_0(M,\mu)\bigr)=\infty.
$$
In particular, their reduced bounded cohomology is infinite-dimensional in every degree at least two.
If $M$ also admits a symplectic form $\omega$, the same reduced exact conclusions hold for
$$
\operatorname{Symp}_0(M,\omega)\quad\text{and}\quad\operatorname{Ham}(M,\omega),
$$
using the normalized symplectic volume.
\end{corollary*}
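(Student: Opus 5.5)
The Corollary is a direct consequence of Theorem~\ref{thm:main}. The plan is to check that $G=\pi_1(M)/Z(\pi_1(M))$ satisfies hypothesis~\textup{(iii)} and then read off part~(a). We do not need the constructions behind Theorem~\ref{thm:main}; only the group-theoretic input is verified here.

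\textbf{Step 1: the fundamental group.} Let $M$ be closed and carry a metric of strictly negative sectional curvature. Since $M$ is compact, the curvature is pinched, $K\le -\kappa<0$. By Cartan--Hadamard, the universal cover $\widetilde M$ is a CAT$(-\kappa)$ space diffeomorphic to $\R^n$. The group $\pi=\pi_1(M)$ acts on $\widetilde M$ freely, properly and cocompactly by isometries, so the \v{S}varc--Milnor lemma shows that $\pi$ is word-hyperbolic. Since $M$ is aspherical and finite-dimensional, $\pi$ is torsion-free. It is non-elementary: it is not virtually cyclic, because its cohomological dimension is $n\ge3$ ($M$ is a closed aspherical $n$-manifold).

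\textbf{Step 2: the center.} Every nontrivial element $\gamma\in\pi$ acts as a hyperbolic isometry with a unique axis. By Preissmann's theorem, the centralizer of $\gamma$ is infinite cyclic. If $\gamma$ were central, $\pi$ would equal its centralizer and hence be cyclic, contradicting Step~1. Therefore $Z(\pi)=1$ and $G=\pi$.

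\textbf{Step 3: hypothesis (iii).} A non-elementary hyperbolic group is acylindrically hyperbolic. By \cite[Theorem~6.14(c)]{DGO}, and using that $G$ is torsion-free so the maximal finite normal subgroup is trivial, $G$ contains a hyperbolically embedded subgroup isomorphic to $F_2$. This is exactly hypothesis~\textup{(iii)}.

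\textbf{Step 4: conclusion.} The manifold $M$ is compact, connected, oriented (it is orientable, and we fix an orientation) and has dimension $n\ge3$. Apply Theorem~\ref{thm:main}(a) with $\cG=\Homeo_0(M,\mu)$ or $\Diff_0(M,\mu)$, and, if $M$ is symplectic, with $\Symp_0(M,\omega)$ or $\Ham(M,\omega)$. This gives $\dim\rEH^d(\cG)=\infty$ for all $d\ge2$. Since $\rEH^d(\cG)$ is by definition a linear subspace of $\rHb^d(\cG)$, infinite-dimensionality of $\rHb^d(\cG)$ follows as well.

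The only step needing any care is Step~3, namely matching the precise form of \cite[Theorem~6.14(c)]{DGO}, which produces $F_2\times K(G)$ hyperbolically embedded. The plan there is to note that $K(G)$ is the maximal finite normal subgroup of $G$, which is trivial because $G$ is torsion-free.
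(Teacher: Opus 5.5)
Your proposal is correct and follows the paper's argument. The paper likewise notes that $\pi_1(M)$ is center-free, torsion-free and non-elementary word-hyperbolic, invokes \cite[Theorem~6.14(c)]{DGO} to get a hyperbolically embedded $F_2$ (i.e.\ hypothesis~\textup{(iii)}), and then applies Theorem~\ref{thm:main}; your Steps~1--2 (\v{S}varc--Milnor, asphericity, Preissmann) only supply the standard justifications the paper states without proof.
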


The second theorem treats the closed two-disc $\mathbb D$ and the sphere
$S^2$, using three-point and four-point configurations, respectively.

\begin{theorem}[The disc and the sphere]\label{thm:disc}
Let $S$ be $\mathbb D$ or $S^2$, equipped with an area
form $\omega$, and let $\mu$ be its induced measure. Let
$d\ge2$ be an integer. Then
$$
 \dim\rHb^d\bigl(\Homeo_0(S,\mu)\bigr)=\infty,\qquad\dim\rEH^d\bigl(\Ham(S,\omega)\bigr)=\infty.
$$
\end{theorem}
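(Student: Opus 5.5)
We follow the Gambaudo--Ghys strategy, using configuration spaces in place of the fundamental group, and combine it with the symmetric external-product trick from the introduction. Set $k=3$ for $S=\mathbb D$ and $k=4$ for $S=S^2$. Let $X_k(S)$ be the configuration space of $k$ ordered distinct points of $S$. Its fundamental group is the pure braid group $P_k(\mathbb D)$, respectively $P_4(S^2)$. Modulo its center this group surjects onto a free group. For $P_3$ the quotient $P_3/Z(P_3)$ is $\F$. For $P_4(S^2)$, forgetting one point, the quotient by the center $\Z/2$ is $P_3(\mathbb{D}\text{ with a puncture})/\cdots$, which again contains or maps onto $\F$.

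For a bounded cocycle $c$ on $\F$ and $f\in\cG$, we pick a fixed base configuration $\bar z$ and, for almost every $x\in X_k(S)$, short paths $s_x$ from $\bar z$ to $x$. An isotopy $f_t$ from the identity to $f$ then gives a loop $\gamma(f,x)=s_x\cdot f_t(x)\cdot s_{f(x)}^{-1}$. Its image $\bar\gamma(f,x)\in\F$ is well defined, since the center absorbs the ambiguity of the isotopy; this works for $\Ham$ and $\Homeo_0$ of the disc, where $\pi_1$ of the group is trivial or central. We define the transfer
$$\Gamma(c)(f_1,\dots,f_d)=\int_{X_k(S)} c\bigl(\bar\gamma(f_1,x),\bar\gamma(f_2,f_1x),\dots\bigr)\,d\mu^k(x).$$
This is a bounded cocycle because $\mu^k$ is invariant. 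For higher degrees $d=2m+q$, we integrate over the product $X_k(S)^{m+1}$. We use the external product $c=\alpha^{\times m}\times\beta$, with $\alpha$ a fixed degree-two class and $\beta\in\rHb^q(\F)$, symmetrized as in the introduction. Thus the maps $\Phi_{m,q}$ are defined from $\rHb^q(\F)$ to $\rHb^{2m+q}$, and degrees $2$ and $3$ are covered by \cite{Brooks,Soma}.

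Injectivity goes by restriction. We embed $\F^{m+1}$ (or a product of large rank-two free subgroups) into $\cG$ via point-pushing diffeomorphisms supported in disjoint discs. Concretely, we choose $m+1$ disjoint small regions, each carrying a copy of the standard $\F$-action by finger moves of $k$ marked small balls. On the locus where each factor configuration sits in the corresponding balls, the pulled-back class is a nonzero multiple of the external product, up to error terms. Those error terms come from configurations that are not fully inside the balls; they are controlled by making the balls and the supports small relative to one another, or cancelled using the homogeneity of the classes. Pulling back along the diagonal and then the projections recovers $\alpha^{\times m}\times\beta$. This class is nonzero in $\rHb^{2m+q}(\F^{m+1})$ whenever $\beta\ne0$, by a Künneth-type injectivity for external products against a fixed class. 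For that we choose $\alpha$ to be the class of a homogeneous quasimorphism, for which a dual bounded cycle (or seminorm estimate) detects it.

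Exactness for $\Ham$ comes from the fact that the cocycles are realized as coboundaries of unbounded cochains. Either the degree-two part is $\delta$ of the transferred quasimorphism, or the restriction of the class to $\Ham$ vanishes in ordinary cohomology because $H^*(\F)=0$ in degrees $\ge2$ and the transfer factors through a cochain-level construction. For $\Homeo_0$, only the reduced statement is needed.

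The main obstacle is the injectivity step. The restriction of the integrated cocycle picks up contributions from configurations in which the marked points are split between the pushed regions and fixed regions. One must show these contributions form a combination of external products with factors that are trivial or bounded away from cancelling the main term. To handle this, I would first try to show that such mixed terms are pullbacks along homomorphisms $\F^{m+1}\to\F^{j}$ with $j<m+1$, or have a trivial factor, and then kill them using a projection onto the summand where every factor is nontrivial.
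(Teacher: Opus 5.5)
\textbf{The transfer cannot be injective on $\rHb^q(\F)$.} Your cocycle takes values in $\F=P_3/Z(P_3)$, but the configuration space is ordered and its measure is symmetric. Relabeling the points is a measure-preserving map of $X_3(\mathbb D)$ that commutes with the diagonal action. It replaces the cocycle by one that is cohomologous, via a change of closing paths that does not change the bounded class, to its composite with the automorphism $\phi$ of $\F$ induced by conjugation by a non-pure braid lifting the permutation. Hence $\Gamma(\phi^*\beta)=\Gamma(\beta)$ for all such $\phi$.

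You can see this directly in your restriction test. The six labelings of a configuration with one point in each good region give six homomorphisms $\F\to\F$ that differ by these outer automorphisms. The limit is therefore $\lambda$ times the $S_3$-average of $\beta$, not a multiple of $\beta$, and this average can vanish. For example, conjugation by $\sigma_1\sigma_2\sigma_1$ induces $S(a)=b$, $S(b)=a$. The class $[\delta\phi_{ab}]-[\delta\phi_{ba}]$ is nonzero, since the homogenizations take the values $1$ and $-1$ on $[a,b]$ and so do not differ by a homomorphism. Yet $\Gamma$ kills it. So $\Phi_{m,q}$ is not injective on $\rHb^q(\F)$, and your fixed class $\alpha$ must itself be invariant, or $\Gamma(\alpha)$ may be zero.

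The missing idea is to take the domain to be $\rHb^q(B_3)$, respectively $\rHb^q(\operatorname{Mod}(S^2,4))$. Conjugation acts trivially there, so the common conjugator $\beta_{\mathbf x}$ of each good configuration can be discarded. Restriction to $\F$ is still an isometric embedding, because the subgroup has finite index and the central kernel is amenable. You must then prove infinite-dimensionality of these spaces in degrees two and three. This does not follow from \cite{Brooks,Soma} for $\F$, since the invariant part of an infinite-dimensional representation of a finite group can be small. The paper obtains it from $B_3/Z(B_3)\cong\operatorname{PSL}(2,\Z)$ and the virtually free group $\operatorname{Mod}(S^2,4)$, via \cite{EpsteinFujiwara} and \cite[Corollary~3]{FPS}. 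It also constructs an explicit nonzero invariant degree-two class.

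\textbf{The error analysis needs a different mechanism.} The configurations outside your good locus are not of small measure; for instance, points lying outside all supports form a large set. Their contribution must therefore be shown to vanish, not merely estimated. This requires test maps whose time-one maps fix pointwise everything outside a transition set $B_\eps$ with $\mu(B_\eps)\to0$, while the isotopies make full turns. The paper uses complete rotations of two overlapping round discs inside each of $k$ disjoint discs $E_j$. Then every configuration avoiding $B_\eps$ gives a genuine homomorphism, and these can be classified.

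If a rotating disc contains other than two marked points, or both discs contain the same pair, the image is cyclic. Such configurations contribute zero because positive-degree bounded cohomology of amenable groups vanishes. The only noncyclic case is one point in each of the three regions, and it is conjugate to $\jmath\circ p_j$. Since a triple has two or more points in at most one $E_j$, the single-factor pullback is $\lambda\sum_jp_j^*\jmath^*u$, with no cross terms. On the sphere you also need the fourth point outside both rotating discs, and a bigon argument showing that types $(2,0,2,0)$ and $(0,2,0,2)$ have cyclic image.

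Once this is in place, your projection onto terms using every coordinate is a valid substitute for the Amontova--Bucher vanishing used in the disc and sphere proofs; the paper itself uses such projections $\Pi_r$ in positive genus. Finally, exactness for $\Ham$ requires integrating an unbounded primitive. This is justified by the finite-image property of the configuration cocycle for smooth maps \cite[Lemma~2.1]{BMentropy}, together with $H^{\ge2}(B_3;\R)=0$, which makes the domain classes exact.
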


For positive-genus surfaces we use a mixed transfer. Its distinguished
factor is the compact one-point transfer when $g\ge2$, and the
two-point transfer when $g=1$. For torus homeomorphisms, a family of
two-point quasimorphisms supplies exactness in every even degree.
For these homeomorphisms, in odd degrees we assert only the reduced
bounded conclusion.
Write $T=T^2=\Sigma_1$. For $n=2,3$, let $\operatorname{Mod}(S,n)$
denote the mapping class group with $n$ marked points, allowing their
permutations, and set
$$
 K(S,n)=\ker\bigl(\operatorname{Mod}(S,n)
                        \longrightarrow\operatorname{Mod}(S)\bigr).
$$
The group $K(T,2)$ in the next theorem is identified with
$B_2(T)/Z(B_2(T))$ in the torus proof. 

\begin{theorem}[Closed surfaces of positive genus]\label{thm:surface}
Let $S=\Sigma_g$ be a closed connected oriented surface of genus $g\ge1$.
Let $\omega$ be a smooth positive area form and $\mu$ its normalized
area measure. Let $\mathcal T_S$ denote any one of
$$
 \Homeo_0(S,\mu),\quad
 \Symp_0(S,\omega),\quad\Ham(S,\omega).
$$
\begin{enumerate}[label=\textup{(\alph*)},leftmargin=2.4em,itemsep=7pt]
\item For every integer $d\ge2$,
$$
 \dim_{\R}\rHb^d(\mathcal T_S)=\infty.
$$
If $g\ge2$, or if $\mathcal T_S$ is one of the two smooth groups,
or if $d$ is even, then also
$$
 \dim_{\R}\rEH^d(\mathcal T_S)=\infty.
$$
\item For every $m\ge0$ and $q\in\{2,3\}$, there are injective
bounded linear maps, bounded below, as follows. For $g\ge2$ they are
$$
 \Phi_{m,q}:\rHb^q(\F)\longrightarrow\rEH^{2m+q}(\mathcal T_S).
$$
For $g=1$ they are
$$
 \Phi^T_{m,q}:\rHb^q(K(T,2))\longrightarrow\rHb^{2m+q}(\mathcal T_T).
$$
The latter maps have reduced exact image for each of the smooth groups.
For $\mathcal T_T=\Homeo_0(T,\mu)$, there is a single infinite-dimensional
subspace $\mathcal E_T\subset\rHb^2(K(T,2))$ of codimension at most one
such that every restriction $\Phi^T_{m,2}|_{\mathcal E_T}$ has reduced
exact image.
\end{enumerate}
\end{theorem}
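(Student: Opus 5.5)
We deduce part (a) from part (b) and build the maps $\Phi_{m,q}$ and $\Phi^T_{m,q}$ by a product transfer. Every $d\ge2$ can be written as $d=2m+q$ with $m\ge0$ and $q\in\{2,3\}$. By Brooks \cite{Brooks} and Soma \cite{Soma}, $\rHb^2(\F)$ and $\rHb^3(\F)$ are infinite-dimensional. For $g=1$, the group $K(T,2)\cong B_2(T)/Z(B_2(T))$ is virtually free non-abelian (a finite-index subgroup surjects onto a free group), so the same holds for $\rHb^q(K(T,2))$. Hence injective maps bounded below give (a).

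\textbf{Construction of the transfer.} Fix a distinguished configuration space $X$ with a $\mathcal T_S$-equivariant "braid" cocycle into the relevant group $\Gamma$:
\begin{itemize}
\item[] for $g\ge2$, $X=S$ with $\Gamma=\pi_1(\Sigma_g)$, using Polterovich's one-point construction \cite{Polterovich} and a surjection $\pi_1(\Sigma_g)\twoheadrightarrow\F$;
\item[] for $g=1$, $X=\mathrm{Conf}_2(T)$ with $\Gamma=K(T,2)$, in the Gambaudo--Ghys style \cite{GG}.
\end{itemize}
Fix also an auxiliary degree-two "bracket" class. This is a quasimorphism-type class $\psi\in\rHb^2(\F)$, transported to $\mathcal T_S$ by the same transfer; for $g=1$, in the homeomorphism case, we use the two-point quasimorphisms. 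Given $u\in\rHb^q$, we form the symmetric external product $\psi^{\times m}\times u$ on $\Gamma^{m}\times\Gamma$, i.e.\ on $\F^{m+1}$. We then integrate it over $X^{m+1}$ (with the product measure) against the product of the braid cocycles. Boundedness of the braid cocycles up to uniformly bounded error yields a bounded cochain on $\mathcal T_S$, and so a bounded linear map into $\rHb^{2m+q}(\mathcal T_S)$. Exactness follows the argument in \cite{BM}. Each factor is transferred from a class whose image under the comparison map becomes trivial on $\mathcal T_S$: for $g\ge2$ this is because $H^*(\F)$ vanishes above degree one and degree-two classes come from quasimorphisms. For $g=1$ in the smooth cases, the flux/Calabi-type homogeneous quasimorphisms are genuinely exact. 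For $\Homeo_0(T,\mu)$ we only get exactness on the codimension-one subspace $\mathcal E_T$ killing the obstruction, and only in even degrees via the $\psi$ factors. This explains the restrictions in (a).

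\textbf{Injectivity via restriction.} For injectivity and boundedness below, we construct for each $k$ an embedding of $\F^{m+1}$, or of $\Gamma^{m+1}$, into $\mathcal T_S$. Choose $m+1$ disjoint small discs or handles (thin neighbourhoods of loops realizing the generators $a,b$), and in each one point-pushing/finger-move maps supported there. This gives commuting copies $\rho:\F^{m+1}\to\mathcal T_S$. Pulling back the transferred class along $\rho$, the integral splits, up to a controlled error, as an external product. The transfer restricted to each factor is a nonzero multiple of the identity on $\rHb^q$ plus terms coming from the measure of the support regions. Letting the supports fill a fixed proportion of $S$ makes the leading term dominate. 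One then needs a Künneth-type injectivity statement: the map $u\mapsto\psi^{\times m}\times u$ is injective and bounded below in reduced bounded cohomology of $\F^{m+1}$. This holds by pairing with a bounded cochain realizing the nontrivial value of $\psi$ on each factor together with slant products, or by restricting to a diagonal product of cyclic/free subgroups where $\psi$ is detected by a homogeneous quasimorphism (in that case $\psi$ is the cup square of nothing, and one uses its coboundary form $\delta\phi$ for a quasimorphism $\phi$).

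\textbf{Main obstacle.} The hardest step is the restriction computation: we must show that the cross terms in the integral over $X^{m+1}$ (points leaving their designated support region, and diagonal collisions in the symmetrization) contribute only classes that can be separated from the main term. The approach I would try first is to choose nested supports whose measure is $\varepsilon$-concentrated and take a limit. The error terms are bounded by $O(\varepsilon)\|u\|$, while the main term is $c\,\|u\|$ with $c>0$ independent of $\varepsilon$, which gives the bound below. For $g=1$ I would additionally verify that $Z(B_2(T))$ acts trivially, so that the cocycle descends to $K(T,2)$.
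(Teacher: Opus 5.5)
There is a genuine gap in your injectivity step, at exactly the point where surfaces differ from Theorem~\ref{thm:main}.

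\textbf{Where the argument fails.} You transfer all $m+1$ factors through the \emph{same} cocycle: one-point for $g\ge2$, two-point for $g=1$. So you need $\rho:\F^{m+1}\to\mathcal T_S$ whose $m+1$ commuting factors, with pairwise disjoint supports, are each detected by that cocycle. On a surface no such $\rho$ exists for large $m$.
\begin{itemize}
\item Small discs are invisible to these cocycles. For an isotopy supported in a disc, a one-point trajectory closes to a contractible loop, so the one-point cocycle is trivial. On the torus, the two-point tracing homomorphism lands in a conjugate of the cyclic group $P_2(\mathbb D)$, so it contributes nothing in positive degree (compare Lemmas~\ref{lem:one-point-factorization} and~\ref{lem:torus-two-factorization}).
\item Handles do not suffice either. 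Each copy's support must have a component with non-abelian image in $\pi_1(S)$. An Euler characteristic count allows at most $2g-2$ pairwise disjoint such subsurfaces. On $T$, only one neighbourhood of a meridian--longitude pair fits.
\end{itemize}
With fewer than $m+1$ independent copies, every assignment of the $m+1$ positive-degree factors repeats a coordinate. The pullback then consists of internal cup products on $\F$. For a Brooks-type $\psi$ these vanish by \cite{AB}. For any other choice you would need nonvanishing of higher cup products on $\F$, which is exactly the open problem the construction must avoid. So your argument reaches at most $m$ bounded in terms of $g$. This is where $n\ge3$ is used in Lemma~\ref{lem:push} (general position for $k$ disjoint figure-eights).

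\textbf{The missing idea: a mixed transfer.} The paper sends only the $m$ fixed factors through the three-point configuration space, via $A=\Gamma^2_{S,3}(\alpha)$. Here $\alpha\in\rEH^2(K(S,3))$ is chosen via \cite{BBF} (Lemma~\ref{lem:surface-bbf}) so that its restriction to the local disc braid group, and then to $\langle\sigma_1^2,\sigma_2^2\rangle\cong\F$, is nonzero. This class is detected by overlapping rotating discs, and arbitrarily many disjoint discs fit in $S$. Only the single variable factor uses the one-point (resp.\ two-point) transfer, tested in one fixed essential region.

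This creates cross terms that are not small in measure, notably the uncontrolled class $s_\delta^*A$ on the essential copy. Shrinking supports, as you propose, only handles the transition collars, and cup-product vanishing is unavailable. The paper instead uses the degree-two splitting $\xi=\sum_jp_j^*e_j^*\xi$ on $\F^k$ (Lemma~\ref{lem:degree-two-product-splitting}) together with the projection $\Pi_m=\prod_j(\id-\tau_j^*)$, which kills every term with an unused disc coordinate.

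\textbf{Exactness.} In the paper exactness comes from the variable factor alone, via $c_A^{\smile m}\smile d_v=\delta(c_A^{\smile m}\smile B_v)$, so $A$ need not be exact. Your account of torus homeomorphisms is inconsistent on this point. If the $\psi$ factors carried ordinary primitives, the same Leibniz identity would give exactness in every degree $2m+q$ with $m\ge1$, odd degrees included. In the paper, even-degree exactness comes from the degree-two variable factor $u\in\mathcal E_T$, whose two-point transfer has an ordinary primitive $C_\varphi$ by \cite{BKMS}. No such primitive is known for $q=3$.
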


\begin{remark*}[Degree two]
For a discrete group $K$, the map $f\mapsto[\delta f]$ identifies the
space of real homogeneous quasimorphisms modulo homomorphisms
$K\to\R$ with $EH_b^2(K)$; see \cite[Sections~2.3--2.4]{Frigerio}.
Since $H_b^2(K)$ is normed, this is also $\rEH^2(K)$ by
\cite[Theorem~6.5]{Frigerio}. Thus the degree-two conclusions of
Theorems~\ref{thm:main} and~\ref{thm:surface}, and the smooth conclusions
of Theorem~\ref{thm:disc}, give infinite-dimensional spaces of homogeneous
quasimorphisms even after quotienting by homomorphisms. This is a
bounded-cohomological proof of that consequence, not an assertion that
the existence of such quasimorphisms is new. Relevant constructions and
injectivity results already appear in \cite{GG,Polterovich,BM,BMentropy,Kimura}.
For torus homeomorphisms, the required extension of two-point quasimorphisms
follows from \cite[Theorem~1.1]{BKMS}; see also \cite{BKS}.
\end{remark*}

\begin{remark*}[Norm estimates]
For each map in Theorems~\ref{thm:main}\textup{(b)}
and~\ref{thm:surface}\textup{(b)}, there is a constant $c_{m,q}>0$
such that
$$
\|\Phi_{m,q}(v)\|\ge c_{m,q}\|v\|\qquad\text{for every $v$ in the domain of $\Phi_{m,q}$.}
$$
The same statement holds with $\Phi^T_{m,q}$ in the torus case.
The constant may depend on $M$, $\cG$, $m$, $q$, and the construction, but not on $v$.
All the maps in Theorem~\ref{thm:surface} can be chosen bounded as well.
In Theorem~\ref{thm:main}, these maps can be chosen bounded
under \textup{(i)} or \textup{(ii)}. Under \textup{(iii)}, 
the construction gives injective linear maps with the stated lower norm bounds; 
no upper norm bound is asserted.
\end{remark*}

We also record an ordinary-cohomology consequence of the
classical mass-flow construction: a nonzero real character
of $\pi_1(M)/Z(\pi_1(M))$ yields infinite-dimensional ordinary
cohomology in every positive degree for the volume-preserving
homeomorphism and diffeomorphism groups, see Theorem \ref{thm:ordinary-cohomology}.

\emph{Symmetric external products.}
For a group $K$ and $\sigma\in S_k$, we define a
permutation of $K^k$ by
$\sigma(g_1,\ldots,g_k)=(g_{\sigma^{-1}(1)},\ldots,g_{\sigma^{-1}(k)})$.
Set
$$\Sym_k\xi=\sum_{\sigma\in S_k}\sigma^*\xi.$$

A class on $K^k$ is called \emph{symmetric} if every $\sigma^*$ fixes it.
For every class $\xi$, the sum $\sum_{\sigma\in S_k}\sigma^*\xi$ is symmetric. 
If $\xi$ is already symmetric, then $\Sym_k\xi=k!\xi$. Thus the normalized operator $\frac{1}{k!}\Sym_k$ 
acts as the identity on the symmetric subspace and is an idempotent linear projection onto that subspace.
A transposition sends $u\times v$ to $(-1)^{\deg u\deg v}v\times u$; this need not equal
$u\times v$. With degree-two fixed factors and at most one other factor,
all permutation signs are positive.

Now we sketch the proof of Theorem~\ref{thm:main}, proofs of Theorems \ref{thm:disc} and \ref{thm:surface} 
use similar ideas, thus we do not sketch their proofs in the introduction.

\begin{proof}[Sketch of the proof of Theorem~\ref{thm:main}]
We construct the maps in part~\textup{(b)}
using only classes of degrees two and three on $\F$. 
Both $\rHb^2(\F)$ and $\rHb^3(\F)$ are infinite-dimensional, see, e.g., \cite{Brooks,Soma}.
For $m\ge1$ and $q\in\{2,3\}$, put $k=m+1$ and $d=2m+q$. Consider the composition
$$
 \rHb^q(\F) \xrightarrow{\ \widetilde T_m^q\ }
 \rHb^d(G^k)\xrightarrow{\ k!^{-1}\overline\Gamma_{b,k}^{\,d}\ }\rHb^d(\cG).
$$
The first map is the unnormalized sum of the extended external products, see Section \ref{sec:2}.
The second is the Gambaudo--Ghys map over $M^k$, divided by $k!$, see Section \ref{sec:3}.
Thus we transfer the normalized symmetric class, as in
\eqref{eq:transfer-symmetrization}. We explain why their
composition is injective and why its image lies in reduced exact bounded cohomology.

\begin{itemize}[leftmargin=1.5em,itemsep=7pt,topsep=6pt]
\item
Each of the three hypotheses provides an embedding $i:\F\hookrightarrow G$
and linear extension maps $J_2,J_3$ satisfying
$$i^*J_s=\id\qquad(s=2,3).$$
Lemmas~\ref{lem:exact} and~\ref{lem:othersections} construct these maps and
control their images under comparison. The image of $J_3$ lies in reduced exact bounded cohomology.
The same holds for $J_2$ on a subspace
$K_2\subset\rHb^2(\F)$ of codimension at most one; under \textup{(ii)} or
\textup{(iii)}, this is the whole space. Lemma~\ref{lem:brooks} chooses in
$K_2$ a nonzero finite linear combination $\theta$ of Brooks classes such that
$$\theta\smile u=0\qquad\bigl(u\in\rHb^s(\F),\ s\ge1\bigr).
$$
The vanishing is the theorem of Amontova--Bucher \cite[Theorem~A\textup{(b)}]{AB}.
External products with this fixed class increase the degree. Its cup-product
vanishing property eliminates the terms with repeated coordinate assignments
in the calculation.

\item
First consider two factors. For $v\in\rHb^q(\F)$, take
$$\theta\times v+v\times\theta\in\rHb^{q+2}(\F\times\F).
$$
This depends injectively on $v$. To see it, choose a degree-two
$\ell^1$-cycle $z$ with $\langle\theta,z\rangle=1$
(Lemma~\ref{lem:brooks}). The dual of the chain map $b\mapsto z\times b$
evaluates the first factor on $z$ and leaves a reduced bounded-cohomology
class on the second factor. Applied to the displayed sum, it gives
$$v\quad(q=3),\qquad v+\langle v,z\rangle\theta\quad(q=2).$$
The second expression also determines $v$, since
$\langle\theta,z\rangle=1$ (see ~\eqref{eq:explicitQ}). 

For $k=m+1$ factors, use the symmetric sum
$$T_m^q(v)=\Sym_k(\theta^{\times m}\times v).$$
Here $\theta^{\times m}$ denotes $m$ external factors.
Proposition~\ref{prop:symmetric-products}, using Lemma~\ref{lem:crossdual}, proves that
$T_m^q$ has a bounded left inverse. Extend each factor separately and set
$$
\widetilde T_m^q(v)=\operatorname{Sym}_k\bigl(J_2(\theta)^{\times m}\times J_q(v)\bigr).
$$
For the product embedding $i^k(w_1,\ldots,w_k)=(i(w_1),\ldots,i(w_k))$,
naturality gives
$$(i^k)^*\widetilde T_m^q(v)=T_m^q(v).$$
Thus the extended classes remain independent. No compatibility of the
maps $J_s$ with cup products is required.

\item
Each point in $M$ and an isotopy of a transformation, together with paths to a basepoint,
produce an element of $G$ in a natural way. For $k$ points this gives
an element of $G^k$. Integrating a bounded cocycle on these elements over
$M^k$ defines the second map in the composition. Lemma~\ref{lem:transfer}
proves that it is a bounded cochain construction and commutes with
comparison. For each fixed tuple of transformations the cocycle
has finite image, so the same integral is defined even on ordinary
cochains that are not bounded.

Every summand of $\widetilde T_m^q(v)$ contains the factor
$J_2(\theta)\in\rEH^2(G)$, since $m\ge1$. Its comparison image is therefore
zero, and the transferred class belongs to reduced exact bounded cohomology as well. We denote this composition
by $\Phi_{m,q}$.

\item
It remains to show that the transfer is injective on the subspace of
external-product classes just constructed.
Lemma~\ref{lem:push} constructs homomorphisms
$\rho_\eps:\F^k\to\cG$ by finger-pushing isotopies in $k$ disjoint
figure-eight neighborhoods. On a subset $U_{j,\eps}$ of the $j$th
neighborhood, the associated homomorphism is $i\circ p_j$, up to conjugation,
where $p_j:\F^k\to\F$ is the $j$th projection. The measure of $U_{j,\eps}$ tends to a positive number $\lambda_j$.
Other fixed subsets give cyclic or trivial associated homomorphisms; the
remaining transition subset has measure tending to zero.

For two coordinates, the products
$U_{1,\eps}\times U_{2,\eps}$ and $U_{2,\eps}\times U_{1,\eps}$ 
contribute the same class $\theta\times v+v\times\theta$. If both points lie in $U_{j,\eps}$,
both cocycle coordinates use the same projection $p_j$. The pullback then factors through the
diagonal $\Delta(w)=(w,w)$, and
$$
\Delta^*(\theta\times v)=\theta\smile v=0.
$$
The other summand vanishes for the same reason. 
Homomorphisms with cyclic or trivial image also give zero in positive-degree bounded cohomology.
Finally, boundedness of the cocycle makes the contribution of the
transition subset tend to zero.

For $k$ coordinates, every repeated assignment puts at least two factors
on the same free group. At least one of them is $\theta$, since only one
factor varies, so that internal cup product vanishes. The $k!$ bijective assignments
give the remaining contributions and have the same product measure.
Proposition~\ref{prop:approx} proves the resulting convergence
$$
\rho_\eps^*\Phi_{m,q}(v)\longrightarrow C_kT_m^q(v),\qquad C_k=\prod_{j=1}^k\lambda_j>0,
$$
in reduced norm, first for two coordinates and then for general $k$.

\begin{samepage}
As in the proof outline in \cite[Section~3]{BM}, the comparison between
transfer and restriction can be summarized by the diagram
$$
\begin{tikzpicture}[>=Stealth, baseline=(current bounding box.center)]
  \node (A1) at (0,2.2) {$\rHb^q(\F)$};
  \node (B1) at (4.7,2.2) {$\rHb^d(G^k)$};
  \node (C1) at (10.1,2.2) {$\rHb^d(\cG)$};
  \node (A2) at (0,0) {$\rHb^q(\F)$};
  \node (B2) at (4.7,0) {$\rHb^d(\F^k)$};

  \draw[->] (A1) -- node[above] {$\widetilde T_m^q$} (B1);
  \draw[->] (B1) -- node[above] {$k!^{-1}\overline\Gamma_{b,k}^{\,d}$} (C1);
  \draw[->] (A1) -- node[left] {$=$} (A2);
  \draw[->] (A2) -- node[below] {$C_kT_m^q$} (B2);
  \draw[->] (B1) -- node[right] {$C_k(i^k)^*$} (B2);
  \draw[->] (C1) -- node[right] {$\rho_\eps^*$} (B2);
  \draw[->, bend left=16] (A1) to node[above] {$\Phi_{m,q}$} (C1);
\end{tikzpicture}
$$
\end{samepage}
This diagram commutes in the limit $\eps\to0$ on the source space
$\rHb^q(\F)$: for each $v$, the two paths from the upper-left corner to the
lower-right corner have values whose difference tends to zero in reduced norm.
Indeed, the upper path gives $\rho_\eps^*\Phi_{m,q}(v)$, whereas the lower
path gives $C_kT_m^q(v)$ because $(i^k)^*\widetilde T_m^q(v)=T_m^q(v)$.

We do not assert exact commutativity for fixed $\eps$, or convergence
on arbitrary classes in $\rHb^d(G^k)$.
If $\Phi_{m,q}(v)=0$, the left-hand side is zero for every $\eps$.
Thus $T_m^q(v)$ has zero reduced norm and is the zero class.
Injectivity of $T_m^q$ gives $v=0$. This proves the required injectivity.
\end{itemize}

Degrees two and three use the same argument with one coordinate and no
fixed external factor: apply the transfer to $J_2|_{K_2}$ and to $J_3$,
respectively. The degrees $2m+2$ and $2m+3$ exhaust all integers at least
two. The injective maps therefore imply part~\textup{(a)}.
Lemma~\ref{lem:push} gives the isotopies in the volume-preserving groups
and, in the symplectic case and in $\Ham$ itself. 
The argument consequently applies to every group in the theorem.
\end{proof}

\textbf{Acknowledgments.}
The author acknowledges the support of Israel Science Foundation grant
823/23. He thanks the Department of Mathematics at Ben Gurion University for
excellent working conditions. The author used an LLM for computational exploration, algebraic checks,
and editorial assistance. The mathematical ideas, research direction, and the transfer method
approach were developed exclusively by the author.


\section{Bounded cohomology of products of groups\texorpdfstring{\\}{ } and transfer maps}\label{sec:2}

In this section establish the common algebraic and transfer lemmas. 

\emph{Cup and external products.}
We first fix our cochain conventions. Let $K$ be a discrete group, and let $f$ and $h$ be
inhomogeneous cochains on $K$ of degrees $p$ and $q$, respectively.
Their cup product is defined by
\begin{equation}\label{eq:cupcochain}
 (f\smile h)(g_1,\ldots,g_{p+q})
 =f(g_1,\ldots,g_p)\,h(g_{p+1},\ldots,g_{p+q}).
\end{equation}
See \cite[Section~2.2]{AB}. This product satisfies the Leibniz identity
$$\delta(f\smile h)=\delta f\smile h+(-1)^p f\smile\delta h.$$
If both cochains are bounded, then their cup product is bounded and
$$\|f\smile h\|_\infty\le\|f\|_\infty\|h\|_\infty.$$

For discrete groups $K$ and $L$, let
$p_K:K\times L\to K$ and $p_L:K\times L\to L$
denote the coordinate projections. For cochains $f$ on $K$ and
$h$ on $L$, define their external product by
$f\times h=p_K^*f\smile p_L^*h$.
We use the same notation for the induced products on ordinary
and bounded cohomology. For bounded-cohomology classes of the
appropriate degrees, the canonical seminorms satisfy
\begin{equation}\label{eq:productnorm}
 \|u\smile v\|\le\|u\|\|v\|,\qquad\|u\times v\|\le\|u\|\|v\|.
\end{equation}
See \cite[Section~2.3]{Loh} and \cite[Section~2]{Nitsche}.
Our convention does not include the factor
$(-1)^{pq}$ appearing in L\"oh's definition. This difference
has no effect when one of the degrees is even.
The following lemma is proved in 
\begin{lemma}[\cite{FFPS}]\label{lem:zero}
For every discrete group $K$, a class of zero seminorm in $\Hb^d(K)$ has zero image in $H^d(K;\R)$. Consequently the comparison map induces a linear map
$$
\overline{c}_K^d:\rHb^d(K)\longrightarrow H^d(K;\R),\qquad\rEH^d(K)=\ker\overline{c}_K^d.
$$
External and cup products descend to reduced bounded cohomology and are compatible with these comparison maps.
\end{lemma}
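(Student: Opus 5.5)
The first claim says that a bounded class of zero seminorm has trivial ordinary image. I would prove it by duality with $\ell^1$-homology, using the Kronecker pairing on real chains. Fix $u\in\Hb^d(K)$ with $\|u\|=0$. By the universal coefficient theorem over $\R$, $H^d(K;\R)\cong\operatorname{Hom}(H_d(K;\R),\R)$. So it suffices to show that $\langle c_K^d(u),z\rangle=0$ for every real cycle $z$ in the bar complex of $K$. Such a $z$ is a finite chain, so it has finite $\ell^1$-norm $\|z\|_1$. For any bounded representative $f$ of $u$ we have $\langle c_K^d(u),z\rangle=f(z)$, and $|f(z)|\le\|f\|_\infty\|z\|_1$. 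Here $f(z)$ is independent of the representative, because coboundaries vanish on cycles. Taking the infimum over representatives gives
$$|\langle c_K^d(u),z\rangle|\le\|u\|\,\|z\|_1=0.$$
Hence $c_K^d(u)=0$.

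Therefore $c_K^d$ vanishes on the zero-seminorm subspace $N^d(K)$ and factors through the quotient $\rHb^d(K)=\Hb^d(K)/N^d(K)$, giving $\overline c_K^d$. For the identity $\rEH^d(K)=\ker\overline c_K^d$ I would check both inclusions. The image of $EH_b^d(K)$ in $\rHb^d(K)$ clearly lies in the kernel. Conversely, if $\overline c_K^d([u])=0$, then $c_K^d(u)=0$, so $u\in EH_b^d(K)$ and $[u]$ lies in the image.

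For the products, the main point is that $N^*$ is an ideal under cup and external products. This follows from \eqref{eq:productnorm}: if $\|u\|=0$, then $\|u\smile v\|\le\|u\|\|v\|=0$, and symmetrically in the other slot. The same holds for $\times$ with the corresponding groups. Consequently, if $u-u'\in N^p$ and $v-v'\in N^q$, then
$$u\smile v-u'\smile v'=(u-u')\smile v+u'\smile(v-v')\in N^{p+q}.$$
So the products are well defined on the reduced spaces.

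Compatibility with comparison holds already at the cochain level. The cup product of bounded cochains \eqref{eq:cupcochain} is the same formula as for ordinary cochains, and the external product is defined via pullbacks along projections, which commute with inclusion of bounded cochains. Thus $c(u\smile v)=c(u)\smile c(v)$ and $c(u\times v)=c(u)\times c(v)$ on $\Hb$. These identities descend through the quotient by $N$ using the factorization above.

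No step is a genuine obstacle. The only point needing care is the duality step: the identification of $H^d(K;\R)$ with the dual of real homology. This uses field coefficients $\R$ and the fact that ordinary cycles are finite, hence $\ell^1$, chains.
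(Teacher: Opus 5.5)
Your proof is correct and complete. Pairing with finite real cycles, which are $\ell^1$-chains, gives $|\langle c_K^d(u),z\rangle|\le\|u\|\,\|z\|_1$, and the universal coefficient theorem over $\R$ then forces $c_K^d(u)=0$. The inequality \eqref{eq:productnorm} makes the zero-seminorm classes an ideal for cup and external products, and since bounded and ordinary products are given by the same cochain formula, compatibility with comparison follows. The paper does not prove this lemma itself and only cites the literature. Your duality argument is the standard proof of these facts, so nothing is missing.
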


The following lemma is a consequence of the isometric section theorem
of Bucher--Burger--Frigerio--Iozzi--Pagliantini--Pozzetti
\cite{BBFIPP}; see also \cite[Theorem~9.7]{Frigerio}.
The assertions about comparison maps follow from the ordinary
Mayer--Vietoris sequence of the amalgam.

\begin{lemma}\label{lem:exact}
Assume \textup{(i)} and write
\begin{equation}\label{eq:amalgam}
 G\cong\F*_A B.
\end{equation}
Let $i:\F\hookrightarrow G$ and $j:B\hookrightarrow G$ denote
the vertex inclusions. For every $q\ge2$, there are isometric
linear maps
$$
 \begin{aligned}
 &J_b^q:\Hb^q(\F)\longrightarrow\Hb^q(G),\\
 &J_q:\rHb^q(\F)\longrightarrow\rHb^q(G)
 \end{aligned}
$$
such that
\begin{equation}\label{eq:section}
 \begin{aligned}
 &i^*J_b^q=\id,\qquad j^*J_b^q=0,\\
 &i^*J_q=\id,\qquad j^*J_q=0.
 \end{aligned}
\end{equation}
Moreover,
$\overline c_G^q\circ J_q=0\quad(q\ge3),\qquad\operatorname{rank}(\overline c_G^2\circ J_2)\le1$.
In particular, $K_2=\ker(\overline c_G^2\circ J_2)$
has codimension at most one in $\rHb^2(\F)$.
\end{lemma}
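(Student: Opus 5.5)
Under (i), $G=\F*_A B$ with $A$ amenable, so I will obtain the maps from the isometric section theorem of \cite{BBFIPP} (see also \cite[Theorem~9.7]{Frigerio}). That theorem gives, for $q\ge2$, an isometric embedding
$$\Hb^q(G)\hookrightarrow \Hb^q(\F)\oplus\Hb^q(B),$$
given by $(i^*,j^*)$, together with an isometric right inverse. Concretely, for $u\in\Hb^q(\F)$ it produces a class $J_b^q(u)\in\Hb^q(G)$ with $i^*J_b^q(u)=u$ and $j^*J_b^q(u)=0$, of the same norm. I define $J_b^q$ as the composition of this section with the inclusion of the first summand. Linearity and the identities \eqref{eq:section} are then immediate. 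Because $J_b^q$ is isometric, it sends zero-seminorm classes to zero-seminorm classes, so it descends to $J_q$ on reduced cohomology. Applying $i^*$ and using $\|i^*x\|\le\|x\|$ shows that $J_q$ is again isometric, and \eqref{eq:section} passes to the quotient. Lemma~\ref{lem:zero} guarantees that everything is compatible with comparison maps.

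For the comparison statements I will use the ordinary Mayer--Vietoris sequence of the amalgam:
$$\cdots\to H^{q-1}(A;\R)\xrightarrow{\ \partial\ } H^q(G;\R)\xrightarrow{(i^*,j^*)} H^q(\F;\R)\oplus H^q(B;\R)\to H^q(A;\R)\to\cdots.$$
Set $x=c_G^q J_b^q(u)$. Naturality of the comparison map gives $i^*x=c_{\F}^q(u)$ and $j^*x=0$. Since $\F$ has cohomological dimension one, $H^q(\F;\R)=0$ for $q\ge2$, so $i^*x=0$ as well. Exactness then gives $x=\partial(y)$ for some $y\in H^{q-1}(A;\R)$.

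The remaining step is to control $\operatorname{im}\partial$, and I expect this to be the main obstacle. The idea is that $A$ embeds in $\F$, and an amenable subgroup of a free group is cyclic, so $A$ is trivial or $\Z$. Hence $H^{q-1}(A;\R)=0$ for $q\ge3$, which forces $x=0$, i.e.\ $\overline c_G^q\circ J_q=0$. For $q=2$, the space $H^1(A;\R)$ has dimension at most one, so $\operatorname{im}\partial$ in degree two is at most one-dimensional. Therefore $\overline c_G^2\circ J_2$ takes values in a space of dimension at most one and has rank at most one. Its kernel $K_2$ consequently has codimension at most one in $\rHb^2(\F)$.

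The part that needs care is that the isometric section from \cite{BBFIPP} really satisfies $j^*J_b^q=0$ in addition to $i^*J_b^q=\id$. If the theorem is stated only as an isometric isomorphism onto the subspace of compatible pairs, I will use the fact that $\Hb^q(A)=0$ for amenable $A$: the compatibility condition on pairs is then vacuous, the pair $(u,0)$ is admissible, and its preimage is the required class.
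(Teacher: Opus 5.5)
Your argument is the paper's: take the BBFIPP section $\Theta^q$ and set $J_b^q(u)=\Theta^q(u,0)$, pass to reduced cohomology by isometry, and bound $\overline c_G^q\circ J_q$ through the Mayer--Vietoris connecting map using $A=1$ or $A\cong\Z$.

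One correction: $(i^*,j^*)\colon\Hb^q(G)\to\Hb^q(\F)\oplus\Hb^q(B)$ is not an isometric embedding. It is not even injective. For $G=\F*\Z$ with $\Z=\langle c\rangle$, the Brooks class counting $ac$ is nonzero but restricts to zero on both vertex groups. BBFIPP only gives an isometric right inverse $\Theta^q$ defined on the whole direct sum. Hence $j^*J_b^q=0$ follows automatically from $(i^*,j^*)\Theta^q=\id$, and your fallback via an ``isomorphism onto compatible pairs'' is unnecessary and rests on a false premise.
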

\begin{proof}
Since $G$ is countable and $A$ is amenable, the isometric section
theorem gives a linear map
$$\Theta^q:\Hb^q(\F)\oplus\Hb^q(B)\longrightarrow\Hb^q(G)$$
which is an isometric right inverse of the combined restriction
map $(i^*,j^*)$. Here the direct sum carries the maximum seminorm.
Define $J_b^q(u)=\Theta^q(u,0)$. Then
$$
i^*J_b^q(u)=u,\qquad j^*J_b^q(u)=0,\qquad\|J_b^q(u)\|=\|u\|.
$$
Thus $J_b^q$ preserves zero-seminorm classes and induces an
isometric map $J_q$ on reduced bounded cohomology. This proves
\eqref{eq:section}.

An amenable subgroup of a free group is either trivial or infinite
cyclic. Consequently,
\begin{equation}\label{eq:edgecohom}
 \begin{aligned}
 &A=1\ \text{or}\ A\cong\Z,\\
 &H^s(A;\R)=0\quad(s\ge2),
 \qquad \dim H^1(A;\R)\le1.
 \end{aligned}
\end{equation}
We also have $H^q(\F;\R)=0$ for $q\ge2$, see \cite[Section~1.3]{Hatcher}.

Let $\iota_F:A\to\F$ and $\iota_B:A\to B$ denote the edge
inclusions. The ordinary Mayer--Vietoris sequence contains
$$
 \begin{aligned}
 &H^{q-1}(\F;\R)\oplus H^{q-1}(B;\R)
   \xrightarrow{\ d^{q-1}\ } H^{q-1}(A;\R)\\
 &\xrightarrow{\ \partial_A^{q-1}\ } H^q(G;\R)
   \xrightarrow{\ R^q\ }
   H^q(\F;\R)\oplus H^q(B;\R),
 \end{aligned}
$$
where
$$d^{q-1}(x,y)=\iota_F^*x-\iota_B^*y,\qquad R^q(w)=(i^*w,j^*w).$$
Here $\partial_A^{q-1}$ is the connecting homomorphism; see
\cite[Chapter~II, Section~7]{Brown}.

Let $u\in\rHb^q(\F)$ and put
$w=\overline c_G^q(J_q(u))$. Naturality and
\eqref{eq:section} give
$$
 \begin{aligned}
 &i^*w
   =\overline c_{\F}^q(i^*J_q(u))
   =\overline c_{\F}^q(u)=0,\\
 &j^*w
   =\overline c_B^q(j^*J_q(u))
   =\overline c_B^q(0)=0.
 \end{aligned}
$$
Exactness of the Mayer--Vietoris sequence therefore gives
\begin{equation}\label{eq:mv}
 \im(\overline c_G^q\circ J_q)
 \subseteq \ker R^q
 =\im\partial_A^{q-1}.
\end{equation}

For $q\ge3$, the domain $H^{q-1}(A;\R)$ of the connecting
homomorphism is zero by \eqref{eq:edgecohom}. Hence
\eqref{eq:mv} implies $\overline c_G^q\circ J_q=0$.

For $q=2$, the same inclusion gives
$\operatorname{rank}(\overline c_G^2\circ J_2)\le \dim H^1(A;\R)\le1$.
More precisely, exactness identifies the possible obstruction
space as
\begin{equation}\label{eq:obstructionquotient}
 \ker R^2
 \cong
 \frac{H^1(A;\R)}
 {\im\bigl(
 H^1(\F;\R)\oplus H^1(B;\R)
 \xrightarrow{\ d^1\ }H^1(A;\R)
 \bigr)}.
\end{equation}
Thus the comparison image of $J_2$ lies in one fixed subspace
of dimension at most one. Its kernel $K_2$ consequently has
codimension at most one, as claimed.
\end{proof}

\begin{lemma}\label{lem:othersections}
Under \textup{(ii)} or \textup{(iii)} there are an embedding $i:\F\hookrightarrow G$ and linear maps
$$J_q:\rHb^q(\F)\longrightarrow\rEH^q(G),\qquad i^*J_q=\id\quad(q=2,3).$$
Under \textup{(ii)} these maps are isometric homomorphism pullbacks. Under \textup{(iii)} 
they are algebraic linear right inverses of restriction.
\end{lemma}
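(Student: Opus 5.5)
Under (ii) we have an epimorphism $r:G\twoheadrightarrow\F$. Since $\F$ is free, $r$ splits: sending $a,b$ to chosen preimages gives a homomorphism $i:\F\to G$ with $r\circ i=\id_{\F}$. In particular $i$ is injective. We then set $J_q=r^*$ on $\rHb^q(\F)$ for $q=2,3$, so that
$$i^*J_q=(r\circ i)^*=\id .$$
Pullbacks are norm non-increasing. Since $i^*r^*=\id$ is also norm non-increasing, $\|u\|\le\|r^*u\|\le\|u\|$, so $r^*$ is isometric. In particular it preserves zero-seminorm classes and descends to the reduced groups. For exactness, naturality of the comparison map gives
$$\overline c_G^q(r^*u)=r^*\overline c_{\F}^q(u)=0,$$
because $H^q(\F;\R)=0$ for $q\ge2$. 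Hence the image of $J_q$ lies in $\rEH^q(G)$ by Lemma~\ref{lem:zero}.

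Under (iii), let $i:\F\hookrightarrow G$ be the inclusion of the hyperbolically embedded copy of $\F$. The plan is to invoke the extension theorem for quasi-cocycles on hyperbolically embedded subgroups. In degree two this is the Hull--Osin extension of quasimorphisms. In higher degrees it is the Frigerio--Pozzetti--Sisto result (\cite{FPS}) that, for a hyperbolically embedded subgroup $H\hookrightarrow_h G$ and $q\ge2$, the restriction $EH_b^q(G)\to EH_b^q(H)$ is surjective for $q=2,3$. Since $H^q(\F;\R)=0$ for $q\ge2$, we have $EH_b^q(\F)=\Hb^q(\F)$. So restriction $EH_b^q(G)\to\Hb^q(\F)$ is surjective, and choosing a linear section on a Hamel basis gives a linear map $J_b^q:\Hb^q(\F)\to EH_b^q(G)$ with $i^*J_b^q=\id$.

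The main obstacle is passing to reduced cohomology, because $J_b^q$ is only algebraic and need not send zero-seminorm classes to zero-seminorm classes. To handle this, note that $i^*:\rEH^q(G)\to\rHb^q(\F)$ is a well-defined surjective linear map. Surjectivity follows because every class in $\rHb^q(\F)$ lifts to $\Hb^q(\F)$, and that lift is the restriction of an exact class on $G$. We then choose any algebraic linear right inverse $J_q$ of this map on the vector space $\rHb^q(\F)$; one exists by choosing a basis. Its image lies in $\rEH^q(G)$ by construction, and $i^*J_q=\id$.

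The points to verify carefully are that restriction is compatible with the passage to reduced groups and preserves exactness, and that the extension theorem is available in degree three and not only in degree two. For degree three I would cite the statement of \cite{FPS} on exact bounded cohomology for hyperbolically embedded subgroups. No norm control is claimed under (iii), consistent with the remark on norm estimates.
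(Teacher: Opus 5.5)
Your proposal is correct and follows the paper's proof essentially step for step. Under \textup{(ii)}, you split $r$ using freeness, take $J_q=r^*$, obtain the isometry from $i^*r^*=\id$ and exactness from $H^q(\F;\R)=0$. Under \textup{(iii)}, you combine the Frigerio--Pozzetti--Sisto exact extension theorem with $EH_b^q(\F)=\Hb^q(\F)$ to get surjectivity of $i^*:\rEH^q(G)\to\rHb^q(\F)$, and then choose an algebraic right inverse on a Hamel basis at the reduced level, exactly as the paper does. The paper also notes that $\|J_q(v)\|\ge\|v\|$ under \textup{(iii)}; this is immediate from $i^*J_q=\id$ and the norm bound for restriction.
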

\begin{proof}
Under \textup{(ii)}, freeness gives a section $i:\F\to G$ of $r$, so $r\circ i=\id$. Put $J_q=r^*$. 
Naturality and the norm bound for pullback give
$$
i^*J_q=\id,\qquad\|v\|\le\|J_q(v)\|\le\|v\|,\qquad\overline c_G^qJ_q=r^*\overline c_{\F}^q=0.
$$
Here $H^q(\F;\R)=0$ for $q\ge2$. 

Under \textup{(iii)}, the exact extension theorem of Frigerio--Pozzetti--Sisto 
\cite[Corollary~2 and Proposition~5.1]{FPS} states that $i^*:EH_b^q(G)\longrightarrow EH_b^q(\F)$
is onto for $q\ge2$. Since $EH_b^q(\F)=H_b^q(\F)$, lifting a representative of 
any reduced bounded-cohomology class and then passing to the quotient gives a surjection
$$
 i^*:\rEH^q(G)\twoheadrightarrow \rHb^q(\F).
$$
Choose lifts of a Hamel basis and extend linearly to obtain $J_q$ for $q=2,3$. 
Its image lies in reduced exact bounded cohomology and $i^*J_q=\id$; 
compare \cite[proof of Theorem~B]{BM}. 
The inequality $\|J_q(v)\|\ge\|v\|$ follows from the norm bound for restriction.
\end{proof}

For the remainder of the proof, fix $i,J_2,J_3$ as in
Lemma~\ref{lem:exact} or Lemma~\ref{lem:othersections}. We use the following
subspace for the degree-two assertion of Theorem~\ref{thm:main}:
\begin{equation}\label{eq:commondata}
 \begin{aligned}
 &i^*J_q=\id\quad(q=2,3),\qquad
 J_3\bigl(\rHb^3(\F)\bigr)\subset\rEH^3(G),\\
 &K_2:=\ker\bigl(\overline c_G^2\circ J_2\bigr)
 =\{v\in\rHb^2(\F):\overline c_G^2(J_2(v))=0\}.
 \end{aligned}
\end{equation}
Thus $K_2$ consists precisely of the reduced bounded-cohomology classes whose chosen extension lies in $\rEH^2(G)$.
Lemma~\ref{lem:exact} gives
$\dim(\rHb^2(\F)/K_2)\le1$ under \textup{(i)}.
Under \textup{(ii)} or \textup{(iii)}, Lemma~\ref{lem:othersections} gives
$K_2=\rHb^2(\F)$.
The approximation below will be applied to individual classes in the images of $J_q$.

\begin{lemma}\label{lem:brooks}
There exist a nonzero class $\theta\in \rHb^2(\F)$, 
a bounded representative given by the coboundary of a homogeneous quasimorphism, 
and an $\ell^1$ bar cycle $z\in Z_2^{\ell^1}(\F)$ such that
$$
J_2(\theta)\in\rEH^2(G),\qquad\pair{\theta}{z}=1,\qquad\theta\smile v=0\quad(v\in\rHb^{>0}(\F)).
$$
The class $\theta$ may be chosen to be a finite linear combination of Brooks classes.
\end{lemma}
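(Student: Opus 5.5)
The plan is to take $\theta$ to be a nonzero finite linear combination of Brooks classes lying in $K_2$, and then to obtain each of the required properties from results already quoted.

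\emph{Step 1: an infinite family of independent Brooks classes.} I would start from Brooks's construction \cite{Brooks}. Choose words $w_1,w_2,\ldots$ in $\F$ whose counting quasimorphisms $\phi_{w_j}$ have homogenizations $\bar\phi_j$ that are linearly independent modulo homomorphisms. For instance, take $w_j=a^jb^j$, or any family for which one can exhibit test elements in $[\F,\F]$ separating them. The classes $\theta_j=[\delta\bar\phi_j]$ are then nonzero in $EH_b^2(\F)=\Hb^2(\F)$. By the Remark on degree two, $\Hb^2(\F)$ is already Hausdorff, so these classes give linearly independent elements of $\rHb^2(\F)$.

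\emph{Step 2: landing in $K_2$ and choosing $z$.} By Lemma~\ref{lem:exact}, $K_2$ has codimension at most one; under (ii) or (iii), Lemma~\ref{lem:othersections} gives $K_2=\rHb^2(\F)$. Hence the span of $\theta_1,\theta_2$ meets $K_2$ nontrivially, and I take $\theta=\alpha\theta_1+\beta\theta_2\neq0$ in that intersection. Its representative $\delta(\alpha\bar\phi_1+\beta\bar\phi_2)$ is the coboundary of a homogeneous quasimorphism, and $J_2(\theta)\in\rEH^2(G)$ by \eqref{eq:commondata}.

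For the cycle, I would use the standard fact that a homogeneous quasimorphism $\phi$ with $[\delta\phi]\neq0$ is non-additive on some commuting-free pair. Pick $g,h$ with $\phi(gh)-\phi(g)-\phi(h)\neq0$. The integral chain $c=(g,h)$ has boundary $h-gh+g$, which is a $1$-chain. To close it up I would use $\ell^1$ chains of the form $\sum_n \frac1{n^2}$ (powers) or the standard ``$\frac1n(g^n)$'' filling: the $1$-chain $g$ is $\ell^1$-homologous to $\frac1n g^n$ modulo boundaries of size $O(1)$, so in the limit the boundary can be filled by an $\ell^1$ chain on which $\delta\phi$ evaluates to $\phi(g)+\phi(h)-\phi(gh)$, using homogeneity. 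Alternatively, and more cleanly, I would invoke duality: since $\rHb^2(\F)$ is the topological dual relevant to $\ell^1$-homology, a nonzero class of $\rHb^2$ pairs nontrivially with some $\ell^1$-cycle (Matsumoto--Morita/Löh duality). After rescaling, this gives $\pair{\theta}{z}=1$. I expect to rely on this duality principle rather than an explicit chain.

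\emph{Step 3: vanishing of cup products.} The vanishing $\theta\smile v=0$ for $v$ of positive degree is exactly Amontova--Bucher \cite[Theorem~A(b)]{AB}: the cup product of a Brooks class with any bounded class vanishes, already in $\Hb$ and hence in reduced cohomology (Lemma~\ref{lem:zero}). Their theorem applies to each $\theta_j$, and bilinearity extends it to the combination $\theta$. The main obstacle is checking that the hypotheses of \cite{AB} cover the chosen words $w_j$ (their result is for suitable Brooks quasimorphisms, e.g.\ for non-self-overlapping words). I would therefore pick the $w_j$ inside the class they treat and verify that independence still holds for such words. The cycle $z$ in Step 2 is then chosen afterwards for this fixed $\theta$.
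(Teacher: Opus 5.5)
Your proposal is correct and follows essentially the paper's argument: take a nonzero finite combination of Brooks classes in $K_2$, get $z$ from the Matsumoto--Morita degree-two duality and rescale, and get $\theta\smile v=0$ from Amontova--Bucher together with bilinearity and Lemma~\ref{lem:zero}. You intersect $K_2$ with a two-dimensional span and the paper with the whole span, but the codimension-one count is the same. The explicit chain-filling sketch at the start of your Step~2 is not justified as written, but it is also unnecessary, since the duality route you settle on is the one the paper uses. Restricting to non-self-overlapping words such as $a^jb^j$ is a harmless extra precaution.
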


\begin{proof}
The algebraic span of Brooks classes in $H_b^2(\F)$ is infinite-dimensional 
by Brooks--Mitsumatsu \cite{Brooks,Mitsumatsu}; see also \cite[Section~2.6]{Frigerio}. 
By Matsumoto--Morita, the seminorm on $H_b^2(\F)$ is a norm and the Kronecker map is an isomorphism
\begin{equation}\label{eq:degree2duality}
H_b^2(\F)=\rHb^2(\F)\xrightarrow{\ \cong\ }\bigl(\overline H_2^{\ell^1}(\F)\bigr)',
\qquad u\longmapsto\langle u,\,\cdot\,\rangle.
\end{equation}
Here $\overline H_2^{\ell^1}(\F)$ denotes reduced $\ell^1$-homology. 
We use \cite[Theorem~2.3 and Corollary~2.7]{MM}; 
the needed statements are also given in \cite[Theorem~6.5 and Corollary~6.7]{Frigerio}. 
The same degree-two duality is used in \cite[proof of Theorem~4.2]{Nitsche}. 

By \eqref{eq:commondata}, the intersection of $K_2$ with the span of 
Brooks classes has codimension at most one. 
Choose a nonzero class $\theta$ in this intersection. 
It is a finite linear combination of Brooks classes 
and has a representative $\delta q$, where $q$ is a 
homogeneous quasimorphism: replace each Brooks quasimorphism by 
its homogenization.

By \eqref{eq:degree2duality}, some $\ell^1$-cycle evaluates nontrivially on $\theta$. 
Multiplying that cycle by a scalar gives $z\in Z_2^{\ell^1}(\F)$ with $\langle\theta,z\rangle=1$. 
The condition $\theta\in K_2$ gives exactness of $J_2(\theta)$. 
Finally, \cite[Theorem~A\textup{(b)}]{AB} states that the cup product of 
a Brooks class with every bounded-cohomology class of positive degree is zero. 
Apply this to the finite linear combination defining $\theta$ 
and pass to reduced bounded cohomology using Lemma~\ref{lem:zero}.
\end{proof}

The following lemma is the bounded version of the classical
slant product with a fixed homology class; see
\cite[Chapter~6, Sections~1 and~10]{Spanier}.
For its bounded chain maps and homotopies, see
\cite[Lemma~2.6 and the proof of Proposition~2.5]{Loh}.
We retain the notation $L_Z^*$ for the dual of cross product
with a fixed cycle $Z$.

Let $C_*^{\ell^1}(K)$ denote the standard unnormalized
$\ell^1$-chain complex of a group $K$ with trivial real
coefficients, and let $C_b^*(K)$ denote its continuous dual.
Write $Z_p^{\ell^1}(K)=\ker\partial_p$.
We use the standard shuffle cross product
$$
\times:C_r^{\ell^1}(K)\otimes C_t^{\ell^1}(L)\longrightarrow C_{r+t}^{\ell^1}(K\times L).
$$
For chains $x$ and $y$ of degrees $r$ and $t$, respectively,
it satisfies
\begin{equation}\label{eq:shufflebound}
 \|x\times y\|_1
 \le\binom{r+t}{r}\|x\|_1\|y\|_1
\end{equation}
and
\begin{equation}\label{eq:shufflechain}
 \partial(x\times y)
 =\partial x\times y+(-1)^r x\times\partial y.
\end{equation}
See \cite[Section~2.3]{Loh}. The binomial coefficient in
\eqref{eq:shufflebound} is the number of $(r,t)$-shuffles.

\begin{lemma}\label{lem:crossdual}
Let $K,L$ be discrete groups, let $p\ge0$ be even, and let
$Z\in Z_p^{\ell^1}(K)$. Cross product with $Z$ defines
$$
L_Z:C_q^{\ell^1}(L)\longrightarrow C_{p+q}^{\ell^1}(K\times L),\qquad L_Z(b)=Z\times b.
$$
Its dual cochain map
$$
L_Z^*:C_b^{p+q}(K\times L)\longrightarrow C_b^q(L),\qquad (L_Z^*c)(b)=c(Z\times b),
$$
induces a bounded linear map $L_Z^*:\rHb^{p+q}(K\times L)\longrightarrow\rHb^q(L)$
for every $q\ge0$, with
\begin{equation}\label{eq:dualbound}
\|L_Z^*\|\le\binom{p+q}{p}\|Z\|_1.
\end{equation}
For $u\in\rHb^s(K)$ and $v\in\rHb^t(L)$ with
$s+t=p+q$, we have
\begin{equation}\label{eq:dualproduct}
 L_Z^*(u\times v)=
 \begin{cases}
  \langle u,Z\rangle\,v,&s=p,\\
  0,&s\ne p.
 \end{cases}
\end{equation}
\end{lemma}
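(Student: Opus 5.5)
The proof has three parts: show that $L_Z$ is a bounded chain map, dualize to get a map on reduced bounded cohomology, and then evaluate it on cocycle-level products.

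\emph{Chain map and norm bound.} By \eqref{eq:shufflechain},
$$\partial(Z\times b)=\partial Z\times b+(-1)^pZ\times\partial b=Z\times\partial b,$$
because $\partial Z=0$ and $p$ is even. So $L_Z$ commutes with the boundary; this is where the parity assumption on $p$ enters, as it removes the sign. By \eqref{eq:shufflebound}, $\|L_Z(b)\|_1\le\binom{p+q}{p}\|Z\|_1\|b\|_1$. Since $Z$ is an $\ell^1$-chain, $L_Z$ extends continuously from finite chains to all of $C_q^{\ell^1}(L)$.

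\emph{Passage to reduced cohomology.} Dualizing, $L_Z^*$ is a cochain map $C_b^{p+q}(K\times L)\to C_b^q(L)$ with $\|L_Z^*c\|_\infty\le\binom{p+q}{p}\|Z\|_1\|c\|_\infty$. It therefore induces a map on $\Hb$ whose seminorm norm satisfies \eqref{eq:dualbound}. Any bounded linear map sends zero-seminorm classes to zero-seminorm classes, so the map descends to $\rHb$ with the same bound. Here I use that both $C_b^*$ and the norms are computed from the $\ell^1$-complex, which is the standard identification in \cite{Loh}.

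\emph{Product formula.} Take cocycle representatives $f$ of $u$ and $h$ of $v$. Then I must compute $(p_K^*f\smile p_L^*h)(Z\times b)$ for a cycle $b$. Expand $Z\times b$ as a signed sum over $(p,q)$-shuffles of bar simplices in $K\times L$. In a shuffle term, the first $s$ entries project under $p_K$ to $K$-entries interleaved with identity elements, and the last $t$ entries project under $p_L$ similarly.

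The cleanest route is the classical identity: the Alexander--Whitney type cup product evaluated on a shuffle product satisfies
$$\langle f\times h,\,x\times y\rangle=\pm\langle f,x\rangle\langle h,y\rangle \quad\text{when }\deg f=\deg x,$$
and the pairing vanishes in cohomology otherwise. If $f$ is normalized (vanishes on simplices containing the identity), then only the trivial shuffle survives, giving exactly $f(Z)\,h(b)$ when $s=p$. When $s\ne p$, every shuffle term has some degenerate coordinate, so the pairing is $0$.

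Bounded cohomology can be computed with normalized (or alternating) bounded cochains, and the reduction is isometric up to a constant; I would invoke this from \cite{Loh} or \cite{Frigerio}. The sign is $+$ because $p$ is even, consistent with the paper's convention omitting $(-1)^{pq}$. This gives the cochain identity $L_Z^*(f\times h)=f(Z)\,h$ for $s=p$, and $0$ for $s\ne p$, on normalized representatives. For $s=p$, $f(Z)=\langle u,Z\rangle$ is independent of the representative, since $Z$ is an $\ell^1$-cycle and $f$ changes by a bounded coboundary.

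\emph{Main obstacle.} The hardest step is the shuffle computation: showing that normalized representatives kill all nontrivial shuffles in the inhomogeneous bar description, and handling infinite $\ell^1$ sums by continuity. If normalization is awkward in the inhomogeneous picture, I would instead pass to homogeneous cochains, where the product formula is the standard Eilenberg--Zilber/Alexander--Whitney duality, and check that the identification respects the norms up to bounded chain homotopy, as in \cite[Lemma~2.6]{Loh}.
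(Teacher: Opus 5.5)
Your proposal is correct. The first two steps are exactly the paper's: the chain-map property from \eqref{eq:shufflechain} using $\partial Z=0$ and $p$ even, the norm bound from \eqref{eq:shufflebound}, and descent to reduced cohomology.

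For \eqref{eq:dualproduct} you take a different route. The paper quotes the classical slant-product identity. It then argues that, because the Eilenberg--Zilber homotopies are bounded in each degree (L\"oh), the discrepancy between $L_Z^*(f\times h)$ and $\langle u,Z\rangle h$ is a bounded coboundary. You instead choose normalized representatives and compute exactly, and this works:
\begin{itemize}
\item Each shuffle term of $\sigma\times b$ has entries of the form $(g,1)$ or $(1,h)$.
\item For $s=p$, every nontrivial shuffle places an identity among the first $p$ entries fed to $f$, so it is killed.
\item For $s>p$ some identity lands among the arguments of $f$, and for $s<p$ among those of $h$. So you do need both $f$ and $h$ normalized, as you say.
\item By $\ell^1$-continuity this gives the cochain identity $L_Z^*(f\times h)=f(Z)\,h$ for $s=p$, and $0$ otherwise.
\end{itemize}

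This is the classical fact $AW\circ\nabla=\id$ on normalized chains. It buys you an on-the-nose cochain-level formula with no homotopy bookkeeping. The price is the standard fact that every bounded class has a normalized bounded representative. For example, alternating homogeneous cochains compute $\Hb^*$, and under the paper's homogeneous/inhomogeneous dictionary they vanish whenever an inhomogeneous entry equals $1$. That fact itself rests on a degreewise bounded homotopy, so both routes use the same kind of input. Yours makes the vanishing for $s\neq p$ transparent; the paper's is shorter but less explicit about where the error term goes.

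Two small corrections:
\begin{itemize}
\item The $+$ sign comes from the trivial shuffle having sign $+1$ under the paper's unsigned external-product convention. It does not come from the parity of $p$, which is needed only for $L_Z$ to be a chain map.
\item The cochain identity should be checked on all simplices $b$, not just cycles. Your computation never uses $\partial b=0$, so this costs nothing.
\end{itemize}
The tentative $\pm$ identity and the fallback to homogeneous cochains are unnecessary.
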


\begin{proof}
Since $\partial Z=0$ and $p$ is even,
\eqref{eq:shufflechain} shows that $L_Z$ commutes with
the boundary. Its dual therefore commutes with the
coboundary, and \eqref{eq:shufflebound} gives
\eqref{eq:dualbound}. Being bounded, it preserves closures
of coboundaries and hence induces the asserted map on
reduced bounded cohomology.

The classical slant-product calculation gives
\eqref{eq:dualproduct}; see
\cite[Chapter~6, Sections~1 and~10]{Spanier}.
The same calculation applies to $\ell^1$-chains and bounded
cochains because the Eilenberg--Zilber homotopies are bounded
in every degree; see
\cite[Lemma~2.6 and the proof of Proposition~2.5]{Loh}.
In particular, evaluating the homotopy on the fixed cycle $Z$
makes the cochain error a bounded coboundary. Thus the identity
holds in bounded cohomology and passes to reduced bounded
cohomology. 
\end{proof}

\begin{proposition}\label{prop:symmetric-products}
Let $\theta\in\rHb^2(\F)$ and $z\in Z_2^{\ell^1}(\F)$ satisfy
$\langle\theta,z\rangle=1$. For $m\ge0$, $q\in\{2,3\}$, put $k=m+1$ and
$d=2m+q$, and define
\begin{equation}\label{eq:T}
 \begin{aligned}
 &T_m^q:\rHb^q(\F)\longrightarrow\rHb^d(\F^k),\\
 &T_m^q(v)=\Sym_k(\theta^{\times m}\times v).
 \end{aligned}
\end{equation}
Each $T_m^q$ has a bounded left inverse. In particular it is bounded below, 
and its image is an infinite-dimensional space of symmetric classes, 
and each nonzero class in the image has positive norm. 
\end{proposition}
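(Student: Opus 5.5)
The left inverse will be built by iterated slant products with the fixed cycle $z$, as in the two-factor computation of the introduction. For $k=m+1$, define
$$
L:=L_z^*\circ\cdots\circ L_z^*\;(m\text{ times}):\rHb^d(\F^k)\longrightarrow\rHb^q(\F),
$$
where at each step we write $\F^{j}=\F\times\F^{j-1}$ and apply Lemma~\ref{lem:crossdual} with $K=\F$ (the first coordinate), $L=\F^{j-1}$, $p=2$, $Z=z$. Each step is bounded by \eqref{eq:dualbound}, so $L$ is bounded with norm at most $\prod\binom{\cdot}{2}\|z\|_1^m$. Equivalently, $L$ is dual to $b\mapsto z^{\times m}\times b$. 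The case $m=0$ is trivial, since then $T_0^q=\id$.

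Next we compute $L\circ T_m^q$. Each summand $\sigma^*(\theta^{\times m}\times v)$ is, up to sign, an external product $u_1\times\cdots\times u_k$ in which exactly one factor equals $v$ and the others equal $\theta$. Because every $\theta$ has degree two, all permutation signs are $+1$, as noted in the introduction. Applying \eqref{eq:dualproduct} repeatedly peels off the first coordinate.

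\emph{Case $q=3$.} A first factor $v$ has degree $3\neq2$, so it gives $0$. A first factor $\theta$ gives the multiplier $\langle\theta,z\rangle=1$. Hence only the summands with $v$ in the last slot survive, and $L(T_m^3v)=m!\,v$.

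\emph{Case $q=2$.} A summand with $v$ in slot $j\le m$ contributes $\langle v,z\rangle\,\theta$, and a summand with $v$ in the last slot contributes $v$. Hence
$$
L(T_m^2v)=m!\,v+m\cdot m!\,\langle v,z\rangle\,\theta=m!\bigl(v+m\langle v,z\rangle\theta\bigr).
$$
The map $A(v)=v+m\langle v,z\rangle\theta$ is a rank-one perturbation of the identity. We solve $A(v)=w$ by pairing with $z$: this gives $\langle w,z\rangle=(1+m)\langle v,z\rangle$, so
$$
A^{-1}(w)=w-\frac{m}{1+m}\langle w,z\rangle\,\theta.
$$
Here $w\mapsto\langle w,z\rangle$ is bounded on $\rHb^2$ by $\|z\|_1$; it is well-defined on the reduced space because zero-seminorm classes pair trivially with $\ell^1$-cycles. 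Therefore $A^{-1}$ is bounded, and $\frac{1}{m!}A^{-1}\circ L$ is a bounded left inverse of $T_m^2$. For $q=3$, $\frac{1}{m!}L$ is a bounded left inverse.

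The consequences follow directly. A bounded left inverse $P$ gives $\|v\|\le\|P\|\,\|T_m^qv\|$, so $T_m^q$ is bounded below. It is therefore injective, and its image has dimension $\dim\rHb^q(\F)=\infty$ by \cite{Brooks,Soma}. Nonzero image classes have positive norm, since we are working in reduced cohomology where the seminorm is a norm, or alternatively by bounded-belowness. The image consists of symmetric classes by construction of $\Sym_k$.

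The main obstacle is the bookkeeping in the iterated application of \eqref{eq:dualproduct}. Lemma~\ref{lem:crossdual} is stated for a two-factor product $K\times L$ with $u\in\rHb(K)$ and $v\in\rHb(L)$, so we must identify $\F^k=\F\times\F^{k-1}$ and regroup $u_1\times(u_2\times\cdots\times u_k)$ by associativity of external products. We must also check that the factor ordering after a permutation does not introduce signs. With at most one odd-degree factor this holds, but it should be stated explicitly.
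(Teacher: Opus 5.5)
Your proposal is correct and is essentially the paper's proof. The paper applies a single slant $L^*_{Z_m}$ with $Z_m=z^{\times m}$, which you note is equivalent to your iterated slant, and it obtains the same identities $L^*_{Z_m}T_m^3(v)=m!\,v$ and $L^*_{Z_m}T_m^2(v)=m!\bigl(v+m\langle v,z\rangle\theta\bigr)$. It then inverts the rank-one perturbation by exactly your formula (its $Q_{m,2}$), and even the resulting operator-norm bound $\frac{(2m+q)!}{q!\,2^m}\|z\|_1^m$ coincides with yours.
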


\begin{proof}
Cross-product independence via Kronecker pairings is used in
\cite[Proposition~3.3]{Loh}, and a graded-symmetric version occurs
in \cite[Lemma~4.1(ii)]{Nitsche}. We apply
Lemma~\ref{lem:crossdual} to construct a bounded left inverse
of $T_m^q$ for each $q\in\{2,3\}$. The construction uses only
the fixed degree-two cycle $z$ with $\langle\theta,z\rangle=1$.
In particular, for $q=3$ it does not require a degree-three
homology class detecting the class $v$.

Let $\sigma$ act on $\F^k$ by permuting its coordinates. A sum over all such pullbacks is invariant under every coordinate permutation, so every class in the image of $T_m^q$ is symmetric. It remains to prove injectivity with a lower norm estimate.

Put $Z_m=z^{\times m}$. Since an external product of cycles
is a cycle, we have $Z_m\in Z_{2m}^{\ell^1}(\F^m)$. 
Repeated application of \eqref{eq:shufflebound} shows that it has finite norm; more precisely
the cross-product norm estimate gives
$$\|Z_m\|_1=\|Z_{m-1}\times z\|_1\le\binom{2m}{2}\|Z_{m-1}\|_1\|z\|_1,$$
because $Z_{m-1}$ has degree $2m-2$ and $z$ has degree two.
Starting with $Z_1=z$, we obtain
$$
\|Z_m\|_1\le\left(\prod_{j=1}^{m}\binom{2j}{2}\right)\|z\|_1^m=
\left(\prod_{j=1}^{m}\frac{(2j)!}{(2j-2)!\,2!}\right)\|z\|_1^m=\frac{(2m)!}{(2!)^m}\|z\|_1^m.
$$
In particular, $Z_m$ has finite $\ell^1$-norm.

We next compute the pairing of $\theta^{\times m}$ with $Z_m$.
By the definition of $L_Z^*$, for every class $a$ and every
cycle $w$ of the appropriate degrees,
$$\langle a,Z\times w\rangle=\langle L_Z^*a,w\rangle.
$$
For $m\ge2$, apply Lemma~\ref{lem:crossdual} with
$K=\F^{m-1}$, $L=\F$, and $Z=Z_{m-1}$. Since both
$\theta^{\times(m-1)}$ and $Z_{m-1}$ have degree $2m-2$,
\eqref{eq:dualproduct} gives
$$
L_{Z_{m-1}}^*(\theta^{\times m})=\langle\theta^{\times(m-1)},Z_{m-1}\rangle\,\theta.
$$
As $Z_m=Z_{m-1}\times z$, it follows that
$$
\langle\theta^{\times m},Z_m\rangle=\langle L_{Z_{m-1}}^*(\theta^{\times m}),z\rangle
=\langle\theta^{\times(m-1)},Z_{m-1}\rangle\langle\theta,z\rangle.
$$
Starting with $Z_1=z$ and $\langle\theta,z\rangle=1$,
induction therefore gives
\begin{equation}\label{eq:productpairtheta}
 \langle\theta^{\times m},Z_m\rangle
 =\langle\theta,z\rangle^m=1.
\end{equation}
For $m=0$, the same identity holds with the degree-zero
unit $\theta^{\times0}=1$ and $Z_0=[\,]$.

Apply Lemma~\ref{lem:crossdual} with $K=\F^m$, $L=\F$, $p=2m$ and $Z=Z_m$. It gives the map
$$
 L_{Z_m}^{*}:\rHb^{2m+q}(\F^m\times\F)\longrightarrow \rHb^q(\F).
$$
There are $m!$ permutations in \eqref{eq:T} for each prescribed 
position of the variable class $v$: the other $m$ factors all equal $\theta$. Moving an even-degree factor 
past another factor produces no sign. Consequently these $m!$ 
copies of each term agree in reduced bounded cohomology. 
Graded commutativity is valid for bounded cohomology 
because the usual shuffle/flip homotopy is bounded in 
each degree; see \cite[Section~2.3]{Loh} and the 
bounded chain maps in Lemma~\ref{lem:crossdual}.

Suppose $q=3$. If $v$ is in the last coordinate, 
the first $m$ coordinates carry $\theta^{\times m}$, 
whose evaluation on $Z_m$ is one by \eqref{eq:productpairtheta}. 
Each of these $m!$ terms therefore gives $v$. 
If $v$ is in one of the first $m$ coordinates, 
the degree on those coordinates is 
$$2(m-1)+3=2m+1.$$
Since $Z_m$ has degree $2m$, the second line of \eqref{eq:dualproduct} gives zero. We obtain
\begin{equation}\label{eq:left3}
 L_{Z_m}^{*}T_m^3(v)=m!v.
\end{equation}
In particular, $m!^{-1}L_{Z_m}^{*}$ is a left inverse of $T_m^3$.

Suppose $q=2$. Evaluation on the fixed cycle $z$ defines
a bounded linear functional on $\rHb^2(\F)$, since $|\langle v,z\rangle|\le\|v\|\|z\|_1$.
As before, the $m!$ terms with $v$ last contribute $m!v$. If $v$ is in the $j$th of the first $m$ coordinates, the product pairing on these coordinates is $\langle v,z\rangle$: all the other evaluations are $\langle\theta,z\rangle=1$. The last coordinate contains $\theta$. There are $m!$ terms for each of these $m$ choices, so
\begin{equation}\label{eq:left2}
 L_{Z_m}^{*}T_m^2(v)=m!\bigl(v+m\langle v,z\rangle\theta\bigr).
\end{equation}
Define the following bounded linear maps on $\rHb^{2m+q}(\F^{m+1})$, for $q=3$ and $q=2$, respectively:
\begin{equation}\label{eq:explicitQ}
Q_{m,3}(\eta)=\frac1{m!}L_{Z_m}^{*}(\eta),\quad
Q_{m,2}(\eta)=\frac1{m!}\left(L_{Z_m}^{*}(\eta)
-\frac{m}{m+1}\bigl\langle L_{Z_m}^{*}(\eta),z\bigr\rangle\theta\right).
\end{equation}
Equation~\eqref{eq:left3} gives $Q_{m,3}T_m^3=\id$. To check the second identity, pair \eqref{eq:left2} with $z$. Since $\langle\theta,z\rangle=1$, this gives
$$
\bigl\langle L_{Z_m}^{*}T_m^2(v),z\bigr\rangle=m!(m+1)\langle v,z\rangle.
$$
Substituting this and \eqref{eq:left2} into \eqref{eq:explicitQ} yields
$$
Q_{m,2}T_m^2(v)=v+m\langle v,z\rangle\theta-m\langle v,z\rangle\theta=v.
$$
Thus $Q_{m,q}T_m^q=\id$. Both maps are bounded by Lemma~\ref{lem:crossdual} 
and the bound on the pairing with $z$. For $m=0$ they are the identity maps. For example,
$$
\|Q_{m,2}\|\le\frac1{m!}\left(1+\frac m{m+1}\|z\|_1\|\theta\|\right)\binom{2m+2}{2m}\|Z_m\|_1,
$$
and Lemma~\ref{lem:crossdual} gives the corresponding bound for $Q_{m,3}$.

It follows that $\|T_m^q(v)\|\ge\frac1{\|Q_{m,q}\|}\|v\|$.
Thus $T_m^q$ is injective and satisfies the asserted lower norm estimate.

The space $\rHb^2(\F)$ is infinite-dimensional by \cite{Brooks,Mitsumatsu}. For $\rHb^3(\F)$ we use \cite[Corollary~3]{FPS} (see \cite{Soma} as well): the third reduced bounded cohomology of an acylindrically hyperbolic group has continuum dimension. It applies to $\F$, whose action on its Cayley tree is acylindrical and non-elementary. Another construction of positive-norm degree-three classes is given in \cite[Sections~6--7]{Farre}. Thus both spaces $\rHb^2(\F)$ and $\rHb^3(\F)$, and hence all their images under $T_m^q$, are infinite-dimensional. This proves the proposition.
\end{proof}


\section{Proof of Theorem \ref{thm:main}}
\label{sec:3}

\begin{lemma}\label{lem:transfer}
For each $k\ge1$ and each transformation group $\cG$ 
in Theorem~\ref{thm:main} or~\ref{thm:surface}, the diagonal action on $M^k$ defines compatible maps on reduced bounded cohomology and ordinary cohomology:
$$
 \overline\Gamma_{b,k}^{\,d}:\rHb^d(G^k)\longrightarrow\rHb^d(\cG),
 \qquad
 \Gamma_k^d:H^d(G^k;\R)\longrightarrow H^d(\cG;\R),
$$
with $\norm{\overline\Gamma_{b,k}^{\,d}}\le1$. The corresponding map on bounded cohomology is denoted by $\Gamma_{b,k}^d$. The maps commute with comparison. In particular, the reduced bounded-cohomology transfer preserves reduced exact bounded cohomology:
$$
 \overline\Gamma_{b,k}^{\,d}\bigl(\rEH^d(G^k)\bigr)\subset\rEH^d(\cG).
$$
\end{lemma}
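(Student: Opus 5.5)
We follow the Gambaudo--Ghys/Polterovich construction, as extended to configuration spaces in \cite{BM}. Fix a basepoint $x_0\in M$ and, for almost every $x\in M$, a path $\gamma_x$ from $x_0$ to $x$, chosen measurably with uniformly bounded length off a null set. For $f\in\cG$ pick the isotopy $\{f_t\}$ from $\id$ to $f$. Let $\gamma(f,x)\in\pi$ be the class of the loop $\gamma_x\ast\{f_t(x)\}\ast\overline{\gamma_{f(x)}}$, and let $\bar\gamma(f,x)$ be its image in $G=\pi/Z(\pi)$. Two isotopies differ by a loop in $\cG$, and its evaluation loop at any point is central in $\pi$. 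Hence $\bar\gamma(f,x)$ is independent of the isotopy. It satisfies the cocycle rule $\bar\gamma(fh,x)=\bar\gamma(h,x)\,\bar\gamma(f,h(x))$, up to the order convention. For $\mathbf x=(x_1,\dots,x_k)\in M^k$, set $\bar\gamma_k(f,\mathbf x)=(\bar\gamma(f,x_1),\dots,\bar\gamma(f,x_k))\in G^k$. This is a measurable cocycle for the diagonal measure-preserving action on $(M^k,\mu^{\otimes k})$.

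Given an inhomogeneous cochain $c$ on $G^k$, we define
$$(\Gamma c)(f_1,\dots,f_d)=\int_{M^k}c\bigl(\bar\gamma_k(f_1,\mathbf x),\bar\gamma_k(f_2,f_1\mathbf x),\dots\bigr)\,d\mu^{\otimes k}(\mathbf x),$$
with the arguments twisted along the orbit so that the cocycle rule applies. Three checks are needed:
\begin{enumerate}
\item For fixed $f_1,\dots,f_d$, the integrand takes only finitely many values. This is because $\{f_t\}$ moves points a bounded distance and the paths have bounded length, so only finitely many homotopy classes occur. The integral is therefore defined for arbitrary ordinary cochains, not only bounded ones.
\item The map $\Gamma$ is a cochain map, $\delta\Gamma=\Gamma\delta$. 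This follows from the cocycle rule together with invariance of $\mu^{\otimes k}$ under the diagonal action, which lets us substitute $\mathbf x\mapsto f_1\mathbf x$ in the first face.
\item $\|\Gamma c\|_\infty\le\|c\|_\infty$, since $\mu^{\otimes k}$ is a probability measure.
\end{enumerate}
Consequently $\Gamma$ induces maps $\Gamma_{b,k}^d$ on $\Hb^d$ of norm at most one, and $\Gamma_k^d$ on $H^d(\cdot;\R)$. Both are defined by the same formula, so they commute with the inclusion of bounded cochains into ordinary cochains, and hence with comparison. Changing the paths $\gamma_x$ alters the cocycle by a measurable bounded coboundary, so the induced maps on cohomology do not depend on these choices.

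Passing to reduced cohomology is now formal. A class of zero seminorm is represented by cocycles of arbitrarily small norm, and norm contraction keeps the image small. Thus $\Gamma_{b,k}^d$ preserves zero-seminorm subspaces and descends to $\overline\Gamma_{b,k}^{\,d}$ with $\|\overline\Gamma_{b,k}^{\,d}\|\le1$. Lemma~\ref{lem:zero} says comparison factors through reduced cohomology as $\overline c$. Compatibility then gives $\overline c_{\cG}^d\circ\overline\Gamma_{b,k}^{\,d}=\Gamma_k^d\circ\overline c_{G^k}^d$. If $\overline c_{G^k}^d(u)=0$, its image vanishes as well, and this is the inclusion of $\rEH^d(G^k)$ into $\rEH^d(\cG)$.

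The main obstacle is the finiteness claim in check 1, which is needed for the ordinary-cochain version when $\cG$ consists of homeomorphisms or $\Ham$ on non-aspherical $M$. The plan is to argue by compactness: the isotopy trajectories form an equicontinuous family of paths in the compact manifold $M$. With finitely many path classes fixed on a finite cover by small balls, this yields finitely many classes in $\pi$. One also has to handle the boundary case, where the maps are the identity near $\partial M$, and keep track of the measurability of the choices.
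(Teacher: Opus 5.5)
Your proposal is correct and follows essentially the same route as the paper: a coordinatewise $G$-valued cocycle on $M^k$, integration against $\mu^k$ as a norm-nonincreasing cochain map, and the finite-image property, which lets the same formula act on unbounded cochains and so yields commutation with comparison and preservation of $\rEH^d$. The paper takes finite image from \cite[Lemma~2.2]{BM}, whereas you sketch it via bounded-length reference paths and a compactness argument; the only thing to tidy is the order convention in your inhomogeneous formula, which the paper avoids by using right-homogeneous cochains and the inverted cocycle $\gamma(fg,x)=\gamma(f,gx)\gamma(g,x)$.
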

\begin{proof}
This is the cocycle induction of \cite[Sections~1.B and~2]{BM}, applied to the diagonal action on $M^k$. Its covering-space formulation and compatibility with ordinary cohomology are given in \cite[Section~3]{Nitsche}; see also \cite{Polterovich,GG}. We specify our conventions and check the finite-image hypothesis for the chosen section. Group multiplication is composition, $fg=f\circ g$.

Following \cite[Section~1.A]{BM}, we call a function $c:K^{d+1}\to\R$
\emph{homogeneous} if
$$c(g_0h,\ldots,g_dh)=c(g_0,\ldots,g_d)\qquad(g_0,\ldots,g_d,h\in K).
$$
Its coboundary is the alternating sum obtained by deleting one argument.
This complex is isometrically identified with the inhomogeneous complex
used in Lemma~\ref{lem:crossdual}: the homogeneous cochain corresponding
to an inhomogeneous cochain $f$ is
$$c(g_0,\ldots,g_d)=f(g_0g_1^{-1},g_1g_2^{-1},\ldots,g_{d-1}g_d^{-1}).$$
Fix an auxiliary complete Riemannian metric on $\Int M$ and
a basepoint $x_0\in\Int M$. 
Outside the cut locus of $x_0$, let $\alpha_{x_0,x}$ be the
unique minimizing geodesic from $x_0$ to $x$. On the cut locus,
which has $\mu$-measure zero, choose the reference paths
measurably. Put $\alpha_{x,x_0}=\alpha_{x_0,x}^{-1}$.
This gives the measurable family of reference paths used in
\cite[Section~2.B]{BM}.

For $f\in\cG$, choose an isotopy $\{f_t\}_{t\in[0,1]}$ from
the identity to $f$ and define
\begin{equation}\label{eq:cocycle}
\gamma(f,x)=\bigl[\alpha_{x_0,x}*\{f_t(x)\}_{t\in[0,1]}*\alpha_{f(x),x_0}\bigr]^{-1}\in G.
\end{equation}
Here $*$ denotes path concatenation from left to right, and
the brackets denote the homotopy class projected to
$G=\pi_1(M,x_0)/Z(\pi_1(M,x_0))$. This class is independent of
the chosen isotopy, since evaluation of a loop of isotopies
has image in the center of $\pi_1(M,x_0)$; see
\cite[Proposition~2.1]{BM}. The resulting measurable cocycle
satisfies
\begin{equation}\label{eq:cocycleidentity}
 \gamma(fg,x)=\gamma(f,gx)\gamma(g,x)
\end{equation}
for every $f,g\in\cG$ and almost every $x\in M$.

By \cite[Lemma~2.2]{BM}, for each $f\in\cG$ there is
a finite set $S_f\subset G$ such that
\begin{equation}\label{eq:finiteimage}
\gamma(f,x)\in S_f\qquad\text{for almost every }x\in M.
\end{equation}

For $\mathbf x=(x_1,\ldots,x_k)$ set
$$\gamma_k(f,\mathbf x)=\bigl(\gamma(f,x_1),\ldots,\gamma(f,x_k)\bigr)\in G^k.$$
This is a cocycle for the diagonal action on $M^k$, by \eqref{eq:cocycleidentity}. Moreover,
$$\{\gamma_k(f,\mathbf x):\mathbf x\in M^k\}\subset S_f^k,$$
so $\gamma_k(f,\cdot)$ has finite image for every fixed $f$.

For a homogeneous bounded $d$-cochain $c$ on $G^k$, define $\Gamma'_{b,k}c$ by
\begin{equation}\label{eq:transferformula}
 (\Gamma'_{b,k}c)(f_0,\ldots,f_d)
 =\int_{M^k}
 c\bigl(\gamma_k(f_0,\mathbf x),\ldots,
             \gamma_k(f_d,\mathbf x)\bigr)\,d\mu^k(\mathbf x).
\end{equation}
The integrand is measurable and bounded. Since $\mu(M)=1$, we have
$\|\Gamma'_{b,k}c\|_\infty\le\|c\|_\infty$.
By the standard cocycle-induction calculation 
\cite[Proposition~1.1 and Section~2.C]{BM}, this is a homogeneous cochain map and
$$
 \delta\Gamma'_{b,k}=\Gamma'_{b,k}\delta.
$$
Indeed, the cocycle identity and the change of variable $\mathbf x\mapsto h\mathbf x$ give right homogeneity, 
and the differential commutes with integration. 
The norm bound follows from $\mu^k(M^k)=1$. 
Therefore the maps $\Gamma_{b,k}^d$ and 
$\overline\Gamma_{b,k}^{\,d}$ are defined and have norm at most one.

Changing the measurable reference paths does not change the
induced map on bounded cohomology, by the bounded
change-of-section homotopy; see
\cite[Lemma~3.3 and Remark~3.4]{Nitsche}. 
This proves that the induced map is independent of the section.

We next define the map on ordinary cohomology and explain 
why finite image is needed. Fix $(f_0,\ldots,f_d)$ and 
let $c:(G^k)^{d+1}\to\R$ be an ordinary homogeneous cochain, not necessarily bounded. By \eqref{eq:finiteimage}, the map
$$
\mathbf x\longmapsto\bigl(\gamma_k(f_0,\mathbf x),\ldots,\gamma_k(f_d,\mathbf x)\bigr)
$$
has image in the finite set $\mathcal A= S_{f_0}^k\times\cdots\times S_{f_d}^k$.
For $\mathbf a=(a_0,\ldots,a_d)\in\mathcal A$, let
$$
E_{\mathbf a}=\{\mathbf x\in M^k:\gamma_k(f_i,\mathbf x)=a_i\text{ for }i=0,\ldots,d\}.
$$
These are measurable sets and form a finite partition after empty sets are omitted. Thus the integrand in \eqref{eq:transferformula} has finite image in $\R$, and
$$
(\Gamma'_k c)(f_0,\ldots,f_d)=\sum_{\mathbf a\in\mathcal A}\mu^k(E_{\mathbf a})\,c(a_0,\ldots,a_d).
$$
In particular, its absolute value is at most
$\max_{\mathbf a\in\mathcal A}|c(\mathbf a)|<\infty$, since $\mu^k(M^k)=1$. 
This is an argument for each fixed tuple; it does not assert that $\Gamma'_k c$ 
is a bounded cochain. The same homogeneity and differential calculations are 
valid because all the integrals involved are finite. They give the ordinary 
cochain map $\Gamma'_k$ and the induced map $\Gamma_k^d$. Since $\Gamma'_k$ 
agrees with $\Gamma'_{b,k}$ on bounded cochains, we have
$$\overline c_{\cG}^d\overline\Gamma_{b,k}^{\,d}=\Gamma_k^d\overline c_{G^k}^d.$$
In particular, if a bounded cocycle $c$ has an ordinary primitive $b$, then
$$\Gamma'_{b,k}c=\delta(\Gamma'_k b).$$
The cochain $\Gamma_k^{\prime}b$ is well-defined by the finite-image property. 
Therefore the induced map preserves reduced exact bounded cohomology.
\end{proof}

\begin{remark*}[Permutation invariance of product transfer]
\label{rem:transfer-symmetry}
The product transfer in Lemma~\ref{lem:transfer} is unchanged by
permuting the group coordinates. More generally, let $(X,\nu)$ be a
probability space with a measure-preserving action and a measurable
$K$-valued cocycle. Apply that same cocycle to each of $k$ independent
points of $X$, and let $\Gamma_{X,k}^d$ denote the resulting transfer
from $\rHb^d(K^k)$. Then
\begin{equation}\label{eq:transfer-symmetrization}
 \Gamma_{X,k}^d(\sigma^*\xi)=\Gamma_{X,k}^d(\xi),\qquad
 \Gamma_{X,k}^d(\xi)=\frac1{k!}\Gamma_{X,k}^d(\Sym_k\xi).
\end{equation}
This is an equality of cochain maps before passing to cohomology. Indeed,
for a bounded homogeneous cochain $c$ and a fixed transformation tuple
$\mathbf f=(f_0,\ldots,f_d)$, let $C_{\mathbf f}(\mathbf x)$ denote
its transfer integrand. The coordinatewise cocycle satisfies
$\gamma_k(f,\sigma\mathbf x)=\sigma\gamma_k(f,\mathbf x)$.
Consequently,
$$
(\Gamma'_{X,k}(\sigma^*c))(\mathbf f)=\int_{X^k}C_{\mathbf f}(\sigma\mathbf x)\,d\nu^k(\mathbf x)
=\int_{X^k}C_{\mathbf f}(\mathbf x)\,d\nu^k(\mathbf x)=(\Gamma'_{X,k}c)(\mathbf f).
$$
The second equality uses the measure-preserving permutation of the
integration variables. Summing over $S_k$ proves
\eqref{eq:transfer-symmetrization}. The argument uses boundedness and
finite measure, not compactness. It therefore applies to $M^k$,
$X_3(\mathbb D)^k$, $X_4(S^2)^k$, and $X_3(S)^k$ below.

We consistently transfer the normalized symmetric class
$k!^{-1}\Sym_k\xi$. This has exactly the same transfer as the ordered
class $\xi$. For products of two different kinds of integration spaces,
only equal factors are permuted. In particular, for
$X_3(S)^m\times S$ for $g\ge2$ and $X_3(T)^m\times X_2(T)$ for
$g=1$ in Theorem~\ref{thm:surface}, the first $m$ three-point factors
are symmetrized. The last one-point or two-point factor is kept fixed;
the normalization is $1/m!$.
\end{remark*}

\begin{lemma}\label{lem:push}
Assume $n\ge3$. Let $i:\F\hookrightarrow G$ be as above and fix $k\ge1$. For every sufficiently small $\eps>0$ there is a homomorphism
$$\rho_\eps:\F^k\longrightarrow\cG$$
and a measurable $\rho_\eps(\F^k)$-invariant subset $B_\eps\subset M$ such that
$$\delta_\eps=\mu(B_\eps)\longrightarrow0.$$
Every element of $\rho_\eps(\F^k)$ fixes $M\setminus B_\eps$ pointwise. 
Up to sets of measure zero, this complement has a measurable partition 
into subsets $U_{j,\eps}$, $V_{j,a,\eps}$, $V_{j,b,\eps}$, $1\le j\le k$, and $W$. 
For each point in these subsets the homomorphism 
$w\mapsto\gamma(\rho_\eps(w),x)$ is conjugate, respectively, to
$$
i\circ p_j,\qquad i\circ h_a\circ p_j,\qquad i\circ h_b\circ p_j,\qquad 1,
$$
where $p_j:\F^k\to\F$ is the projection and
$$h_a(a)=a,\quad h_a(b)=1,\qquad h_b(a)=1,\ h_b(b)=b.$$
Moreover, $\lambda_{j,\eps}=\mu(U_{j,\eps})\to\lambda_j>0$.

The homomorphisms can be chosen in $\Diff_0(M,\mu)$. 
In the symplectic case they can be chosen in $\Ham(M,\omega)$ itself, 
with Hamiltonian isotopies supported in the interior.
\end{lemma}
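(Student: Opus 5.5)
We will build $\rho_\eps$ from finger-pushing isotopies, in the style of \cite[Section~3]{BM}, running independently in $k$ pairwise disjoint figure-eight neighborhoods. The embedding $i:\F\hookrightarrow G$ sends $a,b$ to classes in $G$. Choose loops $\alpha,\beta$ based at a point $y\in\Int M$ whose homotopy classes project, after conjugating by a reference path to $x_0$, to $i(a),i(b)$. Since $n\ge3$, a general-position perturbation makes $\alpha\cup\beta$ an embedded wedge of two circles $E$. Pick $k$ distinct points $y_1,\dots,y_k$ near $y$, and translate $E$ along short paths to embedded figure-eights $E_1,\dots,E_k$. These are pairwise disjoint, again because $n\ge3$, and each represents $i(a),i(b)$ up to a common conjugation. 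Then thicken: take small closed tubular neighborhoods $N_j\cong$ (figure-eight)$\times D^{n-1}$ that are pairwise disjoint and contained in $\Int M$.

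Inside each $N_j$ we construct two commuting-free volume-preserving diffeomorphisms $A_j,B_j$, supported in $N_j$, with the following behavior.
\begin{itemize}
\item On the tube around the $a$-loop, $A_j$ is a ``finger push'': it rotates a core solid torus $S^1\times D^{n-1}_{r}$ once around the $S^1$ direction, with angular speed $2\pi\phi(|v|)$. Here $\phi=1$ on the inner disc of radius $r-\eps$ and decays to $0$ in a collar of width $\eps$.
\item On the tube around the $b$-loop, $B_j$ is the analogous push.
\item Near the wedge point the two tubes overlap. There we use a Hamiltonian/divergence-free flow along the vector field $\phi\,\partial_\theta$. Because the field is tangent to the level sets of $|v|$ and has zero divergence in product coordinates, it preserves $\mu$ after a Moser-type correction of the volume form to product form on $N_j$.
\end{itemize}
In the symplectic case, $n$ is even and $N_j$ carries Darboux-type product coordinates. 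The rotation is then generated by the Hamiltonian $H=\Phi(|v|^2)$-type functions times the angular coordinate's dual, compactly supported in $\Int N_j$. Each loop of such a flow is a Hamiltonian isotopy, so the resulting maps lie in $\Ham(M,\omega)$.

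Define $\rho_\eps$ on the $j$th factor by $a\mapsto A_j$ and $b\mapsto B_j$. This is automatically a homomorphism from $\F^k$, since $\F$ is free and maps in different factors have disjoint supports and hence commute.

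Partition $M$ as follows.
\begin{itemize}
\item $U_{j,\eps}$ is the inner core of the wedge region, on which both $A_j$ and $B_j$ act as full rigid pushes. Every point there travels once around $\alpha$ (resp.\ $\beta$). Using the reference paths and the cocycle identity \eqref{eq:cocycleidentity}, $w\mapsto\gamma(\rho_\eps(w),x)$ is therefore the conjugate of $i\circ p_j$ by the class of the path from $x_0$ to $x$.
\item $V_{j,a,\eps}$ and $V_{j,b,\eps}$ are the cores of the two tubes away from the wedge. On $V_{j,a,\eps}$ the point is moved only by $A_j$, and $B_j$ fixes it, giving $i\circ h_a\circ p_j$. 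The case $V_{j,b,\eps}$ is symmetric.
\item $W$ is the complement of $\bigcup_j N_j$, where everything is fixed, giving $1$.
\item $B_\eps$ is the union of the collars where $\phi\in(0,1)$. It is invariant, since the flows preserve $|v|$. Its measure is $O(\eps)$, so $\delta_\eps\to0$.
\end{itemize}
Finally, $\lambda_{j,\eps}=\mu(U_{j,\eps})$ increases to the measure of the open wedge core, which is a positive number $\lambda_j$.

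The main obstacle is making the rigid-push picture precise near the wedge point, where the two tubes meet. There we must arrange that the region swept by both flows has invariant sub-pieces on which each point really traverses the whole loop under the relevant generator. The first thing to try is to replace the wedge by a ``theta''-shaped model: a ball $D$ around $y_j$ with two disjoint handles attached. We would push along each handle through $D$ by a flow that translates $D$ rigidly along a tube that re-enters $D$. The $U$-region is then a smaller ball carried rigidly back to itself by each generator, while the handle cores give the $V$-regions. We would verify volume preservation in these explicit coordinates, and obtain independence of the chosen isotopy from \cite[Proposition~2.1]{BM}.
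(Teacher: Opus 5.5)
Your construction is essentially the paper's: finger pushes in $k$ disjoint thickened figure-eights, with cores, transition collars, and disjoint supports giving the homomorphism on $\F^k$. The ``main obstacle'' you leave open at the wedge point is not really there. A neighborhood of a figure-eight has no single product structure $S^1\times D^{n-1}$, so there is no common $\partial_\vartheta$ near the wedge. Instead, as in the paper (and as your theta model amounts to), take two solid tori $N_{j,a},N_{j,b}\cong S^1\times D^{n-1}$ around the two circles, with connected overlap of positive measure (the paper takes it contractible). Put product-volume coordinates on each tube separately, by your Moser argument.

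The push $P_\eps^t(\vartheta,y)=(\vartheta+t\chi_\eps(y),y)$ of \eqref{eq:volpush} for $a$ is then defined on all of $N_{j,a}$, overlap included, in the coordinates of $N_{j,a}$; likewise for $b$. No compatibility at the wedge is needed. With cores $A_{j,s,\eps}=S^1\times K_\eps$, take $B_\eps=\bigcup_{j,s}(N_{j,s}\setminus A_{j,s,\eps})$, $U_{j,\eps}=A_{j,a,\eps}\cap A_{j,b,\eps}$, $V_{j,a,\eps}=A_{j,a,\eps}\setminus N_{j,b}$, and $V_{j,b,\eps}=A_{j,b,\eps}\setminus N_{j,a}$. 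Then $\mu(U_{j,\eps})\to\mu(N_{j,a}\cap N_{j,b})>0$ is immediate.

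Three steps are wrong as written, all easily repaired.

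First, invariance of $B_\eps$ does not follow from ``the flows preserve $|v|$''. The $a$-flow preserves only the radial coordinate of $N_{j,a}$. Its time-one map can carry a point of the $a$-collar lying in the $b$-collar out of $N_{j,b}$ altogether, so the individual collars are not invariant. The correct argument is the paper's: each time-one map fixes $M\setminus B_\eps$ pointwise (full circuit on its core, stationary off its tube). A bijection fixing a set pointwise preserves its complement.

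Second, mind the inverse in \eqref{eq:cocycle}. Loops representing $i(a),i(b)$, pushed in their positive direction, give $a\mapsto u\,i(a)^{-1}u^{-1}$ and $b\mapsto u\,i(b)^{-1}u^{-1}$. This is a conjugate of $i\circ\iota\circ p_j$ with $\iota(a)=a^{-1}$, $\iota(b)=b^{-1}$. It is not conjugate to $i\circ p_j$ in general (already for $G=\F$, $i=\id$, since $a$ and $a^{-1}$ are not conjugate in $\F$). The loops must represent $i(a)^{-1},i(b)^{-1}$, as in the paper.

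Third, the conjugator is not ``the class of the path from $x_0$ to $x$''. It is the class of the loop $\alpha_{x_0,x}*c_x^{-1}*\kappa^{-1}$, where $\kappa$ runs from $x_0$ to the wedge point and $c_x$ from the wedge point to $x$. The same conjugator serves both generators, and hence every word, only if $c_x$ lies in $N_{j,a}\cap N_{j,b}$. This is exactly where connectedness of the overlap is used. With such a $c_x$, the loop $c_x*(\text{trajectory})*c_x^{-1}$ is based-homotopic to the core loop in each tube, because $\pi_1(S^1\times D^{n-1})\cong\mathbb Z$ is abelian.
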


\begin{proof}
We use the finger-pushing construction of
\cite[Lemma~3.1]{BM}, performed in $k$ disjoint neighborhoods
as in \cite[proof of Theorem~4.2]{Nitsche}.
Since $n\ge3$, general position gives $k$ pairwise disjoint
figure-eight embeddings in $\Int M$, whose based loops
represent $i(a)^{-1}$ and $i(b)^{-1}$ in $G$.
The inverses agree with the cocycle convention
\eqref{eq:cocycle}. Choose tubular neighborhoods
$N_{j,a},N_{j,b}$ with compact closures in $\Int M$,
such that the unions $N_{j,a}\cup N_{j,b}$ are pairwise
disjoint and each intersection $N_{j,a}\cap N_{j,b}$
is contractible and has positive measure.
See Figure~\ref{fig:eight}.

\begin{figure}[tbp]
\centering
\begin{adjustbox}{max width=\linewidth}
\includegraphics{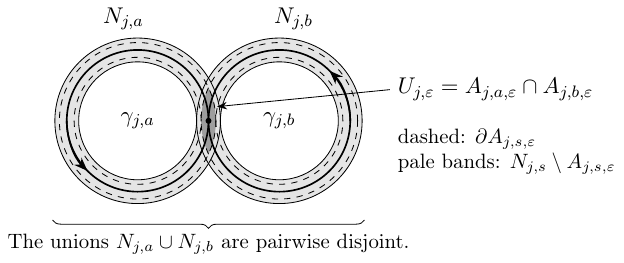}
\end{adjustbox}
\caption{Tubular neighborhoods of a figure-eight embedding.
Their contractible intersection contains the points whose
trajectories record both generators, with the same
conjugating element. The normal directions are omitted.}
\label{fig:eight}
\end{figure}

In product-volume coordinates
$N=S^1\times D^{n-1}$, with $S^1=\R/\Z$,
choose compact transverse cores $K_\eps$ exhausting
$D^{n-1}$ and smooth compactly supported cutoffs
$\chi_\eps$ equal to one near $K_\eps$. Define
\begin{equation}\label{eq:volpush}
 P_\eps^t(\vartheta,y)=(\vartheta+t\chi_\eps(y),y),\qquad 0\le t\le1.
\end{equation}
This isotopy preserves volume and extends by the identity
outside $N$. On $A_\eps=S^1\times K_\eps$, every trajectory
makes one positive circuit, so the time-one map is the
identity there. Moreover, $\mu(N\setminus A_\eps)\to0$.

For symplectic $M$ of dimension $2\ell$, use the standard
isotropic tubular model
$$
N=S^1\times(-r,r)_p\times D^{2\ell-2}_z,\qquad \omega=d\vartheta\wedge dp+\omega_0;
$$
see \cite{Weinstein} and \cite[Section~4.3]{BK}.
Choose $\chi_\eps(p,z)$ compactly supported in the transverse
domain and equal to one near compact cores $K_\eps$
exhausting that domain. Set
\begin{equation}\label{eq:Ham}
H_\eps(\vartheta,p,z)=-p\chi_\eps(p,z),\qquad \iota_{X_{H_\eps}}\omega=-dH_\eps.
\end{equation}
On $A_\eps=S^1\times K_\eps$, the vector field is
$\partial_\vartheta$, so its flow has the same full-circuit
property. Extension by zero gives a Hamiltonian isotopy
on $M$. All cutoff effects lie in $N\setminus A_\eps$,
whose measure tends to zero. See Figure~\ref{fig:annulus}.

\begin{figure}[tbp]
\centering
\begin{adjustbox}{max width=\linewidth}
\includegraphics{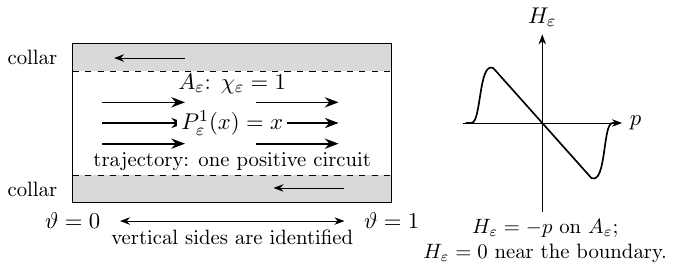}
\end{adjustbox}
\caption{The finger-pushing isotopy on an annular slice
and its Hamiltonian. The vertical sides are identified.
Every point of $A_\eps$ returns at time one after one
positive circuit. The transition region
$N\setminus A_\eps$ has measure tending to zero.}
\label{fig:annulus}
\end{figure}

We apply the appropriate construction in every tube.
Write $A_{j,s,\eps}$ for its core and $f_{j,s,\eps}$
for its time-one map, where $s\in\{a,b\}$.
Now, let 
$$\rho_{j,\eps}:\F\to\cG$$ 
send $a,b$ to $f_{j,a,\eps},f_{j,b,\eps}$. Disjoint supports imply that
$$
\rho_\eps(w_1,\ldots,w_k)=\rho_{1,\eps}(w_1)\cdots\rho_{k,\eps}(w_k)
$$
is a homomorphism. Set
$$
B_\eps=\bigcup_{j=1}^k\bigcup_{s\in\{a,b\}}(N_{j,s}\setminus A_{j,s,\eps}).
$$
Then $\mu(B_\eps)\to0$. Every generator fixes
$M\setminus B_\eps$ pointwise, so $B_\eps$ is invariant.
Up to boundaries of measure zero, its complement is
partitioned by
$$
 \begin{aligned}
 &U_{j,\eps}=A_{j,a,\eps}\cap A_{j,b,\eps},\quad V_{j,a,\eps}=A_{j,a,\eps}\setminus N_{j,b},\quad
 V_{j,b,\eps}=A_{j,b,\eps}\setminus N_{j,a},\\
 &W=M\setminus\bigcup_{j=1}^k(N_{j,a}\cup N_{j,b}).
 \end{aligned}
$$

For $x\in U_{j,\eps}$, use a path $c_x$ from the common vertex
to $x$ inside the contractible tube intersection to identify
both generator trajectories. Hence the same conjugating
element $u_x\in G$ works for both generators. Since all
time-one maps fix $x$, the cocycle identity gives
$$
\gamma(\rho_\eps(w),x)=u_xi(w_j)u_x^{-1}\qquad(w=(w_1,\ldots,w_k)\in\F^k).
$$
On $V_{j,a,\eps}$ only the $a$-isotopy moves $x$,
giving a conjugate of $i\circ h_a\circ p_j$.
Similarly, on $V_{j,b,\eps}$ we obtain
$i\circ h_b\circ p_j$, and on $W$ the cocycle is trivial.
The conjugators can be chosen measurably by enumerating
the countable group $G$ and selecting the first element
that gives the required identities on the generators.

Finally, exhaustion of the tubes by their cores gives
$$
\mu(U_{j,\eps})\longrightarrow\mu(N_{j,a}\cap N_{j,b})=\lambda_j>0.
$$
All isotopies have compact support in $\Int M$.
The volume-preserving construction takes values in
$\Diff_0(M,\mu)$ and hence in $\Homeo_0(M,\mu)$;
the Hamiltonian construction takes values in
$\Ham(M,\omega)$ and hence in $\Symp_0(M,\omega)$.
\end{proof}

We recall the notation needed for the next proposition. Let $\cG$ be one
of the transformation groups in Theorem~\ref{thm:main}. Fix the embedding
$i:\F\hookrightarrow G$ and the linear maps
$$
J_s:\rHb^s(\F)\longrightarrow\rHb^s(G),\qquad i^*J_s=\id\quad(s=2,3),
$$
chosen in Lemma~\ref{lem:exact} or Lemma~\ref{lem:othersections}.
Let $\theta\in\rHb^2(\F)$ be the fixed nonzero finite linear combination
of Brooks classes from Lemma~\ref{lem:brooks}. Recall that
$$
J_2(\theta)\in\rEH^2(G),\qquad\theta\smile u=0\quad\bigl(u\in\rHb^s(\F),\ s\ge1\bigr).
$$

Fix integers $m\ge0$ and $q\in\{2,3\}$, and put $k=m+1$ and $d=2m+q$.
Applying $J_2$ and $J_q$ to the individual factors in
Proposition~\ref{prop:symmetric-products} defines the linear map
$$
 \begin{aligned}
 &\widetilde T_m^q:\rHb^q(\F)\longrightarrow\rHb^d(G^k),\\
 &\widetilde T_m^q(v)=
 \Sym_k\bigl(J_2(\theta)^{\times m}\times J_q(v)\bigr).
 \end{aligned}
$$
Here $J_2(\theta)^{\times m}$ is the external product of $m$ copies of
$J_2(\theta)$. The operator $\Sym_k$ is the unnormalized sum of coordinate
pullbacks fixed above. If $m=0$, then $\widetilde T_0^q=J_q$.
Write $i^k:\F^k\hookrightarrow G^k$ for the product embedding
$i^k(w_1,\ldots,w_k)=(i(w_1),\ldots,i(w_k))$, so $i^2=i\times i$.
Naturality gives
$$
(i^k)^*\widetilde T_m^q(v)=\Sym_k(\theta^{\times m}\times v)=T_m^q(v),
$$
where $T_m^q$ is the map of Proposition~\ref{prop:symmetric-products}.

Fix the homomorphisms
$\rho_\eps:\F^k\to\cG$ and the subsets $B_\eps,U_{j,\eps}$ supplied by Lemma~\ref{lem:push}.
Recall that $\delta_\eps=\mu(B_\eps)\to0$ and
$\lambda_{j,\eps}=\mu(U_{j,\eps})\to\lambda_j>0$. Put
\begin{equation}\label{eq:pushconstants}
 c_{k,\eps}=\prod_{j=1}^k\lambda_{j,\eps},\qquad C_k=\prod_{j=1}^k\lambda_j>0.
\end{equation}
Thus $c_{k,\eps}\to C_k$. We use the product transfer
$\overline\Gamma_{b,k}^{\,d}:\rHb^d(G^k)\to\rHb^d(\cG)$
of Lemma~\ref{lem:transfer}.

\begin{proposition}\label{prop:approx}
With the notation above, for every $\xi\in\im\widetilde T_m^q$ and every
sufficiently small $\eps>0$,
\begin{equation}\label{eq:approx}
\norm{\rho_\eps^*\overline\Gamma_{b,k}^{\,d}(\xi)-k!C_k(i^k)^*\xi}\le
\left(k\delta_\eps+k!|c_{k,\eps}-C_k|\right)\norm{\xi},
\end{equation}
where $c_{k,\eps}$ and $C_k$ are defined in \eqref{eq:pushconstants}.
In particular, the left-hand side tends to zero for each fixed $\xi$.
\end{proposition}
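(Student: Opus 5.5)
We prove the estimate at the cochain level and then pass to reduced norms. Fix $\xi=\widetilde T_m^q(v)$ and a bounded homogeneous cocycle $c$ on $G^k$ representing $\xi$, with $\|c\|_\infty$ as close as we like to $\|\xi\|$. By the formula \eqref{eq:transferformula}, $\rho_\eps^*\Gamma'_{b,k}c$ evaluated on $(w_0,\ldots,w_d)\in(\F^k)^{d+1}$ is the integral over $\mathbf x\in M^k$ of $c(\gamma_k(\rho_\eps(w_0),\mathbf x),\ldots)$.

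\textbf{Partition of $M^k$.} Lemma~\ref{lem:push} gives the sets $B_\eps$, $U_{j,\eps}$, $V_{j,a,\eps}$, $V_{j,b,\eps}$, $W$. Partition $M^k$ into:
\begin{itemize}
\item[(1)] the set of $\mathbf x$ with some coordinate in $B_\eps$, of measure at most $k\delta_\eps$;
\item[(2)] products $Y_1\times\cdots\times Y_k$ with each $Y_l$ one of the other pieces.
\end{itemize}
On (1) the integrand is bounded by $\|c\|_\infty$, so this piece contributes a bounded cochain of norm at most $k\delta_\eps\|c\|_\infty$. Since $B_\eps$ is invariant, the restricted integral is itself a cochain-level transfer. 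Hence the decomposition is into cocycles, and the error term is a class of norm $\le k\delta_\eps\|\xi\|$.

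\textbf{Product pieces.} On a product piece in (2), each coordinate's cocycle is a fixed homomorphism of $\F^k$ conjugated by a measurable element $u_{x_l}$. Conjugation acts trivially on bounded cohomology via a bounded homotopy that depends only on the conjugator, and integrating over $x$ keeps it bounded. So each piece contributes
\[
\mu^k(Y_1\times\cdots\times Y_k)\,\psi^*\xi,\qquad \psi=(\psi_1,\ldots,\psi_k):\F^k\to G^k,
\]
where each $\psi_l$ is one of $i\circ p_j$, $i\circ h_s\circ p_j$, or $1$. We then compute $\psi^*\xi$ using $i^*J_s=\id$ and the external product structure of each summand $\sigma^*(J_2(\theta)^{\times m}\times J_q(v))$. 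Each factor pulls back to $\phi_l^*$ of $\theta$ or $v$, where $\phi_l=p_j$, $h_s\circ p_j$, or trivial; for the trivial case, $1^*J_s(\cdot)$ is $0$ in positive degree.
\begin{itemize}
\item \emph{Factors through $h_s$.} The map factors through a cyclic group. Since $\rHb^{\ge1}(\Z)=0$ by amenability, these terms vanish.
\item \emph{Repeated assignments.} If two coordinates use the same $p_j$, the pullback factors through the diagonal $\F\to\F\times\F$. It contains an internal cup product involving $\theta$, which is zero by Lemma~\ref{lem:brooks}. Here we use $m\ge1$ and that at most one factor is $v$. When $m=0$ there is no repetition to handle.
\item \emph{Bijective assignments.} What remains are the bijective assignments $x_l\in U_{\tau(l),\eps}$ with $\tau\in S_k$. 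Each gives $\tau^*$ of $(i^k)^*\xi=T_m^q(v)$, which is symmetric, with measure $c_{k,\eps}$.
\end{itemize}
Summing gives $k!c_{k,\eps}(i^k)^*\xi$. Replacing $c_{k,\eps}$ by $C_k$ costs $k!|c_{k,\eps}-C_k|\,\|(i^k)^*\xi\|\le k!|c_{k,\eps}-C_k|\,\|\xi\|$.

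\textbf{Main obstacle.} The hardest step is making the cochain-level splitting rigorous while keeping norm control in the reduced setting. The vanishing statements (conjugation invariance, the cup-product vanishing for $\theta$, the amenable factors) hold only up to coboundaries, or up to limits of coboundaries. They must be applied piecewise to finitely many restricted transfers with measurable conjugators. To do this, write the good part as a finite sum, over the finitely many product-piece types, of measure-weighted pullbacks. Each vanishing or identification then holds in $\rHb^d(\F^k)$ by naturality, with no uniformity in $\eps$ required, because only finitely many types occur. Only the bad set (1) needs a quantitative bound, and it gets one from $\|c\|_\infty$. Letting $\|c\|_\infty\to\|\xi\|$ then yields \eqref{eq:approx}.
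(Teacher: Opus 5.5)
Your proposal is correct and follows essentially the same route as the paper's proof. The common steps are:
\begin{enumerate}
\item an invariant exceptional set whose restricted transfer is a cochain map of norm at most $k\delta_\eps$;
\item product cells on which the tracing homomorphism is a coordinatewise conjugate of a fixed $\varphi_C$, with the conjugators removed by the inner-automorphism homotopy (whose bound must be, and is, uniform in the conjugator);
\item vanishing of cells with a $V$ or $W$ coordinate by amenability, and of non-injective assignments via $\theta\smile\zeta=0$;
\item the $k!$ bijective cells summing to $c_{k,\eps}\Sym_k\bigl((i^k)^*\xi\bigr)=k!\,c_{k,\eps}(i^k)^*\xi$.
\end{enumerate}
The only difference is expository: the paper first works out the case $k=2$ explicitly before treating general $k$.
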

\begin{proof}
We compute the pullback directly from the transfer formula
\eqref{eq:transferformula}, using the partition and the
associated homomorphisms of Lemma~\ref{lem:push}.
We first treat two coordinates, identify the nonzero
contributions, and estimate the integral over the exceptional
set. We then give the argument for arbitrary $k$.

\emph{The case $k=2$.}
Here $m=1$, $d=2+q$, and $q\in\{2,3\}$. Let
$$
v\in\rHb^q(\F),\quad\xi=\widetilde T_1^q(v)=J_2(\theta)\times J_q(v)+J_q(v)\times J_2(\theta).
$$
Recall that restriction along $i\times i$ is the map
$$(i\times i)^*:\rHb^d(G\times G)\longrightarrow\rHb^d(\F\times\F).$$
The maps $J_2,J_3$ chosen in Lemma~\ref{lem:exact} or Lemma~\ref{lem:othersections}
are right inverses to restriction along $i$:
$$i^*J_s=\id\quad(s=2,3),$$
as recorded in \eqref{eq:commondata}. In particular, $i^*J_2(\theta)=\theta$ and $i^*J_q(v)=v$. 
Naturality of the external product therefore gives
$$
(i\times i)^*\xi=(i^*J_2(\theta))\times(i^*J_q(v))+(i^*J_q(v))\times(i^*J_2(\theta))=\theta\times v+v\times\theta.
$$
Let $\tau:\F\times\F\to\F\times\F$ exchange the factors. 
Then $\tau^*\bigl((i\times i)^*\xi\bigr)=(i\times i)^*\xi$; 
there is no sign when the factors are exchanged, 
since the degree of $\theta$ is two. For $k=2$, 
the homomorphism $\rho_\eps$ of Lemma~\ref{lem:push} is
$$
\rho_\eps:\F\times\F\longrightarrow\cG,\qquad\rho_\eps(g,h)=\rho_{1,\eps}(g)\rho_{2,\eps}(h).
$$
Thus $g$ determines the pushes in $N_{1,a}\cup N_{1,b}$, and $h$
determines the pushes in the disjoint union $N_{2,a}\cup N_{2,b}$. Set
$$\mathcal E_\eps=(B_\eps\times M)\cup(M\times B_\eps).$$
The set $\mathcal E_\eps$ is invariant under the diagonal 
action of $\rho_\eps(\F^2)$, since $B_\eps$ is invariant. As $\mu(M)=1$,
$$\mu^2(\mathcal E_\eps)=2\delta_\eps-\delta_\eps^2\le2\delta_\eps.$$
For a homogeneous bounded $d$-cochain $c$ on $G^2$, denote the integral over this subset by
$$
(\mathcal R_\eps c)(w_0,\ldots,w_d)=\int_{\mathcal E_\eps}c\bigl(\gamma_2(\rho_\eps(w_0),\mathbf x),
\ldots,\gamma_2(\rho_\eps(w_d),\mathbf x)\bigr)\,d\mu^2(\mathbf x).
$$
Here $w_0,\ldots,w_d\in\F^2$. The cocycle identity \eqref{eq:cocycleidentity}, 
followed by the measure-preserving change of variable 
$\mathbf x\mapsto\rho_\eps(w)\mathbf x$ for fixed $w\in\F^2$, 
proves right invariance of this cochain. 
The change of variable preserves $\mathcal E_\eps$. 
The differential commutes with the integral, so $\mathcal R_\eps$ is a cochain map. Moreover,
$$\|\mathcal R_\eps c\|_\infty\le2\delta_\eps\|c\|_\infty.$$
It therefore induces a map, denoted by the same symbol, on reduced bounded cohomology, and
$$\|\mathcal R_\eps(\xi)\|\le2\delta_\eps\|\xi\|.$$
Thus all pairs with a coordinate in $B_\eps$ are accounted for by this error term.

Up to a null set, the complement of $\mathcal E_\eps$ 
is partitioned into products $C=X\times Y$, where $X,Y$ belong to
$\{U_{1,\eps},U_{2,\eps},V_{1,a,\eps},V_{1,b,\eps},V_{2,a,\eps},V_{2,b,\eps},W\}$.
Every time-one map fixes every point in $C$. 
It follows that for $(x,y)\in C$, the map
$$w\longmapsto\gamma_2(\rho_\eps(w),(x,y))$$
is a homomorphism. By Lemma~\ref{lem:push}, it has the form
$$w\longmapsto u_{x,y}\varphi_C(w)u_{x,y}^{-1},$$
where $\varphi_C:\F^2\to G^2$ is independent of $(x,y)\in C$, and $u_{x,y}$ is measurable.

Inner automorphisms induce the identity on bounded cohomology,
through a cochain homotopy bounded independently of the
conjugating element; see \cite[Lemma~A.2]{Loh}.
Integrating this homotopy shows that the contribution of $C$
is $\mu^2(C)\varphi_C^*$, also on reduced bounded cohomology.
We therefore omit the conjugations in the following calculations.

If $X$ or $Y$ is one of the subsets $V_{j,a,\eps}$, $V_{j,b,\eps}$, or $W$, 
the corresponding coordinate of $\varphi_C$ 
factors through a cyclic or trivial group. For any external product of positive-degree classes,
$$
\varphi_C^*(\widetilde\alpha\times\widetilde\beta)=
\varphi_{C,1}^*\widetilde\alpha\smile\varphi_{C,2}^*\widetilde\beta,
$$
where $\varphi_{C,1},\varphi_{C,2}:\F^2\to G$ are its coordinate homomorphisms. 
The factor corresponding to that cyclic or trivial coordinate is zero, 
since the positive-degree bounded cohomology of an 
amenable group vanishes \cite[Corollary~3.7]{Frigerio}. 
Both summands of $\xi$ are such external products. 
Hence all these subsets contribute zero to the pullback of $\xi$. 
The vanishing is in reduced bounded cohomology.

It remains to compute the four products
$$
 U_{1,\eps}\times U_{2,\eps},\quad
 U_{2,\eps}\times U_{1,\eps},\quad
 U_{1,\eps}\times U_{1,\eps},\quad
 U_{2,\eps}\times U_{2,\eps}.
$$
For this calculation let $\widetilde\alpha\in\rHb^r(G)$ and
$\widetilde\beta\in\rHb^s(G)$, with $r,s>0$ and $r+s=d$, and put
$$\alpha=i^*\widetilde\alpha,\qquad\beta=i^*\widetilde\beta.
$$
If $x\in U_{1,\eps}$ and $y\in U_{2,\eps}$, 
the two associated homomorphisms depend on $g$ and $h$ separately. The associated homomorphism is
$$
\varphi_{12}:\F\times\F\longrightarrow G\times G,\qquad \varphi_{12}(g,h)=(i(g),i(h)),
$$
and therefore
$$
\varphi_{12}^*(\widetilde\alpha\times\widetilde\beta)
=(i\times i)^*(\widetilde\alpha\times\widetilde\beta)=\alpha\times\beta.
$$
For $x\in U_{2,\eps}$ and $y\in U_{1,\eps}$, we have
$\varphi_{21}(g,h)=(i(h),i(g))=(i\times i)\tau(g,h)$, and so
$\varphi_{21}^*(\widetilde\alpha\times\widetilde\beta)=\tau^*(\alpha\times\beta)$.

Now suppose that $x,y\in U_{1,\eps}$. Both associated homomorphisms depend on $g$, while the pushes determined by $h$ are supported in the disjoint second neighborhood. Thus
$$
\varphi_{11}:\F\times\F\longrightarrow G\times G,\qquad \varphi_{11}(g,h)=(i(g),i(g)).
$$
Let $p_1,p_2:\F\times\F\to\F$ be the projections, and let
$\Delta:\F\to\F\times\F$ be given by $\Delta(g)=(g,g)$. Then
$\F\times\F\xrightarrow{\ p_1\ }\F\xrightarrow{\ \Delta\ }\F\times\F\xrightarrow{\ i\times i\ }G\times G$
is a factorization of $\varphi_{11}$. Since
$\alpha\times\beta=p_1^*\alpha\smile p_2^*\beta$ and
$p_1\Delta=p_2\Delta=\id_{\F}$, naturality gives
$\Delta^*(\alpha\times\beta)=(p_1\Delta)^*\alpha\smile(p_2\Delta)^*\beta=\alpha\smile\beta$.
Consequently,
$$
\varphi_{11}^*(\widetilde\alpha\times\widetilde\beta)=p_1^*\Delta^*(i\times i)^*
(\widetilde\alpha\times\widetilde\beta)=p_1^*\Delta^*(\alpha\times\beta)=p_1^*(\alpha\smile\beta).
$$
The same calculation on $U_{2,\eps}\times U_{2,\eps}$ gives $p_2^*(\alpha\smile\beta)$.

The measure of each of the first two products is
$c_{2,\eps}=\lambda_{1,\eps}\lambda_{2,\eps}$, 
while the measures of the last two are $\lambda_{1,\eps}^2$ and $\lambda_{2,\eps}^2$. 
Including the error term, the calculation gives
$$
\begin{aligned}
&\rho_\eps^*\overline\Gamma_{b,2}^{\,d}(\widetilde\alpha\times\widetilde\beta)\\
&=c_{2,\eps}\bigl(\alpha\times\beta+\tau^*(\alpha\times\beta)\bigr)\\
&\mathrel{+}\lambda_{1,\eps}^2p_1^*(\alpha\smile\beta)+\lambda_{2,\eps}^2p_2^*(\alpha\smile\beta)
\mathrel{+}\mathcal R_\eps(\widetilde\alpha\times\widetilde\beta).
\end{aligned}
$$
Figure~\ref{fig:cells} shows the four products and their pullbacks.

\begin{figure}[htbp]
\centering
\begin{adjustbox}{max width=\linewidth}
\includegraphics{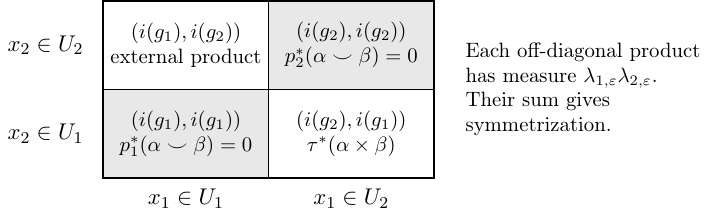}
\end{adjustbox}
\caption{The four products $U_{j,\eps}\times U_{l,\eps}$ for $k=2$. On the shaded products the coordinate homomorphism factors through the diagonal, giving $p_j^*(\alpha\smile\beta)$. This class is zero when one factor is $\theta$. The other two products give $\alpha\times\beta$ and $\tau^*(\alpha\times\beta)$, where $\tau$ exchanges the factors. Tuples with a coordinate in $B_\eps$ belong to $\mathcal E_\eps$.}
\label{fig:cells}
\end{figure}

Apply this identity to the two summands of $\xi$. Their restrictions are
$\theta\times v$ and $v\times\theta$. By Lemma~\ref{lem:brooks} and graded commutativity,
$$\theta\smile v=0,\qquad v\smile\theta=(-1)^{2q}\theta\smile v=0.$$
Hence the two products in which both points belong to the same $U_{j,\eps}$ contribute zero. 
The remaining two products give
$$
\rho_\eps^*\overline\Gamma_{b,2}^{\,d}(\xi)=c_{2,\eps}\bigl((i\times i)^*\xi+
\tau^*\bigl((i\times i)^*\xi\bigr)\bigr)+\mathcal R_\eps(\xi)=2c_{2,\eps}(i\times i)^*\xi+\mathcal R_\eps(\xi).
$$
Subtract $2C_2(i\times i)^*\xi$, where $C_2=\lambda_1\lambda_2>0$. 
Since restriction is norm non-increasing,
$$
\begin{aligned}
&\|\rho_\eps^*\overline\Gamma_{b,2}^{\,d}(\xi)-2C_2(i\times i)^*\xi\|\\
&\le2|c_{2,\eps}-C_2|\|(i\times i)^*\xi\|+\|\mathcal R_\eps(\xi)\|\\
&\le\bigl(2|c_{2,\eps}-C_2|+2\delta_\eps\bigr)\|\xi\|.
\end{aligned}
$$
As $\eps\to0$, the right-hand side tends to zero by Lemma~\ref{lem:push}. 
This proves \eqref{eq:approx} for $k=2$.

The same calculation also explains the injectivity argument. If
$\overline\Gamma_{b,2}^{\,d}(\xi)=0$, 
then the left-hand side of the last estimate equals
$2C_2\|(i\times i)^*\xi\|$ for every $\eps$. 
Thus $(i\times i)^*\xi=0$ in reduced bounded cohomology. But
$(i\times i)^*\xi=T_1^q(v)$, and Proposition~\ref{prop:symmetric-products} 
says that $T_1^q$ is injective. Hence $v=0$ and $\xi=0$. 
Thus $\overline\Gamma_{b,2}^{\,q+2}$ is injective on
$\im\widetilde T_1^q$ for $q=2,3$. 

\emph{The general case.}
Let $k=m+1$ be arbitrary and $\xi\in\im\widetilde T_m^q$. 
Take $\rho_\eps$, $B_\eps$, and $U_{j,\eps}$ from Lemma~\ref{lem:push}, as fixed before the proposition. Set
$$
\mathcal E_\eps=\{(x_1,\ldots,x_k)\in M^k: x_\ell\in B_\eps\text{ for some }\ell\}.
$$
This set is invariant, and the union bound gives
$$
\mu^k(\mathcal E_\eps)\le\sum_{\ell=1}^k\mu^k(M^{\ell-1}\times B_\eps\times M^{k-\ell})=k\delta_\eps.
$$
Define
$$
(\mathcal R_\eps c)(w_0,\ldots,w_d)=\int_{\mathcal E_\eps}c\bigl
(\gamma_k(\rho_\eps(w_0),\mathbf x),\ldots,\gamma_k(\rho_\eps(w_d),\mathbf x)\bigr)\,d\mu^k(\mathbf x).
$$
The cochain-map argument above uses only invariance 
of $\mathcal E_\eps$ and the cocycle identity, so it applies without change. 
The norm of this cochain map is at most $k\delta_\eps$, 
and its induced map on reduced bounded cohomology satisfies
\begin{equation}\label{eq:Rbound}
 \|\mathcal R_\eps(\xi)\|\le k\delta_\eps\|\xi\|.
\end{equation}

Outside $\mathcal E_\eps$ we partition $M^k$ into products $C$ 
of the sets in Lemma~\ref{lem:push}. On each such product the associated homomorphism has the form
$w\mapsto u_{\mathbf x}\varphi_C(w)u_{\mathbf x}^{-1}$, where
$\varphi_C:\F^k\to G^k$ is independent of $\mathbf x\in C$. 
This product contributes $\mu^k(C)\varphi_C^*\xi$. 
If one coordinate lies in $V_{j,a,\eps}$, $V_{j,b,\eps}$, or $W$, 
the corresponding positive-degree factor restricts through an amenable group and is zero. 
These contributions therefore vanish exactly as for two coordinates.

It remains to consider
$$
C_\tau=U_{\tau(1),\eps}\times\cdots\times U_{\tau(k),\eps},\qquad \tau:\{1,\ldots,k\}\to\{1,\ldots,k\}.
$$
The associated homomorphism is $\varphi_{C_\tau}=(i^k)\circ D_\tau$, where
$$D_\tau(w_1,\ldots,w_k)=(w_{\tau(1)},\ldots,w_{\tau(k)}).$$
Thus its contribution is $\mu^k(C_\tau)D_\tau^*\bigl((i^k)^*\xi\bigr)$. 
For positive-degree classes $u_1,\ldots,u_k$ on $\F$, the definition of the external product and naturality give
\begin{equation}\label{eq:diagonal}
 D_\tau^*(u_1\times\cdots\times u_k)=D_\tau^*(p_1^*u_1\smile\cdots\smile p_k^*u_k)=
 p_{\tau(1)}^*u_1\smile\cdots\smile p_{\tau(k)}^*u_k.
\end{equation}
This is the general form of the diagonal calculation above: 
all factors assigned to one $U_{j,\eps}$ become an internal cup product on the $j$th free-group factor.

We claim that $D_\tau^*\bigl((i^k)^*\xi\bigr)=0$ whenever $\tau$ is not injective. 
Each summand of $(i^k)^*\xi$ contains $m$ copies of $\theta$ and one class $v\in \rHb^q(\F)$. 
Group the factors with the same value of $\tau(\ell)$ in \eqref{eq:diagonal}. 
Graded commutativity permits this regrouping without a sign, 
since all degrees are even except possibly for the single degree-three factor. 
A group of indices containing at least two elements contains a $\theta$,
because at most one factor differs from $\theta$. 
After reordering, its internal product is $\theta\smile\zeta$ 
for a positive-degree class $\zeta$, and hence is zero by Lemma~\ref{lem:brooks}. 
Every summand therefore vanishes, 
which proves the claim. As in the two-coordinate case, 
this argument uses cup-product vanishing on $\F$.

If $\tau$ is injective, it is a permutation. Moreover,
$$
\mu^k(C_\tau)=\lambda_{\tau(1),\eps}\cdots\lambda_{\tau(k),\eps}=c_{k,\eps},
$$
independently of the permutation. Summing these $k!$ contributions and adding the error term gives
\begin{equation}\label{eq:exactcell}
 \rho_\eps^*\overline\Gamma_{b,k}^{\,d}(\xi)=c_{k,\eps}\Sym_k\bigl((i^k)^*\xi\bigr)+\mathcal R_\eps(\xi)
 \quad\text{in }\rHb^d(\F^k).
\end{equation}

By construction $(i^k)^*\xi$ is symmetric, 
so $\Sym_k\bigl((i^k)^*\xi\bigr)=k!(i^k)^*\xi$. 
Subtract $k!C_k(i^k)^*\xi$ from \eqref{eq:exactcell}. 
Using \eqref{eq:Rbound} and that restriction is norm non-increasing, we obtain
$$
\begin{aligned}
 &\|\rho_\eps^*\overline\Gamma_{b,k}^{\,d}(\xi)-k!C_k(i^k)^*\xi\|\le k!|c_{k,\eps}-C_k|\|(i^k)^*\xi\|+\|\mathcal R_\eps(\xi)\|\\
 &\le\bigl(k!|c_{k,\eps}-C_k|+k\delta_\eps\bigr)\|\xi\|.
 \end{aligned}
$$
Finally, Lemma~\ref{lem:push} gives $\delta_\eps\to0$ and
$c_{k,\eps}=\prod_j\lambda_{j,\eps}\to\prod_j\lambda_j=C_k>0$. This proves the proposition.
\end{proof}

\begin{proof}[Proof of Theorem~\ref{thm:main}]
We use the restriction-and-limit argument of \cite[proofs of Theorems~A and~B]{BM}; see also \cite[Lemma~4.1(i)]{Nitsche}.
Choose $i,J_2,J_3$ from Lemma~\ref{lem:exact} or Lemma~\ref{lem:othersections},
according to the hypothesis being used, and put
$K_2=\ker(\overline c_G^2\circ J_2)$ as in \eqref{eq:commondata}.
Choose $\theta,z$ from Lemma~\ref{lem:brooks}.

We first define the maps asserted in part~\textup{(b)}.
For $m\ge0$ and $q\in\{2,3\}$, put $d=2m+q$ and $k=m+1$, and set
$$
\Phi_{m,q}(v):=\frac1{k!}\overline\Gamma_{b,k}^{\,d}\bigl(\widetilde T_m^q(v)\bigr).
$$
The domain is $K_2$ when $(m,q)=(0,2)$ and $\rHb^q(\F)$ in every other case.
In particular, the two maps in the initial degrees are
$$
\Phi_{0,2}=\bigl(\overline\Gamma_{b,1}^{\,2}\circ J_2\bigr)\big|_{K_2},\qquad
\Phi_{0,3}=\overline\Gamma_{b,1}^{\,3}\circ J_3.
$$
For $m\ge1$, the definition is equivalently
$$
\Phi_{m,q}(v)=\frac1{(m+1)!}\overline\Gamma_{b,m+1}^{\,2m+q}\Bigl(\operatorname{Sym}_{m+1}
\bigl(J_2(\theta)^{\times m}\times J_q(v)\bigr)\Bigr).
$$
By \eqref{eq:transfer-symmetrization}, this is also the transfer of
the ordered product $J_2(\theta)^{\times m}\times J_q(v)$.
Values of these maps lie in
$\rHb^{2m+q}(\cG)$; we will verify that their images are exact.
Using $i^*J_q=\id$ from \eqref{eq:commondata} and naturality of the external product, we obtain
$$
(i^k)^*\bigl(\widetilde T_m^q(v)\bigr)=\Sym_k(\theta^{\times m}\times v)=T_m^q(v).
$$
Since $T_m^q$ is injective, $\widetilde T_m^q$ is injective.
Moreover, restriction $(i^k)^*$ is injective on $\im\widetilde T_m^q$.

We first prove injectivity of the transfer on that image. Let
$\xi\in\im\widetilde T_m^q$ and suppose $\overline\Gamma_{b,k}^{\,d}(\xi)=0$.
Then $\rho_\eps^*\overline\Gamma_{b,k}^{\,d}(\xi)=0$ for every $\eps$. By Proposition~\ref{prop:approx},
$$
k!C_k\|(i^k)^*\xi\|=\|\rho_\eps^*\overline\Gamma_{b,k}^{\,d}(\xi)-k!C_k(i^k)^*\xi\|\longrightarrow0.
$$
The number on the left is independent of $\eps$, and $C_k>0$. 
Thus $(i^k)^*\xi$ has norm zero. Since $(i^k)^*\xi$ is a class in reduced bounded cohomology, 
it is zero. Injectivity of $(i^k)^*$ on $\im\widetilde T_m^q$ gives $\xi=0$. 
It follows that $\Phi_{m,q}$ is injective. 

Pullback by $\rho_\eps$ is norm non-increasing, so
$$
\|\overline\Gamma_{b,k}^{\,d}(\xi)\|
\ge \lim_{\eps\to0}\|\rho_\eps^*\overline\Gamma_{b,k}^{\,d}(\xi)\|=k!C_k\|(i^k)^*\xi\|.
$$ 
Apply this to $\xi=\widetilde T_m^q(v)$, divide by $k!$, 
and use the bounded left inverse $Q_{m,q}$ from \eqref{eq:explicitQ}. We obtain
$$
\|\Phi_{m,q}(v)\|\ge C_k\|T_m^q(v)\|\ge\frac{C_k}{\|Q_{m,q}\|}\|v\|.
$$

We next show that the images specified in the theorem lie in reduced exact bounded cohomology. 
If $m\ge1$, each summand of $\widetilde T_m^q(v)$ contains the factor $J_2(\theta)$, 
which has zero comparison image. By compatibility of comparison with external products, 
the comparison image of this summand is zero. 
Permutations are homomorphism pullbacks and preserve exactness; 
summing gives an ordinary primitive for the symmetric class. Thus
$$
\widetilde T_m^q(v)\in\rEH^{2m+q}(G^k)\quad(m\ge1,\ v\in \rHb^q(\F)).
$$
Its transfer belongs to reduced exact bounded cohomology by Lemma~\ref{lem:transfer}. 

For $m=0,q=3$, the section has $J_3\bigl(\rHb^3(\F)\bigr)\subset\rEH^3(G)$, 
so the same transfer argument proves exactness. For $m=0,q=2$, restrict to
$K_2=\ker(\overline c_G^2J_2)$. Under \textup{(i)} this has codimension 
at most one in the infinite-dimensional space $\rHb^2(\F)$ 
and is therefore infinite-dimensional; under \textup{(ii)} 
or \textup{(iii)} it is all of $\rHb^2(\F)$. This proves part~\textup{(b)}, 
with the domains and codomains stated in \eqref{eq:baseinj}--\eqref{eq:highinj}. 
The two degree patterns $2m+2$ and $2m+3$, $m\ge0$, cover every integer $d\ge2$. 
Part~\textup{(a)} now follows from injectivity and the infinite-dimensionality of $\rHb^2(\F)$ and $\rHb^3(\F)$.

Finally, for $\Diff_0(M,\mu)$ we use the volume-preserving isotopies
\eqref{eq:volpush}. These are also isotopies in $\Homeo_0(M,\mu)$.
For $\Ham(M,\omega)$ we use the Hamiltonians \eqref{eq:Ham}; their flows
are also isotopies in $\Symp_0(M,\omega)$. Each group preserves the
measure, so Lemma~\ref{lem:transfer} defines its transfer. The time-one
maps define homomorphisms for which Proposition~\ref{prop:approx}
gives the required pullback estimate. This completes the proof.
\end{proof}

\section{Proof of Theorem ~\ref{thm:disc}}
\label{sec:4}

\subsection{Disc case}
\label{sec:4-1}

We first prove the disc case of Theorem~\ref{thm:disc}; the sphere
proof follows it. The positive-genus surfaces are treated afterward in
Theorem~\ref{thm:surface}. In the disc proof, let $\mathcal T_{\mathbb D}$ denote any one
of its four transformation groups.
Multiplying the area form by a positive constant changes none of these groups,
so assume $\int_{\mathbb D}\omega=1$ and write $\mu$ for this area measure.
All configuration-space measures below are restrictions of product measures.

Write $B_3=\langle\sigma_1,\sigma_2\mid
\sigma_1\sigma_2\sigma_1=\sigma_2\sigma_1\sigma_2\rangle$ and let
$P_3\lhd B_3$ be the pure braid group. Put
$\Delta^2=(\sigma_1\sigma_2)^3$. The standard decomposition is
$$
P_3=\langle\sigma_1^2,\sigma_2^2\rangle\times\langle\Delta^2\rangle\cong\F\times\Z.
$$
Let $\jmath:\F\hookrightarrow B_3$ be the inclusion given by
$\jmath(a)=\sigma_1^2$ and $\jmath(b)=\sigma_2^2$.
These braid-group facts are recalled in \cite[Sections~2 and~4.1]{Kimura}.

\begin{lemma}\label{lem:braid-cohomology}
For $q=2,3$, restriction gives an isometric embedding
$$\jmath^*:\rHb^q(B_3)\longrightarrow\rHb^q(\F).$$
The domain $\rHb^q(B_3)$ is infinite-dimensional and equals $\rEH^q(B_3)$.
There are a fixed nonzero class $\alpha\in\rHb^2(B_3)$ and an
$\ell^1$-cycle $z\in Z_2^{\ell^1}(\F)$ with
\begin{equation}\label{eq:disc-normalization}
 \langle\jmath^*\alpha,z\rangle=1.
\end{equation}
\end{lemma}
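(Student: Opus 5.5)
The plan is to exploit the structure $B_3\supset P_3\cong \F\times\Z$ with $P_3$ of finite index $6$ in $B_3$, together with the central extension $1\to\langle\Delta^2\rangle\to B_3\to B_3/Z(B_3)\cong \Z/2*\Z/3=PSL_2(\Z)\to1$. First, bounded cohomology is insensitive to amenable normal subgroups. Quotienting by the infinite cyclic centre therefore gives isometric isomorphisms $\Hb^q(B_3)\cong\Hb^q(PSL_2(\Z))$ and $\Hb^q(P_3)\cong\Hb^q(P_3/\langle\Delta^2\rangle)\cong\Hb^q(\F)$; see \cite[Corollary~4.23 / Theorem~4.?]{Frigerio} for the mapping theorem. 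Under the second isomorphism, restriction to $\jmath(\F)\subset P_3$ is identified with the identity, since $\jmath$ is a section of $P_3\to P_3/\langle\Delta^2\rangle$. Second, restriction from a group to a finite-index subgroup is an isometric embedding in bounded cohomology, with left inverse the averaged transfer of norm at most one. Composing, $\jmath^*=\jmath^*|_{P_3}\circ\operatorname{res}^{B_3}_{P_3}$ is isometric on $\Hb^q(B_3)$. Isometry preserves zero-seminorm subspaces in both directions, so it descends to an isometric embedding of $\rHb^q(B_3)$ into $\rHb^q(\F)$.

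Next I check exactness and infinite dimension. Rationally $PSL_2(\Z)$ has the cohomology of a point, because $\Z/2*\Z/3$ has real cohomology vanishing in positive degrees by Mayer--Vietoris. Hence $B_3$, as a central $\Z$-extension, has $H^q(B_3;\R)=0$ for $q\ge3$; for $q=2$ the Gysin sequence gives $H^2(B_3;\R)=0$ as well (indeed $B_3$ has cohomological dimension $2$ and $H^2(B_3;\Z)=0$). So every bounded class in degrees $2,3$ is exact, and $\rHb^q(B_3)=\rEH^q(B_3)$ follows via Lemma~\ref{lem:zero}. For infinite dimension, $PSL_2(\Z)$ is virtually free non-elementary, hence acylindrically hyperbolic. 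By \cite[Corollary~3]{FPS} and the Brooks/Epstein--Fujiwara counting quasimorphisms, $\rHb^q(PSL_2(\Z))$ is infinite-dimensional for $q=2,3$, and this transfers to $B_3$ by the first isomorphism.

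Finally, for the normalization, pick any nonzero $\alpha\in\rHb^2(B_3)$. Then $\jmath^*\alpha\ne0$ in $\Hb^2(\F)$, which is normed by Matsumoto--Morita. By the duality \eqref{eq:degree2duality}, some $\ell^1$-cycle $z'\in Z_2^{\ell^1}(\F)$ has $\langle\jmath^*\alpha,z'\rangle\neq0$, and rescaling gives \eqref{eq:disc-normalization}. The main point needing care is the first step, namely that killing the central $\langle\Delta^2\rangle$ and passing to the finite-index $P_3$ commute correctly with $\jmath^*$. I would verify this by writing $\jmath$ as the composite $\F\to P_3\to B_3$ and checking that $P_3\to\F$ (the quotient by the centre) composed with $\jmath$ is the identity. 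Then $\jmath^*$ on $\Hb(P_3)$ is inverse to the isometric inflation, and the isometry claim reduces to standard facts.
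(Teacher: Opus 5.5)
Your proposal is correct and follows essentially the same route as the paper: the mapping theorem for the central quotient $B_3\to\operatorname{PSL}(2,\Z)$ together with Epstein--Fujiwara and \cite[Corollary~3]{FPS} gives infinite dimensionality, the vanishing of $H^q(B_3;\R)$ for $q\ge2$ gives exactness, isometric restriction to the finite-index subgroup $P_3\cong\F\times\Z$ followed by the identification with $\F$ gives the embedding, and degree-two duality with rescaling gives the normalization. The differences are only in how you justify the standard inputs: you derive $H^q(B_3;\R)=0$ from the Gysin sequence of the central extension instead of citing Arnol'd, and you obtain the $P_3$-to-$\F$ identification directly from the mapping theorem for $P_3\to P_3/\langle\Delta^2\rangle$, with $\jmath$ as a section, instead of citing Kimura.
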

\begin{proof}
We first prove infinite-dimensionality independently of the restriction
map. Let $p:B_3\to\operatorname{PSL}(2,\Z)$ denote the central quotient
homomorphism. Its kernel is $Z(B_3)=\langle\Delta^2\rangle\cong\Z$, and
$\operatorname{PSL}(2,\Z)\cong\Z/2*\Z/3$ is a non-elementary hyperbolic
group. Since the kernel is amenable, we obtain an
isometric isomorphism
$$
p^*:\rHb^q\bigl(\operatorname{PSL}(2,\Z)\bigr)\longrightarrow\rHb^q(B_3)\qquad(q\ge2);
$$
see \cite[Theorem~2.2]{Kimura} and \cite[Chapter~3]{Frigerio}.
The second bounded cohomology of this quotient is infinite-dimensional
by \cite{EpsteinFujiwara}; in degree two it coincides with reduced bounded
cohomology. In degree three, \cite[Corollary~3]{FPS} gives
$$\dim\rEH^3\bigl(\operatorname{PSL}(2,\Z)\bigr)=\infty.$$
Pullback preserves exactness by naturality of the comparison map.
The isometry of $p^*$ therefore implies that
$\rEH^3(B_3)$ is infinite-dimensional. In particular,
$\rHb^q(B_3)$ is infinite-dimensional for $q=2,3$.
See also \cite[proof of Corollary~1.2]{Kimura} for the degree-three
argument.

For exactness, recall Arnol'd's computation of the ordinary
cohomology of braid groups \cite{Arnold1970}. In particular, $H^r(B_3;\R)=0\thinspace (r>1)$.
For $B_3$, this also follows from the one-relator computation
in \cite[Chapter~II, Section~4, Example~3]{Brown}.
Consequently, the comparison map $c_{B_3}^q$ is zero for
every $q\ge2$, hence
$$EH_b^q(B_3)=H_b^q(B_3),\qquad\rEH^q(B_3)=\rHb^q(B_3)\qquad(q\ge2).$$

We now prove the assertion about restriction.
Restriction from $B_3$ to its finite-index subgroup $P_3$ is isometric onto
its image: averaging over the cosets is a norm non-increasing left inverse
to restriction. The reduced bounded cohomology of $P_3=\F\times\Z$ is identified
isometrically with that of $\F$, with the inverse given by restriction to
the free factor. Composing proves the assertion about $\jmath^*$; see
\cite[Theorems~2.1--2.2]{Kimura} and \cite[Chapter~3]{Frigerio}.
Restriction is not asserted to be onto the entire space $\rHb^q(\F)$.
Its image is infinite-dimensional by the first part of the proof.

Finally, choose $0\ne\alpha\in\rHb^2(B_3)$. Isometry makes
$\jmath^*\alpha$ nonzero. The degree-two duality used in
Lemma~\ref{lem:brooks} gives an $\ell^1$-cycle with nonzero evaluation.
Rescale it to obtain \eqref{eq:disc-normalization}.
\end{proof}

Let $X_3(\mathbb D)$ denote the space of ordered configurations of three points in $\Int\mathbb D$:
$$
X_3(\mathbb D)=\{(x_1,x_2,x_3)\in(\Int\mathbb D)^3:x_i\ne x_j\text{ for }i\ne j\}.
$$
It has measure one. A surface transformation acts on it diagonally, by
$f(x_1,x_2,x_3)=(f(x_1),f(x_2),f(x_3))$.
In the product $X_3(\mathbb D)^k$, points belonging to different triples are
not required to be distinct. In particular, we do not identify this space
with $X_{3k}(\mathbb D)$.

\begin{lemma}\label{lem:disctransfer}
For every $k\ge1$ and $d\ge0$, the braid cocycle for the diagonal 
action on $X_3(\mathbb D)^k$ induces the product Gambaudo--Ghys map
$$
\overline\Gamma_{b,\mathbb D,k}^{\,d}:\rHb^d(B_3^k)\longrightarrow\rHb^d(\mathcal T_{\mathbb D}),
\qquad \|\overline\Gamma_{b,\mathbb D,k}^{\,d}\|\le1.
$$
For $k=1$ we write $\overline\Gamma_{b,\mathbb D}^{\,d}$ for this map.
The product maps satisfy the permutation identity
\eqref{eq:transfer-symmetrization} with $X=X_3(\mathbb D)$.
For each of the smooth groups in Theorem~\ref{thm:disc}, these maps preserve
reduced exact bounded cohomology. If $d_1,\ldots,d_k\ge0$ and
$u_j\in\rHb^{d_j}(B_3)$, then
\begin{equation}\label{eq:disc-product-transfer}
 \overline\Gamma_{b,\mathbb D,k}^{\,d_1+\cdots+d_k}(u_1\times\cdots\times u_k)
 =\overline\Gamma_{b,\mathbb D}^{\,d_1}(u_1)\smile\cdots\smile\overline\Gamma_{b,\mathbb D}^{\,d_k}(u_k).
\end{equation}
\end{lemma}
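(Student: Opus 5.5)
I would prove Lemma~\ref{lem:disctransfer} by following the proof of Lemma~\ref{lem:transfer}, with the point-pushing cocycle replaced by the Gambaudo--Ghys braid cocycle on $X_3(\mathbb D)$. The product formula \eqref{eq:disc-product-transfer} then follows from a direct Fubini computation at the cochain level.

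\emph{The braid cocycle and the transfer.} Fix a basepoint configuration $\mathbf y\in X_3(\mathbb D)$. Outside a null set, choose measurable reference paths in $X_3(\mathbb D)$ from $\mathbf y$ to $\mathbf x$; for instance, use straight-line segments for each coordinate, which is possible for almost every pair of configurations. For $f\in\mathcal T_{\mathbb D}$ with an isotopy $f_t$ supported in $\Int\mathbb D$, let $\gamma(f,\mathbf x)\in B_3$ be the class of the loop formed by the reference path, then $f_t(\mathbf x)$, then the inverse reference path, projected to unordered configurations. Contractibility of $\Homeo_0$ and $\Diff_0$ of the disc rel boundary, or the convention that isotopies fix a collar, makes this independent of the isotopy. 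The cocycle identity \eqref{eq:cocycleidentity} holds almost everywhere.

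For the finite-image property analogous to \eqref{eq:finiteimage}, I would argue as in \cite{GG,BM}. The cocycle takes finitely many values off a set where two coordinates are close, and it takes only finitely many values overall because $f$ is uniformly close to a map with bounded winding. Boundedness, which is all the bounded map needs, is standard \cite{GG}, whereas finite image is required for the ordinary cochain map. I expect this step to be the main obstacle for the exactness claim. That is why exactness is asserted only for the smooth groups: for a diffeomorphism the winding of $f_t(x_i)-f_t(x_j)$ is uniformly bounded by the derivative of the isotopy, which gives finite image. For homeomorphisms I would not claim it.

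With the cocycle $\gamma_k(f,\mathbf x)=(\gamma(f,\mathbf x_1),\ldots,\gamma(f,\mathbf x_k))$ on $X_3(\mathbb D)^k$, I would define $\Gamma'$ by formula \eqref{eq:transferformula}. The norm bound $\le1$, the cochain-map property, and independence of the section all follow verbatim from the proof of Lemma~\ref{lem:transfer}. Compatibility with comparison, and hence preservation of $\rEH$ for smooth groups, also follows verbatim. The permutation identity is exactly the Remark following Lemma~\ref{lem:transfer}, applied with $X=X_3(\mathbb D)$.

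\emph{The product formula.} Here I would work with inhomogeneous representatives, where the transfer reads
$(\Gamma' c)(f_1,\ldots,f_d)=\int c\bigl(\gamma_k(f_1,f_2\cdots f_d\mathbf x),\ldots,\gamma_k(f_d,\mathbf x)\bigr)\,d\mu^k$.
Take cocycles $c_j$ representing $u_j$ and form $c=p_1^*c_1\smile\cdots\smile p_k^*c_k$. The integrand factors as $\prod_j c_j(\ldots)$, and the $j$th factor depends only on the $j$th block of arguments and on $\mathbf x_j$. Fubini then splits the integral over $X_3(\mathbb D)^k$ into a product of $k$ integrals over $X_3(\mathbb D)$. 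The $j$th factor is $\Gamma'c_j$ evaluated on the $j$th block $(f_{d_1+\cdots+d_{j-1}+1},\ldots)$, with the point shifted by the product of the later group elements. The shifts are absorbed by invariance of $\mu$: substitute $\mathbf x_j\mapsto h\mathbf x_j$ with $h$ the product of later elements, and use that the inhomogeneous cochain depends only on the $\gamma$-values at the shifted point. This gives exactly $(\Gamma'c_1\smile\cdots\smile\Gamma'c_k)$ at the cochain level, so it holds in bounded cohomology and, by Lemma~\ref{lem:zero}, in reduced bounded cohomology. I would check this bookkeeping carefully by induction on $k$, doing the case $k=2$ first.
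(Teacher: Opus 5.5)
Your proposal follows the paper's proof in all essentials: the straight-line braid cocycle, isotopy independence via contractibility, finite image for the smooth groups (giving an ordinary cochain map and hence preservation of exactness), and a Fubini factorization for \eqref{eq:disc-product-transfer}. The only cosmetic difference is that you check the product formula in inhomogeneous coordinates, which needs the measure-preserving substitution; the paper's right-homogeneous convention makes the integrand split with no substitution.

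One point to tighten is finite image. A uniform bound on net pairwise winding numbers only controls the image in the abelianization of $P_3\cong\F\times\Z$. You therefore need either the bound on the angular speed of $f_t(x_i)-f_t(x_j)$ coming from the derivative of the isotopy, together with a subdivision of the isotopy into short straight-line-homotopic pieces, or simply \cite[Lemma~2.1]{BMentropy}, which is what the paper cites.
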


\begin{proof}
We use the construction of \cite[Section~2]{BKdisc}.
Fix a base configuration
$\mathbf z=(z_1,z_2,z_3)\in X_3(\mathbb D)$.
For almost every $\mathbf x=(x_1,x_2,x_3)\in X_3(\mathbb D)$,
the straight-line path
$$
\alpha_{\mathbf z,\mathbf x}(t)=\bigl((1-t)z_1+tx_1,\,(1-t)z_2+tx_2,\,
(1-t)z_3+tx_3\bigr),\qquad 0\le t\le1,
$$
lies in $X_3(\mathbb D)$ and joins $\mathbf z$ to $\mathbf x$.
On the exceptional set of measure zero, choose reference
paths measurably. Let
$\alpha_{\mathbf x,\mathbf z}=\alpha_{\mathbf z,\mathbf x}^{-1}$
denote the reverse path.

For $f\in\mathcal T_{\mathbb D}$, choose an isotopy
$\{f_t\}_{t\in[0,1]}$ from the identity to $f$ and write
$$f_t(\mathbf x)=(f_t(x_1),f_t(x_2),f_t(x_3)).$$
Define
$$
\gamma_{\mathbb D}(f,\mathbf x)=\left[\alpha_{\mathbf z,\mathbf x}
*\{f_t(\mathbf x)\}_{t\in[0,1]}*\alpha_{f(\mathbf x),\mathbf z}\right]^{-1}
\in\pi_1(X_3(\mathbb D),\mathbf z)=P_3\subset B_3.
$$

The homotopy class is independent of the chosen isotopy,
since the group of homeomorphisms of the disc fixing its
boundary is contractible. This defines a measurable
cocycle, with values in $B_3$, satisfying
$$
\gamma_{\mathbb D}(fh,\mathbf x)=\gamma_{\mathbb D}(f,h(\mathbf x))\gamma_{\mathbb D}(h,\mathbf x)
$$
for every $f,h\in\mathcal T_{\mathbb D}$ and almost every
$\mathbf x\in X_3(\mathbb D)$.

We now define the product transfer on cochains. A point of
$X_3(\mathbb D)^k$ is a tuple
$\mathbf X=(\mathbf x^{(1)},\ldots,\mathbf x^{(k)})$, with each
$\mathbf x^{(j)}$ an ordered triple. The action is
$$
f\mathbf X=(f\mathbf x^{(1)},\ldots,f\mathbf x^{(k)}),\qquad
f\mathbf x^{(j)}=(f(x^{(j)}_1),f(x^{(j)}_2),f(x^{(j)}_3)).
$$
This is an action of the group $\mathcal T_{\mathbb D}$. It preserves the probability measure
$$\nu_k=\bigl(\mu^3|_{X_3(\mathbb D)}\bigr)^{\otimes k}.$$
Using these paths in each factor, define the product cocycle by
$$
\begin{aligned}
&\gamma_{\mathbb D,k}:\mathcal T_{\mathbb D}\times X_3(\mathbb D)^k\longrightarrow P_3^k\subset B_3^k,\\
&\gamma_{\mathbb D,k}(f,\mathbf X)=\bigl(\gamma_{\mathbb D}(f,\mathbf x^{(1)}),\ldots,
\gamma_{\mathbb D}(f,\mathbf x^{(k)})\bigr).
\end{aligned}
$$
The preceding cocycle identity in each coordinate gives
$$
\gamma_{\mathbb D,k}(fh,\mathbf X)=\gamma_{\mathbb D,k}(f,h\mathbf X)\gamma_{\mathbb D,k}(h,\mathbf X).
$$

Let $c:(B_3^k)^{d+1}\to\R$ be a bounded cochain invariant under right multiplication. Define
$$
\begin{aligned}
&(\Gamma'_{b,\mathbb D,k}c)(f_0,\ldots,f_d)=\int_{X_3(\mathbb D)^k}
c\bigl(\gamma_{\mathbb D,k}(f_0,\mathbf X),\ldots,\gamma_{\mathbb D,k}(f_d,\mathbf X)\bigr)\,d\nu_k(\mathbf X).
\end{aligned}
$$
In particular, when $k=1$ this is
$$
(\Gamma'_{b,\mathbb D}c)(f_0,\ldots,f_d)=\int_{X_3(\mathbb D)}
 c\bigl(\gamma_{\mathbb D}(f_0,\mathbf x),\ldots,\gamma_{\mathbb D}(f_d,\mathbf x)\bigr)\,d\mu^3(\mathbf x).
$$
The product integral is absolutely convergent. Since
$\nu_k(X_3(\mathbb D)^k)=1$,
$$\|\Gamma'_{b,\mathbb D,k}c\|_\infty\le\|c\|_\infty.$$
We check that it is a right-homogeneous cochain. For
$h\in\mathcal T_{\mathbb D}$ the cocycle identity gives
$$
\begin{aligned}
 &(\Gamma'_{b,\mathbb D,k}c)(f_0h,\ldots,f_dh)=\\
 &\int_{X_3(\mathbb D)^k}
 c\bigl(\gamma_{\mathbb D,k}(f_0,h\mathbf X)\gamma_{\mathbb D,k}(h,\mathbf X),\ldots,
\gamma_{\mathbb D,k}(f_d,h\mathbf X)\gamma_{\mathbb D,k}(h,\mathbf X)\bigr)\,d\nu_k(\mathbf X)=\\
 &\int_{X_3(\mathbb D)^k}c\bigl(\gamma_{\mathbb D,k}(f_0,h\mathbf X),\ldots,
\gamma_{\mathbb D,k}(f_d,h\mathbf X)\bigr)\,d\nu_k(\mathbf X)=
 (\Gamma'_{b,\mathbb D,k}c)(f_0,\ldots,f_d).
 \end{aligned}
$$
The second equality uses right invariance of $c$, with the same multiplier
$\gamma_{\mathbb D,k}(h,\mathbf X)\in B_3^k$ in all $d+1$ arguments. The last
uses the measure-preserving change of variable $\mathbf Y=h\mathbf X$.

We also check the differential. The homogeneous differential is the
alternating sum obtained by deleting one argument, so
$$
\begin{aligned}
&(\delta\Gamma'_{b,\mathbb D,k}c)(f_0,\ldots,f_{d+1})=\\
&\sum_{\ell=0}^{d+1}(-1)^\ell\int_{X_3(\mathbb D)^k}c\bigl(\gamma_{\mathbb D,k}(f_0,\mathbf X),\ldots,
\widehat{\gamma_{\mathbb D,k}(f_\ell,\mathbf X)},\ldots,
\gamma_{\mathbb D,k}(f_{d+1},\mathbf X)\bigr)\,d\nu_k(\mathbf X)=\\
&\int_{X_3(\mathbb D)^k}(\delta c)\bigl(\gamma_{\mathbb D,k}(f_0,\mathbf X),\ldots,
\gamma_{\mathbb D,k}(f_{d+1},\mathbf X)\bigr)\,d\nu_k(\mathbf X)\\
&=(\Gamma'_{b,\mathbb D,k}\delta c)(f_0,\ldots,f_{d+1}).
\end{aligned}
$$
The hat means that the indicated argument is omitted. Thus
$\delta\Gamma'_{b,\mathbb D,k}=\Gamma'_{b,\mathbb D,k}\delta$. We obtain a
map on bounded cohomology, and its norm bound sends zero-seminorm classes
to zero-seminorm classes. Its induced map on reduced bounded cohomology is
$\overline\Gamma_{b,\mathbb D,k}^{\,d}$. The construction 
applies to area-preserving homeomorphisms as well as smooth transformations.

For the smooth transformation groups, the finite-image property
follows from \cite[Lemma~2.1]{BMentropy}: with the reference
paths chosen there, for every fixed $f$ there is a finite set
$S_f\subset P_3$ such that
$$
\gamma_{\mathbb D}(f,\mathbf x)\in S_f\qquad\text{for almost every }\mathbf x\in X_3(\mathbb D).
$$
We may use these reference paths, since the induced bounded
transfer is independent of their choice. Consequently,
$\gamma_{\mathbb D,k}(f,\mathbf X)\in S_f^k$ for almost every
$\mathbf X\in X_3(\mathbb D)^k$.

For each fixed tuple $(f_0,\ldots,f_d)$, the integrand in the
transfer formula therefore has finite image almost everywhere,
even when the cochain is unbounded. Since $\nu_k$ is finite,
the same integral defines a map $\Gamma'_{\mathbb D,k}$ on
ordinary right-homogeneous cochains. The preceding cocycle
calculation shows that this is a cochain map, and it agrees
with $\Gamma'_{b,\mathbb D,k}$ on bounded cochains.

In particular, if a bounded cocycle $c$ has an ordinary
primitive $A$, then
$$
\Gamma'_{b,\mathbb D,k}c=\Gamma'_{\mathbb D,k}(\delta A)=\delta\bigl(\Gamma'_{\mathbb D,k}A\bigr).
$$
Thus the transfer preserves exact bounded cohomology and,
by its norm bound, reduced exact bounded cohomology.

We finish by proving \eqref{eq:disc-product-transfer} on cochains, first for
two factors. In the right-homogeneous convention, the cup product of a
$p$-cochain $c$ and a $q$-cochain $e$ on the same group is
$$
(c\smile e)(g_0,\ldots,g_{p+q})=c(g_0,\ldots,g_p)e(g_p,\ldots,g_{p+q}).
$$
This is \eqref{eq:cupcochain} under the homogeneous identification in
Lemma~\ref{lem:transfer}. If $c$ and $e$ are cochains on $B_3$, their
external product is the cochain on $B_3\times B_3$ given by
$$
(c\times e)\bigl((g_0,h_0),\ldots,(g_{p+q},h_{p+q})\bigr)=c(g_0,\ldots,g_p)e(h_p,\ldots,h_{p+q}).
$$
Let $c,e$ be bounded; they need not be cocycles for the following identity.
For $(\mathbf x,\mathbf y)\in X_3(\mathbb D)^2$ we have
$$
\gamma_{\mathbb D,2}(f,(\mathbf x,\mathbf y))=\bigl(\gamma_{\mathbb D}(f,\mathbf x),\gamma_{\mathbb D}(f,\mathbf y)\bigr).
$$
For this computation, fix $\mathbf f=(f_0,\ldots,f_{p+q})$ and write the
two integrands as
$$
\begin{aligned}
&c_{\mathbf f}(\mathbf x)=c\bigl(\gamma_{\mathbb D}(f_0,\mathbf x),\ldots,
\gamma_{\mathbb D}(f_p,\mathbf x)\bigr),\\
&e_{\mathbf f}(\mathbf y)=e\bigl(\gamma_{\mathbb D}(f_p,\mathbf y),\ldots,
\gamma_{\mathbb D}(f_{p+q},\mathbf y)\bigr).
\end{aligned}
$$
Substitution into the product transfer gives
$$
\begin{aligned}
&(\Gamma'_{b,\mathbb D,2}(c\times e))(f_0,\ldots,f_{p+q})=\int_{X_3(\mathbb D)^2}
c_{\mathbf f}(\mathbf x)e_{\mathbf f}(\mathbf y)\,d\nu_2(\mathbf x,\mathbf y)\\
&=\left[\int_{X_3(\mathbb D)}c_{\mathbf f}(\mathbf x)\,d\mu^3(\mathbf x)\right]
\left[\int_{X_3(\mathbb D)}e_{\mathbf f}(\mathbf y)\,d\mu^3(\mathbf y)\right]\\
&=(\Gamma'_{b,\mathbb D}c)(f_0,\ldots,f_p)(\Gamma'_{b,\mathbb D}e)(f_p,\ldots,f_{p+q})\\
&=\bigl((\Gamma'_{b,\mathbb D}c)\smile(\Gamma'_{b,\mathbb D}e)\bigr)(f_0,\ldots,f_{p+q}).
\end{aligned}
$$
Thus we have the actual cochain identity
$$
\Gamma'_{b,\mathbb D,2}(c\times e)=(\Gamma'_{b,\mathbb D}c)\smile(\Gamma'_{b,\mathbb D}e).
$$

For completeness, let $c_j$ be a bounded right-homogeneous $d_j$-cochain on
$B_3$, put $D_0=0$ and $D_j=d_1+\cdots+d_j$, and write $d=D_k$.
For $\mathbf h_\ell=(h_\ell^{(1)},\ldots,h_\ell^{(k)})\in B_3^k$,
our external-product convention gives
$$
(c_1\times\cdots\times c_k)(\mathbf h_0,\ldots,\mathbf h_d)=\prod_{j=1}^k
c_j\bigl(h_{D_{j-1}}^{(j)},\ldots,h_{D_j}^{(j)}\bigr).
$$
For the fixed tuple $\mathbf f=(f_0,\ldots,f_d)$, put
$$
c_{j,\mathbf f}(\mathbf x)=
c_j\bigl(\gamma_{\mathbb D}(f_{D_{j-1}},\mathbf x),\ldots,\gamma_{\mathbb D}(f_{D_j},\mathbf x)\bigr).
$$
The same substitution and Fubini's theorem give
$$
\begin{aligned}
&(\Gamma'_{b,\mathbb D,k}(c_1\times\cdots\times c_k))(f_0,\ldots,f_d)
=\int_{X_3(\mathbb D)^k}\prod_{j=1}^k c_{j,\mathbf f}(\mathbf x^{(j)})\,d\nu_k(\mathbf X)\\
&=\prod_{j=1}^k\left[\int_{X_3(\mathbb D)}c_{j,\mathbf f}(\mathbf x)\,d\mu^3(\mathbf x)\right]
=\prod_{j=1}^k(\Gamma'_{b,\mathbb D}c_j)(f_{D_{j-1}},\ldots,f_{D_j})\\
&=\bigl((\Gamma'_{b,\mathbb D}c_1)\smile\cdots\smile(\Gamma'_{b,\mathbb D}c_k)\bigr)(f_0,\ldots,f_d).
\end{aligned}
$$
All integrands are bounded in absolute value by
$\prod_j\|c_j\|_\infty$. Taking cocycles representing $u_j$ 
proves the product identity on bounded
cohomology. The transfer and product norm bounds, including
\eqref{eq:productnorm}, show that zero-seminorm classes give zero-seminorm
terms on both sides, so the identity passes to reduced bounded cohomology
and gives \eqref{eq:disc-product-transfer}. 
\end{proof}

We next construct the simultaneous rotations used to compute the pullback of this transfer.
The number of copies $k$ is fixed in each construction.
Choose $k$ pairwise disjoint open discs $E_j\Subset\Int\mathbb D$.
Inside each $E_j$ choose two overlapping round discs $N_{j,1},N_{j,2}$ in
area coordinates. The pairs are congruent, and their closures lie in $E_j$.
Each of the three sets
$$
N_{j,1}\setminus\overline{N_{j,2}},\qquad N_{j,1}\cap N_{j,2},\qquad N_{j,2}\setminus\overline{N_{j,1}}
$$
has positive area. For any fixed $k$, this is possible in one area-coordinate
chart by taking the radii sufficiently small. Figure~\ref{fig:disc-pairs}
illustrates the case $k=2$.

\begin{figure}[htbp]
\centering
\begin{adjustbox}{max width=\linewidth}
\includegraphics{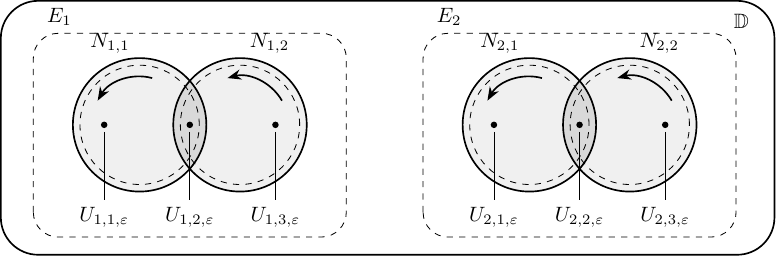}
\end{adjustbox}
\caption{Two disjoint copies of the disc rotation construction. The dashed
inner circles bound the regions rotated through one complete turn; the thin
annuli between the circles are transition regions. A triple with one point
in each of the three indicated subsets in a single $E_j$ gives the free braid
homomorphism. A triple cannot have two points in each of two different $E_j$.}
\label{fig:disc-pairs}
\end{figure}

For $s=1,2$, let $N_{j,s}^{\eps}$ be the concentric disc of radius $R-\eps$
inside $N_{j,s}$, for $\eps>0$ small. Choose a smooth radial function
$\chi_\eps$, equal to one on $[0,R-\eps]$, equal to zero near $R$, and with
$0\le\chi_\eps\le1$. The isotopy
$$(r,\varphi)\longmapsto(r,\varphi+2\pi t\chi_\eps(r)),\qquad 0\le t\le1,
$$
extended by the identity outside $N_{j,s}$, is Hamiltonian. 
Let $f_{j,s,\eps}$ be its time-one map. It fixes
$N_{j,s}^{\eps}$ pointwise, but its isotopy makes each point in that disc
travel once around the center.

Define $\rho_{j,\eps}:\F\to\Ham(\mathbb D,\omega)$ by
$\rho_{j,\eps}(a)=f_{j,1,\eps},\qquad\rho_{j,\eps}(b)=f_{j,2,\eps}$.
Images belonging to different $j$ have disjoint supports and commute. Hence
\begin{equation}\label{eq:disc-rho}
\rho_\eps:\F^k\longrightarrow\Ham(\mathbb D,\omega),\qquad\rho_\eps(w_1,\ldots,w_k)=\prod_{j=1}^k\rho_{j,\eps}(w_j)
\end{equation}
is a homomorphism, also viewed as taking values in any $\mathcal T_{\mathbb D}$.
Put
$$
B_\eps=\bigcup_{j,s}(N_{j,s}\setminus N_{j,s}^{\eps}),\qquad b_\eps=\mu(B_\eps)\longrightarrow0.
$$
Each time-one map fixes $\mathbb D\setminus B_\eps$ pointwise, so $B_\eps$ is
invariant under the generated group. Define
\begin{align*}
 &U_{j,1,\eps}=(N_{j,1}\setminus\overline{N_{j,2}})\setminus B_\eps,\\
 &U_{j,2,\eps}=(N_{j,1}\cap N_{j,2})\setminus B_\eps,\\
 &U_{j,3,\eps}=(N_{j,2}\setminus\overline{N_{j,1}})\setminus B_\eps.
 \end{align*}
Their areas are independent of $j$. Set
\begin{equation}\label{eq:disc-lambda}
 \lambda_\eps=6\prod_{s=1}^3\mu(U_{j,s,\eps}),
 \qquad
 \lambda=\lim_{\eps\to0}\lambda_\eps>0.
\end{equation}
Let $p_j:\F^k\to\F$ denote projection onto the $j$th factor.

\begin{lemma}\label{lem:disc-approx}
For every $d\ge2$ and $u\in\rHb^d(B_3)$,
\begin{equation}\label{eq:disc-approx}
 \left\|\rho_\eps^*\overline\Gamma_{b,\mathbb D}^{\,d}(u)
       -\lambda\sum_{j=1}^k p_j^*\jmath^*u\right\|
 \le\bigl(3b_\eps+k|\lambda_\eps-\lambda|\bigr)\|u\|.
\end{equation}
In particular, the expression on the left tends to zero.
\end{lemma}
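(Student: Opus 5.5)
We will compute $\rho_\eps^*\overline\Gamma_{b,\mathbb D}^{\,d}(u)$ directly from the transfer integral over $X_3(\mathbb D)$, following the cell decomposition used in Proposition~\ref{prop:approx}. Let $\mathcal E_\eps$ be the set of triples with at least one point in $B_\eps$. It is invariant under $\rho_\eps(\F^k)$ and has measure at most $3b_\eps$. Exactly as for $\mathcal R_\eps$ in Proposition~\ref{prop:approx}, integration over $\mathcal E_\eps$ defines a cochain map of norm at most $3b_\eps$. This accounts for the first error term. Every time-one map fixes the complement pointwise. So for each triple $\mathbf x$ outside $\mathcal E_\eps$, the map $w\mapsto\gamma_{\mathbb D}(\rho_\eps(w),\mathbf x)$ is a homomorphism $\F^k\to P_3\subset B_3$. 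We partition the complement, up to null sets, according to which of the sets $U_{j,s,\eps}$ or $W=\mathbb D\setminus\bigcup_{j,s}N_{j,s}$ contains each of the three points. On each cell $C$ the homomorphism equals a fixed $\varphi_C$ up to a measurable conjugation. By \cite[Lemma~A.2]{Loh}, integrated over the cell, the contribution of $C$ is $\mu^3(C)\varphi_C^*u$.

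The main step is the braid computation on each cell. First consider a triple with its three points in the three distinct sets $U_{j,1,\eps},U_{j,2,\eps},U_{j,3,\eps}$ of a single $E_j$, in any of the $3!=6$ orders. Here $a$ rotates the points in $N_{j,1}$ once around one another, and $b$ does the same in $N_{j,2}$. I expect the associated braid to be a conjugate of $\jmath\circ p_j$ after identifying the strands. Concretely, $a$ gives the full twist of the two points lying in $N_{j,1}$, namely $\sigma_1^2$ up to conjugation, and $b$ gives $\sigma_2^2$. Both use the same conjugator, via straight-line reference paths chosen consistently. The total measure of these cells is $6\prod_s\mu(U_{j,s,\eps})=\lambda_\eps$, which produces $\lambda_\eps p_j^*\jmath^*u$.

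Every other cell must have image in an amenable subgroup of $B_3$, so that its contribution vanishes in positive degree by \cite[Corollary~3.7]{Frigerio}. We check this case by case. If at most one point lies in any rotated disc, the braid is trivial. If exactly two points lie in a disc, the image is cyclic, generated by the full twist of those two strands. If the points are spread over two different $E_j$'s, then no single generator moves two points around one another, except for a pair inside one disc, and that again gives a cyclic or abelian image. Three points inside one $E_j$ that do not occupy the three distinct regions give a cyclic image, or the abelian group generated by the full twist $\Delta^2$. The delicate cases are two points in $U_{j,2,\eps}$ together with one point elsewhere, and all three points in one disc. In the first case both generators twist the same pair of points, and the image is generated by conjugates of that twist, possibly together with $\Delta^2$. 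I expect it to be abelian, but this needs checking. If all three points lie in one disc, the image lies in $\langle\Delta^2\rangle$.

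I expect the main obstacle to be this careful verification that each non-generic cell yields an amenable image, and, for the generic cells, the identification of the common conjugator. I would handle the latter as in Lemma~\ref{lem:push}, by enumerating $B_3$ measurably. Summing the cells gives $\lambda_\eps\sum_jp_j^*\jmath^*u$ plus the error term. We then subtract $\lambda\sum_jp_j^*\jmath^*u$. Each pullback $p_j^*\jmath^*$ is norm non-increasing, so the difference has norm at most $k|\lambda_\eps-\lambda|\,\|u\|$. This gives \eqref{eq:disc-approx}, and $b_\eps\to0$ together with $\lambda_\eps\to\lambda$ gives the final claim.
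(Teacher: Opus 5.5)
Your route is the paper's: the invariant exceptional set with error $3b_\eps$, and the fixed-point homomorphisms outside it. Then the six labelled generic cells of total measure $\lambda_\eps$ contribute $\lambda_\eps p_j^*\jmath^*u$; conjugation by non-pure braids is harmless because $u$ is a class on $B_3$. Amenable images give zero, and finally $\lambda_\eps$ is replaced by $\lambda$.

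The gap is the case you flagged and left open: two points in $U_{j,2,\eps}$ and the third outside $N_{j,1}\cup N_{j,2}$. Saying that ``both generators twist the same pair'' and that the image is ``generated by conjugates of that twist'' does not give amenability. Indeed $\sigma_1^2,\sigma_2^2$ are conjugate in $B_3$ and generate $\F$. Even two twists about the same labelled pair, such as $\sigma_1^2$ and $\sigma_2^2\sigma_1^2\sigma_2^{-2}$, generate a free group.

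The missing point is that here the two braids are \emph{equal}. The lunes $N_{j,1}\setminus\overline{N_{j,2}}$ and $N_{j,2}\setminus\overline{N_{j,1}}$ contain no marked point. Hence both boundary circles are isotopic, in the marked disc, to the boundary of the lens $N_{j,1}\cap N_{j,2}$. Equivalently, both rotation loops are full twists of the same two points inside the topological disc $N_{j,1}\cup N_{j,2}$, which misses the third point. Both have winding number one, so they are homotopic in $X_3(\mathbb D)$. With the common closing path, the two generators therefore have the same image, and that image is cyclic.

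Your case list should also be reorganized. The claim ``if exactly two points lie in a disc, the image is cyclic'' is false as stated, because the generic cell has exactly two points in each disc. Sort instead by the number of marked points in each rotating disc, as the paper does:
\begin{itemize}
\item If a disc contains at most one point, its generator is trivial.
\item If it contains all three, its generator is the central $\Delta^2$. This settles at once your subcase with the third point in $U_{j,1,\eps}$ or $U_{j,3,\eps}$.
\item If both discs contain exactly two points, the pairs are either equal (the case above) or distinct. Distinct pairs force one point in each $U_{j,s,\eps}$.
\end{itemize}
Rotations in a disc $E_\ell$ containing at most one marked point give trivial braids, so at most one $E_j$ matters. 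Together with the list above, this makes the analysis exhaustive.

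Two smaller points. In the generic cell, enumerating $B_3$ only makes a conjugator measurable. Its existence for both generators simultaneously comes from an orientation-preserving identification of the two overlapping marked discs with the model; this is Kimura's common-conjugator lemma. For the amenable cells, you need not show (and have not shown) that each cell carries a single homomorphism up to conjugacy. Partition them instead by the countably many actual homomorphisms, each with amenable image. Finite partial sums then vanish, and the tail is bounded by its measure times $\|c\|_\infty$.
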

\begin{proof}
We give the complete decomposition of a single three-point configuration;
the same decomposition applies simultaneously to all $k$ copies. This is the
local computation of \cite[Lemma~4.2 and Section~4.3]{Kimura}, in the disjoint
setting of \cite[proof of Theorem~4.3]{Nitsche}.

Let $\mathcal E_\eps\subset X_3(\mathbb D)$ consist of triples with at least
one coordinate in $B_\eps$. It is invariant under the diagonal action of the time-one maps, and
$$\mu^3(\mathcal E_\eps)=1-(1-b_\eps)^3\le3b_\eps.$$
For a bounded cocycle $c$ representing $u$, integration over
$\mathcal E_\eps$ is a bounded cochain map: invariance gives the
measure-preserving change of variables needed for right homogeneity, and the
differential commutes with integration. Its value has norm at most
$3b_\eps\|c\|_\infty$.

Every point of a triple outside $\mathcal E_\eps$ is fixed by each time-one map.
Consequently $w\mapsto\gamma_{\mathbb D}(\rho_\eps(w),\mathbf x)$ is a group
homomorphism. A rotation supported in an $E_j$ containing at most one point
of the triple gives the trivial braid: that point traces a loop in the
simply connected disc $E_j$, which contains neither of the other points.
Since the $E_j$ are disjoint and a triple has only three points, at most one
$E_j$ contains two or more points. Thus the associated homomorphism is either
trivial or factors through one projection $p_j$. This also deals with
configurations spread among different copies; they cannot produce two
independent nonabelian braid contributions.

Fix the possibly nontrivial copy $j$. The braid made by one complete
rotation of $N_{j,s}$ is trivial if it contains zero or one marked point, a
full twist of the enclosed pair if it contains two, and the central full
twist $\Delta^2$ if it contains all three. If one of the two discs contains
zero or one point, one generator is trivial. If it contains all three, its
generator is central. In both cases the two generators have abelian image.
If both discs contain exactly two points and those pairs are equal, the
corresponding twists are equal up to the same conjugacy; both occur inside
$E_j$ away from the third point. This again gives cyclic image.
The only remaining case is two different pairs of marked points. They share
one point, and the triple then has exactly one point in each of
$U_{j,1,\eps},U_{j,2,\eps},U_{j,3,\eps}$.

Order those three regions from the first exclusive part through the
intersection to the second exclusive part. In that ordering the two
rotations make the braids $\sigma_1^2,\sigma_2^2$. For an arbitrary labeling
and arbitrary closing paths, the simultaneous associated homomorphism is
$$
w\longmapsto\beta_{\mathbf x}\,\jmath(w_j)\,\beta_{\mathbf x}^{-1},\qquad \beta_{\mathbf x}\in B_3.
$$
The same conjugator works on both generators, and hence on every word: an
orientation-preserving identification of the two overlapping marked discs
with the model identifies both twists simultaneously. This is precisely the
common-conjugator assertion of \cite[Lemma~4.2]{Kimura}. The conjugator does
not depend on $w$.

The six possible assignments of labels give six product subsets, each of
measure $\prod_s\mu(U_{j,s,\eps})$. Because $u$ is a class on $B_3$, inner
conjugation by $\beta_{\mathbf x}$ acts trivially on it. Thisis not
true for an arbitrary class on $P_3$. The contribution of these six subsets
is therefore $\lambda_\eps p_j^*\jmath^*u$.
All other subsets have amenable image and their positive-degree
bounded-cohomology contribution is zero. Partitioning by the countably many
associated homomorphisms gives the same conclusion for their union: finite
partial sums are zero in reduced bounded cohomology, and the remaining
integral is bounded by its measure times $\|c\|_\infty$.

Adding the $k$ contributions and the exceptional contribution proves
$$
\left\|\rho_\eps^*\overline\Gamma_{b,\mathbb D}^{\,d}(u)-\lambda_\eps\sum_{j=1}^kp_j^*\jmath^*u\right\|\le3b_\eps\|u\|.
$$
Here we take the infimum over bounded representatives of $u$.
Since each $p_j^*$ and $\jmath^*$ has norm at most one, replacing
$\lambda_\eps$ by $\lambda$ gives \eqref{eq:disc-approx}.
\end{proof}

\begin{remark*}[The fixed Brooks class for the disc]\label{rem:disc-brooks-choice}
For the disc proof, we now choose the fixed class $\alpha$ so that
$$\jmath^*\alpha=[\delta\phi_{ab}]+[\delta\phi_{ba}],$$
where $\phi_w$ is the Brooks quasimorphism counting occurrences of $w$
minus occurrences of $w^{-1}$ in reduced words. For this choice,
\cite[Theorem~A\textup{(b)}]{AB} and bilinearity imply
$$
\jmath^*\alpha\smile\jmath^*\beta=0\qquad\bigl(\beta\in\rHb^q(B_3),\ q\ge1\bigr).
$$
This will make every repeated-coordinate term vanish in the disc
calculation, exactly as for the fixed class in Theorem~\ref{thm:main}.
We verify both nonvanishing and extension to $B_3$ before making this
choice in the proof.

We verify that this choice extends to $B_3$. Put
$e=[\delta\phi_{ab}]$ and $\eta=e+[\delta\phi_{ba}]$.
In $P_3/\langle\Delta^2\rangle=\F$, conjugation by
$\sigma_1\sigma_2\sigma_1$ and $\sigma_1\sigma_2$ induces, respectively,
$$
S(a)=b,\quad S(b)=a,\qquad T(a)=b,\quad T(b)=b^{-1}a^{-1}.
$$
These formulas follow from the braid relation and the central quotient;
see the presentation preceding Lemma~\ref{lem:braid-cohomology}.
The two conjugating braids generate $B_3$. Clearly
$S^*e=[\delta\phi_{ba}]$ and $S^*\eta=\eta$.
Let $\varepsilon_b:\F\to\R$ be the exponent-sum homomorphism with
$\varepsilon_b(a)=0$ and $\varepsilon_b(b)=1$. Direct counting gives
$$
\phi_{ab}(T(g))=\phi_{ab}(g)-\varepsilon_b(g)\qquad(g\in\F).
$$
Indeed, the substitution is
$a\mapsto b$, $a^{-1}\mapsto b^{-1}$,
$b\mapsto b^{-1}a^{-1}$, $b^{-1}\mapsto ab$.
The cancellations at the adjacent pairs $ab$ and $b^{-1}a^{-1}$ remove,
respectively, one negative and one positive occurrence counted by
$\phi_{ab}$; no other cancellation occurs for the 
reduced word to which the substitution is applied.
Before these cancellations the signed count is
$-\varepsilon_b(g)$, which proves the displayed identity.
Taking coboundaries gives $T^*e=e$. Checking $a$ and $b$ also gives
$$S\circ T\circ S=\operatorname{Inn}(a^{-1})\circ T^{-1}.$$
Since inner automorphisms act trivially on bounded cohomology, this
identity implies $T^*S^*e=S^*e$. Hence $T^*\eta=\eta$, so $\eta$ is
invariant under the full braid-group action on the central quotient.
Let $Q$ be the homogenization of $\phi_{ab}+\phi_{ba}$. Since
$Q(a)=Q(b)=0$ and $Q(ab)=2$, the class $\eta=[\delta Q]$ is nonzero in bounded cohomology, 
and hence in reduced bounded cohomology.

We first replace $Q$ by an invariant representative of the
same bounded-cohomology class. Let
$\varepsilon:\F\to\R$ be the homomorphism satisfying
$\varepsilon(a)=\varepsilon(b)=1$, and put $Q_0=Q-\frac23\varepsilon$.
Since $T^*[\delta Q]=[\delta Q]$ and $Q$ is homogeneous,
$Q\circ T-Q$ is a homomorphism. Evaluating on $a,b$ gives
$$
Q\circ T-Q=-2\varepsilon_b,\qquad\varepsilon\circ T-\varepsilon=-3\varepsilon_b.
$$
Hence $Q_0\circ T=Q_0$. Both $Q$ and $\varepsilon$ are
invariant under $S$, so $Q_0\circ S=Q_0$ as well.
Thus $Q_0$ is invariant under the induced $B_3$-action,
and $\delta Q_0=\delta Q$.

Let $p:P_3\to\F$ be the projection.
The homogeneous quasimorphism $Q_0\circ p$ is invariant
under conjugation by $B_3$. Since $P_3$ is normal of
index 6, it extends to a homogeneous quasimorphism
$\overline Q:B_3\to\R$. Explicitly,
$$
\overline Q(\beta)=\frac16 Q_0\bigl(p(\beta^6)\bigr),\qquad \beta\in B_3.
$$
The formula is defined because $\beta^6\in P_3$.
For $h\in P_3$, homogeneity gives
$\overline Q(h)=Q_0(p(h))$.
Define $\alpha=[\delta\overline Q]\in\rHb^2(B_3)$.
Since $p\circ\jmath=\id$, we obtain
$$
\jmath^*\alpha=[\delta Q_0]=[\delta Q]=\eta\ne0.
$$
Choose $z$ as in Lemma~\ref{lem:braid-cohomology}, so that
$\langle\jmath^*\alpha,z\rangle=1$.
\end{remark*}

\begin{proof}[Proof of Theorem~\ref{thm:disc} for the disc]
Choose $\alpha,z$ by the preceding Brooks-class construction, and put
$\theta=\jmath^*\alpha$. Thus $\langle\theta,z\rangle=1$ and
$\theta\smile u=0$ for every positive-degree bounded class $u$ on
$\F$. In this proof, $T_m^q$ denotes the map of
Proposition~\ref{prop:symmetric-products} for this fixed $\theta,z$.
Let $m\ge0$, $q\in\{2,3\}$, $k=m+1$, and $d=2m+q$.
For $\beta\in\rHb^q(B_3)$ define
$$
\xi_\beta=\Sym_k(\alpha^{\times m}\times\beta)\in\rHb^d(B_3^k).
$$
This class is symmetric. Naturality gives
$(\jmath^k)^*\xi_\beta=T_m^q(\jmath^*\beta)$,
so $\beta\mapsto\xi_\beta$ is injective by Proposition~\ref{prop:symmetric-products} and
Lemma~\ref{lem:braid-cohomology}. Define
\begin{equation}\label{eq:disc-symmetric-map}
\begin{aligned}
&\Psi_{m,q}:\rHb^q(B_3)\longrightarrow\rHb^d(\mathcal T_{\mathbb D}),\\
&\Psi_{m,q}(\beta) =\frac1{k!}\overline\Gamma_{b,\mathbb D,k}^{\,d}(\xi_\beta).
\end{aligned}
\end{equation}
By \eqref{eq:transfer-symmetrization} and
\eqref{eq:disc-product-transfer}, this is equivalently
$$
\Psi_{m,q}(\beta)=\bigl(\overline\Gamma_{b,\mathbb D}^{\,2}(\alpha)\bigr)^{\smile m}
\smile\overline\Gamma_{b,\mathbb D}^{\,q}(\beta).
$$
Thus the map is the same cup product of transferred classes, now written
as a transfer of a symmetric external product. 

\emph{Two factors: $m=1$, $k=2$.}
Fix $q=2$ or $3$ and write $b=\jmath^*\beta$.
Use the two disjoint rotation constructions preceding
Lemma~\ref{lem:disc-approx}. They give one homomorphism
$\rho_\eps:\F^2\to\Ham(\mathbb D,\omega)$, independent of $\beta$.
Apply that lemma first to $\alpha$ in degree two and then to $\beta$
in degree $q$. The constant $\lambda>0$ is defined in
\eqref{eq:disc-lambda}. With the same $\rho_\eps$ and $\lambda$ in
both applications, we obtain
$$
\rho_\eps^*\overline\Gamma_{b,\mathbb D}^{\,2}(\alpha)
\longrightarrow\lambda(p_1^*\theta+p_2^*\theta),\qquad
\rho_\eps^*\overline\Gamma_{b,\mathbb D}^{\,q}(\beta)
\longrightarrow\lambda(p_1^*b+p_2^*b).
$$
These are norm limits. The product estimate
\eqref{eq:productnorm} allows us to multiply them. More explicitly,
put $u_\eps=\rho_\eps^*\overline\Gamma_{b,\mathbb D}^{\,2}(\alpha)$,
$w_\eps=\rho_\eps^*\overline\Gamma_{b,\mathbb D}^{\,q}(\beta)$,
and denote their displayed limits by $u_0,w_0$. Then
$$
\|u_\eps\smile w_\eps-u_0\smile w_0\|\le\|u_\eps-u_0\|\,\|w_\eps\|
+\|u_0\|\,\|w_\eps-w_0\|\longrightarrow0.
$$
The norms $\|w_\eps\|$ are bounded by $\|\beta\|$.
Expanding the limiting product gives
\begin{equation}\label{eq:disc-two}
\rho_\eps^*\Psi_{1,q}(\beta)\longrightarrow\lambda^2\bigl[p_1^*(\theta\smile b)+\theta\times b
+b\times\theta+p_2^*(\theta\smile b)\bigr].
\end{equation}
The first and fourth terms vanish by the choice of $\theta$.
The two mixed terms have the same positive sign, since
$(-1)^{2q}=1$. Hence \eqref{eq:disc-two} reduces to
\begin{equation}\label{eq:disc-two-symmetric}
 \rho_\eps^*\Psi_{1,q}(\beta)
   \longrightarrow\lambda^2(\theta\times b+b\times\theta)
   =\lambda^2T_1^q(b).
\end{equation}
This is the same symmetric-product limit as in Theorem~\ref{thm:main}.
If $\Psi_{1,q}(\beta)=0$, then \eqref{eq:disc-two-symmetric} gives
$T_1^q(b)=0$. Proposition~\ref{prop:symmetric-products} implies $b=0$,
and injectivity of $\jmath^*$ gives $\beta=0$. 

\emph{An arbitrary number of factors.}
For $m\ge2$, use $k=m+1$ disjoint copies and apply
Lemma~\ref{lem:disc-approx} to $\alpha$ and $\beta$, with the same
homomorphism. Replacing one factor at a time in the cup product, using
the preceding estimate, gives
$$
\rho_\eps^*\Psi_{m,q}(\beta)\longrightarrow\lambda^k\left(\sum_{j=1}^kp_j^*\theta\right)^{\smile m}
\smile\left(\sum_{j=1}^kp_j^*b\right).
$$
There are $k$ labelled factors of positive degree. In a nonbijective
assignment to the $k$ coordinates, a repeated coordinate contains a
fixed $\theta$ and another positive-degree class. Their internal cup
product is zero. Only bijective assignments remain, and their sum is
$\Sym_k(\theta^{\times m}\times b)=T_m^q(b)$. Therefore
\begin{equation}\label{eq:disc-symmetric-limit}
\rho_\eps^*\Psi_{m,q}(\beta)\longrightarrow\lambda^k T_m^q(\jmath^*\beta).
\end{equation}
The same formula holds for $m=1$ by \eqref{eq:disc-two-symmetric} and
for $m=0$ by Lemma~\ref{lem:disc-approx}, with $T_0^q=\id$.
Since $\lambda>0$, \eqref{eq:disc-symmetric-limit},
Proposition~\ref{prop:symmetric-products}, and injectivity of
$\jmath^*$ prove that every $\Psi_{m,q}$ is injective. In particular,
a nonzero class gives an image of positive reduced norm. For example,
if $Q_{m,q}$ is the bounded left inverse from
\eqref{eq:explicitQ}, then
$$
\|\Psi_{m,q}(\beta)\|\ge\lambda^k\|T_m^q(\jmath^*\beta)\|\ge\frac{\lambda^k}{\|Q_{m,q}\|}\,\|\beta\|.
$$
We used that $\jmath^*$ is isometric; the constants may depend on $k$.

Both $\rHb^2(B_3)$ and $\rHb^3(B_3)$ are infinite-dimensional by
Lemma~\ref{lem:braid-cohomology}, which proves the reduced bounded-cohomology
assertion for the disc. For smooth transformations, $\alpha$ and
$\beta$ are reduced exact by that lemma, and
Lemma~\ref{lem:disctransfer} preserves exactness. Each summand of
$\xi_\beta$ is exact, so its normalized transfer in
\eqref{eq:disc-symmetric-map} is reduced exact. For homeomorphisms
we only use bounded cochains on the finite-measure configuration space
and do not assert an ordinary transfer. All testing homomorphisms take
values in $\Ham(\mathbb D,\omega)$,
so the same proof applies to each of the four transformation groups.
\end{proof}

\subsection{Sphere case}
\label{sec:4-2}

We now prove the sphere case of Theorem~\ref{thm:disc}. Write
$\mathcal T_{S^2}$ for any of the four area-preserving transformation groups
in that theorem, and normalize the area form so that $\mu(S^2)=1$.
The domain of the transfer is the reduced bounded cohomology of the full
mapping class group of the sphere with four marked points, not that of its
pure subgroup.
We first explain this choice and its relation to the three-strand disc braid group.

Fix a disc in $S^2$ and a fourth point outside it. Adding the constant
fourth strand to a three-strand pure disc braid defines a homomorphism
$$\kappa:P_3(\mathbb D)\longrightarrow P_4(S^2).$$
For $r=3,4$, write $\Delta_r^2$ for the full twist on $r$ strands, in the
disc or sphere as appropriate. Thus
$\Delta_3^2=(\sigma_1\sigma_2)^3$, and $\Delta_4^2$ has order two in
$P_4(S^2)$. Denote by $\delta_1,\delta_2$ the elementary spherical braids
supported in the chosen three-punctured disc. In particular,
$\kappa(\sigma_s^2)=\delta_s^2$ for $s=1,2$.
We recall the following standard braid and mapping class group
identifications; see \cite{FVB} and
\cite[Theorem~2.4 and Section~5.2]{Kimura}.
The homomorphism $\kappa$ is onto and
\begin{equation}\label{eq:sphere-braid-quotient}
\begin{aligned}
&\ker\kappa=\langle(\Delta_3^2)^2\rangle,
\qquad \kappa(\Delta_3^2)=\Delta_4^2,\\
&P_4(S^2)=\langle\delta_1^2,\delta_2^2\rangle
\times\langle\Delta_4^2\rangle\cong\F\times\Z/2\Z.
\end{aligned}
\end{equation}
Let $\operatorname{Mod}(S^2,4)$ denote the orientation-preserving
mapping class group of the four-punctured sphere. We identify
$$
\operatorname{Mod}(S^2,4)=B_4(S^2)/\langle\Delta_4^2\rangle.
$$
The images of $\delta_1^2,\delta_2^2$ define an embedding
$\jmath_{S^2}:\F\hookrightarrow\operatorname{Mod}(S^2,4)$
whose image is the pure mapping class group and has index $24$.

\begin{lemma}\label{lem:sphere-cohomology}
For $q=2,3$, restriction is an isometric embedding
\begin{equation}\label{eq:sphere-restriction}
 \jmath_{S^2}^*:\rHb^q\bigl(\operatorname{Mod}(S^2,4)\bigr)
 \longrightarrow\rHb^q(\F).
\end{equation}
Its domain is infinite-dimensional and equals
$\rEH^q\bigl(\operatorname{Mod}(S^2,4)\bigr)$.
There are a fixed nonzero class
$\alpha\in\rHb^2\bigl(\operatorname{Mod}(S^2,4)\bigr)$
and a cycle $z\in Z_2^{\ell^1}(\F)$ such that
\begin{equation}\label{eq:sphere-normalization}
\langle\jmath_{S^2}^*\alpha,z\rangle=1.
\end{equation}
\end{lemma}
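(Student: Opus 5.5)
The proof will mirror Lemma~\ref{lem:braid-cohomology}, replacing $B_3$ by $\operatorname{Mod}(S^2,4)$ and $P_3=\F\times\Z$ by the pure mapping class group $\jmath_{S^2}(\F)$.

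\emph{Restriction.} By \eqref{eq:sphere-braid-quotient}, the pure mapping class group $P_4(S^2)/\langle\Delta_4^2\rangle$ is exactly $\jmath_{S^2}(\F)$, a normal subgroup of index $24$ in $\operatorname{Mod}(S^2,4)$. Restriction to a finite-index subgroup is norm non-increasing. Averaging a bounded cocycle over the $24$ cosets (the transfer) is also norm non-increasing and is a left inverse of restriction. Hence $\jmath_{S^2}^*$ is isometric on $\Hb^q$. It therefore preserves zero-seminorm classes in both directions and descends to an isometric embedding \eqref{eq:sphere-restriction}. No surjectivity onto $\rHb^q(\F)$ is claimed.

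\emph{Exactness.} Since $\F$ has finite index, the same transfer argument shows that restriction $H^q(\operatorname{Mod}(S^2,4);\R)\to H^q(\F;\R)$ is injective. The target vanishes for $q\ge2$, so $H^q(\operatorname{Mod}(S^2,4);\R)=0$ for $q\ge2$. The comparison map is then zero, so $\Hb^q=EH_b^q$ and $\rHb^q=\rEH^q$.

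\emph{Infinite-dimensionality.} $\operatorname{Mod}(S^2,4)$ is virtually $\F$, hence a non-elementary hyperbolic group; concretely it is commensurable with $\operatorname{PSL}(2,\Z)$, as in \cite[Section~5.2]{Kimura}.
\begin{itemize}
\item In degree two, Epstein--Fujiwara \cite{EpsteinFujiwara} gives $\dim\Hb^2=\infty$. In degree two bounded cohomology coincides with reduced bounded cohomology.
\item In degree three, \cite[Corollary~3]{FPS} applies to the acylindrically hyperbolic group $\operatorname{Mod}(S^2,4)$ and gives $\dim\rEH^3=\infty$.
\end{itemize}
Alternatively, one can argue directly from the finite-index embedding: restriction identifies $\rHb^q(\operatorname{Mod}(S^2,4))$ with the subspace of $\operatorname{Mod}(S^2,4)/\F$-invariant classes in $\rHb^q(\F)$. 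Averaging infinitely many independent classes then produces infinitely many independent invariant classes; for example, one can average Brooks classes with disjoint supports of growing length. Citing the hyperbolic-group results is cleaner, so I will use them.

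\emph{Normalization.} Choose any nonzero $\alpha\in\rHb^2(\operatorname{Mod}(S^2,4))$. By isometry, $\jmath_{S^2}^*\alpha\ne0$. Matsumoto--Morita duality \eqref{eq:degree2duality} then gives an $\ell^1$-cycle with nonzero pairing against $\jmath_{S^2}^*\alpha$, and rescaling it yields \eqref{eq:sphere-normalization}.

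The main obstacle I expect is making sure the identification of $\jmath_{S^2}(\F)$ with the pure mapping class group, and its normality, are correct. The pure mapping class group must be free of rank two, which uses that $\Delta_4^2$ is central of order two and is quotiented out. This is quoted from \cite{FVB} and \cite{Kimura}, so the lemma's proof reduces to the finite-index transfer argument.
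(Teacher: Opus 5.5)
Your proposal is correct and follows the paper's own proof. The ingredients are the same: finite-index transfer (averaging over cosets) for the isometry of restriction and for the vanishing of $H^q(\operatorname{Mod}(S^2,4);\R)$ when $q\ge2$, Epstein--Fujiwara and Frigerio--Pozzetti--Sisto for infinite-dimensionality, and Matsumoto--Morita duality for the normalizing cycle. Your optional aside about averaging Brooks classes would need more care, since averaging over a finite quotient can annihilate classes, but you rightly do not rely on it.
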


\begin{proof}
Since the pure subgroup has finite index, restriction is
isometric in bounded cohomology; see
\cite[Theorem~2.1]{Kimura}. It therefore induces the isometric
embedding \eqref{eq:sphere-restriction} on reduced bounded
cohomology.

The group $\operatorname{Mod}(S^2,4)$ is non-elementary
hyperbolic, since it contains $\F$ with finite index.
Its reduced bounded cohomology is infinite-dimensional
in degrees two and three by \cite{EpsteinFujiwara} and
\cite[Corollary~3]{FPS}, respectively; see also
\cite[Section~5.4]{Kimura} for the degree-three conclusion.

Ordinary restriction to a finite-index subgroup is injective.
Since $H^q(\F)=0$ for $q\ge2$, it follows that
$$H^q\bigl(\operatorname{Mod}(S^2,4))=0\qquad(q\ge2).$$
Thus the comparison map is zero in these degrees,
and reduced bounded cohomology equals reduced exact bounded
cohomology.

Choose $0\ne\alpha\in\rHb^2\bigl(\operatorname{Mod}(S^2,4)\bigr)$.
Its restriction is nonzero by isometry. The degree-two
duality recalled in Lemma~\ref{lem:brooks} gives an
$\ell^1$-cycle with nonzero pairing with
$\jmath_{S^2}^*\alpha$; rescaling that cycle gives
\eqref{eq:sphere-normalization}.
\end{proof}

In particular, using $\operatorname{Mod}(S^2,4)$ instead of $B_4(S^2)$
does not discard bounded-cohomology classes: pullback by the quotient with
finite central kernel is isometric in bounded and reduced bounded
cohomology. It is also an isomorphism on ordinary real cohomology.
It does not identify the domain of the transfer with all of
$\rHb^q(\F)$; only the image in \eqref{eq:sphere-restriction} is used.

Let $X_4(S^2)$ denote the ordered configuration space
$$
X_4(S^2)=\{(x_1,x_2,x_3,x_4)\in(S^2)^4:x_i\ne x_j\text{ for }i\ne j\}
$$
with the restriction of the product measure. This measure has total mass one, and $\mathcal T_{S^2}$ acts diagonally.
We use the standard configuration-space cocycle with values in
$\operatorname{PMod}(S^2,4)\subset\operatorname{Mod}(S^2,4)$.

\begin{lemma}\label{lem:spheretransfer}
Integration over $X_4(S^2)$ defines a bounded linear map
$$
\overline\Gamma_{b,S^2}^{\,d}:\rHb^d(\operatorname{Mod}(S^2,4))\longrightarrow
\rHb^d(\mathcal T_{S^2}),\qquad\|\overline\Gamma_{b,S^2}^{\,d}\|\le1.
$$
For smooth transformations this map preserves reduced exact bounded
cohomology. Integration over $X_4(S^2)^k$ defines the corresponding product
transfer, denoted by $\overline\Gamma_{b,S^2,k}^{\,d}$, and the
transfer of an external product equals the cup product of the individual
transferred classes. The product maps satisfy
\eqref{eq:transfer-symmetrization} with $X=X_4(S^2)$.
\end{lemma}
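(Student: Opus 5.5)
The plan is to repeat the construction of Lemma~\ref{lem:disctransfer}, replacing $X_3(\mathbb D)$ by $X_4(S^2)$ and $B_3$ by $\operatorname{Mod}(S^2,4)$. The one new ingredient is the cocycle. Fix a base configuration $\mathbf z\in X_4(S^2)$ and choose measurable reference paths $\alpha_{\mathbf z,\mathbf x}$ in $X_4(S^2)$; geodesic paths in each coordinate work off a null set, and on the null set we choose measurably. For $f\in\mathcal T_{S^2}$ with isotopy $\{f_t\}$, define $\gamma_{S^2}(f,\mathbf x)$ as the inverse of the class of $\alpha_{\mathbf z,\mathbf x}*\{f_t(\mathbf x)\}*\alpha_{f(\mathbf x),\mathbf z}$ in $P_4(S^2)=\pi_1(X_4(S^2),\mathbf z)$, and project it to $\operatorname{PMod}(S^2,4)=P_4(S^2)/\langle\Delta_4^2\rangle$.

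\textbf{Well-definedness.} This is the step I expect to be the main obstacle. The group $\Homeo_0(S^2)$ retracts onto $SO(3)$, so two isotopies differ by a loop, and the loop's evaluation class in $P_4(S^2)$ lies in the image of $\pi_1(SO(3))=\Z/2$. That image is the full twist $\Delta_4^2$, the evaluation of a generator of $\pi_1(SO(3))$ (Fadell--Van~Buskirk \cite{FVB}). Passing to the quotient by $\langle\Delta_4^2\rangle$ therefore removes the dependence on the isotopy. This is exactly why the domain is $\operatorname{Mod}(S^2,4)$ and not $B_4(S^2)$. Once this holds, the cocycle identity $\gamma(fh,\mathbf x)=\gamma(f,h\mathbf x)\gamma(h,\mathbf x)$ follows by concatenating isotopies, as in Lemma~\ref{lem:transfer}.

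\textbf{Transfer and products.} The cochain-level transfer is defined by integrating a bounded homogeneous cochain along $\gamma_{S^2}$, and along the coordinatewise product cocycle on $X_4(S^2)^k$. Right homogeneity uses the cocycle identity together with the measure-preserving change of variables. The differential commutes with integration. The norm is at most one because the total mass is one, so zero-seminorm classes are preserved. The product formula is the Fubini computation from Lemma~\ref{lem:disctransfer}, carried out verbatim on cochains. The permutation identity \eqref{eq:transfer-symmetrization} follows from the Remark on permutation invariance. Independence of the choice of reference paths is the bounded change-of-section homotopy, as cited in Lemma~\ref{lem:transfer}.

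\textbf{Exactness for smooth transformations.} We need the finite-image property: for each fixed $f$, the cocycle $\gamma_{S^2}(f,\cdot)$ takes only finitely many values almost everywhere. We would obtain this as in \cite[Lemma~2.1]{BMentropy}. The reason is that a Lipschitz isotopy lets a pair of strands wind around each other only a bounded number of times, outside a small set. We would also cite Kimura's sphere treatment \cite[Section~5]{Kimura}. With finite image available, the transfer extends to ordinary cochains, and $\Gamma'(\delta A)=\delta\Gamma'(A)$ gives preservation of (reduced) exact classes, exactly as before.
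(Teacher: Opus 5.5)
Your proposal is correct and has the same structure as the paper's proof: the same configuration cocycle projected to $\operatorname{PMod}(S^2,4)=P_4(S^2)/\langle\Delta_4^2\rangle$, the same cochain-level integration with the norm bound, Fubini for external products, the permutation remark, and the finite-image section of \cite[Lemma~2.1]{BMentropy} for exactness in the smooth case.

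The one genuinely different ingredient is isotopy-independence.

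\emph{Your route.} You invoke Kneser's theorem $\Homeo_0(S^2)\simeq SO(3)$. The ambiguity then comes from $\pi_1(SO(3))=\Z/2$, whose generator evaluates to the full twist $\Delta_4^2$ by Fadell--Van~Buskirk. This identifies the ambiguity exactly: it is trivial or $\Delta_4^2$. The cost is that it relies on the deeper theorem about the homotopy type of the homeomorphism group.

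\emph{The paper's route.} The paper argues more elementarily. The evaluation of any loop of homeomorphisms is central in $\pi_1(X_4(S^2))$, because the square $(s,t)\mapsto f_t(\ell(s))$ commutes it past every configuration loop. It then reads off from $P_4(S^2)\cong\F\times\Z/2\Z$ that this center is $\langle\Delta_4^2\rangle$. This needs no topological input about $\Homeo_0(S^2)$, only the algebraic structure of $P_4(S^2)$ already recorded in \eqref{eq:sphere-braid-quotient}.

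One small caution about your heuristic for the finite-image property. It should bound the number of crossings between strands, that is, their total angular variation, not merely their net winding. Bounded net pairwise winding does not confine a pure braid to a finite set, since commutators have zero linking. The bound must also hold almost everywhere, not just outside a small set, because the ordinary transfer integrates unbounded primitives. Since you ultimately rely on the same cited lemma as the paper, this does not affect the argument.
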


\begin{proof}
We explain the two points that differ from the disc: isotopy independence
and the ordinary transfer. Choose a base configuration and measurable
closing paths. An isotopy from the identity to $f$ gives a braid in $P_4(S^2)$.
If two isotopies have the same endpoint, the difference is obtained by
evaluating a loop of surface homeomorphisms on the configuration.
Such evaluation loops are central in the configuration fundamental group:
for a loop $\ell$ of configurations, the square
$(s,t)\mapsto f_t(\ell(s))$ gives a homotopy between the two orders of
concatenation. By \eqref{eq:sphere-braid-quotient}, this central ambiguity is
contained in $\langle\Delta_4^2\rangle$.
Projection to $\operatorname{PMod}(S^2,4)$ removes it.
The resulting measurable cocycle, denoted by $\gamma_{S^2}$, satisfies
$$
\gamma_{S^2}(fg,\mathbf x)=\gamma_{S^2}(f,g\mathbf x)\gamma_{S^2}(g,\mathbf x)
$$
in our right-homogeneous convention. This is the sphere construction in
\cite[Section~5, before Section~5.1]{Kimura}; the centrality argument also
applies to homeomorphism isotopies.

For a bounded homogeneous cochain $c$ on $\operatorname{Mod}(S^2,4)$, set
\begin{equation}\label{eq:sphere-transfer}
 (\Gamma'_{b,S^2}c)(f_0,\ldots,f_d)
 =\int_{X_4(S^2)}c\bigl(\gamma_{S^2}(f_0,\mathbf x),\ldots,\gamma_{S^2}(f_d,\mathbf x)\bigr)\,d\mu^4(\mathbf x).
\end{equation}
The same cocycle identity and change of variables as in
Lemma~\ref{lem:disctransfer} show that this is a cochain map with norm at
most one. It induces the asserted reduced bounded-cohomology map for all
four transformation groups. 

For the smooth groups, the reference paths can be chosen so
that, for each fixed $f$, the cocycle $\gamma_{S^2}(f,\cdot)$
has finite image on a full-measure subset; see
\cite[Lemma~2.1]{BMentropy} and \cite[Section~5]{Kimura}.
The induced bounded transfer is unchanged by this choice.
Thus, for each tuple of transformations, the transfer
integrand has finite image almost everywhere even for an
unbounded ordinary cochain. Hence, the same integral 
defines an ordinary cochain map, which agrees with the
bounded transfer on bounded cochains and commutes with
comparison. In particular, an ordinary primitive of a bounded
cocycle transfers to an ordinary primitive of its image.
Hence the transfer preserves exact bounded cohomology and,
by its norm bound, reduced exact bounded cohomology.
No ordinary transfer is asserted here for area-preserving homeomorphisms.

Finally, the diagonal action on $X_4(S^2)^k$ uses $k$ separate four-point
configurations. The external-product cochain is a product of functions, each depending on
one configuration. Fubini's theorem therefore gives
\begin{equation}\label{eq:sphere-product-transfer}
 \overline\Gamma_{b,S^2,k}^{\,d_1+\cdots+d_k}(u_1\times\cdots\times u_k)
 =\overline\Gamma_{b,S^2}^{\,d_1}(u_1)\smile\cdots\smile\overline\Gamma_{b,S^2}^{\,d_k}(u_k).
\end{equation}
This is the same cochain computation as
\eqref{eq:disc-product-transfer}. Permuting the $k$ four-point
configurations also gives \eqref{eq:transfer-symmetrization}.
\end{proof}

Fix $k\ge1$. In an area-coordinate chart of $S^2$, choose $k$ disjoint
open discs $E_j$ and congruent pairs of overlapping round discs
$N_{j,1},N_{j,2}\Subset E_j$, as in the disc construction.
Choose them small enough that their union does not cover the sphere.
Use exactly the radial Hamiltonian isotopies defined before
\eqref{eq:disc-rho}, now extended by the identity on $S^2$.
For their time-one maps put
\begin{equation}\label{eq:sphere-rho}
 \begin{aligned}
 &\rho_{j,\eps}(a)=f_{j,1,\eps},\qquad\rho_{j,\eps}(b)=f_{j,2,\eps},\\
 &\rho_\eps:\F^k\longrightarrow\Ham(S^2,\omega),\qquad\rho_\eps(w_1,\ldots,w_k)=\prod_{j=1}^k\rho_{j,\eps}(w_j).
 \end{aligned}
\end{equation}
The factors commute because their supports are disjoint. Let $B_\eps$ be
the union of the transition annuli and put
$b_\eps=\mu(B_\eps)\to0$. Every time-one map fixes $S^2\setminus B_\eps$
pointwise, and $B_\eps$ is invariant under all the time-one maps.

For each $j$, use the four subsets
$$
\begin{aligned}
 &U_{j,1,\eps}=(N_{j,1}\setminus\overline{N_{j,2}})\setminus B_\eps,
 \quad U_{j,2,\eps}=(N_{j,1}\cap N_{j,2})\setminus B_\eps,\\
 &U_{j,3,\eps}=(N_{j,2}\setminus\overline{N_{j,1}})\setminus B_\eps,
 \quad U_{j,4,\eps}=(S^2\setminus(N_{j,1}\cup N_{j,2}))\setminus B_\eps.
 \end{aligned}
$$
Up to boundary sets of measure zero, they partition $S^2\setminus B_\eps$.
The two distributions used below are illustrated in
Figure~\ref{fig:sphere-pairs}; the points shown avoid the transition annuli.

\begin{figure}[htbp]
\centering
\begin{adjustbox}{max width=\linewidth}
\includegraphics{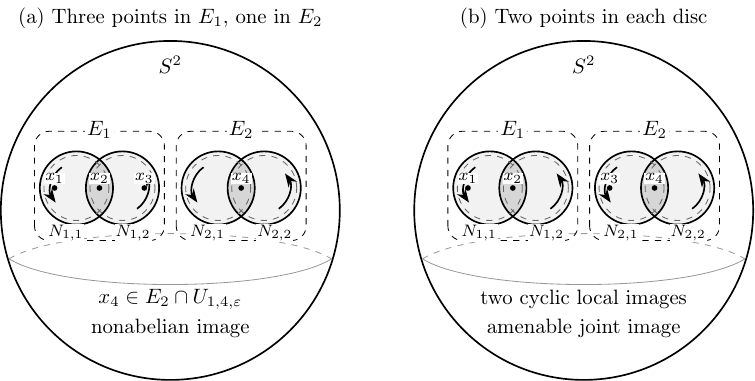}
\end{adjustbox}
\caption{Two copies of the sphere rotation construction, viewed in a chart
with the remaining sphere understood. Solid circles bound the rotating
discs; the strips between solid and dashed circles indicate transition
annuli. In \textup{(a)}, the first three points occupy the three distinct
parts of the first pair, and the fourth lies outside that pair. Thus the
quadruple has type $(1,1,1,1)$ for the first pair, even when its fourth
point lies in the second construction. In \textup{(b)}, each pair contains
two points. Its local image is cyclic, and the two images commute;
the joint image is amenable. Neither picture represents coincident points.
The cyclic-image assertions are proved in Lemma~\ref{lem:sphere-approx}.}
\label{fig:sphere-pairs}
\end{figure}

The first three areas are independent of $j$. So is the fourth area, since
$\mu(U_{j,4,\eps})=1-b_\eps-\sum_{s=1}^3\mu(U_{j,s,\eps})$.
Define
\begin{equation}\label{eq:sphere-lambda}
 \lambda_{k,\eps}=4!\prod_{s=1}^4\mu(U_{j,s,\eps}),\qquad
 \lambda_k=\lim_{\eps\to0}\lambda_{k,\eps}>0.
\end{equation}
Positivity follows because all three parts of each overlap have positive
area and $S^2\setminus(N_{j,1}\cup N_{j,2})$ has positive area. The notation
allows these constants to depend on $k$, since the discs may be chosen
smaller when $k$ increases. Put $p_j:\F^k\to\F$ for the $j$th projection.

\begin{lemma}\label{lem:sphere-approx}
With the choices above, for every $d\ge2$ and
$u\in\rHb^d(\operatorname{Mod}(S^2,4))$,
\begin{equation}\label{eq:sphere-approx}
\left\|\rho_\eps^*\overline\Gamma_{b,S^2}^{\,d}(u)-\lambda_k\sum_{j=1}^k p_j^*\jmath_{S^2}^*u\right\|
\le\bigl(4b_\eps+k|\lambda_{k,\eps}-\lambda_k|\bigr)\|u\|.
\end{equation}
In particular, the left-hand side tends to zero.
\end{lemma}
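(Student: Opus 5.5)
The argument follows the proof of Lemma~\ref{lem:disc-approx}, with the three-point disc configuration replaced by a four-point spherical one. Fix a bounded cocycle $c$ representing $u$. Let $\mathcal E_\eps\subset X_4(S^2)$ be the set of quadruples with some coordinate in $B_\eps$. It is invariant under the time-one maps and has $\mu^4(\mathcal E_\eps)\le 4b_\eps$. As in the disc case, the cocycle identity and the measure-preserving change of variables make integration over $\mathcal E_\eps$ a bounded cochain map of norm at most $4b_\eps$. This accounts for the term $4b_\eps\|u\|$.

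On the complement, every point of the quadruple is fixed by all time-one maps. Hence $w\mapsto\gamma_{S^2}(\rho_\eps(w),\mathbf x)$ is a homomorphism $\F^k\to\operatorname{PMod}(S^2,4)$. I partition the complement according to how the four points are distributed among the sets $U_{j,s,\eps}$ and then classify these homomorphisms.

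\emph{Small pairs.} A rotation of $N_{j,s}$ enclosing at most one marked point gives a trivial mapping class; the point's trajectory lies in a disc containing no other marked point. If it encloses three points, the complement disc contains one point, so the twist is again trivial in $\operatorname{PMod}(S^2,4)$. If both discs of pair $j$ enclose the same two points, the two twists coincide up to a common conjugation, and the image is cyclic.

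\emph{Different pairs.} If pairs $j\neq j'$ both act nontrivially, each needs two points, so each encloses exactly two. The images are cyclic and commute because the supports are disjoint, so the joint image is abelian. This is case (b) of Figure~\ref{fig:sphere-pairs}.

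\emph{Nonabelian case.} The only nonabelian case occurs when, for a single $j$, $N_{j,1}$ and $N_{j,2}$ enclose two different pairs. This forces exactly one point in each of $U_{j,1,\eps},U_{j,2,\eps},U_{j,3,\eps}$, and the fourth point in $U_{j,4,\eps}$; every other pair $j'$ then encloses at most one point and acts trivially.

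For this configuration, the associated homomorphism is $w\mapsto\beta_{\mathbf x}\jmath_{S^2}(w_j)\beta_{\mathbf x}^{-1}$ with $\beta_{\mathbf x}\in\operatorname{Mod}(S^2,4)$. The conjugator is common to both generators because an orientation-preserving identification of the overlapping discs, with the fourth point outside, matches the model $\delta_1^2,\delta_2^2$; this is the sphere analogue of \cite[Lemma~4.2]{Kimura}. Since $u$ is a class on the full group $\operatorname{Mod}(S^2,4)$, conjugation acts trivially, via the bounded homotopy of \cite[Lemma~A.2]{Loh}.

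The $4!$ labelings give product subsets of equal measure $\prod_{s=1}^4\mu(U_{j,s,\eps})$. Their total contribution is therefore $\lambda_{k,\eps}p_j^*\jmath_{S^2}^*u$, and summing over $j$ gives $\lambda_{k,\eps}\sum_jp_j^*\jmath_{S^2}^*u$.

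All remaining subsets have amenable image and contribute zero in positive degree. As in the disc proof, I partition them by the countably many associated homomorphisms. Finite partial sums vanish in reduced bounded cohomology, and the tail is bounded by its measure times $\|c\|_\infty$. This gives
$$
\Bigl\|\rho_\eps^*\overline\Gamma_{b,S^2}^{\,d}(u)-\lambda_{k,\eps}\sum_jp_j^*\jmath_{S^2}^*u\Bigr\|\le 4b_\eps\|u\|.
$$
Replacing $\lambda_{k,\eps}$ by $\lambda_k$ costs at most $k|\lambda_{k,\eps}-\lambda_k|\|u\|$, since $p_j^*$ and $\jmath_{S^2}^*$ have norm at most one. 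This proves \eqref{eq:sphere-approx}.

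The main obstacle is the classification step on the sphere, which must be carried out in $\operatorname{PMod}(S^2,4)$ rather than in a braid group. Two facts need care. First, a rotation enclosing three of the four points is trivial there, which I would verify by viewing it as a twist around the complementary one-point disc. Second, the common conjugator in the nonabelian case must be chosen uniformly and measurably; I would do this by enumerating the countable group, as in Lemma~\ref{lem:push}. I would also check that $\Delta_4^2$ ambiguities disappear after passing to $\operatorname{Mod}(S^2,4)$, which is guaranteed by \eqref{eq:sphere-braid-quotient}.
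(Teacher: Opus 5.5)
Your overall structure is the same as the paper's: the invariant exceptional set of measure at most $4b_\eps$, homomorphisms at fixed good quadruples, the $4!$ labelled sets of type $(1,1,1,1)$, removal of conjugators by \cite[Lemma~A.2]{Loh}, the countable partition for the amenable parts, and the final triangle inequality. The gap is in the classification step. You claim that if $N_{j,1}$ and $N_{j,2}$ enclose two \emph{different} pairs, then there is exactly one point in each of $U_{j,1,\eps},U_{j,2,\eps},U_{j,3,\eps}$. That is the disc argument, where two distinct two-element subsets of a three-point set must share a point. With four marked points they can be disjoint. Write $n_s$ for the number of points in $U_{j,s,\eps}$. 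The conditions $n_1+n_2=n_2+n_3=2$ also allow $(n_1,n_2,n_3,n_4)=(2,0,2,0)$. Then $N_{j,1}$ encloses the two points of $U_{j,1,\eps}$ and $N_{j,2}$ encloses the other two. These quadruples form a set of positive measure. Nothing in your proposal shows their image is amenable. A priori, two round circles meeting in two points can intersect essentially on the four-punctured sphere, and then the two twists generate a free group, exactly as in type $(1,1,1,1)$. So your count of contributing configurations is not justified.

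The missing idea is a bigon argument. In type $(2,0,2,0)$ the lens $N_{j,1}\cap N_{j,2}$ contains no marked point, so it is a puncture-free bigon between the two boundary circles. Removing it makes the curves disjoint. Two disjoint essential curves on the four-punctured sphere are isotopic, since each complementary component of one of them is a twice-punctured disc. Hence the two positive Dehn twists are equal and the image is cyclic. The same argument, applied to the two empty exclusive lunes, is what justifies your same-pair case $(0,2,0,2)$; there too you need the two boundary curves to be isotopic. You also omit a disc enclosing all four points, which is trivial for the same reason as three. Once these cases are added, $(1,1,1,1)$ is the only nonabelian type and the rest of your argument goes through as in the paper.
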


\begin{proof}
Kimura's computation for a single pair of rotating discs is \cite[Lemma~5.2]{Kimura}.
We give the simultaneous computation, including configurations meeting
several of the disjoint discs $E_j$.
Let $\mathcal E_\eps\subset X_4(S^2)$ consist of quadruples with at least
one point in $B_\eps$. It is invariant under the diagonal action of the time-one maps,
and $\mu^4(\mathcal E_\eps)=1-(1-b_\eps)^4\le4b_\eps$.
For a bounded cocycle $c$ representing $u$, the integral over this set is
a cochain map and has norm at most $4b_\eps\|c\|_\infty$, exactly as in
Lemma~\ref{lem:disc-approx}.

Fix a quadruple $\mathbf x$ outside $\mathcal E_\eps$. Since every time-one map
fixes $\mathbf x$, the map
$w\mapsto\gamma_{S^2}(\rho_\eps(w),\mathbf x)$ is a homomorphism.
We first analyze its restriction to one factor $j$.
A complete rotation of a disc with $r$ marked points represents the
Dehn twist about its boundary in $\operatorname{Mod}(S^2,4)$.
It is trivial if $r=0,1,3,4$, since one side of the boundary contains at
most one marked point. Only $r=2$ can give a nontrivial twist.
These are statements in the central quotient: a full twist on three or
four strands can be nontrivial in $P_4(S^2)$.

Let $n_s$ be the number of points of $\mathbf x$ in $U_{j,s,\eps}$.
The two rotation discs contain $n_1+n_2$ and $n_2+n_3$ points.
If either number is not two, at least one generator is trivial and the
local image is cyclic. If both numbers equal two, then
$$
(n_1,n_2,n_3,n_4)=(2,0,2,0),\quad(1,1,1,1),\quad\text{or }(0,2,0,2).
$$
In the first case the overlap contains no marked point; in the last case
each exclusive part contains no marked point. The two round boundary
curves meet in two points, and in either case a puncture-free bigon can be
removed. They become disjoint essential curves on the four-punctured
sphere and hence are isotopic. The two positive Dehn twists are consequently equal.
Their image is cyclic. This verifies directly the two exceptional cyclic
types in \cite[proof of Lemma~5.2]{Kimura}.

For type $(1,1,1,1)$, the first twist encloses the points in regions one
and two and the second encloses those in regions two and three. The two
boundary curves, their four complementary regions, and their marked
points admit an orientation-preserving identification with the standard
model of the sphere with four marked points. The two twists are thereby identified
simultaneously with $\jmath_{S^2}(a)$ and $\jmath_{S^2}(b)$.
An arbitrary labeling or closing-path choice changes this by a common
conjugation in the full group $\operatorname{Mod}(S^2,4)$.
Thus the local homomorphism is conjugate to $\jmath_{S^2}$, as also shown
in \cite[Lemma~5.2]{Kimura}.

Now restore all $k$ factors. A quadruple of type $(1,1,1,1)$ for the $j$th
pair has three points in $E_j$ and only one outside $E_j$. Every other
$E_\ell$ therefore contains at most one point. Rotations supported there
give the trivial mapping class. The full associated homomorphism is
consequently
\begin{equation}\label{eq:sphere-tracing}
 w\longmapsto\eta_{\mathbf x}\,\jmath_{S^2}(w_j)\,\eta_{\mathbf x}^{-1}
 \quad\text{for some }\eta_{\mathbf x}\in\operatorname{Mod}(S^2,4).
\end{equation}
In particular, the sets of these quadruples for different $j$ are disjoint.
If no pair has type $(1,1,1,1)$, every local image is cyclic. The local
images commute, since the surface homomorphisms 
commute and the cocycle defines a homomorphism at $\mathbf x$. 
Their joint image is therefore abelian. This includes the distribution of two points in each of two
different $E_j$, see Figure~\ref{fig:sphere-pairs}(b).

For each $j$, the quadruples of type $(1,1,1,1)$ form the union of $4!$
labelled product sets, with total measure $\lambda_{k,\eps}$.
Because $u$ is a class on the full mapping class group, conjugation acts
trivially. Integrating its bounded
cochain homotopy, whose bound is independent of the conjugator
\cite[Lemma~A.2]{Loh}, makes the contribution of this set equal to
$\lambda_{k,\eps}p_j^*\jmath_{S^2}^*u$.

All remaining quadruples outside $\mathcal E_\eps$ give homomorphisms with amenable image, 
so their positive-degree bounded-cohomology contribution is zero.
To justify this also for the union, partition by the tuple of images of
the generators of $\F^k$. There are countably many such tuples and the
parts are measurable. Each part has one homomorphism with amenable image.
Finite partial sums give the zero class; the norm of the remaining integral
is at most its measure times $\|c\|_\infty$ and tends to zero. Hence the
whole contribution is zero in reduced bounded cohomology.
We have proved
$$
\left\|\rho_\eps^*\overline\Gamma_{b,S^2}^{\,d}(u)-\lambda_{k,\eps}\sum_jp_j^*\jmath_{S^2}^*u\right\|
\le4b_\eps\|u\|.
$$
Each restriction has norm at most one; replacing
$\lambda_{k,\eps}$ by $\lambda_k$ gives \eqref{eq:sphere-approx}.
\end{proof}

\begin{remark*}[The fixed Brooks class for the sphere]
\label{rem:sphere-brooks-choice}
We choose the fixed class in the sphere proof so that its
restriction to $\F$ is the same Brooks class as in the disc case.

Let $Q_0:\F\to\R$ be the invariant homogeneous quasimorphism
constructed for the disc, so that
$$[\delta Q_0]=[\delta\phi_{ab}]+[\delta\phi_{ba}]\ne0.$$
Put $\Gamma=B_3/\langle\Delta_3^2\rangle,\thinspace\Delta_3^2=(\sigma_1\sigma_2)^3$,
and write $\overline\sigma_1,\overline\sigma_2$ for the
images of the standard braid generators in $\Gamma$.
We identify $\F$ with the subgroup generated by
$a=\overline\sigma_1^{\,2}$ and $b=\overline\sigma_2^{\,2}$.
The invariant quasimorphism $Q_0$ extends to a homogeneous
quasimorphism
$$
\overline Q_3:\Gamma\longrightarrow\R,\qquad\overline Q_3|_{\F}=Q_0,
$$
by the same finite-index extension argument used for the disc.

Let $\delta_1,\delta_2,\delta_3$ be the standard generators
of $B_4(S^2)$. Define $r:B_4(S^2)\longrightarrow\Gamma$,
$$
r(\delta_1)=\overline\sigma_1,\qquad r(\delta_2)=\overline\sigma_2,\qquad
r(\delta_3)=\overline\sigma_1.
$$
The braid relations are preserved. The additional spherical
relation $\delta_1\delta_2\delta_3^2\delta_2\delta_1=1$ is
preserved as well, since
$$
r(\delta_1\delta_2\delta_3^2\delta_2\delta_1)=\overline\sigma_1\overline\sigma_2
\overline\sigma_1^{\,2}\overline\sigma_2\overline\sigma_1=
(\overline\sigma_1\overline\sigma_2\overline\sigma_1)^2=1.
$$
Thus $r$ is a well-defined homomorphism; see
\cite[Section~5.2]{Kimura} for the spherical presentation.
Consequently, $\overline Q_4=\overline Q_3\circ r$
is a homogeneous quasimorphism on $B_4(S^2)$.

Since $\Delta_4^2$ is central of order two, $\overline Q_4$
descends to a homogeneous quasimorphism
$Q_{S^2}:\operatorname{Mod}(S^2,4)\to\R$. Set
$$
\alpha_{S^2}=[\delta Q_{S^2}]\in\rEH^2\bigl(\operatorname{Mod}(S^2,4)\bigr).
$$
The inclusion $\jmath_{S^2}$ sends $a,b$ to the mapping
classes represented by $\delta_1^2,\delta_2^2$.
Their images under $r$ are $a,b$, respectively. Hence
$$
 \theta:=\jmath_{S^2}^*\alpha_{S^2}
 =[\delta Q_0]
 =[\delta\phi_{ab}]+[\delta\phi_{ba}]\ne0.
$$
By \cite[Theorem~A\textup{(b)}]{AB} and bilinearity,
$$
 \theta\smile u=0
 \qquad
 \bigl(u\in\rHb^s(\F),\ s\ge1\bigr).
$$
Choose $z\in Z_2^{\ell^1}(\F)$ with
$\langle\theta,z\rangle=1$, using the duality
recalled in Lemma~\ref{lem:brooks}.

With this choice of $\alpha_{S^2}$, every repeated-coordinate
term in the spherical product calculation vanishes, just as
in the disc case. Thus the projection $\Pi_k$ is unnecessary
in that proof.
\end{remark*}

\begin{proof}[Proof of Theorem~\ref{thm:disc} for the sphere]
Choose $\alpha=\alpha_{S^2}$ and $z$ as in the preceding
remark, and put $\theta=\jmath_{S^2}^*\alpha$.
Then $\langle\theta,z\rangle=1$ and
$$\theta\smile u=0\qquad\bigl(u\in\rHb^s(\F),\ s\ge1\bigr).$$
Use $T_m^q$ from Proposition~\ref{prop:symmetric-products}
with this $\theta,z$. 
Let $m\ge0$, $q=2,3$, $k=m+1$, and $d=2m+q$. For
$\beta\in\rHb^q(\operatorname{Mod}(S^2,4))$, set
$\xi_\beta=\Sym_k(\alpha^{\times m}\times\beta)$.
The class $\xi_\beta$ is symmetric, and its restriction under
$(\jmath_{S^2}^k)^*$ is $T_m^q(\jmath_{S^2}^*\beta)$. Define
\begin{equation}\label{eq:sphere-symmetric-map}
\Psi^{S^2}_{m,q}(\beta)=\frac1{k!}\overline\Gamma_{b,S^2,k}^{\,d}(\xi_\beta).
\end{equation}
Permutation invariance \eqref{eq:transfer-symmetrization} and
\eqref{eq:sphere-product-transfer} identify this map with
$$
\Psi^{S^2}_{m,q}(\beta)=\bigl(\overline\Gamma_{b,S^2}^{\,2}(\alpha)\bigr)^{\smile m}
\smile\overline\Gamma_{b,S^2}^{\,q}(\beta).
$$
Thus we use exactly the normalized symmetric construction of the disc.
Write $b=\jmath_{S^2}^*\beta$ for the remainder of this proof.

\emph{Two factors: $m=1$, $k=2$.}
Choose the two rotation pairs defining
$\rho_\eps:\F^2\to\Ham(S^2,\omega)$ and the constant $\lambda_2>0$
in \eqref{eq:sphere-lambda}. Apply Lemma~\ref{lem:sphere-approx}
separately to $\alpha$ and $\beta$, with the same homomorphism. It gives
$$
\rho_\eps^*\overline\Gamma_{b,S^2}^{\,2}(\alpha)\longrightarrow\lambda_2(p_1^*\theta+p_2^*\theta),\quad
\rho_\eps^*\overline\Gamma_{b,S^2}^{\,q}(\beta)\longrightarrow\lambda_2(p_1^*b+p_2^*b).
$$
The product estimate used in the disc proof justifies multiplication of
these norm limits. Thus
\begin{equation}\label{eq:sphere-two}
\rho_\eps^*\Psi^{S^2}_{1,q}(\beta)\longrightarrow
\lambda_2^2\bigl[p_1^*(\theta\smile b)+\theta\times b+b\times\theta+p_2^*(\theta\smile b)\bigr].
\end{equation}

The first and fourth terms in \eqref{eq:sphere-two}
vanish because $\theta\smile b=0$.
The two mixed terms have the same positive sign, since
$(-1)^{2q}=1$. Therefore
\begin{equation}\label{eq:sphere-two-symmetric}
 \rho_\eps^*\Psi^{S^2}_{1,q}(\beta)\longrightarrow\lambda_2^2(\theta\times b+b\times\theta)
 =\lambda_2^2T_1^q(b).
\end{equation}
If the transferred class is zero, this identity gives
$T_1^q(b)=0$. Proposition~\ref{prop:symmetric-products}
gives $b=0$, and injectivity of $\jmath_{S^2}^*$ gives $\beta=0$.

\emph{An arbitrary number of factors.}
For $k=m+1$, Lemma~\ref{lem:sphere-approx} and the cup-product norm
estimate give
$$
\rho_\eps^*\Psi^{S^2}_{m,q}(\beta)\longrightarrow\lambda_k^k\left(\sum_{j=1}^kp_j^*\theta\right)^{\smile m}
\smile\left(\sum_{j=1}^kp_j^*b\right).
$$

The constants may depend on $k$.
In the expansion there are $k=m+1$ labelled factors:
$m$ copies of $\theta$ and one copy of $b$.
If an assignment repeats a coordinate, that coordinate
contains a factor $\theta$ and another positive-degree
factor. Their internal cup product vanishes.
Thus only bijective assignments remain, and their sum is
$\Sym_k(\theta^{\times m}\times b)=T_m^q(b)$.
Consequently,
\begin{equation}\label{eq:sphere-symmetric-limit}
\rho_\eps^*\Psi^{S^2}_{m,q}(\beta)\longrightarrow\lambda_k^kT_m^q(b).
\end{equation}

If $\Psi^{S^2}_{m,q}(\beta)=0$, the norm limit in
\eqref{eq:sphere-symmetric-limit} gives $T_m^q(b)=0$.
Since $\lambda_k>0$, Proposition~\ref{prop:symmetric-products}
and injectivity of $\jmath_{S^2}^*$ imply $\beta=0$.
Hence every $\Psi^{S^2}_{m,q}$ is injective.
The domains in degrees two and three are infinite-dimensional
by Lemma~\ref{lem:sphere-cohomology}, so the degree patterns
$2m+2$ and $2m+3$ give infinite-dimensional reduced bounded
cohomology in every degree at least two.

For the smooth groups, $\alpha$ and $\beta$ are reduced exact by
Lemma~\ref{lem:sphere-cohomology}. Permutation pullbacks preserve
exactness, so every $\xi_\beta$ is reduced exact. Its normalized
transfer in \eqref{eq:sphere-symmetric-map} is reduced exact by
Lemma~\ref{lem:spheretransfer}. For homeomorphisms we use only bounded
cochains and assert the reduced bounded conclusion. The testing maps
\eqref{eq:sphere-rho} take values in $\Ham(S^2,\omega)$, so the same
calculation applies to all four groups in the theorem.
\end{proof}

\section{Proof of Theorem ~\ref{thm:surface}}
\label{sec:5}

\medskip
We first establish the local
three-point restriction and transfer statements used for every positive
genus. We then prove the theorem for $g\ge2$, and finally treat the
two-point construction needed for the torus.

Let $S=\Sigma_g$, where $g\ge1$, with normalized area measure $\mu$.
Let $\mathcal T_S$ denote any of the four area-preserving transformation
groups defined at the beginning of the paper.
Recall the group $K(S,3)$ defined before Theorem~\ref{thm:surface}.
The Birman exact sequence identifies it with
$$
 K(S,3)\cong
 \begin{cases}
 B_3(S),&g\ge2,\\
 B_3(T^2)/Z(B_3(T^2)),&g=1;
 \end{cases}
$$
see \cite[Theorem~2.4]{Kimura}. Choose an embedded disc containing the
three base points. Extension by the identity gives an inclusion
$\iota:B_3\hookrightarrow K(S,3)$; see \cite[Corollary~4.2(iii)]{ParisRolfsen}
Recall the inclusion $\jmath:\F\hookrightarrow B_3$ defined before
Lemma~\ref{lem:braid-cohomology}. Orient and mark every copy of the
three-punctured disc in the same way.

\begin{lemma}\label{lem:surface-bbf}
There is an infinite-dimensional subspace
$E_S\subset\rEH^2(K(S,3))$ on which $\jmath^*\iota^*$ is injective.
One can choose $\alpha\in E_S$ and $z\in Z_2^{\ell^1}(\F)$ such that
\begin{equation}\label{eq:surface-bbf-normalization}
 \eta=\jmath^*\iota^*\alpha\ne0,
 \qquad \langle\eta,z\rangle=1.
\end{equation}
\end{lemma}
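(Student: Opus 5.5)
The plan is to construct $E_S$ from homogeneous quasimorphisms on $K(S,3)$ that restrict nontrivially to the local free group $\jmath(\F)\subset B_3\subset K(S,3)$. The natural source is the Bestvina--Bromberg--Fujiwara machinery: $K(S,3)$ acts on a projection complex (or on the curve complex of $S$ with three punctures) and is acylindrically hyperbolic, so $\rEH^2$ is infinite-dimensional by \cite{EpsteinFujiwara}/\cite{FPS}. We need more than this, however, namely injectivity after restriction to $\F$. So the first step is to pass to degree two quasimorphisms. By the Remark on degree two, $\rEH^2(K)$ is the space of homogeneous quasimorphisms modulo homomorphisms. Hence it suffices to find an infinite-dimensional space $\mathcal Q$ of homogeneous quasimorphisms on $K(S,3)$ whose restrictions to $\iota\jmath(\F)$ are linearly independent modulo $\operatorname{Hom}(\F,\R)$. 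Then $E_S=\{[\delta Q]:Q\in\mathcal Q\}$ works, since $\jmath^*\iota^*$ commutes with $Q\mapsto[\delta Q]$ and $H_b^2(\F)$ is normed.

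To build $\mathcal Q$, I would use that $\iota\jmath(\F)$ (the subgroup generated by the Dehn twists $\sigma_1^2,\sigma_2^2$ in the marked disc) contains elements that act loxodromically on the curve graph of the three-punctured surface $S\setminus\{3\text{ pts}\}$ relative to the point-pushing subgroup. A cleaner route, following \cite[Theorem~2.4 and Section~4]{Kimura}, is to use the Bestvina--Fujiwara construction for the action of $K(S,3)$ on a hyperbolic graph (e.g.\ the curve graph of the punctured surface, on which $K(S,3)$ acts with WPD elements). Choose infinitely many pairwise independent (non-equivalent in the Bestvina--Fujiwara sense) pseudo-Anosov-type elements $g_1,g_2,\dots$ lying in $\iota\jmath(\F)$; for instance, $g_n=\iota\jmath(a^nb^{-1}\cdot a b^{n})$ type words chosen so that they are loxodromic and pairwise non-conjugate up to powers and inverses. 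The Bestvina--Fujiwara quasimorphisms $Q_n$ then satisfy $Q_n(g_m^N)\approx N\delta_{nm}\cdot c$ and $Q_n(g_n^{-1})=-Q_n(g_n)$-type asymmetry, which gives linear independence of the restrictions modulo homomorphisms after the usual triangular argument (evaluating on suitable elements of the commutator subgroup of $\F$ kills the homomorphism ambiguity). Exactness is automatic because the classes are coboundaries of quasimorphisms.

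The main obstacle is verifying that the chosen elements of $\iota\jmath(\F)$ are genuinely loxodromic and pairwise independent for the action of $K(S,3)$, including the case $g=1$ where one quotients by the center. I would first try to reduce this to the disc: the images of $\sigma_1^2,\sigma_2^2$ are twists about curves $c_1,c_2$ filling the three-punctured disc, and by the Thurston/Penner construction products like $T_{c_1}T_{c_2}^{-1}$ are pseudo-Anosov on the subsurface; then use a subsurface-projection version (the projection complex of Bestvina--Bromberg--Fujiwara applied to the $K(S,3)$-orbit of that disc subsurface) to get quasimorphisms on $K(S,3)$ whose restriction to $\F$ are the corresponding disc quasimorphisms. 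If that works, the infinite-dimensional space $\jmath^*\rHb^2(B_3)$ from Lemma~\ref{lem:braid-cohomology} (or a cofinite part of it) is realized, injectivity is clear, and finally picking any nonzero $\alpha\in E_S$ and using the degree-two duality of Lemma~\ref{lem:brooks} yields $z$ with $\langle\eta,z\rangle=1$ after rescaling.
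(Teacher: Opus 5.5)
The reduction in your first paragraph is sound. By the degree-two remark it suffices to find infinitely many homogeneous quasimorphisms on $K(S,3)$ whose restrictions to $\iota\jmath(\F)$ are linearly independent modulo $\operatorname{Hom}(\F,\R)$. Your final step is exactly the paper's: choose $\alpha\ne0$, detect $\eta$ by an $\ell^1$-cycle using the duality in Lemma~\ref{lem:brooks}, and rescale.

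\textbf{The gap is in how you construct those quasimorphisms.} The route in your second paragraph fails outright:
\begin{itemize}
\item Every element of $\iota(B_3)$, hence of $\iota\jmath(\F)$, is supported in the disc $D$ containing the three marked points. It therefore fixes the curve $\partial D$.
\item $\partial D$ is essential and non-peripheral in $S$ minus the marked points: one side carries the three punctures, the other has genus $g\ge1$.
\item So $\iota(B_3)$ fixes a vertex of the curve graph, and none of its elements is pseudo-Anosov or loxodromic.
\item Any homogeneous Bestvina--Fujiwara quasimorphism built from that action vanishes identically on $\iota\jmath(\F)$, since its values on $g^n$ stay bounded. The same holds for any action in which $\iota(B_3)$ has bounded orbits.
\end{itemize}
$K(S,3)$ does contain WPD elements, but none lies in the local braid subgroup. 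Your triangular independence argument therefore has nothing to act on.

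What you actually need is the statement you only hope for in your last paragraph: quasimorphisms on the ambient group that are nontrivial on a subsurface subgroup which is elliptic on the curve graph. That is exactly the output of the Bestvina--Bromberg--Fujiwara projection-complex construction. The paper does not re-derive it but cites it. By \cite[Proposition~4.9]{BBF} (see also \cite[Proposition~2.6 and Corollary~2.7]{BrandenburskyBi} for $g\ge2$), restriction from $\operatorname{Mod}(S,3)$ to the disc subgroup has infinite-dimensional image on homogeneous quasimorphisms modulo homomorphisms, i.e.\ on $EH_b^2$. The rest then goes as follows:
\begin{itemize}
\item Since that restriction factors through $K(S,3)$, the image of $\iota^*\colon EH_b^2(K(S,3))\to EH_b^2(B_3)$ is infinite-dimensional.
\item The degree-two seminorm is a norm, so this survives reduction.
\item Composing with the isometric embedding $\jmath^*$ of Lemma~\ref{lem:braid-cohomology} keeps the image infinite-dimensional.
\item $E_S$ is the span of chosen preimages of linearly independent image classes.
\end{itemize}
You neither need nor obtain from this that the restrictions are ``the corresponding disc quasimorphisms'' or a cofinite part of $\jmath^*\rHb^2(B_3)$; infinite-dimensionality of the image is all the lemma uses.

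If you want to prove the BBF input yourself rather than cite it, two points must be checked, including in the torus case. First, the orbit of $D$ needs a finite coloring into families of pairwise overlapping subsurfaces, invariant under a finite-index subgroup. Second, the stabilizer's action on the curve graph of $D$ (the Farey graph, on which $B_3$ acts through $\mathrm{PSL}(2,\Z)$) must produce quasimorphisms that do not vanish on $\jmath(\F)$.
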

\begin{proof}
Bestvina--Bromberg--Fujiwara \cite[Proposition~4.9]{BBF} implies
that restriction from the 3 puncured mapping class group 
$\operatorname{Mod}(S,3)$ to its disc subgroup
has infinite-dimensional image on homogeneous quasimorphisms modulo homomorphisms
and bounded functions, which is $EH_b^2$. This application to disc braid
subgroups also appears in \cite[Proposition~2.6 and Corollary~2.7]{BrandenburskyBi}
for ambient genus greater than one. For the marked torus we apply
\cite[Proposition~4.9]{BBF} directly to the same three-punctured disc.

Restriction factors through $K(S,3)$. Hence the image of
$$
 EH_b^2(K(S,3))\xrightarrow{\ \iota^*\ }EH_b^2(B_3)
$$
is infinite-dimensional. The degree-two seminorm is a norm, by
\cite[Theorem~6.5]{Frigerio}, so this conclusion is unchanged on passing
to reduced exact bounded cohomology. Lemma~\ref{lem:braid-cohomology}
makes $\jmath^*$ isometric onto its image. Consequently the composite
$\jmath^*\iota^*$ still has infinite-dimensional image. Choose linearly
independent classes in that image, choose one preimage for each, and let
$E_S$ be their algebraic span. The composite is injective on $E_S$.

Choose $0\ne\alpha\in E_S$. Its restriction $\eta$ is nonzero, and the
degree-two duality in Lemma~\ref{lem:brooks} supplies an $\ell^1$-cycle
with nonzero evaluation on $\eta$. Rescaling this cycle gives
\eqref{eq:surface-bbf-normalization}.
\end{proof}

Let $X_3(S)$ denote the ordered configuration space, with measure
$\nu=\mu^3|_{X_3(S)}$, and let $\nu_k=\nu^{\otimes k}$ on $X_3(S)^k$.
Both measures have mass one. Choose measurable closing paths from a base
configuration. Tracing an isotopy defines the endpoint cocycle
$$
 \gamma_{S,3}:\mathcal T_S\times X_3(S)\longrightarrow K(S,3).
$$
Its values lie in the image of the pure braid group. The central quotient
makes it independent of the chosen isotopy: the evaluation of a loop of
unmarked isotopies is central and is in the kernel of the Birman map.
Equivalently, the cocycle is obtained directly from that map applied to
the tracing braid; see \cite[Theorem~2.4 and Section~3.1]{Kimura}.
It satisfies $\gamma_{S,3}(fg,\mathbf x)=\gamma_{S,3}(f,g\mathbf x)\gamma_{S,3}(g,\mathbf x)$.
For $\mathbf X=(\mathbf x^{(1)},\ldots,\mathbf x^{(k)})$, define
$$
\gamma_{S,3,k}(f,\mathbf X)=\bigl(\gamma_{S,3}(f,\mathbf x^{(1)}),\ldots,
\gamma_{S,3}(f,\mathbf x^{(k)})\bigr).
$$
As in Lemma~\ref{lem:disctransfer}, different triples are independent.

\begin{lemma}\label{lem:surface-config-transfer}
The diagonal action on $X_3(S)^k$ defines a norm non-increasing map
$$
\Gamma_{S,3,k}^d:\rHb^d(K(S,3)^k)\longrightarrow\rHb^d(\mathcal T_S).
$$
Write $\Gamma_{S,3}^d=\Gamma_{S,3,1}^d$. These maps satisfy the
permutation identity \eqref{eq:transfer-symmetrization} with
$X=X_3(S)$. They also satisfy
\begin{equation}\label{eq:surface-config-product}
\Gamma_{S,3,k}^{d_1+\cdots+d_k}(u_1\times\cdots\times u_k)
=\Gamma_{S,3}^{d_1}(u_1)\smile\cdots\smile\Gamma_{S,3}^{d_k}(u_k).
\end{equation}
For the smooth transformation groups they preserve reduced exact bounded
cohomology. The bounded maps are also defined for
$\Homeo_0(S,\mu)$.
\end{lemma}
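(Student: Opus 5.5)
The plan is to repeat, almost verbatim, the cochain-level arguments of Lemmas~\ref{lem:transfer}, \ref{lem:disctransfer} and~\ref{lem:spheretransfer}, with $X_3(\mathbb D)$ replaced by $X_3(S)$ and $B_3$ by $K(S,3)$.

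\emph{Cochain map and norm bound.} For a bounded right-homogeneous cochain $c$ on $K(S,3)^k$ define
\[
(\Gamma'_{S,3,k}c)(f_0,\ldots,f_d)=\int_{X_3(S)^k}c\bigl(\gamma_{S,3,k}(f_0,\mathbf X),\ldots,\gamma_{S,3,k}(f_d,\mathbf X)\bigr)\,d\nu_k(\mathbf X).
\]
The stated cocycle identity for $\gamma_{S,3}$, applied coordinatewise, gives the identity for $\gamma_{S,3,k}$. Right homogeneity then follows from right invariance of $c$ and the change of variable $\mathbf Y=h\mathbf X$, which preserves $\nu_k$ because $h$ preserves $\mu$. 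The differential commutes with the integral, exactly as displayed in the proof of Lemma~\ref{lem:disctransfer}. Since $\nu_k$ has mass one, we get $\|\Gamma'_{S,3,k}c\|_\infty\le\|c\|_\infty$. Hence zero-seminorm classes go to zero-seminorm classes, and the map descends to reduced bounded cohomology with norm at most one. This step uses only measurability and the cocycle property, not finite image, so it applies to $\Homeo_0(S,\mu)$ as well.

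\emph{Independence of choices, permutation identity and product formula.} Independence from the measurable closing paths follows from the bounded change-of-section homotopy cited in Lemma~\ref{lem:transfer}. The permutation identity \eqref{eq:transfer-symmetrization} is the general statement of the Remark on permutation invariance, applied with $X=X_3(S)$. The product formula \eqref{eq:surface-config-product} is the Fubini computation from the end of the proof of Lemma~\ref{lem:disctransfer}. Under the external-product convention, the integrand splits as a product of functions, each depending on one triple $\mathbf x^{(j)}$. Since $\nu_k=\nu^{\otimes k}$, the integral factors into the cup product of the single transfers evaluated on consecutive blocks $(f_{D_{j-1}},\ldots,f_{D_j})$. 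This is an identity of cochains, and by \eqref{eq:productnorm} it passes to reduced bounded cohomology.

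\emph{Exactness for smooth groups.} This is the step I expect to be the main obstacle. One needs the finite-image property: for each smooth $f$ there should be a finite set $S_f\subset K(S,3)$ containing $\gamma_{S,3}(f,\mathbf x)$ for almost every $\mathbf x$, for a suitable choice of closing paths. I would first try to adapt \cite[Lemma~2.1]{BMentropy}, as used in \cite[Section~3]{Kimura}. Choose the closing paths by straight lines in a finite atlas, or by geodesics outside a null cut locus as in Lemma~\ref{lem:transfer}. For smooth $f$ with a fixed smooth isotopy, the traced configuration paths then have uniformly bounded length, and their crossing patterns with a fixed finite collection of charts are finite. This leaves only finitely many braid classes up to the Birman quotient. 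Since the bounded transfer is independent of the reference paths, we may use these paths.

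With finite image in hand, the same integral is defined on arbitrary ordinary cochains for each fixed tuple. It is a cochain map agreeing with $\Gamma'_{S,3,k}$ on bounded cochains, so it commutes with comparison. If a bounded cocycle $c$ has an ordinary primitive $A$, then $\Gamma'c=\delta(\Gamma'A)$. Therefore exact classes map to exact classes, and by the norm bound reduced exact classes map to reduced exact classes. For $\Homeo_0(S,\mu)$ no ordinary transfer is claimed, which matches the statement.
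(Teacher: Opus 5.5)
Your plan follows the paper's proof step by step: the same integral over $X_3(S)^k$, right homogeneity from the coordinatewise cocycle identity and the $\nu_k$-preserving substitution $\mathbf Y=h\mathbf X$, and the bound $\|\Gamma'_{S,3,k}c\|_\infty\le\|c\|_\infty$. It also uses permutation of the independent triples for \eqref{eq:transfer-symmetrization} and the Fubini factorization for \eqref{eq:surface-config-product}. For the smooth groups it uses finite image of the cocycle, so that the same integral is an ordinary cochain map carrying a primitive of $c$ to a primitive of $\Gamma'_{S,3,k}c$. For the finite-image step the paper only invokes \cite[Lemma~2.1]{BMentropy}, applied to the mapping-class-valued configuration cocycle, and notes that on the identity component its values lie in $K(S,3)$. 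You should rely on that citation too, because the justification you sketch does not work.

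\emph{The gap in your finite-image sketch.} Bounded length of the traced paths, together with finitely many chart crossings, does not bound the braid. Two marked points that are close together can wind around each other many times along a short path. For example, in a disc chart take $f_t(r,\varphi)=(r,\varphi+2\pi t\,h(r))$, with $h(r)=r^{-1/2}$ near $r=0$ and $h=0$ near the edge of the chart.
\begin{itemize}
\item This is an isotopy in $\Homeo_0(S,\mu)$.
\item Every trajectory stays in one chart and has length at most $2\pi r^{1/2}$.
\item A pair of points at radii $r_1<r_2$ near the origin still traces roughly $h(r_2)$ full twists of its two strands, and this number is unbounded as $r_2\to0$.
\end{itemize}
An argument that uses only length and chart data would therefore apply verbatim to this homeomorphism isotopy, whose traced braids are unbounded.

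What smoothness actually supplies is a uniform bound on $Df_t$ and $Df_t^{-1}$. This forces the direction of $f_t(y)-f_t(x)$, for nearby $x,y$, to turn by a uniformly bounded amount. Combined with closing paths whose pairwise relative rotation is bounded, this bounds the braid, and that is the content of the lemma the paper cites.
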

\begin{proof}
For a tuple $\mathbf f=(f_0,\ldots,f_d)$, write
$\boldsymbol\gamma_k(\mathbf f,\mathbf X)
=(\gamma_{S,3,k}(f_i,\mathbf X))_{i=0}^d$.
For a bounded homogeneous cochain $c$ on $K(S,3)^k$, define
\begin{equation}\label{eq:surface-config-cochain}
 (\Gamma'_{S,3,k}c)(\mathbf f)
   =\int_{X_3(S)^k}c\bigl(\boldsymbol\gamma_k(\mathbf f,\mathbf X)\bigr)
       \,d\nu_k(\mathbf X).
\end{equation}
The integrand is measurable and bounded. In the right-homogeneous
convention of Lemma~\ref{lem:transfer}, the cocycle identity and the
measure-preserving substitution $\mathbf Y=h\mathbf X$ prove right
invariance, exactly as in Lemma~\ref{lem:disctransfer}, and
$\delta\Gamma'_{S,3,k}=\Gamma'_{S,3,k}\delta$.
The estimate $\|\Gamma'_{S,3,k}c\|_\infty\le\|c\|_\infty$ gives the
map on reduced bounded cohomology. 

We recall the product calculation with its independent variables.
Let $c,e$ have degrees $p,q$ and fix
$\mathbf f=(f_0,\ldots,f_{p+q})$. Set
$$
c_{\mathbf f}(\mathbf x)
=c\bigl((\gamma_{S,3}(f_i,\mathbf x))_{i=0}^p\bigr),\\
e_{\mathbf f}(\mathbf y)=e\bigl((\gamma_{S,3}(f_i,\mathbf y))_{i=p}^{p+q}\bigr).
$$
Substitution in \eqref{eq:surface-config-cochain} gives the cochain identity
$$
\begin{aligned}
&(\Gamma'_{S,3,2}(c\times e))(\mathbf f)
=\int_{X_3(S)^2}c_{\mathbf f}(\mathbf x)e_{\mathbf f}(\mathbf y)\,d\nu(\mathbf x)d\nu(\mathbf y)=\\
&\left(\int_{X_3(S)}c_{\mathbf f}\,d\nu\right)\left(\int_{X_3(S)}e_{\mathbf f}\,d\nu\right)=
\bigl((\Gamma'_{S,3}c)\smile(\Gamma'_{S,3}e)\bigr)(\mathbf f).
\end{aligned}
$$
Fubini applies because both functions are bounded and the measure is
finite. Permuting the independent configuration variables gives
\eqref{eq:transfer-symmetrization}, so normalized symmetrization does
not change any transferred class. Repeating the product calculation proves
\eqref{eq:surface-config-product} on reduced bounded cohomology.

For smooth transformations, use the section of the mapping-class-valued
configuration cocycle in \cite[Lemma~2.1]{BMentropy}; see also
\cite[Section~3.2]{Kimura}. For each fixed transformation that cocycle
has finite image outside a null set. On the identity component its
values lie in $K(S,3)$, so the same is true as a $K(S,3)$-valued
cocycle, and coordinatewise on the product. Hence, formula
\eqref{eq:surface-config-cochain} is a finite sum even for an ordinary,
unbounded cochain. It defines an ordinary cochain map with the same
formula on bounded cochains. Thus, if $c=\delta B$ with $c$ bounded,
$$
\Gamma'_{S,3,k}c=\delta\bigl(\Gamma'_{S,3,k}B\bigr).
$$
This proves preservation of exactness, and the norm estimate proves the
reduced assertion. No ordinary configuration-space transfer is asserted
here for arbitrary homeomorphisms.
\end{proof}

We use the disc rotations of Lemma~\ref{lem:disc-approx} in pairwise
disjoint embedded discs $D_1,\ldots,D_k\subset S$. Each $D_j$ contains
two overlapping rotation discs with closures in $\operatorname{int}(D_j)$.
The time-one maps of their Hamiltonian rotations define
$\rho_{j,\eps}:\F\to\Ham(S,\omega)$, extended by the identity outside
$D_j$. Their commuting images define
$$
\rho_\eps(w_1,\ldots,w_k)=\rho_{1,\eps}(w_1)\cdots\rho_{k,\eps}(w_k).
$$
Let $B_\eps$ be the union of all transition collars and put
$b_\eps=\mu(B_\eps)$. Let $U_{j,1,\eps},U_{j,2,\eps},U_{j,3,\eps}$
be, in order, the first exclusive region, the overlap, and the second
exclusive region, with the collars removed. The construction can be
chosen so that
$$
\lambda_{j,\eps}=6\prod_{s=1}^3\mu(U_{j,s,\eps})\longrightarrow\lambda_j>0,\qquad b_\eps\longrightarrow0.
$$
Let $p_j:\F^k\to\F$ denote the coordinate projections.

\begin{lemma}\label{lem:surface-local-approx}
For $d\ge2$ and $u\in\rHb^d(K(S,3))$, the above homomorphisms satisfy
\begin{equation}\label{eq:surface-local-approx}
\left\|\rho_\eps^*\Gamma_{S,3}^d(u)-\sum_{j=1}^k\lambda_j p_j^*\jmath^*\iota^*u\right\|
 \le\left(3b_\eps+\sum_{j=1}^k|\lambda_{j,\eps}-\lambda_j|\right)\|u\|.
\end{equation}
In particular, the expression on the left tends to zero. 
\end{lemma}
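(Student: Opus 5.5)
The plan follows the proof of Lemma~\ref{lem:disc-approx}, now on the surface $S$ with the cocycle $\gamma_{S,3}$ of Lemma~\ref{lem:surface-config-transfer}. Fix a bounded cocycle $c$ representing $u$. Let $\mathcal E_\eps\subset X_3(S)$ be the set of triples with at least one point in $B_\eps$. Since every time-one map preserves $B_\eps$, this set is invariant. Integration over $\mathcal E_\eps$ is then a bounded cochain map: invariance allows the measure-preserving change of variables, and the differential commutes with integration. Its norm is at most $\nu(\mathcal E_\eps)\|c\|_\infty\le3b_\eps\|c\|_\infty$, which gives the first error term.

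Next I would classify triples $\mathbf x\notin\mathcal E_\eps$. Every generator fixes such a triple pointwise, so $w\mapsto\gamma_{S,3}(\rho_\eps(w),\mathbf x)$ is a homomorphism $\F^k\to K(S,3)$.

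\begin{enumerate}
\item A rotation in $D_\ell$ whose disc contains at most one point of the triple traces a loop in the disc $D_\ell$ that avoids the other points. It therefore gives the trivial braid. The same holds in $K(S,3)$, because everything happens inside a disc and the isotopy can be taken supported there.
\item At most one $D_j$ contains two or more points. So the homomorphism is trivial or factors through a single $p_j$.
\item Inside that $D_j$, the local analysis of Lemma~\ref{lem:disc-approx} applies verbatim through $\iota$. Inside a disc, the image is either abelian (a trivial generator, a central full twist $\iota(\Delta^2)$, or equal twists) or, for one point in each of $U_{j,1,\eps},U_{j,2,\eps},U_{j,3,\eps}$, the map $w\mapsto\beta_{\mathbf x}\,\iota\jmath(w_j)\,\beta_{\mathbf x}^{-1}$.
\end{enumerate}

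In the last case the conjugator $\beta_{\mathbf x}$ lies in $K(S,3)$. The reason is that the closing paths leave the disc and the labelling is arbitrary, but the conjugator is a single element of the full group for both generators, by the common-conjugator argument of \cite[Lemma~4.2]{Kimura} transported by $\iota$. Since $u$ is a class on $K(S,3)$ itself, inner conjugation acts trivially on it. This uses a bounded homotopy independent of the conjugator, \cite[Lemma~A.2]{Loh}. The six labelled product sets therefore contribute exactly $\lambda_{j,\eps}p_j^*\jmath^*\iota^*u$.

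The remaining full-measure complement splits into countably many measurable pieces, one for each tuple of generator images, and each piece has amenable image. Finite partial sums vanish because positive-degree bounded cohomology of amenable groups is zero. The tail integral is bounded by its measure times $\|c\|_\infty$. So this part contributes zero in reduced bounded cohomology. Summing over $j$ and the exceptional set gives
\[
\Bigl\|\rho_\eps^*\Gamma_{S,3}^d(u)-\sum_j\lambda_{j,\eps}p_j^*\jmath^*\iota^*u\Bigr\|\le3b_\eps\|u\|.
\]
Replacing $\lambda_{j,\eps}$ by $\lambda_j$ costs at most $\sum_j|\lambda_{j,\eps}-\lambda_j|\|u\|$, since $p_j^*,\jmath^*,\iota^*$ are norm non-increasing.

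The main obstacle is the third item for positive genus. There one must check that the local braid computations, done in $B_3$, survive in $K(S,3)$, where closing paths may wind around handles. I would handle this by noting that for fixed points all isotopies are supported in $D_j$. The tracing braid is thus the image under $\iota$ of a disc braid, conjugated by the class of the closing path. Since $\iota$ is injective \cite{ParisRolfsen}, the abelian-image cases stay abelian (the central full twist maps to an element commuting with the other generator's image). The free case stays conjugate to $\iota\jmath\circ p_j$. For $g=1$ one passes to the central quotient; this only removes central elements and does not affect any of the cases.
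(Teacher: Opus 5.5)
Your proposal is correct and follows essentially the same route as the paper's proof. Both arguments use the same four steps: the invariant exceptional set of measure at most $3b_\eps$; the fact that at most one $D_j$ contains two or more points of a triple; the local disc classification transported by $\iota$, with a common conjugator in $K(S,3)$ removed by the uniformly bounded homotopy of \cite[Lemma~A.2]{Loh}; and amenable-image vanishing on the remaining pieces, followed by replacing $\lambda_{j,\eps}$ with $\lambda_j$.
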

\begin{proof}
We check that the configuration decomposition used for the disc remains
valid after inclusion into $S$ and, for the torus, after the central
quotient. The set of triples having a point in $B_\eps$ is invariant
under the time-one maps, with measure at most $3b_\eps$.
Its integral is a cochain map of norm at most $3b_\eps$, by the same
cocycle and change-of-variables argument as in
Lemma~\ref{lem:disc-approx}.

Outside this set every marked point is fixed by every time-one map.
Tracing therefore gives a homomorphism on $\F^k$. A supporting disc
containing no more than one of the selected points gives the trivial
braid: a one-point trajectory contracts inside that disc, without
meeting either of the other selected points. A triple has two or more
points in at most one of the pairwise disjoint discs. Hence at most one
factor $\rho_{j,\eps}$ can contribute nontrivially.

Within that factor the complete rotations give exactly the local braids
listed in Lemma~\ref{lem:disc-approx}. A disc enclosing zero or one point
gives the identity. Enclosing all three gives the full twist, which is
central in the \emph{local} $B_3$; it need not be central in $K(S,3)$.
Enclosing two points gives the twist about that pair. If the two
rotations enclose the same pair, the two curves are isotopic in the
local marked disc. If one generator is trivial or the local full twist,
or both twist about the same pair, the image is abelian. Its image
under inclusion into $K(S,3)$ is still amenable.
The only remaining case has one point in each of the three regions
$U_{j,1,\eps},U_{j,2,\eps},U_{j,3,\eps}$.

In their given order, this case reads $\sigma_1^2,\sigma_2^2$ on the
two generators. A different closing path or label assignment conjugates
both generators by the same element of $K(S,3)$. To identify different
supporting discs, move the marked model disc by an ambient isotopy from
the identity and use its traced braid as the connecting path. Thus these
local inclusions are conjugate to the fixed $\iota$ in $K(S,3)$. 
Consequently the homomorphism
on this region is conjugate to $\iota\jmath p_j$.
There are six label assignments, with total measure
$\lambda_{j,\eps}$. The uniformly bounded inner-automorphism homotopy
\cite[Lemma~A.2]{Loh} removes the measurable conjugators. The contribution
is therefore $\lambda_{j,\eps}p_j^*\jmath^*\iota^*u$.

All other regions, except of possibly the 
region that contain triples having a point in $B_\epsilon$, 
give zero in positive-degree bounded cohomology
because their image is amenable. Combining the regions and 
integrating gives the stated estimate with
$\lambda_{j,\eps}$ in place of $\lambda_j$ and error $3b_\eps\|u\|$.
Pullbacks have norm at most one, so replacing those constants by their
limits proves \eqref{eq:surface-local-approx}.
\end{proof}

\subsection{Hyperbolic surfaces}
\label{sec:5-1}

\medskip
We first prove Theorem~\ref{thm:surface} for $g\ge2$. The essential
neighborhood is fixed once and for all, while the number of additional
supporting discs is unrestricted. The next degree-two splitting lemma
will also be used in the torus proof.

\begin{lemma}\label{lem:degree-two-product-splitting}
Let $p_j:\F^k\to\F$ and $e_j:\F\to\F^k$ denote the coordinate
projections and inclusions. For every $\xi\in\rHb^2(\F^k)$,
\begin{equation}\label{eq:degree-two-product-splitting}
 \xi=\sum_{j=1}^k p_j^*e_j^*\xi.
\end{equation}
\end{lemma}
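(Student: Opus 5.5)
The plan is to pass from bounded classes to homogeneous quasimorphisms, using the degree-two identification recalled in the Remark on degree two, and to prove \eqref{eq:degree-two-product-splitting} at the level of quasimorphisms. Since the seminorm on $\Hb^2$ of a discrete group is a norm \cite[Theorem~6.5]{Frigerio}, we have $\rHb^2(\F^k)=\Hb^2(\F^k)$. It therefore suffices to prove the identity in $\Hb^2(\F^k)$.

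\emph{Step 1: $\xi$ is exact.} We show that the comparison image $c(\xi)\in H^2(\F^k;\R)$ vanishes. By the K\"unneth formula, and since $H^2(\F;\R)=0$, we have
$$H^2(\F^k;\R)\cong\bigoplus_{i<j}H^1(\F;\R)\otimes H^1(\F;\R),$$
spanned by the cross products $p_i^*x\smile p_j^*y$. These classes are detected by restriction to the subgroups $\Z^2\cong\langle e_i(s),e_j(t)\rangle$, where $s,t\in\{a,b\}$: the restriction of $p_i^*x\smile p_j^*y$ is $x(s)y(t)$ times the generator of $H^2(\Z^2;\R)$. On the other hand, $\xi$ restricts to zero on each such $\Z^2$, because positive-degree bounded cohomology of amenable groups vanishes \cite[Corollary~3.7]{Frigerio}. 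Naturality of comparison then forces every coordinate of $c(\xi)$ to vanish, so $c(\xi)=0$. Checking that these $\Z^2$-restrictions really separate the K\"unneth summands is the only step needing care, and I regard it as the main obstacle. It is, however, a finite linear-algebra check on generators.

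\emph{Step 2: reduction to a homogeneous quasimorphism.} By Step~1 and the identification in the Remark on degree two, $\xi=[\delta Q]$ for a homogeneous quasimorphism $Q:\F^k\to\R$.

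\emph{Step 3: additivity of $Q$.} Homogeneous quasimorphisms are additive on commuting pairs. The elements $e_1(w_1),\ldots,e_k(w_k)$ pairwise commute, and their product is $(w_1,\ldots,w_k)$. Hence
$$Q(w_1,\ldots,w_k)=\sum_j Q(e_j(w_j))=\sum_j (Q\circ e_j\circ p_j)(w_1,\ldots,w_k).$$
That is, $Q=\sum_j p_j^*e_j^*Q$ as functions on $\F^k$.

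\emph{Step 4: conclusion.} Taking coboundaries in Step~3, and using naturality of $\delta$ under homomorphism pullback, gives
$$[\delta Q]=\sum_j p_j^*e_j^*[\delta Q].$$
This is \eqref{eq:degree-two-product-splitting}, and by the first paragraph it holds in reduced bounded cohomology as well.
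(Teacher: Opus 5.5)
Your proof is correct and follows the paper's argument. It first shows that degree-two bounded classes on $\F^k$ are exact, using restriction to amenable $\Z^2$-subgroups; the paper phrases this dually, with $H_d(\F^k;\R)$ generated by images of tori. It then represents the class by a homogeneous quasimorphism, uses additivity on the commuting elements $e_j(w_j)$, and takes coboundaries.

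The separation step you flag as the main obstacle is immediate. On $\langle e_i(s),e_j(t)\rangle$, every Künneth summand indexed by a pair other than $\{i,j\}$ restricts to zero, because one of its two projections is trivial on that subgroup.
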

\begin{proof}

Note that the comparison map
$$
c_{\F^k}^{\,d}:\Hb^d(\F^k)\longrightarrow H^d(\F^k;\R)
$$
vanishes for every $k\ge1$ and $d\ge1$.
Indeed, the ordinary K\"unneth formula shows that
$H_d(\F^k;\R)$ is generated by images of fundamental
classes of $d$-tori; see \cite[Section~3.B]{Hatcher}.
Every bounded class restricts trivially to their
fundamental groups, since these groups are amenable.
Naturality of comparison and the universal coefficient
theorem therefore imply the assertion.
For the same argument in degree two, see
\cite[Remark~5.6 and proof of Theorem~5.15]{FrigerioSisto2023}.

It follows that every degree-two bounded class is represented by
$\delta q$, where $q$ is a homogeneous quasimorphism on $\F^k$, and hence 
$q(xy)=q(x)+q(y)$ whenever $xy=yx$. Consequently,
$$q(g_1,\ldots,g_k)=\sum_{j=1}^k q(e_j(g_j)).$$
Taking coboundaries gives \eqref{eq:degree-two-product-splitting} already
in bounded cohomology and hence in reduced bounded cohomology.
\end{proof}

Let $S=\Sigma_g$, $g\ge2$, and choose a compact one-holed torus
$Q\subset S$ with nonempty complementary interior. Choose a basepoint
in $Q$ and simple meridian and longitude loops giving
$i:\F\hookrightarrow\pi_1(S)$. In a standard surface presentation,
write these as $a_1,b_1$ and complete them to
$$
\pi_1(S)=\left\langle a_1,b_1,\ldots,a_g,b_g\ \middle|\ \prod_{j=1}^g[a_j,b_j]=1\right\rangle.
$$
Define $r:\pi_1(S)\to\F=\langle a,b\rangle$ by
$$
\begin{aligned}
&r(a_1)=a,\quad r(b_1)=b,\quad r(a_2)=b,\quad r(b_2)=a,\\
&r(a_j)=r(b_j)=1\qquad(3\le j\le g).
\end{aligned}
$$
The relator maps to $[a,b][b,a]=1$, so $r$ is a homomorphism and
$r\circ i=\id$. The surface group is center free, since $g\ge2$. 
Thus it is the group $G$ in
Lemma~\ref{lem:transfer}.
For $q=2,3$ define the one-point transfer classes
\begin{equation}\label{eq:mixed-theta}
\Theta_q(v)=\overline\Gamma_{b,1}^{\,q}(r^*v),\qquad v\in\rHb^q(\F).
\end{equation}
Since $H^q(\F;\R)=0$, the map $r^*$ has exact image. The comparison
identity in Lemma~\ref{lem:transfer} gives
$$
\Theta_q(v)\in\rEH^q(\mathcal T_S),\qquad\|\Theta_q(v)\|\le\|v\|.
$$
Here the one-point transfer is defined on the compact surface. Its
finite-image proof uses continuity of isotopies and compactness, not
smoothness or the dimension assumption in Theorem~\ref{thm:main}.
Thus this exactness statement includes $\Homeo_0(S,\mu)$.

\begin{lemma}\label{lem:essential-one-point}
There are homomorphisms
$s_\delta:\F\to\Ham(S,\omega)$ supported in $\operatorname{int}(Q)$
and a number $\kappa>0$, independent of $v$ and $q\in\{2,3\}$, such
that
\begin{equation}\label{eq:essential-one-point}
 s_\delta^*\Theta_q(v)\longrightarrow\kappa v
 \quad\text{in }\rHb^q(\F)\quad(\delta\longrightarrow0).
\end{equation}
\end{lemma}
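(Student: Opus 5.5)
Construct $s_\delta$ by the finger-pushing of Lemma~\ref{lem:push}, adapted to dimension two inside the one-holed torus $Q$. In $\operatorname{int}(Q)$ choose two embedded annuli $N_a,N_b$ whose core curves are the simple meridian and longitude representing $i(a)^{-1}$ and $i(b)^{-1}$ (inverse convention of \eqref{eq:cocycle}). They meet transversally in a single square $N_a\cap N_b$, which is contractible and has positive area. On each annulus use the Hamiltonian $H_\delta=-p\chi_\delta(p)$ from \eqref{eq:Ham} with $\ell=1$. Its cutoff equals one on a core subannulus $A_{s,\delta}$ exhausting $N_s$. Let $s_\delta(a),s_\delta(b)$ be the time-one maps. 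These lie in $\Ham(S,\omega)$, since the isotopies are Hamiltonian flows supported in $\operatorname{int}(Q)$. We do not need these maps to commute: $s_\delta$ is defined on the free group.

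Next, compute the pullback exactly as in Proposition~\ref{prop:approx} with $k=1$. Put $B_\delta=(N_a\setminus A_{a,\delta})\cup(N_b\setminus A_{b,\delta})$. It is invariant, $\mu(B_\delta)\to0$, and its complement is partitioned into three kinds of region. On $U_\delta=A_{a,\delta}\cap A_{b,\delta}$, both trajectories are read through a path inside the square, so $w\mapsto\gamma(s_\delta(w),x)$ is $u_x i(w)u_x^{-1}$ with a common conjugator. On $A_{a,\delta}\setminus N_b$ and $A_{b,\delta}\setminus N_a$ the image is cyclic. Outside $N_a\cup N_b$ the cocycle is trivial. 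The cocycle here is valued in $G=\pi_1(S)$, which is center-free, and Lemma~\ref{lem:transfer} covers $\mathcal T_S$ in dimension two: the finite-image property is only needed for exactness, which was already noted.

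Inner automorphisms act trivially via the uniformly bounded homotopy \cite[Lemma~A.2]{Loh}, and amenable images contribute zero. The exceptional set gives a cochain map of norm at most $\mu(B_\delta)$. Hence
$$
\bigl\|s_\delta^*\Theta_q(v)-\mu(U_\delta)\,i^*r^*v\bigr\|\le\mu(B_\delta)\|r^*v\|\le\mu(B_\delta)\|v\|.
$$
Since $r\circ i=\id$, we have $i^*r^*v=v$. Setting $\kappa=\lim\mu(U_\delta)=\mu(N_a\cap N_b)>0$ and using $|\mu(U_\delta)-\kappa|\|v\|\to0$ gives \eqref{eq:essential-one-point}. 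The constant $\kappa$ is independent of $v$ and of $q$.

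The main obstacle is the common-conjugator claim on $U_\delta$. For a point of the square, both generator loops must be read with the same conjugating path, giving $u_x\,i(\cdot)\,u_x^{-1}$ rather than two independent conjugates. I would handle this as Lemma~\ref{lem:push} does: take the reference path $\alpha_{x_0,x}$ followed by a path inside the contractible square to the common vertex, and note that the difference is a single element of $G$. I would choose these paths measurably by enumerating $G$.

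A secondary point is that the meridian and longitude are nonseparating and meet once. As a result the square really is a disc and the exclusive parts of the annuli are discs, so the cocycle on those parts is exactly cyclic.
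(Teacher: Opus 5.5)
Your proposal is correct and follows essentially the same route as the paper. Both use annuli $N_a,N_b\subset\operatorname{int}(Q)$ about the meridian and longitude with the Hamiltonian $H_\delta=-p\chi_\delta(p)$. Both partition into the overlap (conjugate to $i$), the exclusive parts (cyclic image), the complement (trivial) and the invariant transition collars. This gives $\|s_\delta^*\Theta_q(v)-\mu(U_\delta)v\|\le\mu(B_\delta)\|r^*v\|$ and $\kappa=\mu(N_a\cap N_b)$. The only difference is that the paper cites the one-point calculation of \cite[Lemma~3.1]{BM} for the common-conjugator step, which you spell out along the lines of Lemma~\ref{lem:push}.
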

\begin{proof}
Choose thin annuli $N_a,N_b\subset\operatorname{int}(Q)$ about the two
simple generator loops, with intersection a disc. On each annulus use
coordinates $(\vartheta,p)$ with $\omega=d\vartheta\wedge dp$, where
$\vartheta\in S^1$, and use the compactly supported Hamiltonian
$H_\delta=-p\chi_\delta(p)$. Here $\chi_\delta$ equals one on intervals
exhausting the annulus and is zero near its boundary. Its vector field is
$(\chi_\delta+p\chi'_\delta)\partial_\vartheta$ under the convention
$\iota_{X_H}\omega=-dH$. Its flow makes one complete circuit on the
region where $\chi_\delta=1$. It is the annular Hamiltonian construction of
Lemma~\ref{lem:push} in dimension two. Send the two free
generators to the two time-one maps to obtain $s_\delta$.

Outside the union $C_\delta$ of the transition collars every point is
fixed by both endpoints. The set $C_\delta$ is invariant under the
endpoints: a measure-preserving bijection fixing its complement
pointwise preserves it. Its measure tends to zero. On the fixed overlap
$U_\delta$, the tracing homomorphism is conjugate to $i$. On each
exclusive annular region its image is cyclic, and outside both annuli
it is trivial. This is the one-point calculation of
\cite[Lemma~3.1]{BM}; the simple loops chosen above satisfy its surface
hypothesis. Thus we obtain
$$
\|s_\delta^*\Theta_q(v)-\mu(U_\delta)v\|\le\mu(C_\delta)\|r^*v\|.
$$
The cyclic and trivial images have zero positive-degree bounded
cohomology. Also $i^*r^*=\id$ and both pullbacks are norm
non-increasing, so $\|r^*v\|=\|v\|$. Set
$\kappa=\mu(N_a\cap N_b)>0$. Since $\mu(U_\delta)\to\kappa$, the last
estimate proves \eqref{eq:essential-one-point}.
\end{proof}

Choose $m$ pairwise disjoint closed discs
$D_1,\ldots,D_m\subset S\setminus Q$ and use the local rotations of
Lemma~\ref{lem:surface-local-approx} inside them. The supports commute
with each other and with $s_\delta$. Hence
\begin{equation}\label{eq:mixed-testing-map}
 \begin{aligned}
 &\rho_{\eps,\delta}:\F^{m+1}\longrightarrow\Ham(S,\omega),\\
 &\rho_{\eps,\delta}(g_1,\ldots,g_m,h)
   =\rho_{1,\eps}(g_1)\cdots\rho_{m,\eps}(g_m)s_\delta(h).
 \end{aligned}
\end{equation}
Let $p_j:\F^{m+1}\to\F$ again denote the projections.

\begin{lemma}\label{lem:one-point-factorization}
For the homomorphism \eqref{eq:mixed-testing-map} and every
$v\in\rHb^q(\F)$, $q=2,3$, we have the following identity
\begin{equation}\label{eq:one-point-factorization}
\rho_{\eps,\delta}^*\Theta_q(v)=p_{m+1}^*s_\delta^*\Theta_q(v).
\end{equation}
\end{lemma}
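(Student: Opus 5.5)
The identity should hold at the level of cochains, before passing to cohomology. The reason is that the one-point cocycle $\gamma$ of \eqref{eq:cocycle} does not see the disc rotations $\rho_{j,\eps}$.

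Write $\rho_{\eps,\delta}(g,h)=R_\eps(g)\,s_\delta(h)$, where $R_\eps(g)=\rho_{1,\eps}(g_1)\cdots\rho_{m,\eps}(g_m)$. The supports of $R_\eps(g)$ and $s_\delta(h)$ are disjoint, since the discs $D_j$ lie in $S\setminus Q$ while $s_\delta$ is supported in $\operatorname{int}(Q)$. Hence the two factors commute, and the cocycle identity \eqref{eq:cocycleidentity} gives
$$\gamma(\rho_{\eps,\delta}(g,h),x)=\gamma(R_\eps(g),s_\delta(h)x)\,\gamma(s_\delta(h),x).$$
The key step is to show that $\gamma(R_\eps(g),y)=1$ in $G=\pi_1(S)$ for almost every $y$. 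Fix the concatenated isotopy that performs the radial rotation isotopies defining $\rho_{j,\eps}$, one generator letter at a time. It is supported in $D_1\cup\cdots\cup D_m$.

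\begin{itemize}
\item If $y\notin\bigcup D_j$, the trajectory of $y$ is constant.
\item If $y\in D_j$, the whole trajectory stays in the disc $D_j$. The closing paths $\alpha_{x_0,y}$ and $\alpha_{R(y),x_0}$ then differ only by a path inside $D_j$, so the loop in \eqref{eq:cocycle} is homotopic to $\alpha_{x_0,y}*\beta*\alpha_{y',x_0}$ with $\beta\subset D_j$. This is null-homotopic modulo the measurable choice of reference paths.
\end{itemize}

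To make the second case rigorous I would use the independence of the induced bounded transfer from the choice of reference paths (Lemma~\ref{lem:transfer}). Choose the reference paths so that $\alpha_{x_0,y}$ for $y\in D_j$ is a fixed path to the centre of $D_j$ followed by a segment in $D_j$. Then the traced class is exactly trivial, using that $D_j$ is simply connected. Since $\gamma$ is independent of the isotopy, any isotopy may be used.

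It follows that $\gamma(\rho_{\eps,\delta}(g,h),x)=\gamma(s_\delta(h),x)$ for almost every $x$, which equals $\gamma(s_\delta(p_{m+1}(g,h)),x)$. Substituting this into the transfer formula \eqref{eq:transferformula} with $k=1$ shows that $\rho_{\eps,\delta}^*\Gamma'_{b,1}c=p_{m+1}^*s_\delta^*\Gamma'_{b,1}c$ for every bounded cochain $c$ on $G$, pointwise on tuples. Applying this to a representative of $r^*v$ and passing to reduced bounded cohomology gives \eqref{eq:one-point-factorization}.

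The main obstacle is the almost-everywhere triviality on $D_j$ together with the reference-path bookkeeping. If adapting the paths is awkward, a fallback works without any path adjustment. For fixed $y$ the class of the loop in $G$ depends only on the homotopy class rel endpoints of the trajectory. Any trajectory inside $D_j$ is homotopic rel endpoints to the straight path inside $D_j$ from $y$ to $R(y)$. Moreover $R(y)=y$ outside the transition collars, and there the loop is $\alpha_{x_0,y}*(\text{null loop in }D_j)*\alpha_{y,x_0}$, which is trivial for any choice of paths.

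On the collars, $R(y)\neq y$ is possible. However, the class depends only on the rel-endpoint homotopy class of the trajectory, and inside a disc there is a unique such class. So $\gamma(R_\eps(g),y)$ equals the class obtained from the trivial isotopy composed with any path in $D_j$, and this is $\gamma(\psi,y)$ for a map $\psi$ supported in $D_j$ and isotopic to the identity within $D_j$. It remains to check that this is independent of $g$ and compatible with the cocycle identity. If it were not trivial, I would instead absorb it as a bounded coboundary via the change-of-section homotopy.
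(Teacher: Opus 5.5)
Your proposal is correct and is essentially the paper's proof. You adapt the one-point closing paths so that a point of $D_j$ is reached by a fixed path into $D_j$ followed by a path inside $D_j$, and deduce from simple connectivity of $D_j$ that $\gamma(R_\eps(g),y)=1$; the cocycle identity then gives the factorization on cochains, and independence of the section gives it for the classes. The fallback paragraph is unnecessary: for arbitrary reference paths on the transition collars you have not justified the triviality it needs, so you should drop it.
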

\begin{proof}
Choose the one-point closing path for $x\in D_j$ by first going from the global basepoint to a
fixed point of $D_j$ and then taking a path inside $D_j$. Outside the
finitely many discs use the bounded-length closing paths from
Lemma~\ref{lem:transfer}. The additional local paths may also be chosen
with bounded length. Thus these choices retain the finite-image property.

If an isotopy is supported in $D_j$, its closed trajectory for a point
of $D_j$ is conjugate to a loop entirely inside $D_j$. This loop is
contractible. Therefore its one-point cocycle has value $1$ everywhere. This holds
also for products of isotopies supported in the pairwise disjoint discs,
including points in their transition collars. The isotopy defining
$s_\delta(h)$ is stationary on every $D_j$.

Write $\gamma_{S,1}$ for the one-point cocycle and
$e=\rho_{1,\eps}(g_1)\cdots\rho_{m,\eps}(g_m)$.
Using the concatenated isotopies, the cocycle identity gives, for every
point where the closing paths are defined,
$$
\gamma_{S,1}(e s_\delta(h),x)=\gamma_{S,1}(e,s_\delta(h)x)\gamma_{S,1}(s_\delta(h),x)=\gamma_{S,1}(s_\delta(h),x).
$$
The one-point cocycle is independent of isotopy because the center
of the surface group is trivial. Let $\mathbf w_i=(g_{1,i},\ldots,g_{m,i},h_i)\in\F^{m+1}$.
For a bounded homogeneous cochain $c$ on $\pi_1(S)$, put
$$
c_{\mathbf h}(x)=c\bigl((\gamma_{S,1}(s_\delta(h_i),x))_{i=0}^q\bigr).
$$
The preceding cocycle identity and the transfer formula give
$$
\bigl(\rho_{\eps,\delta}^*\Gamma'_{S,1}c\bigr)
(\mathbf w_0,\ldots,\mathbf w_q)=\int_S c_{\mathbf h}(x)\,d\mu(x)
 =\bigl(p_{m+1}^*s_\delta^*\Gamma'_{S,1}c\bigr)(\mathbf w_0,\ldots,\mathbf w_q).
$$
Here $\Gamma'_{S,1}$ is the one-point cochain map of
Lemma~\ref{lem:transfer} for this choice of closing paths.
Take $c$ to be the pullback by $r$ of a cocycle representing $v$.
This proves \eqref{eq:one-point-factorization} on cochains for the
chosen section, and therefore on reduced bounded cohomology.
Independence of closing paths gives the same cohomology identity for
any section.
\end{proof}

\phantomsection\label{proof:surface}
\begin{proof}[Proof of Theorem~\ref{thm:surface} for $g\ge2$]
Let $S=\Sigma_g$ with $g\ge2$, and fix the one-point maps
\eqref{eq:mixed-theta}. Choose $\alpha,\eta,z$ from
Lemma~\ref{lem:surface-bbf} and put
$A=\Gamma_{S,3}^2(\alpha)\in\rHb^2(\mathcal T_S)$.
The variable class will come from $\rHb^q(\F)$ through the one-point
map, not from degree-three cohomology of a braid group.
For $m\ge0$ and $q\in\{2,3\}$, define $\Phi_{m,q}(v)$ as the
transfer over $X_3(S)^m\times S$ of the symmetric class
$$
\frac1{m!}\bigl(\Sym_m(\alpha^{\times m})\bigr)\times r^*v\in\rHb^{2m+q}(K(S,3)^m\times\pi_1(S)).
$$
Here $\Sym_m$ permutes only the first $m$ group coordinates and fixes
the last one. Since all $m$ fixed factors have degree two and equal
$\alpha$, this class is $\alpha^{\times m}\times r^*v$. The
factor $1/m!$ is the same normalized-symmetrization convention as
\eqref{eq:transfer-symmetrization}, applied to the identical factors.
The cochain calculation below shows
\begin{equation}\label{eq:mixed-phi}
 \begin{aligned}
 &\Phi_{m,q}:\rHb^q(\F)\longrightarrow\rHb^{2m+q}(\mathcal T_S),\\
 &\Phi_{m,q}(v)=A^{\smile m}\smile\Theta_q(v).
 \end{aligned}
\end{equation}
For $m=0$ this is $\Theta_q$. We will prove injectivity, then verify
that every image is reduced exact, including for homeomorphisms.

\emph{The mixed product transfer on cochains.}
The symmetric construction just defined has the same
transfer as the ordered product $\alpha^{\times m}\times r^*v$.
Unlike the other product transfers, its last integration factor is $S$,
not $X_3(S)$. Thus we do not permute it with a configuration factor.
The diagonal action has the product cocycle whose first $m$ entries
are $\gamma_{S,3}$ and whose last entry is $\gamma_{S,1}$, with values
in $K(S,3)^m\times\pi_1(S)$. Its integration is a bounded cochain map
by the same cocycle and identities as \eqref{eq:surface-config-cochain}.
For an explicit verification of the product identity, choose a bounded
$2$-cocycle $c$ representing $\alpha$ and a bounded $q$-cocycle $e$
representing $r^*v$. For fixed
$\mathbf f=(f_0,\ldots,f_{2m+q})$ set
$$
C_{j,\mathbf f}(\mathbf x)=c\bigl((\gamma_{S,3}(f_i,\mathbf x))_{i=2j-2}^{2j}\bigr),\qquad
E_{\mathbf f}(y)=e\bigl((\gamma_{S,1}(f_i,y))_{i=2m}^{2m+q}\bigr).
$$
Writing $\mathbf X=(\mathbf x^{(1)},\ldots,\mathbf x^{(m)})$, Fubini gives
$$
\begin{aligned}
&\int_{X_3(S)^m\times S}\left(\prod_{j=1}^m C_{j,\mathbf f}(\mathbf x^{(j)})\right)
E_{\mathbf f}(y)\,d\nu_m(\mathbf X)d\mu(y)\\
&=\left(\prod_{j=1}^m\int_{X_3(S)}C_{j,\mathbf f}\,d\nu\right)\left(\int_S E_{\mathbf f}\,d\mu\right).
\end{aligned}
$$
The last line is the evaluation of the cup product of the transferred
cochains, with the successive degree intervals displayed in the
definitions of $C_{j,\mathbf f}$ and $E_{\mathbf f}$. All integrands are
bounded. Permuting the first $m$ configuration variables in this
integral leaves it unchanged, exactly as in
\eqref{eq:transfer-symmetrization}. Hence its value also equals the
transfer of the normalized symmetric class used in the definition of
$\Phi_{m,q}$. No permutation involving the last variable is used.
In particular, the mixed bounded transfer is defined for homeomorphisms
without integrating an unbounded primitive.

\emph{Two coordinates: one disc and one essential neighborhood.}
For $m=1$ choose a supporting disc outside $Q$ and use
$\rho_{\eps,\delta}$ from \eqref{eq:mixed-testing-map}. Write
$$
\eta_\eps=\rho_{1,\eps}^*A,\qquad E_\delta=s_\delta^*A,\qquad v_\delta=s_\delta^*\Theta_q(v).
$$
Lemmas~\ref{lem:degree-two-product-splitting}
and~\ref{lem:one-point-factorization} give
$$
\rho_{\eps,\delta}^*A=p_1^*\eta_\eps+p_2^*E_\delta,\qquad
\rho_{\eps,\delta}^*\Theta_q(v)=p_2^*v_\delta.
$$
These are exact identities in reduced bounded cohomology. Their product is
\begin{equation}\label{eq:mixed-two-expansion}
\rho_{\eps,\delta}^*\Phi_{1,q}(v)=\eta_\eps\times v_\delta+1\times(E_\delta\smile v_\delta).
\end{equation}
To remove the second summand, let
$\tau_1:\F^2\to\F^2$ be given by $\tau_1(g,h)=(1,h)$, and put $\Pi_1=\id-\tau_1^*$.
The pullback $\tau_1^*$ annihilates $\eta_\eps\times v_\delta$ and fixes
$1\times(E_\delta\smile v_\delta)$. Therefore
\begin{equation}\label{eq:mixed-two-limit}
\Pi_1\rho_{\eps,\delta}^*\Phi_{1,q}(v)=\eta_\eps\times v_\delta
\longrightarrow\lambda_1\kappa\,\eta\times v.
\end{equation}

The limit follows from $\eta_\eps\to\lambda_1\eta$ and
$v_\delta\to\kappa v$ in Lemmas~\ref{lem:surface-local-approx}
and~\ref{lem:essential-one-point}, since the external product is
bounded and bilinear. If $\Phi_{1,q}(v)=0$,
\eqref{eq:mixed-two-limit} gives $\eta\times v=0$. 
By Lemma~\ref{lem:crossdual} and
$\langle\eta,z\rangle=1$, external product with $\eta$ is injective,
so $v=0$. This proves injectivity for $q=2,3$, without any condition
on the class $E_\delta$.

\emph{An arbitrary number of disc factors.}
For $m\ge2$ choose $m$ pairwise disjoint supporting discs in
$S\setminus Q$. Such a finite collection exists for every $m$;
its areas and the positive constants below may depend on $m$.
Put $\eta_{j,\eps}=\rho_{j,\eps}^*A$ and retain $E_\delta,v_\delta$
as above. The same two lemmas give
\begin{equation}\label{eq:mixed-general-expansion}
\rho_{\eps,\delta}^*\Phi_{m,q}(v)=\left(\sum_{j=1}^m p_j^*\eta_{j,\eps}
+p_{m+1}^*E_\delta\right)^{\smile m}\smile p_{m+1}^*v_\delta.
\end{equation}

\emph{Removing terms with an unused coordinate.}
\phantomsection\label{par:coordinate-projections}
Fix $k\ge1$. For $1\le j\le k$, let $\tau_j:\F^k\to\F^k$ replace
coordinate $j$ by the identity and leave every other coordinate unchanged.
For $0\le r\le k$, set
$$
\Pi_r=(\id-\tau_1^*)\cdots(\id-\tau_r^*),\qquad \Pi_0=\id.
$$
The product $\F^k$ is always specified; $\Pi_r$ acts only
on its first $r$ coordinates. The pullbacks commute and are idempotent,
so $\Pi_r$ is a bounded projection with $\|\Pi_r\|\le2^r$.
A positive-degree class pulled back by the trivial homomorphism is zero.
Therefore $\tau_j^*$ annihilates an external product whose $j$th factor
has positive degree and fixes one whose $j$th factor has degree zero.
It follows that $\Pi_r$ fixes a product having positive degree in all
of its first $r$ coordinates, and annihilates a product with degree zero
in any of those coordinates.

In particular, let $k=m+1$, let $\theta,z$ satisfy the hypotheses of
Proposition~\ref{prop:symmetric-products}, let $v\in\rHb^q(\F)$ with
$q=2,3$, and let $c_1,\ldots,c_k$ be real numbers. Then
$$
\Pi_k\left[\left(\sum_{j=1}^k c_jp_j^*\theta\right)^{\smile m}
\smile\left(\sum_{j=1}^k c_jp_j^*v\right)\right]=\left(\prod_{j=1}^k c_j\right)T_m^q(v).
$$
Here $p_j:\F^k\to\F$ is projection. To verify the identity, expand the
product and regard its $m$ fixed factors as separately labelled. There
are $k$ factors of positive degree to assign to $k$ coordinates. The
projection removes any term with an unused coordinate, so only the
bijective assignments remain. Each has coefficient $\prod_jc_j$, and
their sum is precisely $\Sym_k(\theta^{\times m}\times v)$. No
cup-product vanishing is assumed in this calculation. 
We use $\Pi_m$ acting on the first $m$
coordinates of $\F^{m+1}$. The last coordinate, carrying $v_\delta$,
is kept fixed. The bound is $\|\Pi_m\|\le2^m$. 

Expand the power in \eqref{eq:mixed-general-expansion}. There are
exactly $m$ degree-two factors to distribute. A term using $E_\delta$
or assigning two factors to the same disc coordinate leaves another
disc coordinate unused, and is therefore sent to zero by $\Pi_m$.
The remaining terms assign one factor to each of the first $m$
coordinates. There are $m!$ such assignments. They agree, with no
sign change because all assigned factors have even degree. Hence
\begin{equation}\label{eq:mixed-recovery}
\Pi_m\rho_{\eps,\delta}^*\Phi_{m,q}(v)=m!\,\eta_{1,\eps}\times\cdots\times\eta_{m,\eps}\times v_\delta.
\end{equation}
For $m=2$, \eqref{eq:mixed-recovery} is
$2\,\eta_{1,\eps}\times\eta_{2,\eps}\times v_\delta$;
$\Pi_2$ here acts on the first two coordinates of $\F^3$.
The approximation lemmas and boundedness of the external product show
that the right side converges in norm to
\begin{equation}\label{eq:mixed-limit}
 m!\,\kappa\left(\prod_{j=1}^m\lambda_j\right)\eta^{\times m}\times v.
\end{equation}
If $\Phi_{m,q}(v)=0$, \eqref{eq:mixed-recovery} and the positive
coefficient in \eqref{eq:mixed-limit} give
$\eta^{\times m}\times v=0$ in reduced bounded cohomology.
Lemma~\ref{lem:crossdual} again gives $v=0$.

For the lower norm estimate, there is a 
$\ell^1$-cycle $z^{\times m}$, and
$\langle\eta^{\times m},z^{\times m}\rangle=1$ by
Lemma~\ref{lem:crossdual}. Its product identity and norm bound give
$$
\|\eta^{\times m}\times v\|\ge \frac{\|v\|}{\binom{2m+q}{2m}\|z^{\times m}\|_1}.
$$
Since $\|\Pi_m\|\le2^m$ and pullback is norm non-increasing,
\eqref{eq:mixed-recovery}--\eqref{eq:mixed-limit} yield
$$
\|\Phi_{m,q}(v)\|\ge \frac{m!\,\kappa\prod_{j=1}^m\lambda_j}
{2^m\binom{2m+q}{2m}\|z^{\times m}\|_1}\,\|v\|\qquad(m\ge1).
$$
The denominator is positive by the pairing identity.
For $m=0$, the lower bound is $\kappa\|v\|$.
The upper bound is $\|\Phi_{m,q}(v)\|\le\|A\|^m\|v\|$.

\emph{Exactness, including for homeomorphisms.}
By \eqref{eq:mixed-theta} and Lemma~\ref{lem:transfer}, the reduced
class $\Theta_q(v)$ is the image of an exact bounded-cohomology class.
Choose a bounded cocycle $d_v$ representing that exact class and an
ordinary cochain $B_v$ on $\mathcal T_S$ with $d_v=\delta B_v$. Choose a bounded $2$-cocycle
$c_A$ representing $A$. The cochain $B_v$ may be unbounded, but the
Leibniz identity for ordinary cochains gives
\begin{equation}\label{eq:mixed-exact-primitive}
c_A^{\smile m}\smile d_v=\delta\bigl(c_A^{\smile m}\smile B_v\bigr).
\end{equation}
The cocycle on the left is bounded and represents
$\Phi_{m,q}(v)$, while the expression on the right gives its ordinary
primitive directly on the transformation group. Formula
\eqref{eq:mixed-exact-primitive} proves that the class is reduced exact
without assuming that $A$ is exact. In particular, for homeomorphisms
we never integrate an unbounded cochain over $X_3(S)$.

Both $\rHb^2(\F)$ and $\rHb^3(\F)$ are infinite-dimensional, and
$2m+2,2m+3$, $m\ge0$, exhaust all degrees at least two. The injective
maps in \eqref{eq:mixed-phi} give both conclusions of the theorem.
Every testing homomorphism takes values in $\Ham(S,\omega)$, and all
cocycle calculations apply to each of the four stated transformation
groups. This proves Theorem~\ref{thm:surface} for $g\ge2$.
\end{proof}

\subsection{Torus case}
\label{sec:5-2}
\medskip
We finish the proof of Theorem~\ref{thm:surface} with the torus.
The quotient of its fundamental group by its center is trivial, so the
one-point factor used above cannot be used here. We replace it by
Kimura's two-point transfer. The first task is to keep the annular
construction disjoint from a fixed disc; the second is to check that
adding local disc isotopies does not change the two-point factor in
positive-degree bounded cohomology.

Put $T=T^2$ and use $K(T,2)$ as defined before
Theorem~\ref{thm:surface}. The Birman map and the two-strand braid
identifications give
$$
K(T,2)=B_2(T)/Z(B_2(T)),\qquad P_2(T)/Z(P_2(T))\cong\F.
$$
Let $\jmath_T:\F\hookrightarrow K(T,2)$ denote the inclusion of this
pure quotient. Its index is two. With a base configuration $(z_1,z_2)$,
choose the generators $a,b$ by moving $z_1$ once along a meridian or
longitude in $T\setminus\{z_2\}$, while keeping $z_2$ fixed. Moving the
second point instead gives braids $a_2,b_2$; the simultaneous motions
$aa_2,bb_2$ generate the center of $P_2(T)$. In particular, the free
group here is the fundamental group of a punctured torus, 
see \cite[Section~5.3]{Kimura}.

\begin{lemma}\label{lem:torus-two-cohomology}
For $q=2,3$, the space $\rHb^q(K(T,2))$ is infinite-dimensional and
equals $\rEH^q(K(T,2))$. Restriction
$$\jmath_T^*:\rHb^q(K(T,2))\longrightarrow\rHb^q(\F)$$
is an isometric embedding.
\end{lemma}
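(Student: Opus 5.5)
The plan follows the proofs of Lemmas~\ref{lem:braid-cohomology} and~\ref{lem:sphere-cohomology}. It has three independent parts: isometry of restriction, vanishing of the comparison map, and infinite-dimensionality.

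\emph{Isometric restriction.} The image $\jmath_T(\F)$ is the pure quotient $P_2(T)/Z(P_2(T))$, which has index two in $K(T,2)$. Restriction to a finite-index subgroup is isometric in bounded cohomology, since averaging over the two cosets is a norm non-increasing left inverse \cite[Theorem~2.1]{Kimura}. Hence restriction preserves zero-seminorm classes in both directions. It therefore descends to an isometric embedding $\jmath_T^*:\rHb^q(K(T,2))\to\rHb^q(\F)$ for every $q$, and in particular for $q=2,3$.

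\emph{Exactness.} The transfer argument gives injectivity of ordinary restriction $H^q(K(T,2);\R)\to H^q(\F;\R)$ for a finite-index subgroup with real coefficients. Since $H^q(\F;\R)=0$ for $q\ge2$, we obtain $H^q(K(T,2);\R)=0$. The comparison map is then zero, so $EH_b^q=H_b^q$. Passing to the quotient by zero-seminorm classes, Lemma~\ref{lem:zero} gives $\rEH^q(K(T,2))=\rHb^q(K(T,2))$.

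\emph{Infinite-dimensionality.} This is the step needing a genuine group-theoretic input. Since $K(T,2)$ contains the free group $\F$ with finite index, it is virtually free, hence word-hyperbolic. It is non-elementary because it contains $\F$. Epstein--Fujiwara \cite{EpsteinFujiwara} then gives that $H_b^2(K(T,2))$ is infinite-dimensional; in degree two this equals reduced bounded cohomology, since the seminorm is a norm \cite[Theorem~6.5]{Frigerio}. In degree three, a non-elementary hyperbolic group is acylindrically hyperbolic, so \cite[Corollary~3]{FPS} gives $\dim\rEH^3(K(T,2))=\infty$; compare \cite[Section~5.3]{Kimura}. The only point to check is that $K(T,2)$ really is non-elementary hyperbolic. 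If one prefers to avoid this, there is an alternative route. Take the bounded classes on $\F$ that are invariant under the conjugation action of $K(T,2)/\jmath_T(\F)\cong\Z/2$, which is realized by an automorphism of $\F$, and symmetrize them by averaging. The averaged classes are the image of $\jmath_T^*$ by the standard finite-index argument. This image is infinite-dimensional, because averaging is a projection onto a subspace containing half of the Brooks-type classes. I would use the hyperbolicity route as the main argument.
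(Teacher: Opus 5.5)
Your proof is correct and follows the paper's argument. Isometry comes from finite-index averaging \cite[Theorem~2.1]{Kimura}, exactness from injectivity of ordinary restriction to an index-two subgroup together with $H^q(\F;\R)=0$, and infinite-dimensionality from \cite{EpsteinFujiwara} and \cite[Corollary~3]{FPS}. The only difference is that you get non-elementary hyperbolicity from $K(T,2)$ being virtually $\F$, whereas the paper quotes $K(T,2)\cong\Z/2*\Z/2*\Z/2$; both routes are fine. The averaging alternative is unnecessary, and as stated (``half of the Brooks-type classes'') it is not an argument, especially in degree three, where no Brooks classes are available.
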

\begin{proof}
As recalled in \cite[proof of Corollary~1.4, Section~5.4]{Kimura},
$$K(T,2)\cong\Z/2*\Z/2*\Z/2.$$
This group is non-elementary hyperbolic. Its second bounded cohomology
is infinite-dimensional by \cite{EpsteinFujiwara}, and its third reduced
exact bounded cohomology is infinite-dimensional by
\cite[Corollary~3]{FPS}. The degree-two seminorm is a norm, as recalled
in Lemma~\ref{lem:brooks}. This proves infinite-dimensionality before
using restriction to the free subgroup. The subgroup $\jmath_T(\F)$ is normal of index two in
$K(T,2)$. Hence restriction is injective and isometric
in bounded cohomology by \cite[Theorem~2.1]{Kimura}.
Thus $\|\jmath_T^*u\|=\|u\|$, and passing to the quotients
by zero-seminorm classes gives the asserted isometric
embedding on reduced bounded cohomology.

Ordinary restriction to the subgroup of index two is injective over
$\R$: composition with ordinary transfer is multiplication by two.
Since $H^q(\F;\R)=0$ for $q\ge2$, it follows that
$H^q(K(T,2);\R)=0$ in these degrees. Thus the comparison map is zero,
and all the bounded classes in the statement are exact, also after
reduction.
\end{proof}

Let $X_2(T)$ denote the ordered configuration space of two points, with
measure $\nu_2=\mu^2|_{X_2(T)}$. The cocycle, followed by the Birman map, is
$$
\gamma_{T,2}:\mathcal T_T\times X_2(T)\longrightarrow K(T,2).
$$
It is independent of isotopy after taking this quotient. Its values lie
in the pure subgroup $\jmath_T(\F)$; allowing the full group $K(T,2)$
in the domain of cohomology accounts for the exchange of the two labels.
For a bounded homogeneous cochain $c$ and
$\mathbf f=(f_0,\ldots,f_q)$, define
$$
(\Gamma'_{T,2}c)(\mathbf f)=\int_{X_2(T)}c\bigl((\gamma_{T,2}(f_i,\mathbf x))_{i=0}^q\bigr)\,d\nu_2(\mathbf x).
$$
The cocycle identity, invariance of $\nu_2$, and the differential
show that this is a bounded cochain map, exactly as in
Lemma~\ref{lem:surface-config-transfer}. Its norm is at most one.
Let
\begin{equation}\label{eq:torus-two-transfer}
\Theta_{T,q}:\rHb^q(K(T,2))\longrightarrow\rHb^q(\mathcal T_T)
\end{equation}
denote the induced map. For smooth transformations it preserves
reduced exact bounded cohomology. Indeed, the finite-image result
\cite[Lemma~2.1]{BMentropy} applies also to two marked points. For that
section the same formula is a finite sum for each transformation tuple,
even for an ordinary cochain. Hence it transfers ordinary primitives,
as in Lemma~\ref{lem:surface-config-transfer}. No ordinary two-point
transfer on all cochains is asserted for arbitrary homeomorphisms.
Lemma~\ref{lem:torus-homeo-exact} below gives ordinary primitives for
a specified family of degree-two classes. The bounded transfer is
independent of the closing paths by the uniformly bounded
change-of-section homotopy.

\begin{lemma}[Localized two-point approximation]\label{lem:torus-two-approx}
There are a closed disc $D_0\subset T$, homomorphisms
$s_\delta:\F\to\Ham(T,\omega)$ whose defining isotopies are supported
in $T\setminus D_0$, and a constant $\kappa_T>0$ such that
\begin{equation}\label{eq:torus-two-approx}
\|s_\delta^*\Theta_{T,q}(u)-\kappa_T\jmath_T^*u\|\le e_\delta\|u\|,\qquad e_\delta\longrightarrow0,
\end{equation}
for $q=2,3$ and $u\in\rHb^q(K(T,2))$. The choices of $D_0$, the
homomorphisms, and the constants are independent of $u$ and $q$.
\end{lemma}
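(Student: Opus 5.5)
We will imitate the one-point computation of Lemma~\ref{lem:essential-one-point}, now for two marked points on the torus, following Kimura's two-point torus calculation \cite[Section~5.3]{Kimura}. Fix the base configuration $(z_1,z_2)$ and the meridian and longitude generators $a,b$ of $\jmath_T(\F)$, realized by moving $z_1$ in $T\setminus\{z_2\}$. Choose thin annuli $N_a,N_b$ about simple meridian and longitude curves meeting in a single disc, and choose a closed disc $D_0$ disjoint from $N_a\cup N_b$; this is possible since the union of two transverse annuli on $T$ has complement a nonempty open disc-like region, in which we put $D_0$ with positive area. On each annulus use the Hamiltonian $H_\delta=-p\chi_\delta(p)$ of Lemma~\ref{lem:push}, with cutoffs exhausting the annulus, and let $s_\delta(a),s_\delta(b)$ be the time-one maps. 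Their isotopies are supported in $N_a\cup N_b\subset T\setminus D_0$, and they fix pointwise the complement of the union $C_\delta$ of transition collars, whose measure tends to zero.

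Next we decompose $X_2(T)$. The set $\mathcal E_\delta$ of pairs having a point in $C_\delta$ is invariant and has measure at most $2\mu(C_\delta)$; integration over it is a cochain map of norm at most $2\mu(C_\delta)$, exactly as in Lemma~\ref{lem:disc-approx}. Outside $\mathcal E_\delta$ every pair is fixed, so $w\mapsto\gamma_{T,2}(s_\delta(w),\mathbf x)$ is a homomorphism $\F\to K(T,2)$. We classify pairs by location. If one point lies in the overlap $U=N_a\cap N_b$ (cores) and the other lies outside $N_a\cup N_b$, the first point traces a meridian and a longitude while the second stays fixed; after a common conjugation (choosing closing paths through the overlap disc, as in Lemma~\ref{lem:push}) this is $\jmath_T$, possibly composed with the label exchange, which is inner in $K(T,2)$. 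If both points lie in the same annulus region they move together, and a simultaneous circuit $aa_2$ or $bb_2$ is central in $P_2(T)$ and hence trivial in $K(T,2)$; if a point lies in an exclusive annular part, or both points are outside, one generator image is trivial or central and the image is cyclic. The remaining mixed cases (e.g.\ one point in the overlap and one in an exclusive part, or both in the overlap) must be checked to give amenable images; for instance with both in the overlap both points move simultaneously and the image is trivial. Amenable images contribute zero in positive degree, using the countable-partition argument of Lemma~\ref{lem:sphere-approx}.

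Summing, with the uniformly bounded inner-automorphism homotopy \cite[Lemma~A.2]{Loh} removing measurable conjugators, the contribution of the good set is $\mu^2(G_\delta)\jmath_T^*u$, where $G_\delta$ is the set of ordered pairs with one point in the overlap core and the other outside $N_a\cup N_b\cup C_\delta$; its measure converges to $\kappa_T=2\mu(N_a\cap N_b)\,\mu(T\setminus(N_a\cup N_b))>0$. This gives \eqref{eq:torus-two-approx} with $e_\delta=2\mu(C_\delta)+|\mu^2(G_\delta)-\kappa_T|$, independent of $u$ and $q$. The main obstacle is the case analysis in the middle step: verifying that configurations with one point in the overlap and the other in an exclusive annular region do not produce a nonamenable image. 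We would first try to show that there the second point's circuit combines with the first into an element commuting with the other generator's image (using that simultaneous circuits are central); should that fail, we would shrink the annuli so these sets have vanishing measure relative to $G_\delta$, absorbing them into the error $e_\delta$.
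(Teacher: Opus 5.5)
There is a genuine gap, and it lies in a case you classified as harmless. You assert that if a point lies in an exclusive annular part then one generator image is trivial or central. This is false when $x_1$ lies in the core of $N_a\setminus N_b$ and $x_2$ lies in the core of $N_b\setminus N_a$. Then the $a$-rotation pushes only $x_1$, around a meridian avoiding $x_2$, and the $b$-rotation pushes only $x_2$, around a longitude avoiding $x_1$.

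Modulo the center of $P_2(T)$, a two-point braid is recorded by the loop traced by the relative position $x_1-x_2$ in $T\setminus\{0\}$. Here the two loops are a meridian and a \emph{reversed} longitude through the common point $x_1-x_2$, and they form a wedge whose complement is a disc about $0$. So on this region, and on its label-swapped copy, the homomorphism is conjugate to $\jmath_T\circ\psi$, where $\psi(a)=a$ and $\psi(b)=b^{-1}$. Its image is free of rank two, not amenable.

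With your fixed annuli this region has fixed positive measure $c\approx2\mu(N_a\setminus N_b)\,\mu(N_b\setminus N_a)$. Hence $s_\delta^*\Theta_{T,q}(u)-\kappa_T\jmath_T^*u$ tends to $c\,\psi^*\jmath_T^*u$, which is nonzero for every $u\neq0$ because $\psi^*$ is an isometry. Your estimate therefore fails.

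Changing the constant does not help either. $\psi$ reverses the intersection form, so it is not induced by conjugation in $K(T,2)$. Concretely, let $h$ be the homogenization of $\phi_{ab}+\phi_{a^{-1}b^{-1}}$. It is invariant under $a\mapsto a^{-1}$, $b\mapsto b^{-1}$, so it extends to $K(T,2)$. Yet $h([a,b])=2$ while $h(\psi([a,b]))=-2$.

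Conversely, the case you singled out as the main obstacle is fine. With one point in the overlap and one in $N_a\setminus N_b$, the $a$-rotation moves both points together. That braid is central, hence trivial in $K(T,2)$, so the image is cyclic.

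Your fallback of shrinking the annuli does not repair this. For two crossing annuli of width $w$, the overlap has area of order $w^2$, and so does the product of the two exclusive parts. Uniform shrinking therefore keeps the harmful region comparable to the good one. It also sends the good measure to zero, which is incompatible with a fixed $\kappa_T>0$.

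The missing idea, which the paper uses, is a non-uniform shape. Fix a disc $U$ about $z_1$ of area $a_0\in(0,1)$. Let each annulus be $U$ together with a meridional, respectively longitudinal, band whose width tends to zero. Use nested inner annuli and shrinking collars, with area coordinates in which $H_\delta=-p\chi_\delta(p)$ rotates the inner annulus once.

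Then the overlap keeps area close to $a_0$, and the complementary disc $U_{2,\delta}\supset D_0$ has area close to $1-a_0$. Every configuration meeting a band or a collar lies in an invariant exceptional set of measure at most $2b_\delta\to0$. Only three good types survive: both points in $U_{1,\delta}$, both in $U_{2,\delta}$, or one in each. The last type gives $\kappa_T=2a_0(1-a_0)$.
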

\begin{proof}
We use the two-point calculation of \cite[Lemma~5.3]{Kimura}. 
Choose a small closed disc $U$ about $z_1$, and choose meridian
and longitude paths passing through $U$. Outside $U$ their two bands
are disjoint. Choose a closed disc $D_0$ about $z_2$ disjoint from a
fixed small neighborhood of $U$ and these paths.

Adjoining one thin band to $U$ gives a meridional annulus, and adjoining
the other gives a longitudinal annulus. Shrink the widths of the bands
and the boundary collars to zero. This gives nested annuli
$W_{a,\delta}\Subset V_{a,\delta}$ and
$W_{b,\delta}\Subset V_{b,\delta}$, all avoiding $D_0$, and disjoint
open discs $U_{1,\delta},U_{2,\delta}$ with the following properties:
$U_{1,\delta}$ contains $z_1$ and lies in both inner annuli;
$U_{2,\delta}$ contains $D_0$ and is disjoint from the two outer
annuli; and
$$
\begin{aligned}
&\mu(U_{1,\delta})\longrightarrow a_0:=\mu(U)\in(0,1),\qquad\mu(U_{2,\delta})\longrightarrow1-a_0,\\
&b_\delta:=\mu\bigl(T\setminus(U_{1,\delta}\cup U_{2,\delta})\bigr)\longrightarrow0.
\end{aligned}
$$
For clarity, $U_{1,\delta}$ is a slightly smaller disc in $U$.
The complement of the two outer annuli is a disc because their union
is a regular neighborhood of a meridian and longitude meeting in one
disc. Take $U_{2,\delta}$ slightly inside this complementary disc,
retaining $D_0$. The area left outside the two good discs consists of
the shrinking bands and collars, see Figure~\ref{fig:torus-two-approx}. 
This also explains why the unused
disc $D_0$ can be fixed while $\delta$ tends to zero.

\begin{figure}[htbp]
\centering
\begin{adjustbox}{max width=\linewidth}
\includegraphics{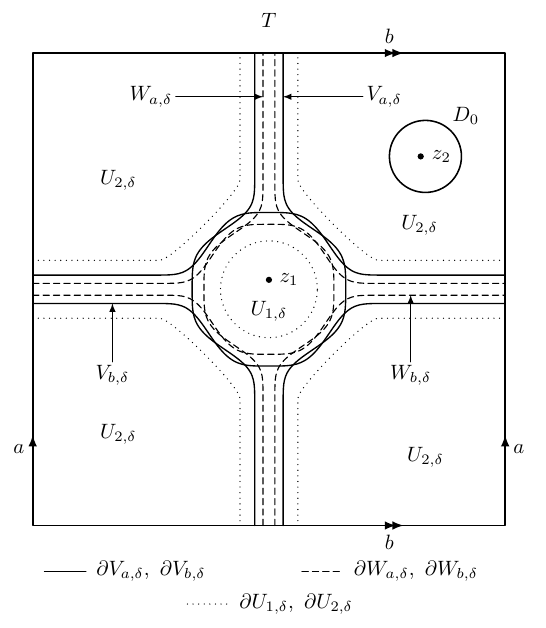}
\end{adjustbox}
\caption{Annuli for the localized two-point construction on $T$.
Solid and dashed curves bound the
outer and inner annuli, respectively; dotted curves bound the
two discs $U_{1,\delta}$ and $U_{2,\delta}$.
The four portions labelled $U_{2,\delta}$ form one disc containing
$D_0$ and $z_2$. The disc $U_{1,\delta}$ contains $z_1$ and lies
in both inner annuli, while $U_{2,\delta}$ is disjoint from both
outer annuli.}
\label{fig:torus-two-approx}
\end{figure}

On each outer annulus choose area coordinates $(\vartheta,p)$,
$\vartheta\in S^1$, in which its inner annulus is a subannulus and
$\omega=d\vartheta\wedge dp$. The area-preserving annulus
parametrization permits these coordinates; equivalently, straighten
the two inner boundary curves first and use area as the transverse
coordinate. Take
$$H_\delta(\vartheta,p)=-p\chi_\delta(p),$$
where $\chi_\delta=1$ on the inner annulus and vanishes near the outer
boundary. With $\iota_{X_H}\omega=-dH$, the vector field is
$(\chi_\delta+p\chi_\delta')\partial_\vartheta$.
Its flow makes one complete positive circuit on the inner annulus
and is stationary off the outer annulus. The transition collars,
including the compensating motion required for a Hamiltonian, are
retained in the exceptional set. Extending $H_\delta$ by zero gives a
Hamiltonian on $T$. Send $a,b$ to the two time-one maps to define
$s_\delta$. Their chosen isotopies are stationary on $D_0$.

For a good two-point configuration, tracing the endpoints 
defines a homomorphism from $\F$. If both points lie in $U_{2,\delta}$,
this homomorphism is trivial. If both lie in $U_{1,\delta}$, one
complete meridional rotation moves both points together and represents
a conjugate of $aa_2$; the longitudinal rotation similarly represents
a conjugate of $bb_2$. These braids are central and hence trivial in
$K(T,2)$. With one point in each disc, the two rotations give,
up to one common conjugation, the generators of $\jmath_T(\F)$.
This is exactly the three-type computation in
\cite[proof of Lemma~5.3]{Kimura}. The simultaneous conjugator is
obtained by identifying the marked meridian and longitude with the
chosen model; it works on both generators and therefore on every word.
Interchanging the two labels conjugates this homomorphism in the full group $K(T,2)$.

The two mixed good regions have total measure
$$
\kappa_{T,\delta}=2\mu(U_{1,\delta})\mu(U_{2,\delta})\longrightarrow\kappa_T:=2a_0(1-a_0)>0.
$$
Their cohomology contribution is
$\kappa_{T,\delta}\jmath_T^*u$. The other good regions contribute
zero. The exceptional configurations have at least one point outside
the two good discs, and measure at most $2b_\delta$.
That set is invariant under the endpoints, since its complement is
fixed pointwise. Its integral is a cochain map of norm at most
$2b_\delta$. Taking the infimum over bounded cocycles representing
$u$ gives
$$
\|s_\delta^*\Theta_{T,q}(u)-\kappa_{T,\delta}\jmath_T^*u\|\le2b_\delta\|u\|.
$$
Thus \eqref{eq:torus-two-approx} holds with
$e_\delta=2b_\delta+|\kappa_{T,\delta}-\kappa_T|$.
\end{proof}

Choose $m$ pairwise disjoint closed discs
$D_1,\ldots,D_m\subset\operatorname{int}(D_0)$. Use the three-point
local rotations of Lemma~\ref{lem:surface-local-approx} in these discs.
Their supports are disjoint from those of $s_\delta$ for every
$\delta$. They give the homomorphism
\begin{equation}\label{eq:torus-mixed-testing-map}
\begin{aligned}
&\rho_{\eps,\delta}:\F^{m+1}\longrightarrow\Ham(T,\omega),\\
&\rho_{\eps,\delta}(g_1,\ldots,g_m,h)=\rho_{1,\eps}(g_1)\cdots\rho_{m,\eps}(g_m)s_\delta(h).
\end{aligned}
\end{equation}
Let $p_j$ denote the coordinate projections of $\F^{m+1}$.

\begin{lemma}[Two-point factorization]\label{lem:torus-two-factorization}
Let $q\in\{2,3\}$ and $u\in\rHb^q(K(T,2))$.
For the homomorphism in \eqref{eq:torus-mixed-testing-map}, we have
\begin{equation}\label{eq:torus-two-factorization}
\rho_{\eps,\delta}^*\Theta_{T,q}(u)=p_{m+1}^*s_\delta^*\Theta_{T,q}(u).
\end{equation}
\end{lemma}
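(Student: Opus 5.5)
The plan follows the one-point factorization (Lemma~\ref{lem:one-point-factorization}), now with the two-point cocycle $\gamma_{T,2}$. The difference is that the disc isotopies can move one of the two marked points, so the factorization will hold only in positive-degree bounded cohomology, not pointwise.

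\emph{Closing paths and cocycle splitting.} I would choose closing paths adapted to the discs $D_j\subset\operatorname{int}(D_0)$. For a configuration with a point in some $D_j$, that point is first joined inside $D_0$ to a fixed point of $D_0$, and then to the base configuration by a fixed path. The bounded transfer is independent of this choice by the change-of-section homotopy. Write $e=\rho_{1,\eps}(g_1)\cdots\rho_{m,\eps}(g_m)$, and realize $e$ by the concatenated local isotopies supported in $D_0$. These are stationary off $D_0$, while the isotopy of $s_\delta(h)$ is stationary on $D_0$. The cocycle identity then gives
\[
\gamma_{T,2}(e\,s_\delta(h),\mathbf x)=\gamma_{T,2}(e,s_\delta(h)\mathbf x)\,\gamma_{T,2}(s_\delta(h),\mathbf x).
\]
The aim is to show that the first factor is trivial in $K(T,2)$, or at least that it contributes nothing in cohomology.

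\emph{Case analysis on the configuration.} If neither point of $\mathbf y$ lies in $D_0$, then $\gamma_{T,2}(e,\mathbf y)=1$. If exactly one point lies in $D_0$, it traces a loop inside the disc $D_0$, which does not contain the other point, so the braid is trivial. If both points lie in $D_0$, the local isotopies produce a two-strand braid supported in the disc $D_0$. This is a power of the standard generator $\sigma$, and its image in $K(T,2)$ need not be trivial. These configurations form the set $D_0\times D_0$, which has positive measure independent of $\eps,\delta$. Since $s_\delta$ is stationary on $D_0$, this set is invariant under the whole group $\rho_{\eps,\delta}(\F^{m+1})$. Its complement, $X_2(T)\setminus(D_0\times D_0)$, is also invariant.

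\emph{Splitting the integral.} I would split the transfer integral into the two invariant pieces. Each piece is a cochain map, by the same change-of-variables argument used for $\mathcal R_\eps$ in Proposition~\ref{prop:approx}.

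\begin{enumerate}
\item On the complement, the first factor is trivial. The integrand then depends only on $h_0,\ldots,h_q$, which gives exactly the corresponding piece of $p_{m+1}^*s_\delta^*\Theta_{T,q}(u)$.
\item On $D_0\times D_0$, the tracing for $s_\delta$ is trivial. For the $\rho_{\eps,\delta}$ piece, the associated map $w\mapsto\gamma_{T,2}(\rho_{\eps,\delta}(w),\mathbf x)$ takes values in the image of the two-strand disc braid group, which is abelian. The corresponding piece for $p_{m+1}^*s_\delta^*$ is identically trivial.
\end{enumerate}

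\emph{Main obstacle.} The main difficulty is that $\mathbf x\mapsto w\mapsto\gamma(\rho(w),\mathbf x)$ is a homomorphism only at points fixed by all the time-one maps. The transition collars of the $D_j$ rotations move points within $D_0$, so on those collars the cocycle is not a homomorphism. I would handle this as follows.

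\begin{enumerate}
\item Regard the restriction to $D_0\times D_0$ as the transfer for the action of $\F^{m+1}$ on $D_0\times D_0$. The cocycle there takes values in the abelian group $\langle\sigma\rangle\subset K(T,2)$, after conjugating by the fixed closing paths.
\item Since it is a cocycle with values in an amenable group $A$, this piece of the transfer factors through $\Hb^q(A)=0$ for $q\ge1$. This is the standard factorization of the cocycle-induced map through bounded cohomology of the target group. Concretely, the cocycle is cohomologous to an $A$-valued one, and the transfer factors through a restriction $\Hb^q(K(T,2))\to\Hb^q(A)$.
\item Hence that piece vanishes, with no limiting argument needed.
\end{enumerate}

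Adding the two pieces gives \eqref{eq:torus-two-factorization} in bounded, and hence reduced bounded, cohomology.
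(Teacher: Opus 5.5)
Your proposal is correct and is essentially the paper's proof. You split $X_2(T)$ into invariant pieces and use adapted closing paths to get the exact identity $\gamma_{T,2}(e\,s_\delta(h),\mathbf x)=\gamma_{T,2}(s_\delta(h),\mathbf x)$ wherever at most one marked point lies in a supporting disc. On the remaining piece you write $c|_A=\delta b$ for the cyclic value group $A$ and integrate $b$ to obtain a bounded primitive, exactly as the paper does.

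The only difference is that you use the single coarser piece $X_2(\operatorname{int}D_0)$ instead of the paper's pieces $Y_j=X_2(\operatorname{int}D_j)$. This works equally well, because every trajectory of a point of $D_0$ stays in $D_0$. On that piece the $s_\delta$-side integrand is the constant $c(1,\ldots,1)$; it vanishes for a normalized cocycle, as the paper chooses, or is killed by the same amenable argument.
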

\begin{proof}
Unlike one-point tracing, two-point tracing of an isotopy supported
in a disc need not be trivial. We prove the identity by showing that
this additional contribution is a bounded coboundary.

Discard the null sets with a point on a disc boundary. Let
$Y_j=X_2(\operatorname{int}(D_j))$ for $1\le j\le m$, and let
$Y_0=X_2(T)\setminus\bigcup_{j=1}^mY_j$. These sets are invariant
under the combined action and under $s_\delta$.
Choose closing paths adapted to this partition. On $Y_j$, first follow
a fixed path to a base configuration in $D_j$, then a measurable path
inside $X_2(D_j)$. On $Y_0$, at most one selected point lies in each
disc. Choose its local part of the closing path inside that disc; any
other selected point is kept outside it. If neither point is in a supporting disc, no adaptation is needed.
Otherwise, first place each disc point at a fixed reference point in
its disc and choose the other point, if any, outside those discs.
Then move the disc points along paths in their discs. The last paths
lie in contractible fibres. 

Write $\gamma_2=\gamma_{T,2}$ and
$e=\rho_{1,\eps}(g_1)\cdots\rho_{m,\eps}(g_m)$.
For $\mathbf x\in Y_0$, each local trajectory closes to a contractible
loop in a disc containing no other selected point. Hence
$\gamma_2(e,\mathbf x)=1$. Since $s_\delta$ is stationary on all
the discs, the cocycle identity gives
$$
\gamma_2(e s_\delta(h),\mathbf x)=\gamma_2(s_\delta(h),\mathbf x)\qquad(\mathbf x\in Y_0).
$$
This is an exact cocycle identity on $Y_0$, including trajectories in
the transition collars.

On $Y_j$, only the isotopy supported in $D_j$ moves the two points.
For the chosen paths its cocycle values lie in a fixed conjugate
$C_j$ of the image of $P_2(D_j)\cong\Z$ in $K(T,2)$.
Thus $C_j$ is amenable. Choose a normalized bounded homogeneous
$q$-cocycle $c$ representing $u$. Normalization does not affect its
class, by the standard bounded normalization homotopy. Amenable
vanishing supplies a bounded homogeneous cochain $b_j$ on $C_j$ with
$c|_{C_j}=\delta b_j$; see \cite[Corollary~3.7]{Frigerio}.

For $\mathbf w_i=(g_{1,i},\ldots,g_{m,i},h_i)$, abbreviate
$\gamma_\rho(\mathbf w_i,\mathbf x)=\gamma_2(\rho_{\eps,\delta}(\mathbf w_i),\mathbf x)$.
Define the bounded cochain
$$
\mathcal B_j(\mathbf w_0,\ldots,\mathbf w_{q-1})=
\int_{Y_j}b_j\bigl((\gamma_\rho(\mathbf w_i,\mathbf x))_{i=0}^{q-1}\bigr)\,d\nu_2(\mathbf x).
$$
Its norm is at most $\nu_2(Y_j)\|b_j\|_\infty$. Invariance of
$Y_j$ and the cocycle identity prove homogeneity. Write $I_jc$ for
the same integral with $c$. We obtain
$$
(\delta\mathcal B_j)(\mathbf w_0,\ldots,\mathbf w_q)=\int_{Y_j}(\delta b_j)
\bigl((\gamma_\rho(\mathbf w_i,\mathbf x))_{i=0}^q\bigr)\,d\nu_2(\mathbf x)
=(I_jc)(\mathbf w_0,\ldots,\mathbf w_q).
$$
Combining this with the identity on $Y_0$ proves
$$
\rho_{\eps,\delta}^*\Gamma'_{T,2}c-p_{m+1}^*s_\delta^*\Gamma'_{T,2}c
=\delta\left(\sum_{j=1}^m\mathcal B_j\right).
$$
This is a bounded coboundary. It proves
\eqref{eq:torus-two-factorization} first in bounded cohomology and then
after reduction. Independence of closing paths gives the same class
identity for every section. Ordinary exactness is proved using
a finite-image section for smooth transformations.
\end{proof}

\begin{lemma}[Exact two-point classes for torus homeomorphisms]
\label{lem:torus-homeo-exact}
There is an infinite-dimensional linear subspace
$\mathcal E_T\subset\rHb^2(K(T,2))$ of codimension at most one such that
$$
\Theta_{T,2}(\mathcal E_T)\subset\rEH^2\bigl(\Homeo_0(T,\mu)\bigr).
$$
\end{lemma}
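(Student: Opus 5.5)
The approach is to work entirely in degree two, where reduced exact bounded cohomology is the same as homogeneous quasimorphisms modulo homomorphisms (Remark on degree two). Every class $u\in\rHb^2(K(T,2))=H_b^2(K(T,2))$ is represented by $\delta\phi$, where $\phi$ is a homogeneous quasimorphism on $K(T,2)$; this uses Lemma~\ref{lem:torus-two-cohomology}, where the comparison map vanishes. The plan is to exhibit $\Theta_{T,2}(u)$ as $[\delta\Phi]$ for an honest, possibly unbounded, function $\Phi$ on $\Homeo_0(T,\mu)$. That immediately shows the transferred class is exact, with no need to integrate arbitrary unbounded ordinary cochains over $X_2(T)$.

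The natural candidate is the Gambaudo--Ghys/Brandenbursky two-point quasimorphism
$$\Phi_\phi(f)=\int_{X_2(T)}\phi\bigl(\gamma_{T,2}(f,\mathbf x)\bigr)\,d\nu_2(\mathbf x).$$
The main input is \cite[Theorem~1.1]{BKMS} (see also \cite{BKS}), already cited in the degree-two remark as supplying the extension of two-point quasimorphisms to torus homeomorphisms. For area-preserving homeomorphisms this integral converges absolutely, and it defines a quasimorphism on $\Homeo_0(T,\mu)$ extending the diffeomorphism one. This holds for $\phi$ in a suitable class, which I expect to be all homogeneous quasimorphisms on $K(T,2)$ satisfying an integrability or word-length growth condition. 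I would check the inequality $|\phi(\gamma)|\le C\,\ell(\gamma)$ against the $L^1$ control of the traced braid length established there. The cocycle identity then yields, pointwise in $(f,g)$,
$$\Phi_\phi(fg)-\Phi_\phi(f)-\Phi_\phi(g)=\int_{X_2(T)}\delta\phi\bigl(\gamma_{T,2}(f,g\mathbf x),\gamma_{T,2}(g,\mathbf x)\bigr)\,d\nu_2,$$
after the measure-preserving substitution $\mathbf x\mapsto g\mathbf x$ in the $f$-term. Passing to homogeneous cochains, the right-hand side is exactly the bounded cocycle $\Gamma'_{T,2}(\delta\phi)$ representing $\Theta_{T,2}(u)$. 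Thus $\Theta_{T,2}(u)=[\delta\Phi_\phi]$ is exact, and after reduction it lies in $\rEH^2$.

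It remains to produce the subspace $\mathcal E_T$ of codimension at most one. I would let $\mathcal E_T$ be the set of classes $u$ whose quasimorphism satisfies the hypothesis of \cite{BKMS}. To see that this set is large, I would use $K(T,2)\cong\Z/2*\Z/2*\Z/2$ together with the finite-index free subgroup $\jmath_T(\F)$. The homogeneous quasimorphisms modulo homomorphisms form an infinite-dimensional space, spanned for instance by averaged Brooks counting quasimorphisms over the index-two cosets. These all have linear growth in word length and hence satisfy the integrability condition. Homomorphisms $K(T,2)\to\R$ vanish because the group is generated by torsion, so each class has a unique homogeneous representative and the assignment $u\mapsto\phi$ is linear. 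The codimension-at-most-one allowance leaves room for a single linear constraint. I expect this constraint to come from a normalization in \cite{BKMS}, for example a condition on the value at the central or diagonal braid, or vanishing of a rotation or flux-type term. I would simply define $\mathcal E_T$ as the kernel of that single functional on the linear-growth classes.

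The main obstacle is the analytic step: convergence of the integral and validity of the quasimorphism inequality for homeomorphisms rather than diffeomorphisms. Here the finite-image property fails and $\gamma_{T,2}(f,\cdot)$ takes infinitely many values. I would lean entirely on \cite[Theorem~1.1]{BKMS} for this. The point to verify carefully is that its hypothesis on $\phi$ is a linear condition with codimension at most one on an infinite-dimensional space.
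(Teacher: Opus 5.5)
Your overall route is the paper's. You represent $u$ by the unique homogeneous quasimorphism $\varphi$ on $K(T,2)$, which is unique since $\operatorname{Hom}(K(T,2),\R)=0$. You then use \cite[Theorem~1.1]{BKMS} to obtain the integrated quasimorphism $Q_\varphi$ on $\Homeo_0(T,\mu)$. Finally, you check via the cocycle identity and invariance of $\nu_2$ that $\delta Q_\varphi$ is the transferred bounded cocycle, and that computation is correct.

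The gap is in the step that carries the content: you never identify which classes \cite[Theorem~1.1]{BKMS} applies to. So $\mathcal E_T$ is never actually defined, and the heuristics you offer in its place are wrong. You claim that linear growth of $\varphi$, together with ``$L^1$ control of the traced braid length'', gives integrability for averaged Brooks quasimorphisms. But every quasimorphism on a finitely generated group has linear growth, so this condition selects nothing. Moreover, for homeomorphisms there is no such $L^1$ control. An area-preserving twist of a small disc, whose rotation angle tends to infinity fast enough at the centre, makes nearby pairs of points wind around each other a non-integrable number of times $N(\mathbf x)$. Heuristically, up to the defect of $\varphi$, this winding changes $\varphi(\gamma_{T,2}(f,\mathbf x))$ by $2N(\mathbf x)\varphi(t)$. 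Here $t\in K(T,2)$ is the image of a generator of the local braid group $B_2(D)$ of a disc $D$ containing both base points. None of your candidate normalizations addresses this. In particular, a condition at the central braid is vacuous, since the centre of $B_2(T)$ is already killed in $K(T,2)$.

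The missing idea is to define
$$\mathcal E_T=\{[\delta\varphi]\in\rHb^2(K(T,2)):\ \varphi\ \text{homogeneous},\ \varphi(t)=0\}.$$
Codimension at most one is then immediate, because $u\mapsto\varphi(t)$ is a well-defined linear functional. Infinite-dimensionality follows from Lemma~\ref{lem:torus-two-cohomology}.

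You must still verify the hypothesis of \cite[Theorem~1.1]{BKMS}:
\begin{enumerate}
\item Lift $\varphi$ to $B_2(T)$ and restrict it to $P_2(T)$.
\item Check that the lift vanishes on the centre and on the local pure subgroup $\langle t^2\rangle$. In $P_2(T)\cong\F\times\Z^2$, this subgroup is generated by the commutator $[a,b]$ of the free factor.
\item Choose a full-measure two-cell whose interior contains $D$, so that its two-strand pure braid group has the same cyclic generator.
\end{enumerate}
With this in place, your cocycle computation finishes the proof exactly as in the paper.
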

\begin{proof}
Put $K=K(T,2)$ and $\mathcal H=\Homeo_0(T,\mu)$ for this proof.
By Lemma~\ref{lem:torus-two-cohomology}, every class in $\rHb^2(K)$
is exact. It is represented by the coboundary of a homogeneous
quasimorphism $\varphi:K\to\R$. This homogeneous quasimorphism is
unique since $K\cong\mathbb{Z}/2*\mathbb{Z}/2*\mathbb{Z}/2$, 
and hence $\operatorname{Hom}(K,\R)=0$.
There is no further identification on passing to reduced cohomology,
because the degree-two seminorm is a norm.

Let $t\in K$ be the image of a generator of the local subgroup
$B_2(D)$ for a disc $D$ containing the two base points. Define
$$
\mathcal E_T=\{[\delta\varphi]\in\rHb^2(K):
\varphi\text{ is homogeneous and }\varphi(t)=0\}.
$$
Since $\rEH^2(K(T,2))=\rHb^2(K(T,2))$ is
infinite-dimensional and the condition $\varphi(t)=0$
imposes at most one linear equation, $\mathcal E_T$ is
infinite-dimensional.

Lift $\varphi$ to $B_2(T)$ and restrict it to $P_2(T)$.
It vanishes on the center and on the local pure subgroup generated by
$t^2$. Choose a full-measure two-cell whose interior contains $D$.
Its two-strand pure braid group has the same cyclic generator.
Consequently the lift satisfies the hypotheses of
\cite[Theorem~1.1]{BKMS}. In the identification
$P_2(T)\cong\F\times\Z^2$, this local pure generator is the commutator
$[a,b]$ in the free factor; see \cite[Lemma~2.6]{BKS}.

Use the closing paths of \cite[Section~2]{BKMS} and write
$\gamma=\gamma_{T,2}$. Theorem~1.1 and its proof in that paper show that
$$
Q_\varphi(f)=\int_{X_2(T)}\varphi(\gamma(f,\mathbf x))\,d\nu_2(\mathbf x)\qquad(f\in\mathcal H)
$$
is well defined and is a quasimorphism. 

We check that its coboundary represents the bounded transfer already
defined in \eqref{eq:torus-two-transfer}, not merely a class agreeing
with it on the diffeomorphism subgroup. In the right-homogeneous
cochain convention, put
$$
\begin{aligned}
&F_\varphi(k_0,k_1)=\varphi(k_0k_1^{-1})\qquad(k_0,k_1\in K),\\
&C_\varphi(f_0,f_1)=Q_\varphi(f_0f_1^{-1})\qquad(f_0,f_1\in\mathcal H).
 \end{aligned}
$$
These are ordinary homogeneous $1$-cochains and need not be bounded.
The cocycle identity gives
$$
\gamma(f_0,\mathbf x)\gamma(f_1,\mathbf x)^{-1}=\gamma(f_0f_1^{-1},f_1\mathbf x).
$$
Since $f_1$ preserves $\nu_2$, we obtain
$$
 \begin{aligned}
&\int_{X_2(T)}
F_\varphi(\gamma(f_0,\mathbf x),\gamma(f_1,\mathbf x))\,d\nu_2(\mathbf x)
=\int_{X_2(T)}\varphi(\gamma(f_0f_1^{-1},f_1\mathbf x))\,d\nu_2(\mathbf x)\\
&=Q_\varphi(f_0f_1^{-1})=C_\varphi(f_0,f_1).
 \end{aligned}
$$
All three expressions are finite by the preceding integrability statement.
The cochain $\delta F_\varphi$ is bounded and represents
$[\delta\varphi]$. We have
\begin{equation}\label{eq:torus-homeo-primitive}
\begin{aligned}
&(\delta C_\varphi)(f_0,f_1,f_2)
=\int_{X_2(T)}(\delta F_\varphi)(\gamma(f_0,\mathbf x),\gamma(f_1,\mathbf x),
\gamma(f_2,\mathbf x))\,d\nu_2(\mathbf x)\\
&=(\Gamma'_{T,2}\delta F_\varphi)(f_0,f_1,f_2).
\end{aligned}
\end{equation}
Thus \eqref{eq:torus-homeo-primitive} supplies an ordinary primitive
of the transferred bounded cocycle. It proves the asserted exactness
before reduction and hence after reduction. 
\end{proof}

\phantomsection\label{proof:torus}
\begin{proof}[Proof of Theorem~\ref{thm:surface} for the torus]
Choose $\alpha,\eta,z$ from Lemma~\ref{lem:surface-bbf} for $S=T$,
and put $A=\Gamma_{T,3}^2(\alpha)$. Use the two-point maps
\eqref{eq:torus-two-transfer}. For $m\ge0$, $q=2,3$, and
$u\in\rHb^q(K(T,2))$, define
\begin{equation}\label{eq:torus-mixed-map}
\begin{aligned}
&\Phi^{T}_{m,q}:\rHb^q(K(T,2))\longrightarrow\rHb^{2m+q}(\mathcal T_T),\\
&\Phi^{T}_{m,q}(u)=A^{\smile m}\smile\Theta_{T,q}(u).
\end{aligned}
\end{equation}
The variable degree-three classes now come from the full two-strand quotient. 

\emph{The mixed transfer.}
Write $X=X_3(T)$, $Y=X_2(T)$, and use their probability measures
$\nu_3,\nu_2$. The product space $X^m\times Y$ has the diagonal
action and the cocycle with values in $K(T,3)^m\times K(T,2)$
obtained by using $\gamma_{T,3}$ in its first $m$ entries and
$\gamma_{T,2}$ in its last entry. Integration defines a
cochain map, by the cocycle and differential identities in
Lemma~\ref{lem:surface-config-transfer}. We transfer the class
$$
\frac1{m!}\bigl(\Sym_m(\alpha^{\times m})\bigr)\times u.
$$
Only the $m$ identical three-point factors are permuted. The two-point
factor is distinguished. The normalized class equals
$\alpha^{\times m}\times u$, since all the fixed factors have degree two.
For $m=0$, omit these factors and their symmetrization.

For the cochain calculation, choose bounded cocycles $c,e$ representing
$\alpha,u$, and fix $\mathbf f=(f_0,\ldots,f_{2m+q})$. Put
$$
\begin{aligned}
&C_{j,\mathbf f}(\mathbf x)=c\bigl((\gamma_{T,3}(f_i,\mathbf x))_{i=2j-2}^{2j}\bigr),\quad
E_{\mathbf f}(\mathbf y)=e\bigl((\gamma_{T,2}(f_i,\mathbf y))_{i=2m}^{2m+q}\bigr),\\
&C_{\mathbf f}(\mathbf X)=\prod_{j=1}^m C_{j,\mathbf f}(\mathbf x^{(j)}),
\quad\mathbf X=(\mathbf x^{(1)},\ldots,\mathbf x^{(m)}).
\end{aligned}
$$
Fubini gives
$$
\int_{X^m\times Y} C_{\mathbf f}(\mathbf X)E_{\mathbf f}(\mathbf y)\,d\nu_3^{\otimes m}(\mathbf X)d\nu_2(\mathbf y)
=\left(\prod_{j=1}^m\int_X C_{j,\mathbf f}\,d\nu_3\right)\left(\int_Y E_{\mathbf f}\,d\nu_2\right).
$$
The last line is the evaluation of the cup product in
\eqref{eq:torus-mixed-map}. All functions are bounded, 
so this identity also applies to homeomorphisms.

\emph{Two factors: one disc and the annular construction.}
For $m=1$ use \eqref{eq:torus-mixed-testing-map}, and put
$$
\eta_\eps=\rho_{1,\eps}^*A,\qquad E_\delta=s_\delta^*A,\qquad
u_\delta=s_\delta^*\Theta_{T,q}(u).
$$
Lemma~\ref{lem:degree-two-product-splitting} and
\eqref{eq:torus-two-factorization} give the exact class identities
$$
\rho_{\eps,\delta}^*A=p_1^*\eta_\eps+p_2^*E_\delta,\qquad
\rho_{\eps,\delta}^*\Theta_{T,q}(u)=p_2^*u_\delta.
$$
Their product is
$$
\rho_{\eps,\delta}^*\Phi^T_{1,q}(u)=\eta_\eps\times u_\delta
+1\times(E_\delta\smile u_\delta).
$$
Use $\tau_1(g,h)=(1,h)$ and $\Pi_1=\id-\tau_1^*$.
This fixes the first external product and annihilates the second. Therefore
\begin{equation}\label{eq:torus-mixed-two-limit}
\Pi_1\rho_{\eps,\delta}^*\Phi^T_{1,q}(u)=\eta_\eps\times u_\delta
 \hspace{0pt}\longrightarrow\lambda_1\kappa_T\,\eta\times\jmath_T^*u.
\end{equation}
Here \eqref{eq:surface-local-approx} gives
$\eta_\eps\to\lambda_1\eta$, and \eqref{eq:torus-two-approx} gives
$u_\delta\to\kappa_T\jmath_T^*u$. 
External product with $\eta$ is injective by
Lemma~\ref{lem:crossdual}, since $\langle\eta,z\rangle=1$.
Restriction $\jmath_T^*$ is injective by
Lemma~\ref{lem:torus-two-cohomology}. Thus a zero class on the left
of \eqref{eq:torus-mixed-two-limit} forces $u=0$, for both $q=2$ and
$q=3$. No vanishing of $E_\delta$ or of its internal cup products is
assumed.

\emph{An arbitrary number of disc factors.}
For any $m\ge2$ place $m$ disjoint discs in $\operatorname{int}(D_0)$
and put $\eta_{j,\eps}=\rho_{j,\eps}^*A$. The same two lemmas give
$$
\rho_{\eps,\delta}^*\Phi^T_{m,q}(u)=\left(\sum_{j=1}^m p_j^*\eta_{j,\eps}
+p_{m+1}^*E_\delta\right)^{\smile m}\smile p_{m+1}^*u_\delta.
$$
Apply $\Pi_m=\prod_{j=1}^m(\id-\tau_j^*)$ from the
coordinate-projection construction, only on the first $m$ coordinates.
Every term with an unused disc coordinate is annihilated. There are
exactly $m$ degree-two factors to distribute, so a surviving term uses
each disc coordinate once and uses no factor $E_\delta$. The $m!$
surviving assignments have the same positive sign. Hence
\begin{equation}\label{eq:torus-mixed-recovery}
\Pi_m\rho_{\eps,\delta}^*\Phi^T_{m,q}(u)
=m!\,\eta_{1,\eps}\times\cdots\times\eta_{m,\eps}\times u_\delta.
\end{equation}
The limit of
\eqref{eq:torus-mixed-recovery} in reduced norm is
\begin{equation}\label{eq:torus-mixed-limit}
 m!\,\kappa_T\left(\prod_{j=1}^m\lambda_j\right)\eta^{\times m}\times\jmath_T^*u.
\end{equation}
Its coefficient is strictly positive. Lemma~\ref{lem:crossdual} makes
external product with $\eta^{\times m}$ injective, and
$\jmath_T^*$ is injective. Thus
\eqref{eq:torus-mixed-recovery}--\eqref{eq:torus-mixed-limit} prove
injectivity of every $\Phi^T_{m,q}$. For $m=0$, use
\eqref{eq:torus-two-approx} directly.

The norm argument is the same as for $g\ge2$, and we obtain 
nonvanishing after reduction. 

\emph{Exactness and completion of the proof.}
For each smooth transformation group, the two-point transfer preserves
exactness, and Lemma~\ref{lem:torus-two-cohomology} makes its domain
exact. Choose a bounded cocycle $d_u$ representing an exact lift of
$\Theta_{T,q}(u)$ and an ordinary cochain $B_u$ with $d_u=\delta B_u$.
If $c_A$ is a bounded cocycle representing $A$, the ordinary Leibniz
identity gives
$$
c_A^{\smile m}\smile d_u=\delta\bigl(c_A^{\smile m}\smile B_u\bigr).
$$
This proves that the image of \eqref{eq:torus-mixed-map} is reduced
exact for the smooth groups.

For $\mathcal T_T=\Homeo_0(T,\mu)$, first take $q=2$ and
$u=[\delta\varphi]\in\mathcal E_T$ from
Lemma~\ref{lem:torus-homeo-exact}. Equation~\eqref{eq:torus-homeo-primitive}
gives the ordinary homogeneous cochain $C_\varphi$ with
$d_u=\delta C_\varphi$ bounded and representing $\Theta_{T,2}(u)$.
The same computation is now valid on the homeomorphism group:
\begin{equation}\label{eq:torus-even-exact-primitive}
c_A^{\smile m}\smile d_u=\delta\bigl(c_A^{\smile m}\smile C_\varphi\bigr)\qquad(m\ge0).
\end{equation}
For $m=0$ the factor $c_A^{\smile0}$ is the degree-zero unit.
The primitive in \eqref{eq:torus-even-exact-primitive} is an ordinary
cochain on the transformation group. It need not be bounded, and no
unbounded three-point cochain is integrated. Thus every restriction
$\Phi^T_{m,2}|_{\mathcal E_T}$ has reduced exact image.
Its injectivity and lower norm estimate have already been proved
on the entire domain. Since $\mathcal E_T$ is infinite-dimensional,
this gives infinite-dimensional reduced exact bounded cohomology
in every even degree for torus homeomorphisms. We have not proved
exactness of the degree-three two-point factor class on this group.

Both spaces $\rHb^2(K(T,2))$ and $\rHb^3(K(T,2))$ are
infinite-dimensional by Lemma~\ref{lem:torus-two-cohomology}. The
degrees $2m+2$ and $2m+3$, $m\ge0$, exhaust all integers at least two.
Thus the maps \eqref{eq:torus-mixed-map} give reduced bounded
infinite-dimensionality in every degree, reduced exact
infinite-dimensionality in every degree for the smooth groups,
and the additional even-degree reduced exact conclusion for
homeomorphisms. All testing homomorphisms are Hamiltonian, and
the same cochain computations apply directly to each of the four
groups. Together with the preceding proof for $g\ge2$, this proves
Theorem~\ref{thm:surface}.
\end{proof}

\section{Ordinary cohomology, final comments and remarks}
\label{sec:6}

\subsection{Ordinary cohomology}
\label{sec:6-1}
For completeness, we record the following consequence of the
classical mass-flow construction
\cite[Proposition~5.1 and Section~6]{Fathi}.
Combined with the ordinary cohomology of $\R$ regarded as
a discrete group, it gives infinite-dimensionality in every
positive degree under the hypothesis below.
In the orientable setting, this yields a strengthening of
the ordinary-cohomology statement in
\cite[Theorem~4.5]{Nitsche}.

\begin{theorem}[Ordinary cohomology of volume-preserving groups]
\label{thm:ordinary-cohomology}
Let $M$ be a compact connected oriented smooth manifold of
dimension $n\ge2$, possibly with boundary, and let $\mu$ be
the probability measure induced by a smooth positive volume
form. Set $G=\pi_1(M)/Z(\pi_1(M))$,
and suppose that
\begin{equation}\label{eq:ordinary-character}
 H^1(G;\R)\ne0.
\end{equation}
Let $\cG$ denote either $\Homeo_0(M,\mu)\qquad\text{or}\qquad\Diff_0(M,\mu)$.

Let $\R^\delta$ denote the additive group of real numbers
with the discrete topology. There are group homomorphisms
$$
F:\cG\longrightarrow\R^\delta,\qquad s:\R^\delta\longrightarrow\cG
$$
such that
\begin{equation}\label{eq:ordinary-splitting}
 F\circ s=\id_{\R^\delta}.
\end{equation}
The section $s$ can be chosen to be a smooth
volume-preserving flow, supported in $\Int M$ when the
boundary is nonempty.

Consequently, pullback gives an embedding
$$
F^*:H^*(\R^\delta;\R)\longrightarrow H^*(\cG;\R),
$$
and
$$
\dim H^d(\cG;\R)=\infty\qquad\text{for every }d\ge1.
$$
All group cohomology in this statement is ordinary
cohomology of discrete groups with trivial real coefficients.
\end{theorem}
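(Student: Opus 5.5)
The plan is to build $F$ as the integrated mass flow of a real character, and $s$ as a flow around an embedded loop detected by that character. By \eqref{eq:ordinary-character}, fix a nonzero homomorphism $\varphi:G\to\R$. Let $\gamma:\cG\times M\to G$ be the measurable cocycle \eqref{eq:cocycle} of Lemma~\ref{lem:transfer}. It is independent of the isotopy because we work modulo $Z(\pi_1(M))$, and by \eqref{eq:finiteimage} $\gamma(f,\cdot)$ has finite essential image $S_f$ for every $f$. Define
$$
F(f)=\int_M\varphi\bigl(\gamma(f,x)\bigr)\,d\mu(x).
$$
The integral is a finite sum, so it is well defined, also for homeomorphisms. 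The cocycle identity \eqref{eq:cocycleidentity} and additivity of $\varphi$ give $\varphi(\gamma(fg,x))=\varphi(\gamma(f,gx))+\varphi(\gamma(g,x))$. Invariance of $\mu$ under $g$ then gives $F(fg)=F(f)+F(g)$, so $F:\cG\to\R^\delta$ is a homomorphism. This is the degree-one case of the ordinary transfer $\Gamma_1^1$ in Lemma~\ref{lem:transfer}, applied to the ordinary $1$-cocycle $\varphi$. Changing the reference paths changes $F$ by a coboundary of a bounded function, which is zero on a homomorphism. I would record this but not rely on it.

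Next I construct the section. Since $\varphi\ne0$ on the finitely generated image of $\pi_1$, some loop $\ell$ at $x_0$ has $\varphi([\ell])\ne0$. For $n\ge3$, general position lets me take $\ell$ embedded in $\Int M$. For $n=2$, a nonzero class in $H^1(M;\R)$ evaluates nontrivially on some primitive homology class, and that class is represented by a simple closed curve. Take a volume tube $N=S^1\times D^{n-1}$ about $\ell$ and a fixed cutoff $\chi$ with compact support. Then $\Phi^t(\vartheta,y)=(\vartheta+t\chi(y),y)$, $t\in\R$, is a one-parameter group of volume-preserving diffeomorphisms supported in $\Int M$, as in \eqref{eq:volpush}. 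The composite $t\mapsto F(\Phi^t)$ is a homomorphism $\R\to\R$. For $t\in[0,1]$ the values of $\gamma(\Phi^t,x)$ lie in a uniformly finite set, because the trajectories stay in $N$ and have bounded winding and the closing paths have bounded length. Hence this homomorphism is bounded on an interval, and therefore it is linear: $F(\Phi^t)=t\,F(\Phi^1)$.

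At $t=1$, a point with transverse coordinate $y$ winds $\chi(y)$ times around $\ell$. I would arrange $\chi\in\{0,1\}$ off a set of small measure, or compute exactly using a uniformly fixed conjugation within the tube. This gives $F(\Phi^1)=-\varphi([\ell])\int_{D^{n-1}}\chi\ne0$, where the sign comes from the inverse in \eqref{eq:cocycle}. Setting $s(t)=\Phi^{t/F(\Phi^1)}$ yields $F\circ s=\id$. Then $s^*F^*=\id$ on $H^*(\R^\delta;\R)$, so $F^*$ is injective. Finally, $\R^\delta$ is a $\mathbb Q$-vector space of continuum dimension, that is, a directed union of free abelian groups. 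This gives $H^d(\R^\delta;\R)\cong\operatorname{Hom}(\Lambda^d_{\mathbb Z}\R,\R)$, which is infinite-dimensional for every $d\ge1$.

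I expect the main obstacle to be the evaluation of $F(\Phi^1)$, together with the linearity step for fractional windings. Closing paths near the tube are not controlled pointwise, so the value of $\gamma$ for a point in $N$ must be identified with a conjugate of $[\ell]^{\pm1}$ carefully. Conjugation does not affect $\varphi$, since $\varphi$ is abelian. I would first choose the measurable paths so that points of $N$ are reached through a fixed path to $\ell$ followed by a path inside $N$, exactly as in Lemma~\ref{lem:push}. After this choice each fully winding point contributes $-\varphi([\ell])$, and the transition region contributes an amount bounded by its measure. Making $\chi$ sharper then shows $F(\Phi^1)\ne0$.
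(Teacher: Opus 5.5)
Your proof is correct, but both halves follow a different route from the paper's.

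\textbf{The homomorphism $F$.} The paper never uses the measurable $G$-valued cocycle here. It uses finite generation of $G$ to obtain an integral character, represents it by a smooth map $u:M\to S^1$, and sets $\Delta_f(x)$ to be the real lift of $t\mapsto u(f_t(x))-u(x)$. This function is continuous in $x$, so $F(f)=\int_M\Delta_f\,d\mu$ is finite by compactness alone. No reference paths, cut loci, or finite-image lemma are needed. When your $\varphi$ is that integral character and your closing paths have bounded length, your $F$ is minus the paper's: the windings of $u$ along the closing paths form a bounded function $w$, and $w\circ f-w$ integrates to zero by invariance.

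\textbf{The section.} The paper puts $\alpha=u^*d\vartheta$ and uses Poincar\'e--Lefschetz duality to choose a closed $(n-1)$-form $\beta$, compactly supported in $\Int M$, with $\int_M\alpha\wedge\beta=1$. It defines $X$ by $\iota_X\Omega=\beta$ and computes $F(P_t)=t\int_M\alpha(X)\Omega=t$ exactly, uniformly in $n\ge2$. Your shear is the special case where $\beta=\chi(y)\,dy_1\wedge\cdots\wedge dy_{n-1}$ is supported in a tube. So the paper's identity gives your formula $F(\Phi^1)=-\varphi([\ell])\int\chi$ exactly, and would make your Cauchy-equation step and sharp-cutoff estimate unnecessary.

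\textbf{Trade-off.} Your route makes $F$ literally the degree-one ordinary transfer $\Gamma_1^1(\varphi)$, consistent with the rest of the paper, and your section needs only an embedded loop, not duality. The cost is reliance on finite image or bounded-length paths, plus separate loop-embedding arguments in dimension two and in dimension at least three.

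\textbf{Two small repairs.}
\begin{enumerate}
\item For $n=2$ with boundary, not every primitive class is represented by a simple closed curve. If $c_1,c_2$ are two boundary classes of a pair of pants, the primitive class $c_1-c_2$ is not. Instead, take a basis of $H_1(M;\mathbb Z)$ consisting of simple closed curves in $\Int M$, and pick one on which $\varphi$ is nonzero.
\item Your evaluation of $F(\Phi^1)$ uses paths adapted to $N$, not the geodesic paths of Lemma~\ref{lem:transfer}. So you should either define $F$ with bounded-length adapted paths from the start (finite image still holds), or use path independence after all. Path independence is immediate here: changing the paths changes $\varphi\circ\gamma(f,\cdot)$ by $\psi\circ f-\psi$ with $\psi$ bounded, and this integrates to zero by invariance of $\mu$.
\end{enumerate}
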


\begin{proof} We first construct a real-valued homomorphism on
$\Homeo_0(M,\mu)$ and a section whose image consists of
smooth volume-preserving diffeomorphisms.

Since $G$ is finitely generated, condition
\eqref{eq:ordinary-character} gives a nonzero homomorphism
$G\to\Z$. Composing with the quotient map, obtain
$$
\chi:\pi_1(M)\longrightarrow\Z,\qquad\chi|_{Z(\pi_1(M))}=0.
$$
Choose a smooth map $u:M\to S^1$ representing $\chi$.
For an isotopy $\{f_t\}_{t\in[0,1]}$ from the identity to
$f\in\Homeo_0(M,\mu)$, let $\Delta_f(x)$ be the endpoint
of the real lift, starting at zero, of the path
$$t\longmapsto u(f_t(x))-u(x)\in S^1.$$
The function $\Delta_f:M\to\R$ is continuous.

Changing the isotopy changes this endpoint by the value
of $\chi$ on the evaluation of a loop of isotopies.
Such an evaluation lies in the center of $\pi_1(M)$;
see \cite[Proposition~2.1]{BM}. Hence $\Delta_f$ is
independent of the chosen isotopy. Concatenation gives
$$\Delta_{fg}(x)=\Delta_f(gx)+\Delta_g(x).$$
The scalar mass-flow construction
\cite[Section~5]{Fathi} therefore defines
$$F(f)=\int_M\Delta_f(x)\,d\mu(x).$$
The integral is finite by compactness. Since $g$ preserves
$\mu$, the preceding identity gives
$$F(fg)=F(f)+F(g).$$
Thus $F:\Homeo_0(M,\mu)\to\R$ is a group homomorphism.

We next construct a section of $F$.
Write $d\mu=\Omega$, where $\Omega$ is a smooth positive
volume form with $\int_M\Omega=1$, and put
$\alpha=u^*(d\vartheta)$,
where $d\vartheta$ is the standard closed one-form on
$ S^1$ with integral one. The class $[\alpha]$ is
nonzero. By Poincare--Lefschetz duality, choose a smooth
closed $(n-1)$-form $\beta$, compactly supported in
$\Int M$ when $\partial M\ne\varnothing$, such that
$$\int_M\alpha\wedge\beta=1.$$

Define the smooth vector field $X$ by $\iota_X\Omega=\beta$.
Since $d\beta=0$, Cartan's formula gives
$\mathcal L_X\Omega=0$. Thus the flow $\{P_t\}_{t\in\R}$
of $X$ preserves $\Omega$. It exists for all real times:
$M$ is compact, and $X$ vanishes near the boundary when
the boundary is nonempty.

For this flow, we have
$$
\Delta_{P_t}(x)=\int_0^t\alpha(X)(P_\tau(x))\,d\tau.
$$
Using preservation of $\mu$ and
$\alpha(X)\Omega=\alpha\wedge\iota_X\Omega$, we obtain
$$
F(P_t)=\int_M\int_0^t\alpha(X)(P_\tau(x))\,d\tau\,d\mu(x)
=t\int_M\alpha(X)\Omega=t\int_M\alpha\wedge\beta=t.
$$
Consequently, $s(t)=P_t$ is a group homomorphism with
$F\circ s=\id$. Its image lies in $\Diff_0(M,\mu)$.
The same section therefore works for the restriction
of $F$ to $\Diff_0(M,\mu)$.

For either choice of $\cG$, functoriality gives
$$s^*\circ F^*=(F\circ s)^*=\id\qquad\text{on }H^*(\R^\delta;\R).$$
Thus $F^*$ is an embedding. Now $H^d(\R^\delta;\R)$ is
infinite-dimensional for every $d\ge1$ and the proof follows.
\end{proof}

For a closed negatively curved manifold of dimension at
least two, the fundamental group has trivial center.
Hence $H^1(G;\R)\cong H^1(M;\R)$. If $b_1(M)>0$, 
Theorem~\ref{thm:ordinary-cohomology} therefore implies.

\begin{corollary}[Negatively curved manifolds]
\label{cor:negative-curvature-ordinary}
Let $M$ be a closed connected oriented smooth manifold of
dimension $n\ge2$ admitting a Riemannian metric of strictly
negative sectional curvature. If $n>2$ suppose that
$$b_1(M)=\dim H^1(M;\R)>0.$$
Then, we have
$$
\dim \cG^d\bigl(\Homeo_0(M,\mu);\R\bigr)=\infty,\quad
\dim \cG^d\bigl(\Diff_0(M,\mu);\R\bigr)=\infty\quad(d\ge1).
$$
\end{corollary}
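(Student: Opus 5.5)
The corollary follows from Theorem~\ref{thm:ordinary-cohomology}, so the plan is only to verify its hypothesis $H^1(G;\R)\ne0$, where $G=\pi_1(M)/Z(\pi_1(M))$. The first step is to show that the center of $\pi=\pi_1(M)$ is trivial. A closed manifold of strictly negative sectional curvature has a torsion-free, non-elementary word-hyperbolic fundamental group. For $n\ge3$ this is noted before the first corollary; for $n=2$ the surface is $\Sigma_g$ with $g\ge2$. In such a group the centralizer of any nontrivial element is infinite cyclic, so a nontrivial central element would force $\pi$ to be virtually cyclic, which is impossible. Equivalently, Preissmann's theorem says every abelian subgroup is cyclic, and a cyclic center in a non-elementary group must be trivial. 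Hence $G=\pi$.

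The second step is to produce a nonzero real character. Since $G=\pi$, we have $H^1(G;\R)=\operatorname{Hom}(\pi_1(M),\R)\cong H^1(M;\R)$, and this is nonzero exactly when $b_1(M)>0$.
\begin{itemize}
\item For $n>2$, $b_1(M)>0$ is assumed.
\item For $n=2$, $M=\Sigma_g$ with $g\ge2$ by Gauss--Bonnet, so $b_1(M)=2g>0$ automatically. This is why the hypothesis is imposed only when $n>2$.
\end{itemize}
In either case \eqref{eq:ordinary-character} holds. $M$ is closed, connected and oriented, and $\mu$ is induced by a smooth positive volume form, so all hypotheses of Theorem~\ref{thm:ordinary-cohomology} are met.

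The final step is to apply Theorem~\ref{thm:ordinary-cohomology} to $\cG=\Homeo_0(M,\mu)$ and to $\cG=\Diff_0(M,\mu)$. It gives $\dim H^d(\cG;\R)=\infty$ for every $d\ge1$. The displayed ``$\cG^d$'' in the statement should be read as ordinary cohomology $H^d$.

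No step is a real obstacle. The only point needing care is the triviality of the center, which rests on standard facts about hyperbolic groups (or Preissmann's theorem) rather than on any result proved in this paper.
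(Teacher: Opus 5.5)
Your proposal is correct and matches the paper's argument: the center of $\pi_1(M)$ is trivial, so $G=\pi_1(M)$ and $H^1(G;\R)\cong H^1(M;\R)\neq0$, and Theorem~\ref{thm:ordinary-cohomology} then applies to both groups. You also spell out what the paper leaves implicit: for $n=2$, Gauss--Bonnet forces $g\ge2$, so $b_1(M)=2g>0$, and the displayed $\cG^d$ is a typo for ordinary cohomology $H^d$.
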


\subsection{Final comments and remarks}
\label{sec:6-2}

The following proposition is a consequence of
\cite[Corollary~4.25]{MO} and
\cite[Theorem~6.14(c)]{DGO}, together with the standard
fixed-point argument for finite groups acting on trees.

\begin{proposition}\label{prop:AtoH}
Every group satisfying \textup{(i)} is acylindrically
hyperbolic and has no nontrivial finite normal subgroup.
In particular, it satisfies \textup{(iii)}.
The hyperbolically embedded free subgroup need not coincide
with the specified $\F$-vertex in \textup{(i)}.
\end{proposition}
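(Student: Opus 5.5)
Assume (i): $G\cong\F*_A B$ with $A$ amenable and injective edge maps. I would let $G$ act on the Bass--Serre tree $X$ of the splitting and show this action is acylindrical and non-elementary, then invoke \cite[Corollary~4.25]{MO}. That corollary should say that an action on a tree with (weakly) malnormal-type edge stabilizers is acylindrical, or at least that the amalgam is acylindrically hyperbolic once edge stabilizers are small and the splitting is nondegenerate. The structural input is that amenable subgroups of $\F$ are trivial or infinite cyclic. If $A=1$, then $G=\F*B$ is a nontrivial free product containing $\F$ and is not virtually cyclic, so it is acylindrically hyperbolic. If $A\cong\Z=\langle c\rangle$, I would first replace $c$ by a generator of its maximal cyclic subgroup $\langle c_0\rangle$ in $\F$. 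Then $\langle c_0\rangle$ is malnormal in $\F$, so an element of $\F$ fixing two distinct edges at the $\F$-vertex lies in $\langle c_0\rangle\cap g\langle c_0\rangle g^{-1}$ for some $g\notin\langle c_0\rangle$, which is trivial. Hence the stabilizer of a path of length two through an $\F$-vertex is trivial, and the stabilizer of any path of length at least four is trivial. This is the uniform acylindricity needed for \cite[Corollary~4.25]{MO}. Non-elementarity follows because the $\F$-vertex has infinitely many edges (since $[\F:A]=\infty$) and $G$ contains hyperbolic elements with independent axes, for example products of elements of $\F\setminus A$ with elements of $B\setminus A$ when $B\ne A$. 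The degenerate case $B=A$ gives $G=\F$, which is already acylindrically hyperbolic.

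Next I would rule out nontrivial finite normal subgroups. Suppose $N\lhd G$ is finite. A finite group acting on a tree fixes a point, so $N$ fixes a vertex. By normality it fixes every vertex in the $G$-orbit of that vertex, and hence it fixes the convex hull, which is all of $X$ because the action is minimal. So $N$ lies in every edge stabilizer, in particular $N\subset A\subset\F$. Since $\F$ is torsion-free, $N=1$.

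With both facts in hand, \cite[Theorem~6.14(c)]{DGO} applies: an acylindrically hyperbolic group with trivial maximal finite normal subgroup $K(G)=1$ contains a hyperbolically embedded copy of $\F$. That is exactly condition (iii). The free subgroup it produces comes from loxodromic WPD elements and is generally unrelated to the vertex group, which accounts for the final sentence of the statement.

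The main obstacle is the acylindricity step when $A\cong\Z$ is not maximal cyclic in $\F$. In that case $A$ itself need not be malnormal, and commensurators can make edge-path stabilizers nontrivial along longer paths through $B$-vertices, where $B$ is arbitrary. I would handle this with the weaker criterion in \cite{MO}, which is phrased in terms of $A\cap gAg^{-1}$ being finite for some $g$, rather than full acylindricity. This weaker condition only requires one element $g\in\F$ with $A\cap gAg^{-1}=1$, and any $g\in\F$ outside the commensurator $\langle c_0\rangle$ works.
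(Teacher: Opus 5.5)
Your final argument is essentially the paper's proof. You obtain acylindrical hyperbolicity from the Minasyan--Osin finite-intersection criterion for amalgams, checking its hypothesis directly: any $g\in F_2\setminus\langle c_0\rangle$ gives $A\cap gAg^{-1}=1$, whereas the paper quotes \cite[Corollary~4.25]{MO} using only that $A$ is finitely generated of infinite index in $F_2$. Then you use the fixed-point and minimality argument to force a finite normal subgroup into the torsion-free edge group, and finally apply \cite[Theorem~6.14(c)]{DGO}, exactly as the paper does.

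Drop the opening attempt at full acylindricity: it is invalid as written, because edge stabilizers are conjugates of $A$ rather than of $\langle c_0\rangle$. Also note that minimality of the Bass--Serre tree needs $A\ne B$, so the degenerate case $G\cong F_2$ must be set aside in the normal-subgroup step, as the paper does.
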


\begin{proof}
Write $G=\F*_A B$. Since $A$ is an amenable subgroup of
$\F$, it is trivial or infinite cyclic. In particular,
it is finitely generated and has infinite index in $\F$.
By \cite[Corollary~4.25]{MO}, $G$ is either infinite cyclic
or acylindrically hyperbolic. The first possibility is
excluded because $G$ contains $\F$.

We show that $G$ has no nontrivial finite normal subgroup.
If the inclusion $A\to B$ is onto, then $G\cong\F$ and
this is immediate. Otherwise both edge inclusions are
proper, and the Bass--Serre action on the associated
tree $T$ is minimal; see \cite[Lemma~4.14]{MO}.
Let $N\lhd G$ be finite. Its fixed-point set $T^N$ is a
nonempty subtree; see \cite[Chapter~I, Section~6]{Serre}.
Normality makes $T^N$ invariant under $G$, so minimality
gives $T^N=T$. Hence $N$ lies in every edge stabilizer,
and in particular in $A$. Since $A$ is torsion-free,
$N=1$.

By \cite[Theorem~1.2]{Osin} and
\cite[Theorem~6.14(c)]{DGO}, an acylindrically hyperbolic
group with no nontrivial finite normal subgroup contains
a hyperbolically embedded subgroup isomorphic to $\F$.
Thus \textup{(iii)} holds. This conclusion does not assert
that the specified $\F$-vertex is hyperbolically embedded.
\end{proof}

\begin{remark}\label{rem:wholeV2}
For $n\ge3$, Proposition~\ref{prop:AtoH} shows that 
every group covered by Theorem~\ref{thm:main} 
satisfies \textup{(ii)} or \textup{(iii)}. Thus, 
allowing a different free embedding and algebraic 
rather than isometric lifts, the full space $\rHb^2(\F)$ 
can be injected into $\rEH^2(\cG)$ in every case. 
The possible codimension-one loss in the direct \textup{(i)} 
construction concerns that specific BBFIPP section and vertex; 
it is not a loss in the all-degree conclusion or an obstruction to another exact section.
\end{remark}

\begin{corollary}\label{cor:AH}
The all-degree conclusion of Theorem~\ref{thm:main} 
for $n\ge3$ also holds whenever $G=\pi/Z(\pi)$ is acylindrically hyperbolic, 
even if $G$ has a nontrivial finite normal subgroup and does 
not satisfy the condition \textup{(iii)}. In particular,
the conclusion holds for any compact orientable smooth $M$ of 
dimension at least three for which $G$ is a non-elementary 
torsion-free word-hyperbolic group.
\end{corollary}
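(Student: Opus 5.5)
The plan is to reduce Corollary~\ref{cor:AH} to the proof of Theorem~\ref{thm:main}. That proof uses hypothesis (iii) in only one place: Lemma~\ref{lem:othersections}, which produces an embedding $i:\F\hookrightarrow G$ together with linear maps $J_q:\rHb^q(\F)\to\rEH^q(G)$ satisfying $i^*J_q=\id$ for $q=2,3$. Everything afterwards uses only this data. Lemma~\ref{lem:brooks} (choice of $\theta,z$), Proposition~\ref{prop:symmetric-products}, Lemma~\ref{lem:transfer}, Lemma~\ref{lem:push}, which needs only $n\ge3$ and the element $i(a),i(b)$ of $G$, and Proposition~\ref{prop:approx} go through unchanged. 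So it suffices to produce such $(i,J_2,J_3)$ when $G$ is acylindrically hyperbolic, possibly with a nontrivial finite normal subgroup.

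\emph{Step 1: finite radical.} Let $E=K(G)$ be the maximal finite normal subgroup, which exists for acylindrically hyperbolic groups \cite{DGO}. Then $\bar G=G/E$ is acylindrically hyperbolic with trivial finite radical. By \cite[Theorem~6.14(c)]{DGO}, $\bar G$ contains a hyperbolically embedded subgroup $\bar F\cong\F$. Moreover, \cite[Theorem~6.14]{DGO} in fact gives a hyperbolically embedded subgroup $\F\times E$ in $G$ itself. I will use that form. Let $H\cong\F\times E$ be hyperbolically embedded in $G$, and let $i:\F\to H\subset G$ be the inclusion of the free factor.

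\emph{Step 2: exact extension.} Apply the Frigerio--Pozzetti--Sisto extension theorem \cite{FPS} to $H\hookrightarrow_h G$. It says that $EH_b^q(G)\to EH_b^q(H)$ is onto for $q\ge2$. Since $E$ is finite, restriction to $\F$ identifies $H_b^q(H)$ isometrically with $H_b^q(\F)$; averaging over $E$ supplies the inverse. It also identifies $H^q(H;\R)\cong H^q(\F;\R)=0$ for $q\ge2$. Hence $EH_b^q(H)=H_b^q(H)\cong H_b^q(\F)$. Lift a Hamel basis of $\rHb^q(\F)$ through this chain, exactly as in the proof of Lemma~\ref{lem:othersections}. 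This defines $J_q$ with image in $\rEH^q(G)$ and $i^*J_q=\id$, and consequently $K_2=\rHb^2(\F)$.

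\emph{Step 3: the special case.} If $G$ is non-elementary, torsion-free and word-hyperbolic, it is acylindrically hyperbolic, and torsion-freeness gives $E=1$, so (iii) already holds. I would note that centers of hyperbolic groups are finite, so $Z(\pi)$ plays no role beyond defining $G$.

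\emph{Expected obstacle.} The main risk is the precise form of the DGO statement. I need a hyperbolically embedded copy of $\F\times K(G)$ rather than of $\F$. I expect this to be the content of Theorem~6.14 together with the discussion of $K(G)$ in \cite{DGO}. If that form is unavailable, the fallback is to take any hyperbolically embedded virtually free subgroup $H$ containing $\F$ with finite index. Finite-index restriction is still injective and isometric on bounded cohomology, but it need not be surjective onto $H_b^q(\F)$. In that case the domain of $\Phi_{m,q}$ becomes the infinite-dimensional image of $\rHb^q(H)\to\rHb^q(\F)$ rather than all of $\rHb^q(\F)$. That image remains infinite-dimensional in degrees two and three by \cite{EpsteinFujiwara} and \cite[Corollary~3]{FPS}. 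Proposition~\ref{prop:symmetric-products} and the rest apply unchanged on that subspace, provided $\theta$ is chosen inside it with $\theta\smile u=0$. Securing that choice of $\theta$, a Brooks-type class extendable to $H$, would require an argument like the disc remark, and this is the delicate point.
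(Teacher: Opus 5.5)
Your proposal is correct and is essentially the paper's argument. You take the hyperbolically embedded $H\cong\F\times K(G)$ from \cite[Theorem~6.14(c)]{DGO}, realize each class on $\F$ as an exact class on $H$ via the projection (your identification by $s^*$ and $p^*$), extend it to $EH_b^q(G)$ by the Frigerio--Pozzetti--Sisto theorem, restrict along the free factor to get $i^*J_q=\id$ with $K_2=\rHb^2(\F)$, and then rerun the proof of Theorem~\ref{thm:main} unchanged. Since \cite[Theorem~6.14(c)]{DGO} (with \cite[Theorem~1.2]{Osin} for acylindrically hyperbolic $G$) does give the $\F\times K(G)$ form, the fallback in your last paragraph is not needed.
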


\begin{proof}
By \cite[Theorem~1.2]{Osin} and \cite[Theorem~6.14(c)]{DGO}, 
there is a hyperbolically embedded subgroup $H=\F\times K$ with $K$ finite. 
Let $p:H\to\F$ be the projection, $s:\F\to H$ the direct-factor inclusion, 
and $j:H\hookrightarrow G$. Put $i=j\circ s$; this is an injective homomorphism.

The reduction through $\F\times K$ is the one used 
in \cite[Theorem~5.4 and Corollary~5.5]{FPS} and 
\cite[proof of Theorem~B]{BM}. Fix $q\in\{2,3\}$ 
and $u\in H_b^q(\F)$. Since $H^q(\F;\R)=0$, the 
class $u$ is exact. Naturality of comparison gives
$$c_H^q(p^*u)=p^*c_{\F}^q(u)=0.$$
The exact extension theorem \cite[Proposition~5.1]{FPS}, 
applied to $j:H\hookrightarrow_hG$, therefore gives $v\in EH_b^q(G)$ with $j^*v=p^*u$. 
Restricting further gives
$$i^*v=s^*j^*v=s^*p^*u=(p\circ s)^*u=u.$$
Given a class in $\rHb^q(\F)$, 
choose a representative $u\in H_b^q(\F)$ and take the image of $v$ 
in $\rEH^q(G)$. This proves that $i^*:\rEH^q(G)\to \rHb^q(\F)$ is onto. 
Choose algebraic linear right inverses as in Lemma~\ref{lem:othersections}.

Apply Lemma~\ref{lem:brooks} and Propositions~\ref{prop:symmetric-products} 
and~\ref{prop:approx} to this inclusion and the two right inverses. 
The proof of Theorem~\ref{thm:main} for $n\ge3$ then applies without change. 
Thus all four transformation-group conclusions follow.
\end{proof}

\begin{remark}[Comparison with Nitsche]\label{prop:nitsche}
For compact Riemannian manifolds of dimension at least three,
\cite[Theorem~4.2]{Nitsche} proves
$$
 \dim\rEH^d\bigl(\Homeo_{\mathrm{vol},0}(M)\bigr)=\infty
 \quad\text{for }d=3\text{ and even }d\ge2,
$$
assuming $\pi_1(M)\twoheadrightarrow\F$. This is exactly condition
\textup{(ii)}. On the compact oriented manifolds considered
here, Theorem~\ref{thm:main} also gives the degrees $5,7,9,\ldots$ and
allows condition \textup{(iii)}, with the four groups and support
conventions in its statement.

Strictness of the enlargement of the manifold hypothesis follows
from the third construction in the proof of
Proposition~\ref{prop:relations}. It gives closed and compact
manifolds with boundary satisfying \textup{(iii)} but not \textup{(ii)}. 
That example gives closed and boundary
manifolds satisfying \textup{(iii)} but not \textup{(ii)}. They are
therefore covered by Theorem~\ref{thm:main} and not by the hypothesis of
Nitsche's Theorem~4.2, even in its even-degree range. This comparison uses
the stated theorem and the example; it requires no additional
nonvanishing argument. The earlier degreewise transfer results in
\cite[Theorems~A and~B]{BM} remain the one-point antecedents of our
construction, so the comparison is not a claim of novelty for every
individual consequence.
\end{remark}

\begin{proposition}\label{prop:relations}
Among the three displayed conditions, the only universal implication
between distinct conditions is \textup{(i)}$\Rightarrow$\textup{(iii)}.
In particular, \textup{(ii)} and \textup{(iii)} are independent, and
\textup{(i)} and \textup{(ii)} are independent, but the three conditions
are not pairwise independent.
\end{proposition}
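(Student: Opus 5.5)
The plan is to prove the one implication and then, for each of the remaining five ordered pairs of distinct conditions, exhibit a centre-free finitely presented group separating them. For realizability as $\pi/Z(\pi)$: every finitely presented group is $\pi_1$ of a closed orientable smooth $4$-manifold, and deleting an open ball gives a compact version with boundary. Since all examples below are centre-free, they arise as $G=\pi$ itself.

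\textbf{The implication.} The implication \textup{(i)}$\Rightarrow$\textup{(iii)} is Proposition~\ref{prop:AtoH}, so only the failures of the other implications need examples. Two examples suffice for four of them, because any group violating \textup{(iii)} also violates \textup{(i)}.

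\textbf{Example 1: $G=\F\times\F$.} This group is centre-free, and the first projection is an epimorphism onto $\F$, so \textup{(ii)} holds. To see that \textup{(iii)} fails, I would use the following facts.
\begin{enumerate}
\item A group with a hyperbolically embedded subgroup that is infinite and proper is acylindrically hyperbolic \cite{DGO,Osin}.
\item A direct product of two infinite groups is not acylindrically hyperbolic \cite{Osin}.
\end{enumerate}
Any copy of $\F$ in $\F\times\F$ is infinite and proper. Hence \textup{(iii)} fails, and so \textup{(i)} fails as well. This shows \textup{(ii)}$\not\Rightarrow$\textup{(iii)} and \textup{(ii)}$\not\Rightarrow$\textup{(i)}. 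The main obstacle I expect is citing these two facts in exactly the required form.

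\textbf{Example 2: a hyperbolic rational homology sphere.} Let $W$ be a closed hyperbolic $3$-manifold with $b_1(W)=0$, for example the Weeks manifold, and put $G=\pi_1(W)$. Then $G$ is torsion-free, non-elementary hyperbolic and centre-free, so \textup{(iii)} holds by \cite[Theorem~6.14(c)]{DGO}.
\begin{enumerate}
\item An epimorphism $G\twoheadrightarrow\F$ would force $b_1(G)\ge2$, so \textup{(ii)} fails.
\item For any $G=\F*_AB$ with $A$ trivial or $\Z$, Mayer--Vietoris gives $H_1(G;\R)=(\R^2\oplus H_1(B;\R))/\im H_1(A;\R)$. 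This has dimension at least one, so $b_1(G)\ge1$ and \textup{(i)} fails.
\end{enumerate}
This shows \textup{(iii)}$\not\Rightarrow$\textup{(ii)} and \textup{(iii)}$\not\Rightarrow$\textup{(i)}.

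\textbf{Example 3: an amalgam without an $\F$ quotient.} For \textup{(i)}$\not\Rightarrow$\textup{(ii)}, take $B=\pi_1(W)$ as above. Choose a primitive element $x\in B$ of infinite order (any nontrivial element works, since $B$ is torsion-free). Form
$$G=\F*_{\langle a\rangle=\langle x\rangle}B,$$
which satisfies \textup{(i)} with $A\cong\Z$.
\begin{enumerate}
\item Both edge inclusions are proper. An element of the centre of $G$ would have to fix the Bass--Serre tree and so lie in $A$; it would then be central in $\F$, which has trivial centre. Hence $G$ is centre-free.
\item Abelianizing gives $H_1(G;\R)\cong(\R^2\oplus0)/\R a\cong\R$, so $b_1(G)=1$. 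Therefore there is no epimorphism onto $\F$, and \textup{(ii)} fails.
\end{enumerate}

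\textbf{Conclusion.} Examples 1--3 rule out all five non-implications, so the only universal implication is \textup{(i)}$\Rightarrow$\textup{(iii)}. In particular, \textup{(ii)} and \textup{(iii)} are independent, \textup{(i)} and \textup{(ii)} are independent, and the three conditions are not pairwise independent because \textup{(i)}$\Rightarrow$\textup{(iii)}.
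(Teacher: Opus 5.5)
Your proof is correct, and its overall structure is the paper's. The implication comes from Proposition~\ref{prop:AtoH}. A direct product of two infinite groups separates \textup{(ii)} from \textup{(iii)} and \textup{(i)} via Osin's results; you use $\F\times\F$ where the paper uses $\pi_1(\Sigma_2)\times\pi_1(\Sigma_2)$. Your amalgam $\F*_{\langle a\rangle=\langle x\rangle}B\cong B*\langle b\rangle$ with $b_1=1$ is exactly the paper's $G_+$.

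The genuine difference is how you show that the hyperbolic $3$-manifold group fails \textup{(i)}.
\begin{itemize}
\item The paper takes an integral homology sphere and uses Bowditch's splitting theory (one-ended, boundary $S^2$ with no local cut point, not cocompact Fuchsian). This rules out every splitting over a trivial or cyclic group; perfectness is used only for the degenerate amalgam and for \textup{(ii)}.
\item You observe via homology Mayer--Vietoris that every group satisfying \textup{(i)} has $b_1\ge 2+b_1(B)-\dim H_1(A;\R)\ge1$. This holds because $H_0(A)\to H_0(\F)\oplus H_0(B)$ is injective and $A$ is trivial or cyclic. Then $b_1(W)=0$ kills \textup{(i)} and \textup{(ii)} at once.
\end{itemize}
Your argument is far more elementary and shows that a rational homology sphere suffices. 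The paper's is heavier but does not depend on Betti numbers, which is why it can add that every closed real hyperbolic $n$-manifold group, $n\ge3$, fails \textup{(i)}.

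On realizability, the general fact that finitely presented groups are fundamental groups of closed $4$-manifolds suffices for the proposition. The paper additionally builds explicit closed, bounded, and symplectic realizations in every dimension at least three.

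One small repair is needed in your centre-freeness argument for Example~3. You must also exclude a hyperbolic central element: its axis would be $G$-invariant, so the minimal tree would be a line, which is impossible since $[\F:\langle a\rangle]=\infty$. Alternatively, note as the paper does that $G\cong B*\langle b\rangle$ is a nontrivial free product.
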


\begin{proof}
Proposition~\ref{prop:AtoH} gives
\textup{(i)}$\Rightarrow$\textup{(iii)}.
We establish all the other nonimplications by the following three
constructions.

\emph{Condition \textup{(i)} does not imply \textup{(ii)}.}
Let $Q$ be a closed oriented hyperbolic integral homology
three-sphere, and put $P=\pi_1(Q)$. Hence $P$ is infinite, 
torsion-free, and perfect. Fix $1\ne t\in P$ and set
$G_+=P*\langle b\rangle$.
Eliminating the generator $a$ gives
\begin{equation}\label{eq:primitiveamalgam}
 G_+\cong F(a,b)*_{\langle a\rangle=\langle t\rangle}P.
\end{equation}
Both edge maps are injective because $t$ has infinite order.
Thus $G_+$ satisfies \textup{(i)}, and hence \textup{(iii)} by
Proposition~\ref{prop:AtoH}. The free-product normal form gives
$Z(G_+)=1$; see \cite[Chapter~I, Section~4]{Serre}.
On the other hand,
$$(G_+)_{\mathrm{ab}}\cong\Z.$$
An epimorphism $G_+\twoheadrightarrow\F$ would induce an epimorphism
$\Z\twoheadrightarrow\Z^2$, which is impossible.
Hence $G_+$ does not satisfy \textup{(ii)}.

The closed oriented three-manifold $M_+=Q\#(S^1\times S^2)$
has fundamental group $G_+$ by van Kampen's theorem.
Thus its center quotient satisfies \textup{(i)} and \textup{(iii)},
but not \textup{(ii)}.
The manifold $M_+\times S^1$ has fundamental group $G_+\times\Z$
and center quotient $G_+$, while $M_+\times S^{n-3}$ has fundamental
group $G_+$ for every $n\ge5$.
These give closed manifold realizations in every dimension at least
three.

\emph{Condition \textup{(ii)} implies neither \textup{(i)} nor
\textup{(iii)}.}
Consider the closed symplectic four-manifold
$$
M_-=\Sigma_2\times\Sigma_2,\qquad G_-=\pi_1(\Sigma_2)\times\pi_1(\Sigma_2),
$$
with the product of area forms.
The surface groups have trivial center, so $G_-$ is the center
quotient of $\pi_1(M_-)$; see
\cite[Corollary~7.2(a)]{Osin}.
The presentation
$$
\pi_1(\Sigma_2)=\langle a_1,b_1,a_2,b_2\mid[a_1,b_1][a_2,b_2]=1\rangle
$$
gives an epimorphism onto $\F=\langle a,b\rangle$ by sending
$a_1$ to $a$, $a_2$ to $b$, and $b_1,b_2$ to the identity;
see \cite[Section~1.2]{Hatcher}.
Composing with projection onto the first surface-group factor
proves \textup{(ii)} for $G_-$.

A product of two infinite groups is not acylindrically hyperbolic;
see \cite[Corollary~7.2(b)]{Osin}.
Any subgroup of $G_-$ isomorphic to $\F$ must be proper, since
$G_-$ contains $\Z^2$, whereas a free group does not.
A hyperbolically embedded copy of $\F$ would therefore imply
acylindrical hyperbolicity by \cite[Theorem~1.2]{Osin}.
Thus \textup{(iii)} fails, and Proposition~\ref{prop:AtoH} implies
that \textup{(i)} fails as well.

\emph{Condition \textup{(iii)} implies neither \textup{(i)} nor
\textup{(ii)}.}
Let $P=\pi_1(Q)$ from the first construction.
Since $Q$ is closed and hyperbolic, $P$ is center-free,
torsion-free, and non-elementary word-hyperbolic.
It has no nontrivial finite normal subgroup and satisfies
\textup{(iii)} by \cite[Theorem~6.14(c)]{DGO}.
Since $P$ is perfect, \textup{(ii)} fails.

The group $P$ is one-ended and has boundary $S^2$.
It has no nontrivial splitting over a finite group, and Bowditch's
splitting criterion excludes a splitting over an infinite cyclic
group: $S^2$ has no local cut point and $P$ is not cocompact
Fuchsian; see \cite[Section~0 and Theorem~6.2]{Bowditch}.
In a splitting as in \textup{(i)}, the edge group would be trivial
or infinite cyclic by \eqref{eq:edgecohom}.
Neither nontrivial amalgam is therefore possible.
The degenerate case would give $P\cong\F$, which is excluded by
$P_{\mathrm{ab}}=0$. Hence \textup{(i)} also fails.
The same splitting obstruction shows that the fundamental group
of any closed real hyperbolic $n$-manifold, $n\ge3$, fails
\textup{(i)}, since its boundary $S^{n-1}$ is connected and has
no local cut point. The homology-sphere choice additionally
ensures failure of \textup{(ii)}.

Thus $Q$ realizes a center quotient satisfying only \textup{(iii)}.
The center quotient of $Q\times S^1$ is again $P$, and
$\pi_1(Q\times S^{n-3})=P$ for $n\ge5$.
This gives such closed manifold realizations in every dimension
at least three.

\emph{Compact manifolds with boundary and symplectic realizations.}
Deleting the interiors of finitely many pairwise disjoint embedded
$n$-balls does not change the fundamental group when $n\ge3$.
Consequently, all the closed examples above also give compact
manifolds with nonempty boundary and the same center quotients.
The boundary conventions of Theorem~\ref{thm:main} apply.

By Gompf's realization theorem \cite{Gompf}, there are closed
connected symplectic four-manifolds $X_+$ and $X_3$ with
fundamental groups $G_+$ and $P$, respectively.
Together with $M_-=\Sigma_2\times\Sigma_2$, these realize all three
patterns by symplectic four-manifolds.
Their products with $\mathbb{CP}^{s}$, $s\ge0$, retain the same
fundamental groups and give symplectic examples in every even
dimension at least four. Removing Darboux balls gives the
corresponding symplectic examples with boundary.
Theorem~\ref{thm:main} applies to all four transformation groups
for these symplectic manifolds.

Finally, all three conditions hold for $\F$ itself: use the
amalgam $\F*_{1}1$, the identity epimorphism, and
$\F\hookrightarrow_h\F$.
For instance, $\#_2(S^1\times S^2)$ and its ball complement
realize this group.
The three preceding counterexamples establish all the claimed
nonimplications, completing the proof.
\end{proof}

Theorem~\ref{thm:surface} gives infinite-dimensional reduced bounded
cohomology of $\Homeo_0(T,\mu)$ in every degree at least two, and
infinite-dimensional reduced exact bounded cohomology in every even
degree. We conclude with the following question.

\begin{question*}
For every odd integer $d>3$, is the reduced exact bounded cohomology
$\rEH^d\bigl(\Homeo_0(T,\mu)\bigr)$ infinite-dimensional?
\end{question*}

\bibliographystyle{plain}
\bibliography{Bd_cohomology_all_degrees_references}
\end{document}